
%
%
%

\documentclass{amsart}

\usepackage{latexsym,amsfonts,amssymb,exscale,enumerate,amscd}
\usepackage{amsmath,amsthm}
\usepackage{hyperref}

\addtolength{\hoffset}{-1.6cm}
\addtolength{\textwidth}{3cm}

\usepackage{pstricks}

\psset{linewidth=0.3pt,dimen=middle}
\psset{xunit=.70cm,yunit=0.70cm}
\psset{arrowsize=1pt 5,arrowlength=0.6,arrowinset=0.7}

\usepackage[all]{xy}
\SelectTips{cm}{}

\usepackage{graphicx}




\newcommand{\ONH}{ONH}

\newcommand{\ONC}{ONC}
\newcommand{\pol}{\mathrm{Pol}}
\newcommand{\opol}{\mathrm{OPol}}
\newcommand{\sym}{\Lambda}
\newcommand{\osym}{\mathrm{O}\Lambda}
\newcommand{\qrk}{\mathrm{rk}_q}
\newcommand{\qdim}{\mathrm{dim}_q}
\newcommand{\Mat}{\mathrm{Mat}}
\newcommand{\sch}{\mathfrak{s}}

\newcommand{\undx}{\underline{x}}

\newcommand{\xt}{\widetilde{x}}
\newcommand{\ang}[1]{{\langle{#1}\rangle}}

\newcommand{\Gr}{\text{Gr}}
\newcommand{\Ss}{\mathcal{S}}


\newcommand{\onen}{{\mathbf 1}_{n}}


\newcommand{\Sq}{{\rm Sq}}
\newcommand{\und}[1]{\underline{#1}}

\newcommand{\bigb}[1]{
\begin{pspicture}(0,0)
 \rput(0,0){\psframebox[framearc=.5,fillstyle=solid]{\small $#1$}}
\end{pspicture}}

\def\nn{\notag}

\newcommand{\qbin}[2]{
\left[
 \begin{array}{c}
 #1 \\
 #2 \\
 \end{array}
 \right]
}


\newcommand{\refequal}[1]{\xy {\ar@{=}^{#1}
(-1,0)*{};(1,0)*{}};
\endxy}

%
%


\renewcommand{\to}{\rightarrow}
\newcommand{\maps}{\colon}
\newcommand{\op}{{\rm op}}

\newcommand{\End}{{\rm End}}

\newcommand{\im}{{\rm im\ }}

\newcommand{\rk}{{\rm rk\ }}

\def\det{\mathop{\rm det}}

\newcommand{\scs}{\scriptstyle}


\def\pmod{{\mathrm{-pmod}}}  

\def\mf{\mathfrak}
\def\shuffle{\,\raise 1pt\hbox{$\scriptscriptstyle\cup{\mskip
               -4mu}\cup$}\,}

\theoremstyle{definition}
\newtheorem{thm}{Theorem}[section]
\newtheorem{cor}[thm]{Corollary}
\newtheorem{conj}[thm]{Conjecture}
\newtheorem{lem}[thm]{Lemma}
\newtheorem{rem}[thm]{Remark}
\newtheorem{prop}[thm]{Proposition}
\newtheorem{defn}[thm]{Definition}


\numberwithin{equation}{section}


%
%
%

%


\def\emph#1{{\sl #1\/}}

%
%


\let\hat=\widehat


\let\phi=\varphi
\let\theta=\vartheta


\usepackage{bbm}
\def\C{{\mathbbm C}}
\def\N{{\mathbbm N}}
\def\R{{\mathbbm R}}
\def\Z{{\mathbbm Z}}
\def\Q{{\mathbbm Q}}


%
%

\usepackage{stmaryrd}

\normalfont\upshape


\title{The odd nilHecke algebra and its diagrammatics}

\author{Alexander P.\ Ellis}
\address{Department of Mathematics, Columbia University, New1
York, NY 10027, USA}
\email{ellis@math.columbia.edu}

\author{Mikhail Khovanov}
\address{Department of Mathematics, Columbia University, New
York, NY 10027, USA}
\email{khovanov@math.columbia.edu}

      \author{Aaron D.\ Lauda}
      \address{Department of Mathematics, University of Southern California, Los Angeles, CA 90089, USA}
\email{lauda@usc.edu}

\date{November 5, 2011}

%
\begin{document}
%

\begin{abstract}
We introduce an odd version of the nilHecke algebra and develop an odd analogue of the thick diagrammatic calculus for nilHecke algebras. We graphically describe idempotents which give a Morita equivalence between odd nilHecke algebras and the rings of odd symmetric functions in finitely many variables.  Cyclotomic quotients of odd nilHecke algebras are Morita equivalent to rings which are odd analogues of the cohomology rings of Grassmannians.  Like their even counterparts, odd nilHecke algebras categorify the positive half of quantum sl(2).
\end{abstract}

\maketitle

\tableofcontents

\newpage

%
\section{Introduction}
%

%
\subsection{Background}
%

The nilHecke algebra plays a central role in the theory of categorified
quantum groups, giving rise to an integral categorification of the negative
half of $\mathbf{U}_q(\mf{sl}_2)$~\cite{Lau1,KL,Rou2}. One of the original
motivations for categorifying quantum groups was to provide a unified
representation
theoretic explanation of the link homology theories that categorify
various quantum link invariants. Various steps in this direction have
since been completed, most notably by Webster~\cite{Web,Web2}.

Khovanov homology is one of the simplest of these link homology theories,
categorifying a certain normalization of the Jones
polynomial. Just as the quantum group $\mf{sl}_2$ plays a fundamental role
in explaining the Jones polynomial, the categorification of quantum
$\mf{sl}_2$ should play an equally important role in Khovanov homology.

Surprisingly, the categorification of the Jones polynomial is not unique.
Ozsv\'{a}th, Rasmussen, and Szab\'{o} found an {\em odd} analogue of Khovanov
homology~\cite{ORS}. This odd homology theory for links agrees with the
original Khovanov homology modulo 2. Both of these theories categorify the
Jones polynomial, and results of Shumakovitch~\cite[Section 3.1]{Shum} show
that these categorified link invariants are not equivalent---both can
distinguish knots that are indistinguishable in the other theory.

This odd analogue of Khovanov homology hints that there should be
a corresponding odd analogue of categorified quantum groups. We expect that
these odd categorified quantum groups will not be equivalent in any
appropriate sense to their even counterparts, though they will still
categorify the same quantum group.

In this paper we begin the program of odd categorification of quantum
groups by studying an odd analogue of the nilHecke algebra that
categorifies the negative half of $\mathbf{U}_q(\mf{sl}_2)$. This algebra
was introduced by Kang, Kashiwara, and Tsuchioka~\cite{KKT} several months ago.
We arrived at the odd nilHecke algebra independently via the categorification of positive half of quantum $\mf{gl}(1|1)$ by way of
Lipshitz-Ozsv\'{a}th-Thurston rings \cite{KhoGL12}. LOT rings provide
the odd counterpart of the nilCoxeter algebra, the subalgebra of the nilHecke
algebra generated by the divided difference operators. Enlarging LOT rings via
suitable generators (represented diagrammatically by dots) and relations
results in the odd counterpart of the nilHecke algebra. A different motivation
and approach to this odd nilHecke algebra can be found in~\cite{KKT} together
with its extension to odd quiver Hecke algebras for certain root systems.  Several years ago Weiqiang Wang \cite{Wang} (see also the work of Khongsap and Wang \cite{KW1,KW2,KW4}) introduced a so-called spin Hecke algebra and, as we learned via private communication, he was aware of a nil version of his construction which he would call the ``spin nilHecke algebra'' and which is identical to the odd nilHecke algebra.

Just as the nilHecke algebra has a close relationship with the
combinatorics of symmetric functions, the odd nilHecke algebra gives rise to an
odd analogue of the ring of symmetric functions. This ring is
noncommutative, yet shares many of the same combinatorics of symmetric
functions. Odd symmetric functions were defined by the first two authors in
\cite{EK}. Here we connect the theory of odd symmetric functions with the
odd nilHecke algebra and construct odd analogues of Schubert polynomials,
Schur polynomials and further develop their combinatorics, including the odd
Pieri rule.

 The nilHecke algebra is also pervasive throughout the theory of
Grassmannians and partial flag varieties. In particular, the nilHecke
algebra admits so-called cyclotomic quotients that are Morita
equivalent to the cohomology rings of Grassmannians. Here we study the
corresponding quotients of the odd nilHecke algebra and show that they are
Morita equivalent to certain noncommutative rings that seem to be odd
analogues of  cohomology rings of complex Grassmannians. We hope that
these rings will show a path to a rather special case of quantum (noncommutative)
geometry.

%
\subsection{Outline}
%

Section \ref{sec_osymm} introduces the main characters: odd analogues of divided difference operators, the nilHecke algebra, and symmetric polynomials.  As in the even case, the odd nilHecke algebra $\ONH_a$ is a matrix algebra over the ring of odd symmetric polynomials $\osym_a$.  Basic results relating these to their even counterparts are established, and odd analogues of some of the usual bases of symmetric functions are given.  An important technical result is the Omission Word Lemma (Lemma \ref{lem-owl}), which implies that the longest divided difference operator $D_a$ is left linear over the ring of odd symmetric polynomials, up to an involution (it is automatically right linear).  This investigation culminates in the definition of odd Schur polynomials and an odd Pieri rule describing products $s_\lambda\varepsilon_k$.

In the following Section \ref{sec-graphical-calculus}, we give a graphical description of the odd nilHecke algebra in the spirit of \cite{KL}.  The familiar planar isotopy relations only hold up to sign in the odd case; distant dots and crossings anticommute.  Much of Subsection \ref{subsec-box-notation} is devoted to keeping track of the resulting signs.  A useful tool here is a family of elements \eqref{eqn-0Hecke} which obey the (usual, not odd) 0-Hecke relations.

These 0-Hecke generators can be used to define idempotents $e_a$ which, when expressed diagrammatically, we refer to as ``thick lines.''  Section \ref{sec-thick} studies the resulting ``thick calculus,'' in analogy with \cite{KLMS}.  The twisted left linearity of $D_a$ is a key ingredient here, as it allows us to make sense of labelling thick lines by elements of $\osym_a$.  In Subsection \ref{subsec-orthogonal-idempotents}, using some technical lemmas, we are able to decompose the unit element of $ONH_a$ and $e_a\otimes e_b\in\ONH_a\otimes\ONH_b\subset\ONH_{a+b}$ into indecomposable orthogonal idempotents, see Theorems \ref{thm_nil_matrix} and \ref{thm_nil-eaeb}.  In Section \ref{sec-categorification}, we describe how these decompositions categorify the relations between divided powers in the positive part of Lusztig's integral form of quantum $\mathfrak{sl}_2$.

In the even case, categorified quantum $\mathfrak{sl}_2$ acts on a bimodule bicategory coming from the cohomology rings of complex Grassmannians \cite{Lau1,Lau2}.  Section \ref{sec-cyclotomic} gives odd analogues of these rings; as expected, they are quotients of the odd symmetric polynomials, and they have bases consisting of certain odd Schur functions.

\bigskip
\noindent {\bf Acknowledgments:}

The first author was supported by the NSF Graduate Research Fellowship Program.
The second author was partially supported by NSF grant DMS-0706924.  The third author was partially supported by NSF grant DMS-0855713 and by the Alfred P. Sloan foundation. All the authors would like to acknowledge partial support from Columbia University's RTG grant DMS-0739392.  A conversation between the first author and Pedro Vaz led to a substantial simplification in the proof of the OWL (Lemma \ref{lem-owl}).

%
\section{Odd symmetric polynomials and the odd nilHecke algebra} \label{sec_osymm}
%

%
\subsection{Odd symmetric polynomials}
%

%
\subsubsection{Defining the odd symmetric polynomials} \label{subsec-osymm-defn}
%

We define the ring of \textit{odd polynomials} to be the free unital associative algebra on skew-commuting variables $x_1,\ldots,x_a$,
\begin{equation}
\opol_a=\Z\langle x_1,\ldots,x_a\rangle/\langle x_ix_j+x_jx_i=0\text{ for }i\neq j\rangle.
\end{equation}
We let the symmetric group $S_a$ act on the degree $k$ part of $\opol_a$ as the tensor product of the permutation representation and the $k$-th tensor power of the sign representation.  That is, for $1\leq j\leq a-1$, the transposition $s_j\in S_a$ acts as the ring endomorphism
\begin{equation}\label{eqn-S-action}
s_i(x_j)=\begin{cases}
-x_{i+1}&\text{if }j=i,\\
-x_i&\text{if }j=i+1,\\
-x_j&\text{otherwise.}
\end{cases}
\end{equation}
The \textit{odd divided difference operators} are the linear operators $\partial_i$ ($1\leq i\leq a-1$) on $\Z\langle x_1,\ldots,x_a\rangle$ defined by
\begin{equation}\begin{split}
&\partial_i(1)=0,\\
&\partial_i(x_j)=\begin{cases}
1&\text{if }j=i,i+1,\\
0&\text{otherwise,}
\end{cases}\end{split}
\end{equation}
and the Leibniz rule
\begin{equation}\label{eqn-leibniz}
\partial_i(fg)=\partial_i(f)g+s_i(f)\partial_i(g)\text{ for all }f,g\in\Z\langle x_1,\ldots,x_a\rangle.
\end{equation}
It is easy to check from the definition of $\partial_i$ that for all $i$,
\begin{equation}
\partial_i(x_jx_k+x_kx_j)=0\text{ for }j\neq k,
\end{equation}
so $\partial_i$ descends to an operator on $\opol_a$.

Note that $\opol_a$ is left and right Noetherian and has no zero divisors, but does not satisfy the unique factorization property if $a\geq2$:
\begin{equation*}
x_1^2+x_2^2=(x_1-x_2)^2=(x_1+x_2)^2.
\end{equation*}

The following basic formulae in $\opol_a$ can be derived from the above.
\begin{equation}\begin{split}
&\partial_i(x_i-x_{i+1})=0,\qquad\partial_i(x_ix_{i+1})=0,\\
&\partial_i(x_i^{2m}+x_{i+1}^{2m})=0,\qquad\partial_i(x_i^mx_{i+1}^m)=0,\\
&\partial_i(x_i^m)=\sum_{j=0}^{m-1}(-1)^jx_{i+1}^jx_i^{m-1-j},\\
&\partial_i(x_{i+1}^m)=\sum_{j=0}^{m-1}(-1)^jx_i^jx_{i+1}^{m-1-j}.\\
\end{split}\end{equation}

\begin{prop} Considering $\partial_i$ and (multiplication by) $x_j$ as operators on $\opol_a$, the following relations hold in $\End(\opol_a)$:
\begin{eqnarray}
& & \partial_i^2 = 0 , \quad \partial_i \partial_{i+1}\partial_i =
 \partial_{i+1}\partial_i \partial_{i+1}, \label{eqn-onh-def1} \\
 & &  x_i \partial_i + \partial_i  x_{i+1} =1, \quad
   \partial_i x_i + x_{i+1}\partial_i = 1,   \label{eqn-onh-def2}  \\
& & x_i x_j + x_j x_i =0 \quad (i\neq j),  \quad
\partial_i \partial_j + \partial_j \partial_i =0 \quad (|i-j|>1), \label{eqn-onh-def3}  \\
& & x_i \partial_j +\partial_j x_i = 0 \quad (i\neq j,j+1). \label{eqn-onh-def4}
\end{eqnarray}
\end{prop}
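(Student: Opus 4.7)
The plan is to verify each identity as an equality of operators on $\opol_a$, reducing in each case to a check on the generators $x_1,\ldots,x_a$ followed by an induction on degree using the Leibniz rule \eqref{eqn-leibniz}. Since $s_i$ is easily seen to be a ring endomorphism of $\opol_a$ (the relations $x_jx_k+x_kx_j=0$ for $j\neq k$ are stable under the signed permutation), one can freely commute $s_i$ past products inside all Leibniz expansions.

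Relation (3) for the $x$'s holds by definition of $\opol_a$. Relations (2) and (4) are immediate: writing any element as a sum of products and applying the Leibniz rule gives
\[
\partial_i(x_j f)=\partial_i(x_j)f+s_i(x_j)\partial_i(f),
\]
and substituting the three possible cases $j=i$, $j=i+1$, $j\notin\{i,i+1\}$ together with $s_i(x_i)=-x_{i+1}$, $s_i(x_{i+1})=-x_i$, $s_i(x_j)=-x_j$ yields exactly the four identities. For $\partial_i\partial_j+\partial_j\partial_i=0$ with $|i-j|>1$, both sides vanish on every generator since the pairs $\{i,i+1\}$ and $\{j,j+1\}$ are disjoint; to extend to products, I would first record the auxiliary identities $\partial_i s_j=s_j\partial_i$ and $s_i\partial_j+\partial_j s_i=0$ (again checked on generators), and then expand $(\partial_i\partial_j+\partial_j\partial_i)(fg)$ via Leibniz; the cross terms cancel in pairs, leaving exactly the inductive hypothesis.

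For $\partial_i^2=0$, the key intermediate step is the identity $s_i\partial_i+\partial_i s_i=0$. This is verified on the generators (three easy cases) and extended to products because its expansion via the Leibniz rule is itself of the form $[s_i\partial_i+\partial_i s_i](f)\,s_i(g)+f\,[s_i\partial_i+\partial_i s_i](g)$. With this in hand, a direct Leibniz expansion gives
\[
\partial_i^2(fg)=\partial_i^2(f)\,g+(s_i\partial_i+\partial_i s_i)(f)\,\partial_i(g)+f\,\partial_i^2(g),
\]
in which the middle term is zero, so $\partial_i^2$ satisfies a clean Leibniz-type rule and the identity follows by induction on degree from $\partial_i^2(x_k)=0$.

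The main obstacle is the braid relation $\partial_i\partial_{i+1}\partial_i=\partial_{i+1}\partial_i\partial_{i+1}$. Let $D$ denote the difference. The commutation relations in (3) and (4), already established, imply that $D\,x_k+x_k\,D=0$ for every $k\notin\{i,i+1,i+2\}$, so by induction $D$ vanishes on every polynomial in those variables. Thus it is enough to prove $D=0$ on monomials $x_i^a x_{i+1}^b x_{i+2}^c$, which I would do by direct computation using the explicit formulas for $\partial_i(x_i^m)$ and $\partial_i(x_{i+1}^m)$ displayed immediately before the proposition. The computation is elementary but sign-heavy; the symmetry of the statement under the swap $i\leftrightarrow i+2$ (together with the action of $s_{i+1}$) should roughly halve the casework, and the final check reduces to matching two explicit sums with alternating signs.
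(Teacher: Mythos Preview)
Your overall strategy is sound and would yield a complete proof, but note one sign slip: for $|i-j|>1$ the correct auxiliary identity is $\partial_i s_j + s_j\partial_i = 0$, not $\partial_i s_j = s_j\partial_i$ (check on $x_i$: $\partial_i s_j(x_i)=\partial_i(-x_i)=-1$ while $s_j\partial_i(x_i)=s_j(1)=1$). With both auxiliary relations being anticommutations, your Leibniz expansion of $(\partial_i\partial_j+\partial_j\partial_i)(fg)$ still cancels as claimed, so this is only a cosmetic fix.

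Your route differs from the paper's in an interesting way. For $\partial_i^2=0$ the paper simply inducts on degree by computing $\partial_1^2(x_1 f)$ and $\partial_1^2(x_2 f)$ directly; your detour through the identity $s_i\partial_i+\partial_i s_i=0$ is more structural, since it shows once and for all that $\partial_i^2$ obeys a genuine twisted derivation rule, so the induction becomes automatic rather than a case-by-case check. The same philosophy underlies your treatment of the distant $\partial$-anticommutation, which the paper dispatches in one line (``apply to monomials'') without writing out the mechanism. Conversely, for the braid relation the paper's method is considerably cleaner than what you outline: rather than expanding $\partial_i\partial_{i+1}\partial_i(x_i^a x_{i+1}^b x_{i+2}^c)$ via the closed formulas for $\partial_i(x_i^m)$, it inducts on degree by peeling off a single factor, verifying $D(x_j f)=0$ for $j=1,2,3$ from $D(f)=0$ using only the Leibniz rule and $\partial_i^2=0$. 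Your ``match two explicit sums with alternating signs'' would work in principle but is substantially more laborious, and the symmetry you invoke under $i\leftrightarrow i+2$ is not obviously compatible with the odd signs, so it may not cut the work as much as you hope.
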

\begin{proof} We first prove that  $\partial_i^2 (f) =0$ for any $f\in \opol_a$.
We can reduce to the case $i=1$ and $f$ being a monomial in $x_1,x_2$, and
then proceed by  induction on the degree of $f$. When the degree is zero, $f=1$
and $\partial_1^2(1)=0$.
Assume $\partial_1^2(f)=0$ for any monomial of degree $m$. Then
\begin{equation*}\begin{split}
\partial_1^2(x_1 f) &= \partial_1( f - x_2 \partial_1(f)) = \partial_1(f)
 - \partial_1(x_2) \partial_1(f) + x_1 \partial_1^2(f) = \partial_1(f) - \partial_1(f) = 0, \\
\partial_1^2(x_2 f) &= \partial_1( f - x_1 \partial_1(f)) = \partial_1(f)
 - \partial_1(x_1) \partial_1(f) + x_2 \partial_1^2(f) = 0 ,
\end{split}\end{equation*}
which takes care of the inductive step. Next, we verify relations of the second
type from \eqref{eqn-onh-def1}.
It suffices to assume that $i=1$ and the action is on a monomial in
$x_1, x_2, x_3$. We proceed by induction on the degree of the monomial.
When $f=1$, both sides act by $0$. Assuming that
$$\partial_1 \partial_2 \partial_1(f) = \partial_2 \partial_1 \partial_2(f)$$
we compute
\begin{equation*}\begin{split}
\partial_1 \partial_2 \partial_1 (x_1 f) &= \partial_1 \partial_2 (f - x_2 (\partial_1 (f) )) =
  \partial_1 ( \partial_2(f) - \partial_1 (f) + x_3 ( \partial_2 \partial_1 (f))) \\
&= \partial_1 \partial_2 (f)  -\partial_1^2 (f) - x_3(\partial_1 \partial_2 \partial_1 (f))=
 \partial_1 \partial_2 (f) - x_3(\partial_1 \partial_2 \partial_1 (f)) , \\
\partial_2 \partial_1 \partial_2 (x_1 f) &= \partial_2 \partial_1 ( - x_2 \partial_2(f)) =
\partial_2 (-\partial_2(f) + x_2 \partial_1 \partial_2 (f)) \\
& = \partial_1 \partial_2 (f) - x_3 (\partial_2 \partial_1 \partial_2(f)),
\end{split}\end{equation*}
implying $\partial_1 \partial_2 \partial_1 (x_1 f) = \partial_2 \partial_1 \partial_2 (x_1 f).$
Likewise,
\begin{equation*}\begin{split}
\partial_1 \partial_2 \partial_1 (x_2 f) &= \partial_1 \partial_2 (f - x_1 (\partial_1 (f) )) =
  \partial_1 ( \partial_2(f) + x_1 ( \partial_2 \partial_1 (f))) \\
& = \partial_1 \partial_2 (f)  +\partial_2\partial_1 (f) - x_2(\partial_1 \partial_2
\partial_1 (f)), \\
\partial_2 \partial_1 \partial_2 (x_2 f) &= \partial_2 \partial_1 ( f - x_3 \partial_2(f)) =
\partial_2 (\partial_1(f) + x_3 \partial_1 \partial_2 (f)) \\
& = \partial_1 \partial_2 (f) +\partial_2 \partial_1 (f) - x_2 \partial_2 \partial_1 \partial_2(f),
\end{split}\end{equation*}
proving the inductive step in this case.
Checking that the two actions are the same on $x_3 f$ is equally simple.

Relations \eqref{eqn-onh-def2} follow from the Leibniz rule \eqref{eqn-leibniz}:
\begin{eqnarray*}
\partial_i (x_{i+1} f) & = & \partial(x_{i+1}) f - x_i \partial_i(f) = f - x_i \partial_i(f),  \\
\partial_i (x_i f) & = &  f - x_{i+1} \partial_i (f).
\end{eqnarray*}
The first type of relation in \eqref{eqn-onh-def3} consists of defining relations in $\opol_a$, and the second
type in \eqref{eqn-onh-def3} can be checked by applying
$\partial_i \partial_j + \partial_j \partial_i$ to monomials in $x_1, \dots, x_a$ to get $0$.
The relation \eqref{eqn-onh-def4} is again a special case of \eqref{eqn-leibniz}.
\end{proof}

We can consider $\opol_a$ as a graded ring by declaring each $x_i$ to be homogeneous of degree $2$; the operators $\partial_i$ are all homogeneous of degree $-2$.  Since $\partial_i^2=0$, we can consider $\opol_a$ as a chain complex (taking the homological grading to be one-half of the grading $\deg(x_i)=2$, so $(\opol_a)_{2k}$ sits in homological degree $k$).  The multiplication operators
\begin{equation}\begin{split}
&\psi_k:(\opol_a)_k\to(\opol_a)_{k+1}\\
&\psi_k=\begin{cases}
x_i&k\text{ is even}\\
x_{i+1}&k\text{ is odd}
\end{cases}
\end{split}\end{equation}
give a chain homotopy between the identity and the zero maps, so the complex $((\opol_a)_k,\partial_i)$ is contractible for each $i,a$.  In particular, $\ker(\partial_i)=\im(\partial_i)$ for each $i$.  We define the ring of \textit{odd symmetric polynomials} to be the subring
\begin{equation}
\osym_a=\bigcap_{i=1}^{a-1}\ker(\partial_i)=\bigcap_{i=1}^{a-1}\im(\partial_i)
\end{equation}
of $\opol_a$.  Observe that
\begin{equation}\begin{split}
&\opol_a\otimes_\Z(\Z/2)\cong\pol_a\otimes_\Z(\Z/2)\\
&\osym_a\otimes_\Z(\Z/2)\cong\sym_a\otimes_\Z(\Z/2),
\end{split}\end{equation}
the usual (commutative) rings of polynomials and symmetric polynomials in $a$ variables over $\Z/2$.  In particular, both $\opol_a$ and $\osym_a$ are free abelian groups whose ranks in each degree are bounded below by those of $\pol_a$ and $\sym_a$, since $\qrk(\pol_a)=(\qdim)_{\Z/2}(\pol_a)$ and $\qrk(\sym_a)=(\qdim)_{\Z/2}(\sym_a)$.  Here, for a graded abelian group $V$ of finite rank in each degree and a graded vector space $W$ over a field $\mathbbm{F}$ of finite dimension in each degree,
\begin{equation}\begin{split}
&\qrk(V)=\sum_{i\in\Z}\rk(V_i)q^i,\\
&(\qdim)_{\mathbbm{F}}(W)=\sum_{i\in\Z}\dim_{\mathbbm{F}}(W_i)q^i.
\end{split}\end{equation}
Both are power series in $q^{\pm1}$.  It is clear that $\pol_a$ and $\opol_a$ have the same graded rank.

%
\subsubsection{Odd elementary symmetric polynomials and the size of $\opol_a$} \label{subsec_osymm_size}
%

Let
\begin{equation}\begin{split}
&[n]=\frac{q^n-q^{-n}}{q-q^{-1}},\\
&[n]!=[n][n-1]\cdots[2][1]
\end{split}\end{equation}
be the balanced $q$-numbers and $q$-factorial; both are elements of $\N[q,q^{-1}]$ and both reduce to their non-quantum analogues $n, n!$ as $q\to1$.    They are especially useful for keeping track of graded ranks.  For instance, the ring of symmetric polynomials is isomorphic to a graded polynomial algebra,
\begin{equation}
\sym_a\cong\Z[\varepsilon_1^\text{even},\varepsilon_2^\text{even},\ldots,\varepsilon_a^\text{even}],
\end{equation}
where $\varepsilon_k^\text{even}$ is the usual elementary symmetric polynomial
\begin{equation}
\varepsilon_k^\text{even}(x_1,\ldots,x_a)=\sum_{1\leq i_1<\cdots<i_k\leq a}x_{i_1}\cdots x_{i_a}
\end{equation}
of degree $2k$ (since we are declaring $\deg(x_i)=2$).  Hence the graded rank of $\sym_a$ is
\begin{equation}\label{eqn_qrk_symm}\begin{split}
\qrk(\sym_a)&=\prod_{i=1}^a\frac{1}{1-q^{2i}}\\
&=\frac{1}{(1-q^2)^a}\prod_{i=1}^a\left(q^{-i+1}\frac{q-q^{-1}}{q^a-q^{-a}}\right)\\
&=q^{-a(a-1)/2}\frac{1}{[a]!}\frac{1}{(1-q^2)^a},
\end{split}\end{equation}
an element of $\Z_{\geq0}\llbracket q^2\rrbracket$.  This equals
\begin{equation}
\qrk(\sym_a)=\frac{\qrk(\pol_a)}{\sum_{\sigma\in S_a}q^{\ell(\sigma)}},
\end{equation}
where $\ell$ denotes the Coxeter length function on $S_a$; this makes one think of ``taking a quotient of the ring of all polynomials by the symmetric group.''\\

\begin{prop}\label{prop_size_osymm} The rings of symmetric and odd symmetric polynomials have the same graded ranks:
\begin{equation}
\qrk(\osym_a)=\qrk(\sym_a)=q^{-a( a-1)/2}\frac{1}{[a]!}\frac{1}{(1-q^2)^a}.
\end{equation}
\end{prop}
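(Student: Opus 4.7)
The plan is to show the mod-$2$ reduction gives an isomorphism $\osym_a/2\osym_a \cong \sym_a\otimes\Z/2$ of graded $\Z/2$-vector spaces. Since $\osym_a$ is a subgroup of the free abelian group $\opol_a$ (which is finitely generated in each degree), $\osym_a$ is itself free abelian, so $\qrk(\osym_a)$ equals $(\qdim)_{\Z/2}(\osym_a/2\osym_a)$. Once the isomorphism is established, the closed-form formula follows immediately from \eqref{eqn_qrk_symm}.

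The upper bound $\qrk(\osym_a)\le\qrk(\sym_a)$ is the easy direction. Mod $2$, the skew-commutativity $x_ix_j+x_jx_i=0$ becomes ordinary commutativity, and both the twisted action \eqref{eqn-S-action} and the Leibniz rule \eqref{eqn-leibniz} lose their signs. Hence $\opol_a\otimes\Z/2\cong\pol_a\otimes\Z/2$ and each $\partial_i$ descends to the usual even divided difference operator on this reduction. The image of $\osym_a$ in $\opol_a\otimes\Z/2$ therefore lies in $\bigcap_i\ker\partial_i^{\mathrm{even}}=\sym_a\otimes\Z/2$. I would also verify that the induced map $\osym_a/2\osym_a\to\opol_a\otimes\Z/2$ is genuinely injective: if $f\in\osym_a$ satisfies $f=2g$ in $\opol_a$, then $2\partial_ig=0$ in the torsion-free ring $\opol_a$ forces $\partial_ig=0$, so $g\in\osym_a$ and $f\in 2\osym_a$.

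The lower bound $\qrk(\osym_a)\ge\qrk(\sym_a)$ is the main obstacle. The key step is to construct odd analogues $\varepsilon_1,\ldots,\varepsilon_a\in\osym_a$ of the elementary symmetric polynomials, with $\deg\varepsilon_k=2k$ and $\varepsilon_k\equiv\varepsilon_k^{\mathrm{even}}\pmod 2$. The natural candidate is
\begin{equation*}
\varepsilon_k \;=\; \sum_{1\le i_1<\cdots<i_k\le a}(-1)^{e(i_1,\ldots,i_k)}\,x_{i_1}x_{i_2}\cdots x_{i_k}
\end{equation*}
for a suitable sign exponent $e$ chosen so that $\partial_j(\varepsilon_k)=0$ for every $j$; this is checked directly using the explicit formulas for $\partial_j$ on monomials at the end of Subsection~\ref{subsec-osymm-defn} (in particular the vanishing of $\partial_j$ on $x_j-x_{j+1}$ and on $x_jx_{j+1}$). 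Granting such $\varepsilon_k$, fix any ordering and consider the ordered monomials $\varepsilon_1^{m_1}\varepsilon_2^{m_2}\cdots\varepsilon_a^{m_a}\in\osym_a$: since $\sym_a\otimes\Z/2=(\Z/2)[\varepsilon_1^{\mathrm{even}},\ldots,\varepsilon_a^{\mathrm{even}}]$ is a polynomial ring, the reductions mod $2$ of these monomials form a $\Z/2$-basis, so the monomials themselves are $\Z$-linearly independent in $\osym_a$. This produces enough independent elements in each degree to force $\qrk(\osym_a)\ge\qrk(\sym_a)$. Combining both inequalities yields the equality, and the explicit formula is then \eqref{eqn_qrk_symm}.
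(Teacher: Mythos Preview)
Your proof is correct and shares the paper's core strategy---mod-$2$ reduction together with the existence of odd elementary polynomials $\varepsilon_k$ reducing to $\varepsilon_k^{\mathrm{even}}$---but your organization is actually somewhat cleaner than the paper's. The paper first introduces the subring $\osym_a^{\mathrm{elem}}\subseteq\osym_a$ generated by the $\varepsilon_k$, proves a family of quadratic commutation relations among the $\varepsilon_k$ (Lemma~\ref{lemma-osymm-relations}) in order to show the ordered products $\varepsilon_\alpha$ \emph{span} $\osym_a^{\mathrm{elem}}$, and only then combines this upper bound on $\osym_a^{\mathrm{elem}}$ with the mod-$2$ lower bound; a separate complement-freeness argument is needed at the end to conclude $\osym_a^{\mathrm{elem}}=\osym_a$. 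By contrast, your upper bound on $\osym_a$ itself comes directly from the injection $\osym_a/2\osym_a\hookrightarrow\sym_a\otimes\Z/2$ (the torsion-freeness argument you give is exactly what makes this work), so you never need the sorting relations or the $\osym_a^{\mathrm{elem}}$ vs.\ $\osym_a$ distinction. The trade-off is that the paper's route also yields the relations \eqref{eqn-e-relations} and the identification $\osym_a=\osym_a^{\mathrm{elem}}$, both of which are used later; your argument proves the proposition but not these by-products. The one point you leave implicit---that a sign choice $e(i_1,\ldots,i_k)$ making $\partial_j(\varepsilon_k)=0$ actually exists---is supplied by the paper's explicit formula $\varepsilon_k=\sum\widetilde{x}_{i_1}\cdots\widetilde{x}_{i_k}$ with $\widetilde{x}_i=(-1)^{i-1}x_i$.
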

To prove this proposition, we will have to develop odd analogues of the elementary symmetric polynomials.

\indent By analogy with the even case, we introduce the \text{odd elementary symmetric polynomials}
\begin{equation}\label{eqn-defn-e}
\varepsilon_k(x_1,\ldots,x_a)=\sum_{1\leq i_1<\cdots<i_k\leq a}\widetilde{x}_{i_1}\cdots\widetilde{x}_{i_k},\qquad\text{where }\widetilde{x}_i=(-1)^{i-1}x_i,
\end{equation}
for $1\leq k\leq a$.  If $k=0$ define $\varepsilon_k=1$, and if $k<0$ or $k>a$ define $\varepsilon_k=0$.  If we want to emphasize the number of variables in shorthand, we will write $\varepsilon_k^{(a)}$ for $\varepsilon_k(x_1,\ldots,x_a)$.  Modulo 2, the elementary symmetric and odd elementary symmetric polynomials are the same.  But with signs, the relations among the odd $\varepsilon_k$ are more complicated than mere commutativity.  The following lemma will give us enough relations to write down a presentation of $\osym_a$.

\begin{lem}\label{lemma-osymm-relations} The polynomial $\varepsilon_k(x_1,\ldots,x_a)$ is odd symmetric for $1\leq k\leq a$.  Furthermore, the following relations hold in the ring $\osym_a$:
\begin{equation}\label{eqn-e-relations}\begin{split}
&\varepsilon_i\varepsilon_{2m-i}=\varepsilon_{2m-i}\varepsilon_i\qquad(1\leq i,2m-i\leq a),\\
&\varepsilon_i\varepsilon_{2m+1-i}+(-1)^i\varepsilon_{2m+1-i}\varepsilon_i
=(-1)^i\varepsilon_{i+1}\varepsilon_{2m-i}+\varepsilon_{2m-i}\varepsilon_{i+1}\qquad(1\leq i,2m-i\leq a-1),\\
&\varepsilon_1\varepsilon_{2m}+\varepsilon_{2m}\varepsilon_1=2\varepsilon_{2m+1}\qquad(1<2m\leq a-1).
\end{split}\end{equation}
Note that the third is the $i=0$ case of the second.
\end{lem}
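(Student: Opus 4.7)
The plan is to prove the two parts of the lemma separately: odd-symmetry of $\varepsilon_k$ by direct verification, and the three commutation relations by induction on $a$.

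For the first part, it suffices to show $\partial_j(\varepsilon_k) = 0$ for each $1 \leq j \leq a-1$. First, decompose
$$\varepsilon_k = \varepsilon_k^{\emptyset} + \varepsilon_k^{\{j\}} + \varepsilon_k^{\{j+1\}} + \varepsilon_k^{\{j,j+1\}}$$
according to the intersection of the indexing subset $S$ with $\{j,j+1\}$. Using the Leibniz rule \eqref{eqn-leibniz} together with $\partial_j(x_i)=0$ for $i\neq j,j+1$ and $\partial_j(x_jx_{j+1})=0$, the first and last summands are killed directly. For the middle two, I would pair monomials sharing a common ``outer'' subset $S'\subseteq\{1,\ldots,a\}\setminus\{j,j+1\}$ of size $k-1$: each such $S'$ produces paired monomials $E\,\tilde{x}_j\,F$ (from $\varepsilon_k^{\{j\}}$) and $E\,\tilde{x}_{j+1}\,F$ (from $\varepsilon_k^{\{j+1\}}$), with $E$ the product of $\tilde{x}_i$ for $i\in S',\,i<j$ and $F$ for $i>j+1$. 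A direct Leibniz calculation using $\partial_j(\tilde{x}_j)=(-1)^{j-1}$, $\partial_j(\tilde{x}_{j+1})=(-1)^{j}$, and the fact that $s_j$ negates each $\tilde{x}_i$ with $i\neq j,j+1$ shows that these two contributions cancel.

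For the relations, I would induct on $a$; the base $a\leq 2$ is trivial. For the inductive step, combine the recursion
$$\varepsilon_k^{(a)}=\varepsilon_k^{(a-1)}+\varepsilon_{k-1}^{(a-1)}\tilde{x}_a$$
with the anticommutation $\tilde{x}_a\,u=(-1)^{\ell(u)}u\,\tilde{x}_a$ for $u$ a monomial of length $\ell(u)$ in $\tilde{x}_1,\ldots,\tilde{x}_{a-1}$ together with $\tilde{x}_a^2=x_a^2$, obtaining
$$\varepsilon_i^{(a)}\varepsilon_j^{(a)}=\varepsilon_i^{(a-1)}\varepsilon_j^{(a-1)}+\bigl(\varepsilon_i^{(a-1)}\varepsilon_{j-1}^{(a-1)}+(-1)^{j}\varepsilon_{i-1}^{(a-1)}\varepsilon_j^{(a-1)}\bigr)\tilde{x}_a+(-1)^{j-1}\varepsilon_{i-1}^{(a-1)}\varepsilon_{j-1}^{(a-1)}\,x_a^2.$$
Substituting into each relation and collecting by $1$, $\tilde{x}_a$, and $x_a^2$ reduces each coefficient to an identity in $\osym_{a-1}$. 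Using the parity identity $(-1)^{j}=(-1)^{i}$ when $i+j$ is even and $(-1)^{j}=-(-1)^{i}$ when $i+j$ is odd, one finds: for relation 1 ($i+j=2m$) the constant and $x_a^2$ coefficients reduce to relation 1 in $\osym_{a-1}$ and the $\tilde{x}_a$ coefficient reduces to relation 2; for relation 2 ($i+j=2m+1$) the constant and $x_a^2$ coefficients reduce to relation 2 and the $\tilde{x}_a$ coefficient reduces to relation 1 (with indices shifted so that the sums have the opposite parity). Relations 1 and 2 must therefore be established simultaneously. Relation 3 is the case $i=0$ of relation 2 (with $\varepsilon_0=1$) and follows at once.

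The main difficulty is purely sign bookkeeping: the argument is structurally routine, but one must consistently reduce $(-1)^j$ to $\pm(-1)^i$ via the parity constraint on $i+j$, and carefully track how $\tilde{x}_a$ slides past each $\varepsilon_k^{(a-1)}$, to keep the algebra under control.
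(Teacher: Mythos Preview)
Your approach is correct and essentially identical to the paper's: both argue by induction on the number of variables $a$, use the recursion $\varepsilon_k^{(a)}=\varepsilon_k^{(a-1)}+\varepsilon_{k-1}^{(a-1)}\tilde{x}_a$, expand the products, and collect coefficients of $1$, $\tilde{x}_a$, $\tilde{x}_a^2$ to reduce relations 1 and 2 simultaneously to the inductive hypothesis. You additionally supply an explicit verification that $\partial_j(\varepsilon_k)=0$, which the paper's proof omits.
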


By the first relation, odd subscripts commute with odd subscripts and even subscripts commute with even subscripts.  These relations are enough to reduce any word $\varepsilon_{i_1}\cdots \varepsilon_{i_r}$ to a $\Z$-linear combination of terms of the form $\varepsilon_{j_1}\cdots \varepsilon_{j_s}$ with $j_1\geq\cdots\geq j_s$; hence the rank of $\osym_a$ in each degree is bounded above by that of $\sym_a$ in the same rank (cf. the proof of Proposition \ref{prop_size_osymm} below).

\begin{proof} The relations are all true in degrees $0,1$.  The third relation is the $i=0$ case of the second.  In $a=1$ variable, all the relations are clear.  We now prove the first and second relations simultaneously by induction on the number of variables $a$.  The equation
\begin{equation}\label{eqn-e-vars}
\varepsilon_m^{(a)}=\varepsilon_m^{(a-1)}+\varepsilon_{m-1}^{(a-1)}\widetilde{x}_a
\end{equation}
allows us to reduce calculations to fewer variables.  Now we compute (writing just $\varepsilon_j$ for $\varepsilon_j^{(a-1)}$):
\begin{equation}\begin{split}
\varepsilon_i^{(a)}\varepsilon_{2m-i}^{(a)}-\varepsilon_{2m-i}^{(a)}\varepsilon_i^{(a)}&=\left(\varepsilon_i+\varepsilon_{i-1}\widetilde{x}_a\right)\left(\varepsilon_{2m-i}+\varepsilon_{2m-i-1}\widetilde{x}_a\right)-\left(\varepsilon_{2m-i}+\varepsilon_{2m-i-1}\widetilde{x}_a\right)\left(\varepsilon_i+\varepsilon_{i-1}\widetilde{x}_a\right)\\
&=\left(\varepsilon_i\varepsilon_{2m-i}-\varepsilon_{2m-i}\varepsilon_i\right)+(-1)^{i-1}\left(\varepsilon_{i-1}\varepsilon_{2m-i-1}-\varepsilon_{2m-i-1}\varepsilon_{i-1}\right)\widetilde{x}_a^2\\
&\qquad\qquad+\left((-1)^i\varepsilon_{i-1}\varepsilon_{2m-i}-\varepsilon_{2m-i}\varepsilon_{i-1}+\varepsilon_i\varepsilon_{2m-i-1}-(-1)^i\varepsilon_{2m-i-1}\varepsilon_i\right)\widetilde{x}_a\\
&=0.
\end{split}\end{equation}
The second equality was grouping terms into powers of $\widetilde{x}_a$; the third equality used the induction hypothesis on the coefficients of 1 and of $\widetilde{x}_a^2$ (first relation) and on the coefficient of $\widetilde{x}_a$ (second relation).  The second relation is proved similarly.
\end{proof}

\begin{rem}The relations \eqref{eqn-e-relations} allow one to sort a product $\varepsilon_{i_1}\cdots\varepsilon_{i_k}$ into non-increasing order of subscripts (up to sign), modulo a set of terms which are ``lexicographically higher'' (that is, the word formed by their subscripts is greater in the lexicographic ordering) and in $2\Z\cdot\osym_a$.  This follows from a consequence of the odd degree relation: suppose $k<\ell$ and $k+\ell$ is odd.  Then
\begin{equation}\label{eqn-e-odd-sort}\begin{split}
&\varepsilon_k\varepsilon_\ell=\varepsilon_\ell\varepsilon_k+2\sum_{i=1}^k(-1)^{\binom{i}{2}}\varepsilon_{\ell+i}\varepsilon_{k-i}\qquad\text{if }k\text{ is even,}\\
&\varepsilon_k\varepsilon_\ell=-\varepsilon_\ell\varepsilon_k+2\sum_{i=1}^k(-1)^{\binom{i-1}{2}}\varepsilon_{\ell+i}\varepsilon_{k-i}\qquad\text{if }k\text{ is odd.}
\end{split}\end{equation}\end{rem}

\begin{proof}[Proof of Proposition \ref{prop_size_osymm}] Let $\osym_a^{\text{elem}}$ be the subring of $\osym_a$ generated by the odd elementary symmetric polynomials $\varepsilon_k(x_1,\ldots,x_a)$.  For a partition $\alpha=(\alpha_1,\ldots,\alpha_m)$ of $a$ written in non-increasing order, let
\begin{equation}
\varepsilon_\alpha=\varepsilon_{\alpha_1}\cdots \varepsilon_{\alpha_m}.
\end{equation}
Lemma \ref{lemma-osymm-relations} and the remark which follows it imply that these products span $\osym_a^{\text{elem}}$.  Hence the rank of each graded piece of $\osym_a^{\text{elem}}$ is bounded above by that of the corresponding graded piece of $\sym_a$.  Conversely, we have an isomorphism
\begin{equation}
\sym_a\otimes_\Z(\Z/2)\cong\osym_a\otimes_\Z(\Z/2)\cong\osym_a^{\text{elem}}\otimes_\Z(\Z/2).
\end{equation}
The identification of the first and third follows because the generators and relations of $\osym_a^{\text{elem}}$ and $\sym_a$ coincide mod 2; the identification of the first and second follows because mod 2, the action of the divided difference operators on $\pol_a$ and the action of the odd divided difference operators on $\opol_a$ coincide.  The graded rank of $\sym_a$ and the graded dimension of $\sym_a\otimes_\Z(\Z/2)$ are equal.  This bounds the graded rank of $\osym_a^{\text{elem}}$ below by that of $\sym_a$: indeed, dividing any linear relation between the $\varepsilon_\alpha$ in $\osym_a^{\text{elem}}$ by a high enough power of 2, we would obtain a linear relation between their reductions mod 2, a contradiction.  Thus $\qrk(\sym_a)=\qrk(\osym_a^{\text{elem}})$.\\
\indent To conclude that $\osym_a=\osym_a^{\text{elem}}$, we prove that both have free abelian complements in $\opol_a$.  For $\osym_a$, this is because if there were no free complement, some free direct summand (as a $\Z$-submodule) would be wholly divisible by an integer $d>1$.  But then we could divide generators of this summand by $d$.  The result would still be in the kernel of all the operators $\partial_i$, a contradiction.  As for $\osym_a^{\text{elem}}$, one checks that with respect to a lexicographic order on monomials, the highest order term of $\varepsilon_\alpha$ is $\undx^{\alpha}=x_1^{\alpha_1}\cdots x_r^{\alpha_m}$ with coefficient 1.  Now since $\osym_a^{\text{elem}}\subseteq\osym_a$ and both have free complements, the graded dimensions over $\Z/2$ of their reductions mod 2 coincide if and only if they are equal.  As both rings have mod 2 reduction isomorphic to $\sym_a$, these graded dimensions do coincide.  So $\osym_a^{\text{elem}}=\osym_a$, and we have established the formula
\begin{equation}
\qrk(\osym_a)=q^{-a(a-1)/2}\frac{1}{[a]!}\frac{1}{(1-q^2)^a}.
\end{equation}
\end{proof}

The following lemma is useful for passing between the rings $\osym_a$ and $\osym_{a-1}$.

\begin{lem}\label{lem_elem_vars} For any $k\geq0$,
\begin{equation}\begin{split}
\varepsilon_k^{(a-1)}&=\varepsilon_k^{(a)}-\varepsilon_{k-1}^{(a)}\widetilde{x}_a+\varepsilon_{k-2}^{(a)}
\widetilde{x}_a^2-\ldots+(-1)^k\widetilde{x}_a^k\\
&=\sum_{j=0}^k(-1)^j\varepsilon_{k-j}^{(a)}\widetilde{x}_a^j.
\end{split}\end{equation}
\end{lem}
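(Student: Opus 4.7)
The plan is to prove this by induction on $k$, using the earlier identity \eqref{eqn-e-vars} as the engine for inverting the recursion. Equation \eqref{eqn-e-vars} expresses $\varepsilon_k^{(a)}$ in terms of $\varepsilon_k^{(a-1)}$ and $\varepsilon_{k-1}^{(a-1)}$; the lemma is essentially its inverse, solving for $\varepsilon_k^{(a-1)}$ in terms of the $\varepsilon_j^{(a)}$ (with $\widetilde{x}_a^j$ multiplied on the right, so no anticommutation issues arise during the manipulation).

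The base case $k=0$ is immediate, since $\varepsilon_0^{(a-1)} = 1 = \varepsilon_0^{(a)}$, which matches the right-hand side (just the $j=0$ term). For the inductive step, assume the formula for $\varepsilon_{k-1}^{(a-1)}$. Rewriting \eqref{eqn-e-vars} as
\begin{equation*}
\varepsilon_k^{(a-1)} = \varepsilon_k^{(a)} - \varepsilon_{k-1}^{(a-1)} \widetilde{x}_a,
\end{equation*}
and substituting the inductive hypothesis $\varepsilon_{k-1}^{(a-1)} = \sum_{j=0}^{k-1} (-1)^j \varepsilon_{k-1-j}^{(a)} \widetilde{x}_a^j$, I get
\begin{equation*}
\varepsilon_k^{(a-1)} = \varepsilon_k^{(a)} - \sum_{j=0}^{k-1}(-1)^j \varepsilon_{k-1-j}^{(a)} \widetilde{x}_a^{j+1} = \varepsilon_k^{(a)} + \sum_{j=1}^{k}(-1)^{j} \varepsilon_{k-j}^{(a)} \widetilde{x}_a^{j},
\end{equation*}
after re-indexing $j \mapsto j+1$. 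Absorbing the leading $\varepsilon_k^{(a)}$ into the sum as the $j=0$ term gives the claimed formula.

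There is no real obstacle here: the only thing to watch is sign bookkeeping in the re-indexing, and the fact that $\widetilde{x}_a$ always appears on the right throughout means we never need to commute $\widetilde{x}_a$ past any of the $\varepsilon^{(a)}$ factors. The identity \eqref{eqn-e-vars} itself is just the decomposition of a product $\widetilde{x}_{i_1}\cdots\widetilde{x}_{i_k}$ with $i_1 < \cdots < i_k \le a$ according to whether $i_k = a$ or $i_k \le a-1$, so the whole argument is purely combinatorial/formal and proceeds inside the non-commutative ring $\opol_a$ without any further input.
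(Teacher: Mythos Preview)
Your proof is correct and amounts to the same idea as the paper's: both invert the basic recursion $\varepsilon_k^{(a)}=\varepsilon_k^{(a-1)}+\varepsilon_{k-1}^{(a-1)}\widetilde{x}_a$, you by formal induction on $k$, the paper by phrasing the unrolled recursion as an inclusion--exclusion on monomials. The arguments are equivalent in content; your version is slightly more formal, the paper's slightly more combinatorial.
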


\begin{proof} The polynomial $\varepsilon_k^{(a)}$ consists of terms without $\widetilde{x}_a$ and terms with $\widetilde{x}_a$.  The former add up precisely to $\varepsilon_k^{(a-1)}$.  The latter all appear as terms in $\varepsilon_{k-1}^{(a)}\widetilde{x}_a$; when we subtract off $\varepsilon_{k-1}^{(a)}\widetilde{x}_a$, the extra terms subtracted are precisely those which already had a factor of $\widetilde{x}_a$ in $\varepsilon_{k-1}^{(a)}$.  The extra terms are now those with a factor of $\widetilde{x}_a^2$, and they appear in $\varepsilon_{k-2}^{(a)}\widetilde{x}_a^2$.  Continuing in this fashion (inclusion-exclusion), the formula follows.
\end{proof}

By a slight abuse of notation, if $R\subseteq S$ is a subring and $s\in S$, we write $R[s]$ for the subring of $S$ generated by $R$ and $s$.
\begin{cor}\label{cor_adjoin_xn}
Inside $\opol_a$, $\osym_a[x_a]=\osym_{a-1}[x_a]$.
\end{cor}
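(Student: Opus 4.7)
The plan is to prove the two inclusions separately, using the two relations that express odd elementary symmetric polynomials in $a$ variables versus those in $a-1$ variables. By Proposition \ref{prop_size_osymm} (and more precisely the equality $\osym_a = \osym_a^{\text{elem}}$ established in its proof), the ring $\osym_a$ is generated as a subring of $\opol_a$ by $\varepsilon_1^{(a)},\ldots,\varepsilon_a^{(a)}$, and likewise $\osym_{a-1}$ is generated by $\varepsilon_1^{(a-1)},\ldots,\varepsilon_{a-1}^{(a-1)}$. So the corollary reduces to showing that each generating set sits inside the subring generated by the other together with $x_a$.

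For the inclusion $\osym_a[x_a] \subseteq \osym_{a-1}[x_a]$, I would invoke the recursion
\[
\varepsilon_k^{(a)} = \varepsilon_k^{(a-1)} + \varepsilon_{k-1}^{(a-1)}\widetilde{x}_a
\]
from \eqref{eqn-e-vars}, which exhibits each $\varepsilon_k^{(a)}$ as a $\Z$-linear combination of products of elements from $\{\varepsilon_j^{(a-1)}\}$ and $x_a$, hence as an element of $\osym_{a-1}[x_a]$. For the reverse inclusion $\osym_{a-1}[x_a] \subseteq \osym_a[x_a]$, I would apply Lemma \ref{lem_elem_vars} directly, which expresses
\[
\varepsilon_k^{(a-1)} = \sum_{j=0}^k (-1)^j \varepsilon_{k-j}^{(a)} \widetilde{x}_a^j,
\]
manifestly an element of $\osym_a[x_a]$. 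Combining the two inclusions yields the desired equality of subrings of $\opol_a$.

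There is no real obstacle here: both key identities have been established already, and the only subtlety is noting that ``the subring generated by $R$ and $s$'' is well-defined in the noncommutative setting, which is built into the notation introduced just before the corollary. The proof is essentially a direct consequence of Lemma \ref{lem_elem_vars} and \eqref{eqn-e-vars}, once one knows that $\osym_a$ and $\osym_{a-1}$ are generated by their respective odd elementary symmetric polynomials.
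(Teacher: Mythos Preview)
Your proof is correct. Both you and the paper use Lemma \ref{lem_elem_vars} for the inclusion $\osym_{a-1}[x_a]\subseteq\osym_a[x_a]$. For the other inclusion, the paper argues differently: it takes an arbitrary $f\in\osym_a$, moves all factors of $x_a$ to the right via skew-commutativity, and observes that the coefficient of each power of $x_a$ must lie in $\osym_{a-1}$ (since $\partial_i$ for $i\leq a-2$ kills $x_a$ and acts term-by-term on such an expansion). Your route via \eqref{eqn-e-vars} and the fact that $\osym_a=\osym_a^{\text{elem}}$ is equally valid and perhaps tidier, since it recycles an identity already on the page rather than making a new (easy) observation about divided differences; the paper's argument, on the other hand, does not rely on knowing that the odd elementary polynomials generate $\osym_a$.
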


\begin{proof} ``$\subseteq$'': Let $f\in\osym_a$.  Using skew-commutativity, we can move all factors of $x_a$ in any term of $f$ all the way to the right.  Collecting powers of $x_a$, we see each coefficient of a given power of $x_a$ is an element of $\osym_{a-1}$, so $\osym_a\subseteq\osym_{a-1}[x_a]$.  The converse ``$\supseteq$'' follows from the previous lemma.
\end{proof}

%
\subsubsection{Odd complete symmetric polynomials} \label{subsec_completes}
%

\begin{defn} For $k\geq1$, the $k$-th \textit{odd complete symmetric polynomial} is defined to be
\begin{equation}
h_k(x_1,\ldots,x_a)=\sum_{1\leq i_1\leq\cdots\leq i_k\leq a}\widetilde{x}_{i_1}\cdots\widetilde{x}_{i_k}.
\end{equation}
Also define $h_0=1$ and $h_k=0$ for $k<0$.
\end{defn}

\begin{lem}\label{lemma-e-h-relation} The polynomials $\varepsilon_k$ and $h_k$ in $\opol_a$ satisfy
\begin{equation}\label{eqn-e-h-relation}
\sum_{k=0}^m(-1)^{\frac{1}{2}k(k+1)}\varepsilon_kh_{m-k}=0
\end{equation}
for all $m\geq1$.\end{lem}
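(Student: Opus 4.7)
The plan is to prove the identity by a generating function argument in the formal power series ring $\opol_a\llbracket t\rrbracket$, where $t$ is a central parameter commuting with all the $\widetilde{x}_i$.  Define
\[\tilde{E}(t) = (1 - t\widetilde{x}_a)(1 - t\widetilde{x}_{a-1}) \cdots (1 - t\widetilde{x}_1), \qquad H(t) = \frac{1}{1 - t\widetilde{x}_1} \cdot \frac{1}{1 - t\widetilde{x}_2} \cdots \frac{1}{1 - t\widetilde{x}_a}.\]
Note the opposite orderings: the $(1-t\widetilde{x}_i)$ factors appear in decreasing order of $i$, while the geometric series factors appear in increasing order.  The ordering of $H(t)$ is forced by the fact that $h_k^{(a)}$, when each weakly increasing index sequence is grouped as $\widetilde{x}_1^{k_1}\widetilde{x}_2^{k_2}\cdots\widetilde{x}_a^{k_a}$, has its factors in increasing order of index.

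First I would verify the two expansions.  Expanding each geometric series as $\frac{1}{1 - t\widetilde{x}_i} = \sum_{k_i \geq 0} t^{k_i}\widetilde{x}_i^{k_i}$ and multiplying out, the coefficient of $t^k$ in $H(t)$ is the sum over compositions $k_1 + \cdots + k_a = k$ of $\widetilde{x}_1^{k_1}\cdots\widetilde{x}_a^{k_a}$, which is precisely $h_k$.  For $\tilde{E}(t)$, expanding gives the coefficient of $t^k$ as $\sum_{i_1 < \cdots < i_k} (-1)^k \widetilde{x}_{i_k}\widetilde{x}_{i_{k-1}}\cdots\widetilde{x}_{i_1}$.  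Since distinct $\widetilde{x}_i$'s anticommute, reversing a product of $k$ distinct factors contributes a sign of $(-1)^{k(k-1)/2}$, giving
\[[t^k]\tilde{E}(t) = (-1)^k (-1)^{k(k-1)/2} \varepsilon_k = (-1)^{k(k+1)/2}\varepsilon_k,\]
precisely the sign appearing in the lemma.

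Next I would observe that $\tilde{E}(t) H(t) = 1$ by nested telescoping.  Writing out the product,
\[\tilde{E}(t)H(t) = (1-t\widetilde{x}_a)\cdots(1-t\widetilde{x}_2)(1-t\widetilde{x}_1) \cdot \frac{1}{1-t\widetilde{x}_1}\frac{1}{1-t\widetilde{x}_2}\cdots\frac{1}{1-t\widetilde{x}_a},\]
the innermost pair cancels because $\widetilde{x}_1$ commutes with itself and with $t$; this reduces the product to the next adjacent pair, which cancels for the same reason, and so on from the center outward until only $1$ remains.

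Finally, extracting the coefficient of $t^m$ from both sides of $\tilde{E}(t)H(t) = 1$ gives
\[\sum_{k=0}^m (-1)^{k(k+1)/2}\varepsilon_k h_{m-k} = [t^m]\,1 = 0\]
for $m \geq 1$, which is the desired identity.  The one potentially tricky point is arranging the orderings of the two products so that the reversal sign $(-1)^{k(k-1)/2}$ coming from the anticommutation of the $\widetilde{x}_i$'s combines with the $(-1)^k$ from the minus signs to yield exactly $(-1)^{k(k+1)/2}$, and simultaneously so that the two products telescope against each other; once the definitions are set up correctly, the rest is bookkeeping.
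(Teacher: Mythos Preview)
Your proof is correct and takes a genuinely different route from the paper's.  The paper proves the identity by induction on the number of variables $a$, using the recursions $\varepsilon_k^{(a+1)}=\varepsilon_k^{(a)}+\varepsilon_{k-1}^{(a)}\widetilde{x}_{a+1}$ and $h_k^{(a+1)}=\sum_{j=0}^k h_{k-j}^{(a)}\widetilde{x}_{a+1}^j$, substituting these into the left-hand side of \eqref{eqn-e-h-relation} in $a+1$ variables, and carrying out a somewhat intricate double-sum manipulation that reduces everything to instances of the inductive hypothesis.

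Your generating-function argument is more conceptual: by ordering the linear factors in $\tilde{E}(t)$ in decreasing index and those in $H(t)$ in increasing index, the product telescopes from the center outward (each $(1-t\widetilde{x}_i)$ meeting its own inverse before encountering any other variable, so noncommutativity never interferes), while the reversal sign $(-1)^{\binom{k}{2}}$ from anticommuting the $\widetilde{x}_i$'s combines with the $(-1)^k$ from the minus signs to produce exactly the $(-1)^{k(k+1)/2}$ in the statement.  The only subtlety you might make more explicit is that coefficient extraction in a noncommutative power series ring $R\llbracket t\rrbracket$ with $t$ central still gives the convolution formula $[t^m](A(t)B(t))=\sum_{k}a_k b_{m-k}$ in the correct order, but this is immediate.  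Your approach is shorter and explains the sign $(-1)^{k(k+1)/2}$ structurally; the paper's inductive approach, while more laborious here, is in the same spirit as its proofs of the $\varepsilon$-relations (Lemma~\ref{lemma-osymm-relations}) and $h$-relations (Lemma~\ref{lem-h-odrs}), so it keeps the section methodologically uniform.
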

\begin{proof} We proceed by induction on the number of variables $a$, the case $a=1$ being clear.  Let $\varepsilon_k,h_k$ denote the odd elementary and odd complete polynomials in $a$ variables, so that
\begin{equation*}
\varepsilon_k^{(a+1)}=\varepsilon_k+\varepsilon_{k-1}\xt_{a+1},\qquad h_k^{(a+1)}=\sum_{j=0}^kh_{k-j}\xt_{a+1}^j.
\end{equation*}
Plugging these expressions into the left-hand side of equation \eqref{eqn-e-h-relation} in $a+1$ variables,
\begin{equation*}\begin{split}
\sum_{k=0}^n(-1)^{\frac{1}{2}k(k+1)}\varepsilon_k^{(a+1)}h_{n-k}^{(a+1)}&=\sum_{k=0}^n(-1)^{\frac{1}{2}k(k+1)}(\varepsilon_k+\varepsilon_{k-1}\xt_{a+1})\left(\sum_{j=0}^{n-k}h_{n-k-j}\xt_{a+1}^j\right)\\
&=\sum_{k=0}^n\sum_{j=0}^{n-k}(-1)^{\frac{1}{2}k(k+1)}\left(\varepsilon_kh_{n-k-j}+(-1)^{n-k-j}\varepsilon_{k-1}h_{n-k-j}\xt_{a+1}\right)\xt_{a+1}^j\\
&=\sum_{j=0}^n\sum_{k=0}^{n-j}(-1)^{\frac{1}{2}k(k+1)}\varepsilon_kh_{n-k-j}\xt_{a+1}^j\\
&\qquad+\sum_{j=1}^{n+1}\sum_{k=0}^{n-j+1}(-1)^{\frac{1}{2}k(k+1)+n-k+j-1}\varepsilon_{k-1}h_{n-k-j+1}\xt_{a+1}^j.
\end{split}\end{equation*}
In the last equality, the order of summation was reversed and the second term was re-indexed, $j\mapsto j-1$.  Consider the last line: the inner sum of the first term is zero unless $j=n,k=0$ (induction hypothesis).  Also, the second term is 0 when $k=0$.  Removing these vanishing terms, removing boundary terms from the summations, re-combining the two summation terms, and re-indexing $k\mapsto k+1$, this equals
\begin{equation*}
\ldots=\xt_{a+1}^n+\sum_{j=1}^{n+1}(-1)^{n+j-1}\left(\sum_{k=0}^{n-j}(-1)^{\frac{1}{2}k(k+1)}\varepsilon_kh_{n-k-j}\right)\xt_{a+1}^j.
\end{equation*}
The inner sum here is zero unless $j=n,k=0$ (induction hypothesis), in which case it cancels the $\xt_{a+1}^n$.  So the entire expression equals zero, and we are done.
\end{proof}

Equation \eqref{eqn-e-h-relation} can be used inductively to solve for each $h_m$ as a polynomial in the various $\varepsilon_k$, so each $h_m$ is indeed odd symmetric.

\begin{lem}\label{lem-h-odrs} The polynomials $h_k$ satisfy the same relations as the $\varepsilon_k$ in the ring $\osym_a$:
\begin{equation}\begin{split}
&h_ih_{2m-i}=h_{2m-i}h_i\qquad(1\leq i,2m-i\leq a),\\
&h_ih_{2m+1-i}+(-1)^ih_{2m+1-i}h_i=(-1)^ih_{i+1}h_{2m-i}+h_{2m-i}h_{i+1}\qquad(1\leq i,2m-i\leq a-1),\\
&h_1h_{2m}+h_{2m}h_1=2h_{2m+1}\qquad(1<2m\leq a-1).
\end{split}\end{equation}
Furthermore, we have the following mixed relations:
\begin{equation}\begin{split}
&\varepsilon_ih_{2m-i}=h_{2m-i}\varepsilon_i\qquad(1\leq i,2m-i\leq a),\\
&\varepsilon_ih_{2m+1-i}+(-1)^ih_{2m+1-i}\varepsilon_i=(-1)^i\varepsilon_{i+1}h_{2m-i}+h_{2m-i}\varepsilon_{i+1}\qquad(1\leq i,2m-i\leq a-1).
\end{split}\end{equation}
\end{lem}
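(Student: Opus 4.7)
My plan is to prove both families of identities (the pure $h$-relations and the mixed $\varepsilon h$-relations) by a joint induction on the total degree $n$ of the relation, where a product like $h_i h_j$ or $\varepsilon_i h_j$ has degree $i+j$. The base cases $n \leq 1$ are immediate since $h_0 = \varepsilon_0 = 1$ and $h_1 = \varepsilon_1$.

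The main workhorse is Lemma \ref{lemma-e-h-relation}, which I would rearrange into the recursion
\[ h_m = \sum_{k=1}^{m} (-1)^{1+k(k+1)/2}\, \varepsilon_k\, h_{m-k}, \]
expressing every $h_m$ as a $\Z$-linear combination of products $\varepsilon_k h_{m-k}$ with $k \geq 1$. I would establish the mixed relations first. To prove, say, $\varepsilon_i h_{2m-i} = h_{2m-i}\varepsilon_i$, substitute this recursion into the $h_{2m-i}$-factor on each side, yielding sums involving $\varepsilon_i \varepsilon_k h_{2m-i-k}$ on the left and $\varepsilon_k h_{2m-i-k}\varepsilon_i$ on the right. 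Then commute $\varepsilon_i$ past $\varepsilon_k$ via Lemma \ref{lemma-osymm-relations} and past $h_{2m-i-k}$ via the mixed relation at strictly smaller $h$-subscript (inductive hypothesis). Matching the two sides termwise proves the relation at degree $n$. The second (odd-degree) mixed relation is handled the same way, now with a ``twisted commutator'' $ab + (-1)^i ba$ on each side.

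With the mixed relations in hand, the pure $h$-relations follow by the analogous device: replace one $h$-factor in the identity under consideration by the recursion, push the resulting $\varepsilon_k$'s through the remaining $h$'s using the mixed relations (now available at all strictly smaller $h$-degrees), apply the $\varepsilon$-relations to reorder adjacent $\varepsilon$'s, and finally re-sum via Lemma \ref{lemma-e-h-relation} to collapse the $\varepsilon$-prefixes back into $h$'s.

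The hard part will be the sign bookkeeping. The exponent $\tfrac12 k(k+1)$ in the recursion has period four in $k$, whereas the signs $(-1)^i$ appearing in the $\varepsilon$-relations have period two, so matching both sides of each odd-degree relation requires grouping terms by residues modulo four and verifying several algebraic identities among these signs. If this reconciliation becomes unmanageable, a clean fallback is direct induction on the number of variables $a$, in the style of Lemma \ref{lemma-osymm-relations}: expand $h_k^{(a)} = \sum_{j=0}^{k} h_{k-j}^{(a-1)}\widetilde{x}_a^{\,j}$ and $\varepsilon_k^{(a)} = \varepsilon_k^{(a-1)} + \varepsilon_{k-1}^{(a-1)}\widetilde{x}_a$, use the anti-commutation $\widetilde{x}_a \cdot h_k^{(a-1)} = (-1)^k h_k^{(a-1)}\cdot\widetilde{x}_a$ to collect powers of $\widetilde{x}_a$, and verify the resulting $(a-1)$-variable identities coefficient-by-coefficient from the inductive hypothesis.
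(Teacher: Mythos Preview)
Your fallback is exactly the route the paper takes: the paper's proof is a one-sentence pointer to Lemma~\ref{lemma-osymm-relations}, saying the argument is the same induction on the number of variables $a$, using $\varepsilon_k^{(a)}=\varepsilon_k^{(a-1)}+\varepsilon_{k-1}^{(a-1)}\widetilde{x}_a$ together with the complete-polynomial analogue $h_m^{(a)}=\sum_{j=0}^m h_{m-j}^{(a-1)}\widetilde{x}_a^{\,j}$.

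Your primary approach (induction on total degree via the recursion $h_m=\sum_{k\geq1}(-1)^{1+k(k+1)/2}\varepsilon_k h_{m-k}$) is a genuinely different strategy, and it is in principle sound since every $h_m$ lies in the subring generated by the $\varepsilon_k$'s. But the induction as you describe it does not close on total degree alone. When $i+k$ is odd, commuting $\varepsilon_i$ past $\varepsilon_k$ via Lemma~\ref{lemma-osymm-relations} produces terms $\varepsilon_{i+1}\varepsilon_{k-1}$; for $k=1$ this leaves $\varepsilon_{i+1}h_{j-1}$, which is a mixed product at the \emph{same} total degree $n=i+j$, not a strictly smaller one. So you would need a secondary induction, for instance on the $h$-subscript at fixed $n$, to make the scheme well-founded. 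That is fixable, but it is exactly the ``sign bookkeeping'' you flag, and it is more than bookkeeping: it is the structure of the induction. The variable-count induction avoids this entirely because the expansion in $\widetilde{x}_a$ is already stratified by the power of $\widetilde{x}_a$, and each coefficient lands at $a-1$ variables where all the relations are assumed.
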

\begin{proof} The proofs of all these relations are similar to the proof of Lemma \ref{lemma-osymm-relations}, using equation \eqref{eqn-e-vars} and its complete polynomial analogue
\begin{equation}
h_m^{(a)}=\sum_{j=0}^mh_{k-j}^{(a-1)}\widetilde{x}_a^j.
\end{equation}
\end{proof}
As in the case of elementary functions, define $h_\alpha=h_{\alpha_1}\cdots h_{\alpha_m}$ for a partition $\alpha=(\alpha_1,\ldots,\alpha_m)$.

In the even case, perhaps the most manifestly symmetric functions are the so-called monomial symmetric functions
\begin{equation}
m_\lambda=\sum_\alpha \undx^{\alpha}\qquad\text{(sum over all distinct permutations }\alpha\text{ of }\lambda\text{)}
\end{equation}
associated to a partition $\alpha$.  However, for certain $\lambda$, no such analogous sum yields an odd symmetric polynomial.  An equivalent definition of these functions in the even setting is that they form the basis dual to the basis $\lbrace h_\lambda\rbrace_\lambda$ with respect to a standard bilinear form.  For an approach to odd symmetric functions along these lines, see \cite{EK}.

%
\subsection{The odd nilHecke algebra} \label{sec_onh}
%

By analogy with the even case \cite{Man}, we define the \text{odd nilHecke ring} $\ONH_a$ to be the graded unital associative ring generated by elements $x_1,\ldots,x_a$ of degree 2 and elements $\partial_1,\ldots,\partial_{a-1}$ of degree $-2$, subject to the relations \eqref{eqn-onh-def1}, \eqref{eqn-onh-def2}, \eqref{eqn-onh-def3}, \eqref{eqn-onh-def4}, which we repeat here:
\begin{eqnarray*}
& & \partial_i^2 = 0 , \quad \partial_i \partial_{i+1}\partial_i =
 \partial_{i+1}\partial_i \partial_{i+1},\\
 & &  x_i \partial_i + \partial_i  x_{i+1} =1, \quad
   \partial_i x_i + x_{i+1}\partial_i = 1,\\
& & x_i x_j + x_j x_i =0 \quad (i\neq j),  \quad
\partial_i \partial_j + \partial_j \partial_i =0 \quad (|i-j|>1),  \\
& & x_i \partial_j +\partial_j x_i = 0 \quad (i\neq j,j+1).
\end{eqnarray*}
We define the \text{odd nilCoxeter ring} $\ONC_a$ to be the graded subring generated by the $\partial_i$'s (this is the LOT ring of \cite{LOT}).  As a consequence of these relations, $\ONH_a$ and $\ONC_a$ have natural representations on $\opol_a$. The $\Z$-grading on $\ONH_a$ induces a $\Z/2$-grading given by dividing the $\Z$-grading by 2 and then reducing mod 2. For $ f \in \ONH_a$ we write $\deg_s(f)$ for the super degree of $f$.

For each $w\in S_a$, choose a reduced expression $w=s_{i_1}\cdots s_{i_\ell}$ in terms of simple transpositions $s_i=(i\quad i+1)$.  Define
\begin{equation}
\partial_w=\partial_{i_1}\cdots\partial_{i_\ell}.
\end{equation}
For $w=w_0$ we make a particular choice of word and re-name the operator,
\begin{equation*}
D_a=\partial_{w_0}=\partial_1(\partial_2\partial_1)\cdots(\partial_{a-1}\cdots\partial_1).
\end{equation*}
Since the $\partial_i$ satisfy a signed version of the relations of the simple transpositions $s_i$, the definition of $\partial_w$ is almost independent of choice of reduced expression for $w$---the only ambiguity is an overall sign.  For $w,w'\in S_a$, the formula
\begin{equation}\label{eqn_compose_oddops}
\partial_w\partial_{w'}=\begin{cases}\pm\partial_{ww'}&\text{if }\ell(ww')=\ell(w)+\ell(w'),\\
0&\text{otherwise}\end{cases}
\end{equation}
is proven just as in the even case \cite{Man}.

When no confusion will result, if $A=(r_1,\ldots,r_a)$ is an $a$-tuple of integers we define
\begin{equation}
\undx^A=x_1^{r_1}\cdots x_a^{r_a}.
\end{equation}
Now it is clear that the sets
\begin{equation}\label{eqn-onh-bases}
\lbrace \undx^A\partial_w\rbrace_{w\in S_a,A\in\N^n}\text{ and }\lbrace\partial_w \undx^A\rbrace_{w\in S_a,A\in\N^n}
\end{equation}
generate $\ONH_a$ (for us, $0\in\N$).  In fact, they are linearly independent as well.  To prove this, we will introduce odd Schubert polynomials.

Let $\delta_a = (a-1,a-2, \dots, 1,0)$. In what follows we will make repeated use of the monomial
\begin{equation}\begin{split}
&\undx^{\delta_a}=x_1^{a-1}x_2^{a-2}\cdots x_{a-1}^1x_a^0.
\end{split}\end{equation}
For $w\in S_a$, define the corresponding \textit{odd Schubert polynomial} $\sch_w\in\opol_a$ by
\begin{equation}
\sch_w(x_1,\ldots,x_a)=\partial_{w^{-1}w_0}(\undx^{\delta_a}),
\end{equation}
where $w_0$ is the longest element of $S_a$.  As in the definition of the $\partial_w$, this is independent of choice of reduced expression for $w$, up to sign.  The degree of $\sch_w$ is $2\ell(w)$.  Equation \eqref{eqn_compose_oddops} implies
\begin{equation}\label{eqn_oddop_schubert}
\partial_u\sch_w=\begin{cases}\pm \sch_{wu^{-1}}&\text{if }\ell(wu^{-1})=\ell(w)-\ell(u),\\
0&\text{otherwise.}
\end{cases}\end{equation}
\begin{lem} Let $e\in S_a$ be the identity.  Then $\sch_e=(-1)^{\binom{a}{3}}$.\end{lem}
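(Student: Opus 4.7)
The plan is to induct on $a$. Note that $\sch_e = \partial_{e^{-1}w_0}(\undx^{\delta_a}) = D_a(\undx^{\delta_a})$, and since $\deg \undx^{\delta_a} = 2\binom{a}{2} = -\deg D_a$, this sits in degree zero and hence is an integer; only its sign must be pinned down. The base case $a = 1$ is trivial, since $D_1$ is the empty product, $\undx^{\delta_1} = 1$, and $(-1)^{\binom{1}{3}} = 1$.

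For the inductive step, I would exploit the factorization $D_a = D_{a-1} \cdot \alpha_{a-1}$ with $\alpha_{a-1} := \partial_{a-1}\partial_{a-2}\cdots\partial_1$, which is built into the chosen reduced expression for $D_a$. The argument then reduces to the key identity
\[
\alpha_{a-1}(\undx^{\delta_a}) = (-1)^{\binom{a-1}{2}}\, \undx^{\delta_{a-1}},
\]
with $\undx^{\delta_{a-1}} \in \opol_a$ via $x_a^0 = 1$. Granting this, the induction hypothesis and Pascal's relation $\binom{a-1}{2}+\binom{a-1}{3}=\binom{a}{3}$ immediately give $\sch_e = (-1)^{\binom{a}{3}}$.

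To prove the key identity, I would run a secondary induction on $k = 0, 1, \ldots, a-1$ establishing that $\alpha_k(\undx^{\delta_a}) = \epsilon_k M_k$, where $M_k$ is the monomial with exponent $a-i-1$ in position $i$ for $i \leq k$ and $a-i$ for $i > k$ (so in particular $M_{a-1} = \undx^{\delta_{a-1}}$), and $\epsilon_k \in \{\pm 1\}$ is determined recursively. The main calculational tool is the subsidiary identity
\[
\partial_k(x_k^m x_{k+1}^{m-1}) = x_k^{m-1} x_{k+1}^{m-1},
\]
proved by writing $x_k^m x_{k+1}^{m-1} = (-1)^{\binom{m-1}{2}} x_k \cdot (x_k x_{k+1})^{m-1}$ (since $(x_k x_{k+1})^j = (-1)^{\binom{j}{2}} x_k^j x_{k+1}^j$ by skew-commutativity) and applying Leibniz, using that $x_k x_{k+1} \in \ker \partial_k$ and $s_k(x_k x_{k+1}) = -(x_k x_{k+1})$ to see that $\partial_k$ annihilates $(x_k x_{k+1})^{m-1}$ and that only the $\partial_k(x_k) = 1$ term contributes.

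At each step of the secondary induction, Leibniz plus the vanishing of $\partial_k$ on monomials in $x_j$ with $j \notin \{k, k+1\}$ lets us split the current monomial as $A \cdot M \cdot B$, with $A$ in $x_1,\ldots,x_{k-1}$ and $B$ in $x_{k+2},\ldots,x_{a-1}$, and collapse the action to $\partial_k(AMB) = (-1)^{\deg_2 A}\, A\, \partial_k(M)\, B$, the sign arising because $s_k$ acts by $-1$ on each generator of $A$. The corresponding sign contribution at step $k$ is $\deg_2 A_k = \sum_{i=1}^{k-1}(a-i-1)$, and the total exponent picked up across $k = 1,\ldots,a-1$ equals $\sum_{i=1}^{a-2}(a-i-1)^2 \equiv \binom{a-1}{2} \pmod{2}$ after using $m^2 \equiv m \pmod 2$. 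The main obstacle is precisely this sign bookkeeping; once the Leibniz decomposition cleanly separates variables, the reduction via $m^2 \equiv m$ is the pleasant surprise that makes the accumulated sign collapse to the clean binomial $\binom{a-1}{2}$.
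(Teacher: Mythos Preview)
Your proof is correct and follows the same inductive approach the paper gestures at (``a simple calculation, by induction on $a$''), supplying the details the paper omits. Your key identity $\alpha_{a-1}(\undx^{\delta_a}) = (-1)^{\binom{a-1}{2}}\undx^{\delta_{a-1}}$ together with Pascal's relation is exactly the right inductive step, and your subsidiary identity $\partial_k(x_k^m x_{k+1}^{m-1}) = x_k^{m-1}x_{k+1}^{m-1}$ and the sign bookkeeping via $\sum_{j=1}^{a-2}j^2 \equiv \binom{a-1}{2}\pmod 2$ all check out.
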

\begin{proof}  This is a simple calculation, by induction on $a$.  Alternatively, it follows from the graphical arguments in Proposition \ref{prop_ea_idemp} (whose proof does not depend on the present claim).
\end{proof}
It follows that
\begin{equation}\label{eqn_onh_schubert}\begin{split}
&\text{for }\ell(w)<\ell(u),\quad(\undx^A\partial_u)(\sch_w)=0,\\
&\text{for }\ell(w)=\ell(u),\quad(\undx^A\partial_u)(\sch_w)=\begin{cases}\pm \undx^A&\text{if }w=u^{-1}\\0&\text{otherwise.}
\end{cases}\end{split}\end{equation}
\begin{prop}\label{prop_onh_li} There are no linear relations among the images of the $\lbrace \undx^A\partial_w\rbrace_{w\in S_a,A\in\N^n}$ in $\End(\opol_a)$, nor among the images of the $\lbrace\partial_w \undx^A\rbrace_{w\in S_a,A\in\N^n}$.  Thus the natural representations of the odd nilCoxeter and odd nilHecke rings on $\opol_a$ are faithful, and these rings have graded ranks
\begin{equation}\begin{split}
&\qrk(\ONC_a)=q^{-a(a-1)/2}[a]!,\\
&\qrk(\ONH_a)=q^{-a(a-1)/2}[a]!\frac{1}{(1-q^2)^a}.
\end{split}\end{equation}
\end{prop}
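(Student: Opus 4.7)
The plan is to use the odd Schubert polynomials $\sch_v$ as test vectors to detect coefficients in any hypothetical linear relation among $\lbrace \undx^A\partial_w\rbrace$ acting on $\opol_a$. Concretely, suppose some non-trivial relation $\sum_{u,A} c_{u,A}\,\undx^A\partial_u = 0$ holds in $\End(\opol_a)$; I would choose $v \in S_a$ of \emph{minimal} length with $c_{v,A}\neq 0$ for some $A$, and evaluate the relation on $\sch_{v^{-1}}$. By \eqref{eqn_onh_schubert}, summands with $\ell(u) > \ell(v^{-1}) = \ell(v)$ contribute $0$ because the Schubert argument is too short; summands with $\ell(u) < \ell(v)$ do not occur by minimality; and among those with $\ell(u) = \ell(v)$, only $u = v$ contributes nontrivially, giving $\pm\undx^A$. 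The resulting equation $\sum_A \pm c_{v,A}\,\undx^A = 0$ in $\opol_a$ then forces $c_{v,A} = 0$ for all $A$ by $\Z$-linear independence of monomials, a contradiction.

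Since the set $\lbrace\undx^A\partial_w\rbrace$ was already noted to span $\ONH_a$ just above \eqref{eqn-onh-bases}, this linear independence in $\End(\opol_a)$ upgrades it to a $\Z$-basis of $\ONH_a$ and shows that the natural representation $\ONH_a \to \End(\opol_a)$ is injective. Restricting to $A = 0$ gives the analogous statements for $\ONC_a$. The graded rank formulas then fall out by summing $q^{2|A|-2\ell(w)}$ over the basis: the polynomial part contributes $1/(1-q^2)^a$, and a routine calculation using $(1-q^{-2k})/(1-q^{-2}) = q^{1-k}[k]$ yields $\sum_{w\in S_a} q^{-2\ell(w)} = q^{-a(a-1)/2}[a]!$.

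For the set $\lbrace \partial_w\undx^A\rbrace$, one option is to rerun the first argument after expressing $\partial_w\undx^A$ in the form $\sum \undx^B\partial_{w'}$. A cleaner option is to observe that this set also spans $\ONH_a$ and has the same graded cardinality as $\lbrace \undx^A\partial_w\rbrace$, since degree is symmetric in the two factors. With $\ONH_a$ now known to be free abelian of determined rank in each degree, a spanning set of the correct size in each graded piece must itself be a basis, and linear independence in $\End(\opol_a)$ then follows from the injectivity already established. Equivalently, the anti-automorphism of $\ONH_a$ that fixes each generator and reverses products sends $\undx^A\partial_w$ bijectively to $\pm\partial_w\undx^A$ (one checks directly that the four families of defining relations are preserved).

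The main obstacle is the first paragraph, specifically the careful use of \emph{minimal} length: equation \eqref{eqn_onh_schubert} gives a clean formula for $(\undx^A\partial_u)(\sch_w)$ only when $\ell(w)\leq\ell(u)$, so choosing $v$ of minimal length is exactly what ensures that the longer-length terms vanish before evaluation while the shorter-length terms vanish by the formula itself, producing a single clean equation in $\opol_a$ from which the coefficients can be read off.
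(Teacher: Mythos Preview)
Your proof is correct and follows essentially the same approach as the paper: both use the odd Schubert polynomials as test vectors via \eqref{eqn_onh_schubert} to detect coefficients in a hypothetical relation. The paper phrases this as an induction on Coxeter length starting from $\sch_e$, while you organize it more cleanly by taking $v$ of \emph{minimal} length in the relation and evaluating at $\sch_{v^{-1}}$; your formulation makes it transparent why the non-surviving terms drop out (longer $u$ by \eqref{eqn_onh_schubert}, shorter $u$ by minimality), whereas the paper's inductive phrasing is terser on this point. For the second spanning set, the paper simply remarks at the outset that finite rank in each degree makes it enough to treat one of the two; your counting argument and the anti-automorphism $\psi$ (which the paper later introduces in Section~\ref{sec_switch}) both flesh this out correctly.
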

\begin{proof} Since $\ONH_a$ is finite dimensional in each degree, the relations defining $\ONH_a$ imply that it suffices to prove the Proposition for either one of the two spanning sets; we do so for the set $\lbrace \undx^A\partial_w\rbrace$.  By equation \eqref{eqn_onh_schubert},
\begin{equation}
(\undx^A\partial_u)(\sch_e)=\begin{cases}(-1)^{\binom{a}{3}}\undx^A&\text{if }u=e,\\
0&\text{otherwise.}\end{cases}
\end{equation}
Thus no element $\undx^A\partial_e$ is a linear combination of any other spanning set elements.  Proceeding by induction on the Coxeter length of $w\in S_a$, suppose that for all $v\in S_a$ with $\ell(v)<\ell(w)$, no element $\undx^A\partial_v$ is a linear combination of any other spanning set elements.  Then by equation \eqref{eqn_onh_schubert}, if $\undx^A\partial_u$ is a linear combination of other terms $\undx^A\partial_v$, all the $v$ must be of shorter Coxeter length than $u$; but by induction, there is no such relation.\end{proof}

Our next goal is to express $\opol_a$ as a free left and right module over $\osym_a$.  The following lemma describes the basis we will use.  Our proofs of Lemma \ref{lem_sch_zbasis} and Proposition \ref{prop_opol_free_mod} closely follow the proofs of Propositions 2.5.3 and 2.5.5 of \cite{Man}, respectively.
\begin{lem}\label{lem_sch_zbasis} Let
\begin{equation}\begin{split}
\mathcal{H}_a&=\text{span}_\Z\lbrace \undx^A\in\opol_a:A\leq\delta_a\text{ termwise}\rbrace\\
&=\text{span}_\Z\lbrace x_1^{a_1}\cdots x_a^{a_a}:a_i\leq a-i\text{ for }1\leq i\leq a\rbrace.
\end{split}\end{equation}
Then the odd Schubert polynomials $\lbrace\sch_w(x)\rbrace_{w\in S_a}$ are an integral basis for $\mathcal{H}_a$.\end{lem}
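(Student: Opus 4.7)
The plan combines a graded-rank match with a leading-term triangularity argument. First, $\mathcal{H}_a$ is a free graded $\Z$-module of graded rank
\[
\qrk(\mathcal{H}_a) = \prod_{i=1}^{a}(1 + q^2 + \cdots + q^{2(a-i)}) = \sum_{w \in S_a} q^{2\ell(w)},
\]
by the standard factorization of the Poincar\'e polynomial of $S_a$; since $\deg(\sch_w) = 2\ell(w)$, this is exactly the target graded rank for the $\Z$-span of the $\sch_w$'s. Linear independence of $\{\sch_w\}_{w\in S_a}$ in $\opol_a$ then follows directly from \eqref{eqn_oddop_schubert} and $\sch_e = \pm 1$: given a relation $\sum_w c_w \sch_w = 0$, apply $\partial_u$ and extract the constant part; only $\pm\sch_e$ can contribute, isolating $\pm c_u$ and forcing $c_u = 0$.

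The heart of the proof is to show simultaneously that $\sch_w \in \mathcal{H}_a$ and that $\{\sch_w\}$ generates $\mathcal{H}_a$ over $\Z$ (not merely after inverting primes). I would do both by a single leading-term claim. Fix the lexicographic order on monomials with $x_1 > x_2 > \cdots > x_a$, and let $c(w)=(c_1(w),\dots,c_a(w))$, $c_i(w) = |\{j > i : w(i) > w(j)\}|$, be the Lehmer code, so that $w \mapsto c(w)$ gives a bijection between $S_a$ and $\{A : A \leq \delta_a\}$. The claim, proved by downward induction on $\ell(w)$, is
\[
\sch_w \;=\; \pm\, \undx^{c(w)} \;+\; \bigl(\text{strictly lex-smaller monomials, each } \leq \delta_a \text{ termwise}\bigr).
\]
The base case $w = w_0$ is immediate: $\sch_{w_0} = \partial_e(\undx^{\delta_a}) = \undx^{\delta_a}$. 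For the inductive step, choose any simple reflection $s_i$ with $\ell(ws_i) = \ell(w)+1$, write $\sch_w = \pm\,\partial_i \sch_{ws_i}$ via \eqref{eqn_oddop_schubert}, and compute directly from $\partial_i(x_i^m) = \sum_j (-1)^j x_{i+1}^j x_i^{m-1-j}$ and the skew Leibniz rule \eqref{eqn-leibniz} that $\partial_i$ sends the inductively known expansion of $\sch_{ws_i}$ to one with new leading term $\pm\undx^{c(w)}$, matching the change in the Lehmer code under $w s_i \mapsto w$. Assuming the claim, the transition matrix from $\{\sch_w\}$ to the monomial basis $\{\undx^A\}_{A\leq\delta_a}$ of $\mathcal{H}_a$ is lex-triangular with $\pm 1$ on the diagonal, hence invertible over $\Z$, giving the claimed integral basis.

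The principal obstacle is the inductive leading-term step. Note that $\partial_i$ does not preserve $\mathcal{H}_a$ monomial-by-monomial: one computes that $\partial_i(x_i^m x_{i+1}^n)$ produces $x_{i+1}$-exponents as large as $m+n-1$, which exceeds the $\mathcal{H}_a$-bound $a-i-1$ when $m,n$ are both near maximal. So one must show that cancellations internal to $\sch_{ws_i}$ kill every out-of-range monomial, not merely match the leading one. Reducing modulo $2$ identifies the odd computation with the classical even case (see Manivel, Prop.~2.5.3), providing a guide for the combinatorial skeleton; the odd signs from skew-commutativity of the $x_j$'s and the mixed Leibniz rule are the real source of bookkeeping, and the Omission Word Lemma~\ref{lem-owl} is likely to streamline them.
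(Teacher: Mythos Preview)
Your leading-term claim is false, already in the even case. Take $a=3$ and $w=s_2$: the Lehmer code is $c(s_2)=(0,1,0)$, so $\undx^{c(s_2)}=x_2$, but $\sch_{s_2}=\partial_1\partial_2(x_1^2x_2)=\partial_1(x_1^2)=x_1\pm x_2$ (plus in the even case, minus in the odd), whose lex-leading monomial with $x_1>x_2>x_3$ is $x_1$, not $x_2$. So the transition matrix is not lex-triangular in the way you assert, and the inductive step you sketch cannot produce it. There \emph{is} a unimodular triangularity lurking here, but identifying the correct monomial order and proving it in the odd setting is genuinely more work than the problem requires.

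Separately, the ``principal obstacle'' you flag dissolves on inspection: $\partial_i$ \emph{does} preserve $\mathcal{H}_a$ one monomial at a time. The Leibniz expansion of $\partial_i(x_i^m x_{i+1}^n)$ produces intermediate terms with $x_{i+1}$-exponent up to $m+n-1$, but these cancel in pairs \emph{within that single computation}, not across different monomials of $\sch_{ws_i}$; after cancellation every surviving monomial has both exponents $\leq\max(m,n)-1$, exactly as in the even case. Since $A\leq\delta_a$ gives $\max(a_i,a_{i+1})\leq a-i$, this yields $\partial_i(\mathcal{H}_a)\subseteq\mathcal{H}_a$ and hence $\sch_w\in\mathcal{H}_a$ immediately---the paper records this in one line. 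For integrality the paper then reuses the same device you used for independence: write $f=\sum_w c_w\sch_w$ over $\Q$ (cardinality plus $\Q$-independence), apply $\partial_u$, and read off the degree-$0$ part; by \eqref{eqn_oddop_schubert} only $w=u$ survives, giving $\pm c_u$, which lies in $\Z$ because $\partial_u(f)\in\opol_a$. No monomial combinatorics or OWL needed.
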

\begin{proof} It is immediate from the definition of the Schubert polynomials that they are all contained in $\mathcal{H}_a$, as the operators $\partial_i$ only decrease exponents from $\undx^{\delta_a}$.  Since the set of Schubert polynomials and the defining basis of $\mathcal{H}_a$ both have $a!$ elements, it suffices to show linear independence and unimodularity.  Suppose
\begin{equation}
\sum_{w\in S_a}c_w\sch_w(x)=0
\end{equation}
with $c_w\in\Z$.  Then by applying various operators $\partial_u$ (as in the proof of Proposition \ref{prop_onh_li}), we see that all the $c_w=0$, proving linear independence over $\Q$.  So any $f\in \mathcal{H}_a$ is a rational linear combination of Schubert polynomials; but applying $\partial_u$'s to an expression $f=\sum_wc_w\sch_w$, we see that each $c_w\in\Z$.\end{proof}

\begin{prop}\label{prop_opol_free_mod} $\opol_a$ is a free left and right $\osym_a$-module of graded rank $q^{a(a-1)/2}[a]!$, with a homogeneous basis given by the odd Schubert polynomials $\lbrace\sch_w\rbrace_{w\in S_a}$.\end{prop}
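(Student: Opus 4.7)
The strategy is to prove right $\osym_a$-freeness directly using the operators $\partial_u$, and then to deduce left freeness from an anti-involution of $\opol_a$. The graded rank check is easy: the length generating function of $S_a$ is $\sum_{w\in S_a}q^{2\ell(w)}=q^{a(a-1)/2}[a]!$, so by Proposition \ref{prop_size_osymm} the expected free module has graded rank $q^{a(a-1)/2}[a]!\cdot\qrk(\osym_a)=(1-q^2)^{-a}=\qrk(\opol_a)$.

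For the right structure, first observe that each $\partial_i$, and hence each $\partial_u$, is right $\osym_a$-linear: the Leibniz rule \eqref{eqn-leibniz} gives $\partial_i(fg)=\partial_i(f)g+s_i(f)\partial_i(g)$, and the second summand vanishes whenever $g\in\osym_a\subseteq\ker(\partial_i)$. Linear independence of $\{\sch_w\}$ then follows by the standard ``pick the longest term and hit it with $\partial_u$'' argument: in a relation $\sum_w\sch_w f_w=0$, choosing $u$ of maximal length with $f_u\neq0$ and applying $\partial_u$, the formula \eqref{eqn_oddop_schubert} kills every $w\neq u$ with $\ell(w)\leq\ell(u)$; every $w$ with $\ell(w)>\ell(u)$ contributes $0=f_w$ by choice of $u$; and $\partial_u\sch_u=\pm(-1)^{\binom{a}{3}}$ leaves $\pm f_u=0$, contradicting $f_u\neq0$.

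To upgrade linear independence to a basis, I show the right submodule $M:=\bigoplus_w\sch_w\osym_a$ is a \emph{saturated} $\Z$-submodule of $\opol_a$: if $nf\in M$ for $n\in\Z_{>0}$ and $f\in\opol_a$, then $f\in M$. Writing $nf=\sum_w\sch_wg_w$ and repeating the calculation with $u$ of maximal length among $\{w:g_w\neq0\}$ yields $n\partial_u(f)=\pm(-1)^{\binom{a}{3}}g_u$. Since $g_u\in\osym_a$ forces $n\partial_i(\partial_u(f))=\partial_i(g_u)=0$ for every $i$ and $\opol_a$ is torsion-free, $\partial_u(f)\in\bigcap_i\ker\partial_i=\osym_a$, so $g_u/n\in\osym_a$. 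Replacing $f$ by $f-\sch_u(g_u/n)$ strictly reduces the number of nonzero $g_w$, and iteration delivers $f\in M$. Saturation combined with the graded rank equality forces $\opol_a/M$ to be a torsion-free graded abelian group of rank zero in every degree, hence zero, so $\opol_a=M$ as right modules.

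For left freeness, introduce the anti-involution $\iota\colon\opol_a\to\opol_a$ fixing each $x_i$ and satisfying $\iota(fg)=\iota(g)\iota(f)$; it respects the relations $x_ix_j+x_jx_i=0$. A direct computation gives $\iota(\varepsilon_k)=(-1)^{\binom{k}{2}}\varepsilon_k$ and $\iota(\undx^A)=\pm\undx^A$, so $\iota$ preserves $\osym_a$ and acts by a $\Z$-linear automorphism on $\mathcal{H}_a$. Applying $\iota$ to the right decomposition $\opol_a=\bigoplus_w\sch_w\osym_a$ yields $\opol_a=\bigoplus_w\osym_a\cdot\iota(\sch_w)$, so $\{\iota(\sch_w)\}$ is a left $\osym_a$-basis; since both $\{\iota(\sch_w)\}$ and $\{\sch_w\}$ are $\Z$-bases of $\mathcal{H}_a$ by Lemma \ref{lem_sch_zbasis}, the transition matrix between them lies in $\mathrm{GL}_{a!}(\Z)$, making $\{\sch_w\}$ itself a left $\osym_a$-basis. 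The main obstacle is the saturation step: linear independence plus graded rank equality only exhibits $M$ as a finite-index sublattice of $\opol_a$ in each degree, and showing the index is one is precisely the divisibility content extracted from $n\mid g_u$ in $\opol_a$.
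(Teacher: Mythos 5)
Your proof is correct, and it takes a genuinely different route from the paper's. The paper attacks the left-module statement head-on: it shows the multiplication map $\osym_a\otimes\mathcal{H}_a\to\opol_a$ is surjective by induction on the number of variables, using Corollary \ref{cor_adjoin_xn} (that $\osym_a[x_a]=\osym_{a-1}[x_a]$) to rewrite an arbitrary $f$ with coefficients in $\osym_a$ times elements of $\mathcal{H}_{a-1}$ times powers of $x_a$, and then obtains injectivity from the graded rank count; the right-module case is dismissed as ``similar.'' You instead prove the right-module statement first, getting linear independence from the $\partial_u$-pairing with Schubert polynomials and spanning from the saturation-plus-rank argument, and then transport the result to the left side via the reversal anti-involution $\iota$. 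Both work. The paper's induction is constructive---it actually produces the coefficients of a given $f$---at the cost of the $\osym_{a-1}[x_a]$ bookkeeping; your saturation step is more uniform and correctly identifies and closes the one real gap in the naive ``independence plus equal ranks'' reasoning, namely that this a priori only exhibits $\bigoplus_w\sch_w\osym_a$ as a finite-index sublattice in each degree (the divisibility $g_u=\pm(-1)^{\binom{a}{3}}n\,\partial_u(f)$ together with torsion-freeness is exactly what kills the index). Your $\iota$-transfer, using that $\iota$ preserves $\osym_a$ and acts by signs on the monomial basis of $\mathcal{H}_a$ so that the transition matrix between $\{\sch_w\}$ and $\{\iota(\sch_w)\}$ lies in $\mathrm{GL}_{a!}(\Z)$, is also a cleaner justification of the left/right symmetry than the paper's unproved ``a similar proof shows.'' The only cosmetic point: depending on which of \eqref{eqn_oddop_schubert} or \eqref{eqn_onh_schubert} one reads literally, the operator isolating the coefficient of $\sch_u$ is $\partial_u$ or $\partial_{u^{-1}}$; since $\ell(u)=\ell(u^{-1})$ this does not affect your maximal-length argument.
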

\begin{proof} We will show that multiplication
\begin{equation}
\osym_a\otimes \mathcal{H}_a\to\opol_a
\end{equation}
is an isomorphism of abelian groups, realizing $\opol_a$ as a free left $\osym_a$-module; a similar proof shows it is a free right module with the same basis.\\
\indent Our first claim is that any $f\in\opol_a$ can be expressed in the form
\begin{equation}
f=\sum_{k=1}^{a-1}\sum_j\ell_{k,j}h_{k,j}x_a^k\qquad h_{k,j}\in \mathcal{H}_{a-1},\quad\ell_{k,j}\in\osym_a.
\end{equation}
This being clear for $a=1$, we proceed by induction.  Expand a given $f\in\opol_a$ in powers of $x_a$,
\begin{equation}
f=\sum_kx_a^kf_k=\sum_k\sum_{i=1}^{a-2}\sum_jx_a^k\ell_{i,j,k}h_{i,j,k}x_{a-1}^i,
\end{equation}
where $h_{i,j,k}\in \mathcal{H}_{a-2}$, $\ell_{i,j,k}\in\osym_{a-1}$, and $f_k\in\opol_{a-1}$ for all $i,j,k$.  Since $h_{i,j,k}x_{a-1}^i\in \mathcal{H}_{a-1}$, we can re-label and re-index to obtain an expression
\begin{equation}
f=\sum_{j,k}x_a^k\ell_{k,j}h_{k,j}\qquad h_{k,j}\in \mathcal{H}_{a-1},\quad\ell_{k,j}\in\osym_{a-1}.
\end{equation}
By Corollary \ref{cor_adjoin_xn}, each $x_a^k\ell_{k,j}$ can be re-written as a sum over terms of the form $x_a^{k'}\ell$, where $\ell\in\osym_a$ and $1\leq k'\leq a-1$.  Re-indexing and collecting terms again, this proves the claim.\\
\indent The above claim implies surjectivity of the multiplication map.  Injectivity follows from a graded rank count.  We have shown that $\opol_a$ is a free (left and right) $\osym_a$-module of graded rank
\begin{equation}
(\qrk)_{\osym_a}(\opol_a)=q^{a(a-1)/2}[a]!,
\end{equation}
since the right-hand side is equal to $\sum_wq^{2\ell(w)}=\sum_wq^{\deg(\sch_w(x))}$.
\end{proof}

We briefly recall the grading on matrix algebras over graded rings.  Let $f(q)$ be a Laurent series in $q$ and let $h(q)$ be a Laurent polynomial in $q$.  If $A$ is a graded ring with $\qrk(A)=f(q)$ and $M$ is a graded $A$-module with $(\qrk)_A(M)=h(q)$, then
\begin{equation}
\qrk(\End_A(M))=f(q)h(q)h(q^{-1}).
\end{equation}

\begin{cor}  \label{cor_nilmatrix}
The natural action of $\ONH_a$ on $\opol_a$ by multiplication and odd divided difference operators is an isomorphism
\begin{equation}
\varphi:\xymatrix{\ONH_a\ar[r]^-\cong&\End_{\osym_a}(\opol_a)\cong\Mat_{q^{a(a-1)/2}[a]!}(\osym_a).}
\end{equation}
\end{cor}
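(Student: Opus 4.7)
The plan is to verify that $\varphi$ is a well-defined injective ring homomorphism landing in the stated matrix algebra, and then to obtain surjectivity by combining a graded-rank comparison with an explicit matrix-unit construction.

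\textbf{Well-definedness and codomain.} The operators of multiplication by $x_j$ and odd divided differences $\partial_i$ on $\opol_a$ satisfy the defining relations of $\ONH_a$, as established in the proposition near the start of Section \ref{sec_osymm}. So $\varphi$ is a well-defined ring map into $\End_\Z(\opol_a)$. Viewing $\opol_a$ as a right $\osym_a$-module, left multiplication by $x_j$ commutes with right multiplication by any element, and for $g \in \osym_a$ the Leibniz rule \eqref{eqn-leibniz} gives $\partial_i(fg) = \partial_i(f)g + s_i(f)\partial_i(g) = \partial_i(f)g$ since $\partial_i(g) = 0$. Hence $\varphi$ factors through $\End_{\osym_a}(\opol_a)$, and the basis $\{\sch_w\}_{w\in S_a}$ from Proposition \ref{prop_opol_free_mod} identifies $\End_{\osym_a}(\opol_a) \cong \Mat_{q^{a(a-1)/2}[a]!}(\osym_a)$.

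\textbf{Injectivity and rank comparison.} Injectivity of $\varphi$ is immediate from Proposition \ref{prop_onh_li}. Using Propositions \ref{prop_size_osymm} and \ref{prop_onh_li}, together with the fact that $[a]!$ is invariant under $q \mapsto q^{-1}$,
\[
\qrk\bigl(\End_{\osym_a}(\opol_a)\bigr) = \qrk(\osym_a)\cdot q^{a(a-1)/2}[a]! \cdot q^{-a(a-1)/2}[a]! = q^{-a(a-1)/2}[a]!/(1-q^2)^a = \qrk(\ONH_a).
\]

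\textbf{Surjectivity.} The main obstacle is that injectivity together with equality of graded ranks does not suffice over $\Z$ to conclude that $\varphi$ is an isomorphism. To close this gap, I would exhibit explicit preimages of the matrix units $E_{u,v}$ defined by $E_{u,v}(\sch_w) = \delta_{v,w}\sch_u$. A length count using \eqref{eqn_oddop_schubert} shows $\partial_{w_0}(\sch_v) = \pm(-1)^{\binom{a}{3}}\delta_{v,w_0}$, since $\ell(vw_0) = \ell(v) - \ell(w_0)$ forces $v = w_0$, in which case $\partial_{w_0}(\undx^{\delta_a}) = \sch_e = (-1)^{\binom{a}{3}}$. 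Consequently, for every $u \in S_a$, the element $\sch_u\partial_{w_0} \in \ONH_a$ satisfies $\varphi(\sch_u\partial_{w_0}) = \pm(-1)^{\binom{a}{3}}E_{u,w_0}$, placing the entire $w_0$-column of matrix units inside the image. The remaining matrix units $E_{u,v}$ with $v \neq w_0$ are then produced by a downward induction on $\ell(v)$: the action of $\undx^A\partial_w$ on the Schubert basis, as recorded in \eqref{eqn_onh_schubert}, is triangular with respect to Coxeter length, with an easily-invertible diagonal at the length-matching level. This triangular structure lets one solve for each $E_{u,v}$ in terms of previously produced matrix units and further elements of $\varphi(\ONH_a)$, completing the proof.
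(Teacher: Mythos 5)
Your proof is correct, and for the surjectivity step it takes a genuinely different (and more careful) route than the paper. The paper's own proof is two lines: injectivity follows from the linear independence of the operators $\undx^A\partial_w$ (Proposition \ref{prop_onh_li}), and then surjectivity is asserted from the equality of graded ranks. As you rightly point out, over $\Z$ an injective map of graded free abelian groups of equal finite rank in each degree need not be surjective, so the rank count only shows that $\varphi$ becomes an isomorphism after tensoring with $\Q$; the paper leaves the integrality of the inverse implicit. Your construction of the matrix units closes this gap explicitly and is essentially the classical argument from \cite{Man} for the even nilHecke algebra (and it parallels what the paper itself does later, in Theorem \ref{thm_nil_matrix}, via the elements $\sigma_{\und{\ell}},\lambda_{\und{\ell}}$). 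Your triangularity induction does work: writing $\sch_u\partial_{v'}=\pm E_{u,v}+C$ with $C$ supported on columns $w$ of length greater than $\ell(v)$, one has $C=(\sch_u\partial_{v'})\circ\sum_{\ell(w)>\ell(v)}E_{w,w}$, so $E_{u,v}$ lies in the image once the diagonal units in higher length do. The one detail you should add is how to recover the full $\osym_a$-coefficients on the matrix units, since the $E_{u,v}$ alone only generate $\Mat_{a!}(\Z)$: for $g\in\osym_a$ the element $g\,\undx^{\delta_a}\partial_{w_0}\in\ONH_a$ composed on the left with $E_{w_0,w_0}$ gives $\pm E_{w_0,w_0}\,g^{w_0}$ by the twisted left linearity \eqref{eqn-Da-left-linearity}, and since $w_0$ is an automorphism of $\osym_a$ (Lemma \ref{lemma-w0-e}) this produces $E_{w_0,w_0}c$ for every $c\in\osym_a$; then $E_{u,v}c=E_{u,w_0}(E_{w_0,w_0}c)E_{w_0,v}$ finishes surjectivity. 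With that addition your argument is complete and strictly stronger than the one printed in the paper.
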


\begin{proof} Since the $\ONH_a$ action is by linearly independent operators (see proof of Proposition \ref{prop_onh_li}), $\varphi\otimes_\Z\Q$ (and hence $\varphi$) is injective.  Both $\ONH_a$ and $\End_{\osym_a}(\opol_a)$ have the same graded rank, so $\varphi$ is surjective as well.\end{proof}

\begin{prop} For $a\geq2$, the center of $\ONH_a$ is the ring of symmetric polynomials in the squared variables $x_1^2,\ldots,x_a^2$.  This coincides with the center of $\osym_a$.\end{prop}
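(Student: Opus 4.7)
The plan is to use the matrix algebra structure of Corollary~\ref{cor_nilmatrix} to reduce to computing $Z(\osym_a)$, and then to identify the latter with symmetric polynomials in the squared variables by a direct analysis in $\opol_a$. The isomorphism $\varphi\colon \ONH_a \xrightarrow{\sim} \End_{\osym_a}(\opol_a)$ from Corollary~\ref{cor_nilmatrix} realizes $\ONH_a$ as a matrix algebra over $\osym_a$, with $\opol_a$ regarded as a free right $\osym_a$-module via Proposition~\ref{prop_opol_free_mod}. Since the center of $\Mat_n(R)$ consists of $Z(R)$ embedded as scalar matrices, we obtain $Z(\ONH_a) \cong Z(\osym_a)$, which already proves the second assertion. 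It remains to show $Z(\osym_a) = \Z[x_1^2, \ldots, x_a^2]^{S_a}$.

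For the inclusion $\Z[x_1^2, \ldots, x_a^2]^{S_a} \subseteq Z(\osym_a)$, I would first observe that each squared variable is central in $\opol_a$: for $i \neq j$, $x_i^2 x_j = -x_i x_j x_i = x_j x_i^2$. Hence any polynomial in the squared variables lies in $Z(\opol_a)$ and commutes with all of $\osym_a$. To verify that the $S_a$-symmetric such polynomials actually lie in $\osym_a$, note that on squares the twisted $S_a$-action \eqref{eqn-S-action} reduces to the usual permutation action, so ordinary symmetry coincides with $s_i$-invariance. Then each generator $\varepsilon_k^{\mathrm{even}}(x_1^2, \ldots, x_a^2)$ is killed by every $\partial_i$, by direct calculation using the Leibniz rule \eqref{eqn-leibniz} together with $\partial_i(x_i^2 + x_{i+1}^2) = 0$ and $\partial_i(x_i^2 x_{i+1}^2) = 0$. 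Since $\ker \partial_i$ is closed under multiplication (as $\partial_i(fg) = \partial_i(f) g + s_i(f) \partial_i(g)$ vanishes when both factors are in $\osym_a$), this extends to all of $\Z[x_1^2, \ldots, x_a^2]^{S_a}$.

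For the reverse inclusion, given $z \in Z(\osym_a) \subseteq \opol_a$, the plan is to show $z \in Z(\opol_a)$ and then invoke the computation $Z(\opol_a) \cap \osym_a = \Z[x_1^2, \ldots, x_a^2]^{S_a}$. The center of $\opol_a$ is determined monomial-by-monomial: comparing $x_k \cdot x_1^{a_1} \cdots x_a^{a_a} = (-1)^{a_1+\cdots+a_{k-1}} x_1^{a_1}\cdots x_k^{a_k+1}\cdots x_a^{a_a}$ with $x_1^{a_1} \cdots x_a^{a_a} \cdot x_k = (-1)^{a_{k+1}+\cdots+a_a} x_1^{a_1}\cdots x_k^{a_k+1}\cdots x_a^{a_a}$ and demanding equality for all $k$ forces each $a_j$ to be even. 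Intersecting with $\osym_a$ then cuts out $\Z[x_1^2, \ldots, x_a^2]^{S_a}$ by the symmetry requirement.

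The hard part will be justifying the step $z \in Z(\osym_a) \Rightarrow z \in Z(\opol_a)$, since a priori centrality in a subring does not entail centrality in an overring. The attack I have in mind is to exploit that $z$ commutes with each odd elementary symmetric polynomial $\varepsilon_k = \sum_{i_1 < \cdots < i_k} \widetilde{x}_{i_1} \cdots \widetilde{x}_{i_k}$; the equations $[z, \varepsilon_k] = 0$, combined with the linear independence of monomials in $\opol_a$, should yield enough constraints on the individual commutators $[z, x_i]$ to force them to vanish. Alternatively, one can leverage the faithful action of $\ONH_a$ on $\opol_a$: under $\varphi$, a central element acts as right-multiplication by some $\lambda \in Z(\osym_a)$, and consistency with the left-multiplication action of $\opol_a \subset \ONH_a$ should force $\lambda$ into $Z(\opol_a)$.
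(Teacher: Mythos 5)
Your matrix-algebra reduction and the easy inclusion $\Z[x_1^2,\ldots,x_a^2]^{S_a}\subseteq Z(\osym_a)$ track what the paper does, but the reverse inclusion --- the actual content of the proposition --- is left unproved. You correctly flag the step $z\in Z(\osym_a)\Rightarrow z\in Z(\opol_a)$ as the hard part, but neither proposed attack is carried out, and neither is likely to close the gap as stated: commuting with the odd elementary symmetric polynomials is far weaker information than commuting with the individual $x_j$, and in your second attack the ``consistency'' condition is vacuous, since right multiplication by $\lambda$ commutes with left multiplication by anything, by associativity. The paper never descends to $Z(\osym_a)$ at all: it characterizes which skew polynomials $f\in\opol_a\subset\ONH_a$ are central in $\ONH_a$, where one gets to use commutation with left multiplication by each $x_j$ (its first claim: comparing $x_jf$ with $fx_j$ degree by degree forces parity constraints on the exponents) and commutation with each $\partial_i$ (applying the commutator to $1$ gives $\partial_i(f)=0$); the remaining work is the paper's second claim, proved by a decreasing induction on $|k-\ell|$, that for a polynomial in the squared variables $\partial_i(f)=0$ is equivalent to $s_i(f)=f$. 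Your detour through $Z(\osym_a)$ discards exactly the elements ($x_j$ and $\partial_i$) that make this run, and your closing step ``intersecting with $\osym_a$ cuts out $\Z[x_1^2,\ldots,x_a^2]^{S_a}$ by the symmetry requirement'' silently uses the nontrivial direction of that second claim.

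There is also a concrete error in your computation of $Z(\opol_a)$. The conditions $(-1)^{a_1+\cdots+a_{k-1}}=(-1)^{a_{k+1}+\cdots+a_a}$ for all $k$ force all exponents $a_j$ to have the \emph{same parity}, not to be even: when $a$ is odd, monomials with all exponents odd also satisfy them. For instance $x_1x_2x_3$ is central in $\opol_3$ (e.g.\ $x_1x_2x_3\cdot x_1=x_1^2x_2x_3=x_1\cdot x_1x_2x_3$), and it equals $-\varepsilon_3\in\osym_3$, so $Z(\opol_3)\cap\osym_3$ is strictly larger than $\Z[x_1^2,x_2^2,x_3^2]^{S_3}$; your concluding step would therefore fail for odd $a$ even if the hard step were established. (The same parity loophole is present in the paper's own first claim, whose argument only shows that $|A|-a_j$ is even for each $j$; one can check that $x_1x_2x_3$ commutes with all generators of $\ONH_3$, so the statement seems to need either the super-center or a further argument. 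But that is an issue with the source, not a vindication of your route, which has the independent unproved step above.)
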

\begin{proof} Since $\ONH_a$ is a matrix ring over $\osym_a$, their centers are the same when we view $\osym_a\subset\ONH_a$ as scalar multiples of the identity.  For the duration of this proof, then, we will just refer to ``central'' (skew) polynomials.  First claim: \textit{if} $f$ \textit{is central, then} $f$ \textit{is a polynomial in the squared variables} $x_1^2,x_2^2,\ldots,x_a^2$.  To see this, fix some $1\leq j\leq a$ and expand the skew polynomial $f$ as a polynomial in $x_j$,
\begin{equation*}
f=\sum_{k\geq0}f_kx_j^k.
\end{equation*}
Then for each $k,\ell\geq0$, let $f_{k,\ell}$ be the degree $\ell$ part of $f_k$, so
\begin{equation*}
f=\sum_{k,\ell\geq0}f_{k,\ell}x_j^k.
\end{equation*}
Multiplying this equation by $x_j$ on the left and on the right and comparing the results, it follows that the degree of each $f_{k,\ell}$ must be even.  Doing this for each $j$ separately, the first claim follows.

Second claim: \textit{if} $f$ \textit{is a polynomial in the squared variables, then} $\partial_i(f)=0$ \textit{if and only if} $s_i(f)=f$.  First, suppose $s_i(f)=f$.  Then $f$ is a linear combination of terms which are a product of a factor not involving $x_i,x_{i+1}$ and a factor of the form $x_i^{2k}x_{i+1}^{2\ell}+x_i^{2\ell}x_{i+1}^{2k}$.  The operator $\partial_i$ annihilates both sorts of factor, so $\partial_i(f)=0$.  Conversely, suppose $\partial_i(f)=0$; without loss of generality suppose $f$ is of homogeneous degree.  Expand
\begin{equation*}
f=\sum_{k,\ell\geq0}f_{k,\ell}x_i^{2k}x_{i+1}^{2\ell},
\end{equation*}
where $f_{k,\ell}$ is a polynomial in the variables $x_j$ for $j\neq i,i+1$.  So
\begin{equation*}\begin{split}
0&=\partial_i(f)=\sum_{k,\ell\geq0}f_{k,\ell}\partial_i(x_i^{2k}x_{i+1}^{2\ell})\\
&=\sum_{k,\ell\geq0}(x_i^2x_{i+1}^2)^\ell\left(f_{k,\ell}\partial_i(x_i^{2(k-\ell)})+f_{\ell,k}\partial_i(x_{i+1}^{2(k-\ell)})\right).
\end{split}\end{equation*}
By decreasing induction on $|k-\ell|$, this implies that $f_{k,\ell}=f_{\ell,k}$ for all $k,\ell$, that is, $s_i(f)=f$.

We now use the second claim to show that a skew polynomial $f$ is central if and only if it is a symmetric polynomial in the squared variables $x_1^2,\ldots,x_a^2$.  By the second claim, $f$ a symmetric polynomial in the squared variables if and only if it is a polynomial in the squared variables and $\partial_i(f)=0$ for all $i$.  For such an $f$,
\begin{equation*}
(\partial_if-f\partial_i)(g)=\partial_i(f)g+s_i(f)\partial_i(g)-f\partial_i(g)=0,
\end{equation*}
so $f$ commutes with all divided difference operators.  So $f\in Z(\ONH_a)$ if $f$ is a symmetric polynomial in the squared variables.  Conversely, using the above observations, it is easy to see that all symmetric polynomials in the squared variables are in $Z(\osym_a)$.\end{proof}

%
\subsubsection{Odd symmetrization}
%

\begin{lem}\label{lemma-w0-e} The longest element of $S_a$ acts on odd elementary polynomials as
\begin{equation}\label{eqn-w0-e}
(\varepsilon_k)^{w_0}=(-1)^{\binom{k}{2}+k\binom{a-1}{2}}\varepsilon_k.
\end{equation}
In particular, $(\osym_a)^{w_0}=\osym_a$.\end{lem}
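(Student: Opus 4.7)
The plan is to reduce the statement to how $w_0$ acts on a single variable, then apply this uniformly to each term of $\varepsilon_k$ and account for reordering.

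\textbf{Step 1: action of $w_0$ on the $x_i$.} Examining \eqref{eqn-S-action}, one sees that for every index $i$ and every simple transposition $s_j$, one has $s_j(x_i) = -x_{s_j(i)}$, i.e.\ $s_j$ permutes the variables and introduces the sign $-1$ \emph{uniformly in $i$}. Because this sign does not depend on $i$, composing the endomorphisms in a reduced expression $w_0 = s_{j_1}\cdots s_{j_\ell}$ of length $\ell = \binom{a}{2}$ yields
\begin{equation*}
w_0(x_i) = (-1)^{\binom{a}{2}}\, x_{a+1-i}.
\end{equation*}

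\textbf{Step 2: action of $w_0$ on $\tilde{x}_i$.} Substitute $\tilde{x}_i=(-1)^{i-1}x_i$ and $x_{a+1-i} = (-1)^{a-i}\tilde{x}_{a+1-i}$ into the previous formula. The exponents $i-1$ and $a-i$ combine to give $a-1$, so the $i$-dependence disappears:
\begin{equation*}
w_0(\tilde{x}_i) \;=\; (-1)^{\binom{a}{2}+a-1}\,\tilde{x}_{a+1-i}.
\end{equation*}
Call this uniform sign $(-1)^{\sigma}$, where $\sigma = \binom{a}{2}+a-1$.

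\textbf{Step 3: apply to $\varepsilon_k$ and reorder.} Because $w_0$ is a ring endomorphism and the sign in Step 2 is uniform,
\begin{equation*}
w_0(\varepsilon_k) \;=\; (-1)^{k\sigma}\!\!\sum_{i_1<\cdots<i_k}\!\tilde{x}_{a+1-i_1}\cdots\tilde{x}_{a+1-i_k}.
\end{equation*}
Re-indexing by $j_r := a+1-i_{k+1-r}$ converts the sum into one over $j_1<\cdots<j_k$, but reverses the order of factors, giving the monomial $\tilde{x}_{j_k}\tilde{x}_{j_{k-1}}\cdots\tilde{x}_{j_1}$. Since the $\tilde{x}_j$'s skew-commute, reversing $k$ distinct factors produces the sign $(-1)^{\binom{k}{2}}$, so
\begin{equation*}
w_0(\varepsilon_k) \;=\; (-1)^{k\sigma + \binom{k}{2}}\,\varepsilon_k.
\end{equation*}

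\textbf{Step 4: simplify the sign.} A short calculation gives $\sigma - \binom{a-1}{2} = \tfrac{(a-1)(a+2)-(a-1)(a-2)}{2} = 2(a-1)$, so $k\sigma \equiv k\binom{a-1}{2}\pmod 2$. Therefore $w_0(\varepsilon_k) = (-1)^{\binom{k}{2}+k\binom{a-1}{2}}\varepsilon_k$, as claimed. The second assertion, $(\osym_a)^{w_0} = \osym_a$, then follows immediately because the $\varepsilon_k$ generate $\osym_a$ as a ring (Proposition \ref{prop_size_osymm}).

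The only real content is the observation in Step 1 that every $s_j$ contributes a sign of $-1$ on each variable independent of the variable's index, which guarantees the same uniformity after composition; there is no genuine obstacle beyond careful sign bookkeeping in Steps 2 and 4.
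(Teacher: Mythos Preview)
Your proof is correct and follows essentially the same approach as the paper: both compute the three sign contributions (the $(-1)^{\binom{a}{2}}$ per variable from the length of $w_0$, the $(-1)^{\binom{k}{2}}$ from reordering, and the tilde conversion sign) and check they combine to $(-1)^{\binom{k}{2}+k\binom{a-1}{2}}$. Your choice to pass to $\tilde{x}_i$ first (Step 2) makes the uniformity of the sign especially transparent, but the argument is the same as the paper's.
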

\begin{proof} The set of monomials appearing in $\varepsilon_k$ is unchanged by $w_0$, so we need only consider the incurred sign.  The action on monomials is
\begin{equation*}
x_{i_1}\cdots x_{i_k}\mapsto(-1)^{k\binom{a}{2}}x_{a+1-i_1}\cdots x_{a+1-i_k}.
\end{equation*}
The sign appears because elementary transpositions act on variables $x_i$ with a minus sign.  Next, a sign of $(-1)^{\binom{k}{2}}$ is incurred in sorting the right hand side into ascending order.  Finally, remember that the monomials in $\varepsilon_k$ appear with tildes, $\widetilde{x}_i=(-1)^{i-1}x_i$.  The sign difference between removing the tildes on the left monomial and adding them in on the right monomial is $(-1)^{k(a-1)}$.  Putting all these signs together, the sign is as described in the statement of the lemma.  Since products of odd elementary polynomials are a basis of $\osym_a$, it follows that $(\osym_a)^{w_0}=\osym_a$.\end{proof}

\begin{rem}
For $a>2$ it is easy to see that given $s_j \in S_a$, the action by $s_j$ does not preserve the ring of odd symmetric functions.  For example
\[
 s_1( \varepsilon_1(\xt_1, \xt_2, \xt_3)) = \xt_2 + \xt_1 -\xt_3,
\]
which is not odd symmetric.
\end{rem}

Recall our fixed choice of Coxeter word for the longest Weyl group element,
\begin{equation}
D_a=\partial_{w_0}=\partial_1(\partial_2\partial_1)\cdots(\partial_{a-1}\partial_{a-2}\cdots\partial_1).
\end{equation}
A useful $\Z$-linear map is
\begin{equation}\label{eqn-odd-symmetrization}\begin{split}
&\Ss:\opol_a\to\osym_a\\
&f\mapsto(-1)^{\binom{a}{3}}\left(D_a(f\undx^{\delta_a})\right)^{w_0},
\end{split}\end{equation}
which we call \textit{odd symmetrization}.  The name comes from the fact that, as we will prove in this section, $\Ss$ is the projection operator from $\opol_a$ onto its lowest-degree indecomposable summand, the subring $\osym_a$.  In order to prove this, we first establish a few lemmas.

A word $w=s_{i_1}\cdots s_{i_r}$ in the symmetric group $S_a$ can act on a skew polynomial $f$ in two ways:
\begin{itemize}
\item act by $\partial_{i_1}\cdots\partial_{i_r}$, as an element of the odd nilCoxeter ring,
\item act by $s_{i_1}\cdots s_{i_r}$, as an element of $S_a$ (equation \eqref{eqn-S-action}).
\end{itemize}
One way to hybridize and keep track of these two actions is to equip $w$ with a function $\xi:\lbrace1,2,\ldots,r\rbrace\to\lbrace0,1\rbrace$ and to say
\begin{equation*}
\text{the simple transposition }s_{i_j}\text{ acts as}
\begin{cases}
s_{i_j}		&	\text{if }\xi(j)=0	\\
\partial_{i_j}	&	\text{if }\xi(j)=1.
\end{cases}
\end{equation*}
We will refer to the resulting operator as the \textit{generalized action} of the pair $(w,\xi)$, and denote its action on a polynomial $f$ by $w^\xi\cdot f$.  Give such a pair, define its \textit{omission word} $w^\xi_\text{om}$ to be the sub-word of $w$ consisting of just those $s_{i_j}$ such that $\xi(j)=0$; that is, the sub-word of $w$ corresponding to those transpositions which act via $S_a$ rather than the odd nilCoxeter ring.  In this language, the Leibniz rule \eqref{eqn-leibniz} can be generalized,
\begin{equation}\label{eqn-generalized-leibniz}
\partial_w(fg)=\partial_{i_1}\cdots\partial_{i_r}(fg)=\sum_\xi(w^\xi\cdot f)\partial_{w^\xi_\text{om}}(g).
\end{equation}
The sum is over all $2^{\ell(w)}$ possible choices of $\xi$.  Note that the action of $w^\xi$ is a generalized action, while the action of $w^\xi_\text{om}$ is an action by odd divided difference operators.

\begin{lem}[Omission Word Lemma (OWL)]\label{lem-owl} Suppose $w$ is a reduced expression for $w_0$.  For any $\xi$ as above, either
\begin{enumerate}
\item $w^\xi\cdot f=0$ for all $f\in\osym_a$,
\item $\xi(j)=0$ for all $j=1,\ldots,r$, or
\item the omission word $w^\xi_\text{om}$ is non-reduced.
\end{enumerate}\end{lem}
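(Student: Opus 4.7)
The approach is to push the rightmost $\partial$ in $w^\xi$ toward the right, either obtaining a form that manifestly annihilates $f\in\osym_a$ or showing that $w^\xi_{\mathrm{om}}$ is non-reduced. First I would establish two operator identities on $\opol_a$: (a) $\partial_i s_j = -s_j \partial_i$ whenever $|i-j|\neq 1$ (this includes $i=j$); and (b) the twisted braid $s_i\partial_{i+1}s_i = s_{i+1}\partial_i s_{i+1}$. Each can be verified on the generators $x_1,\dots,x_a$ and then extended to all of $\opol_a$ via the Leibniz rule \eqref{eqn-leibniz}, since the two sides of each identity satisfy the same twisted derivation rule; braiding (b) comes from the $S_a$ braid $s_is_{i+1}s_i = s_{i+1}s_is_{i+1}$ combined with (a). A consequence, obtainable by iterating (a) and (b), is that for any $y\in S_a$ with reduced expression $\underline{y}$ (acting on $\opol_a$ through the $S_a$-action), one has $\underline{y}^{-1}\partial_i\underline{y}=\pm\partial_\ell$ precisely when $y^{-1}(\alpha_i)$ is a simple root $\alpha_\ell$, in which case $\ell=y^{-1}(i)$.

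Now suppose $\xi\not\equiv 0$ and let $k$ be the rightmost position with $\xi(k)=1$, so $\xi(j)=0$ for $j>k$. Set $y=s_{i_{k+1}}\cdots s_{i_r}\in S_a$; the expression displayed is reduced because it is a suffix of the reduced word $w$. If $k=r$ then the rightmost operator is $\partial_{i_r}$, which annihilates $f\in\osym_a$, giving option~(1). Otherwise, if $y^{-1}(\alpha_{i_k})$ is simple, the identities above rewrite $\partial_{i_k}\underline{y}$ as $\pm\underline{y}\partial_\ell$ with $\ell=y^{-1}(i_k)$; the rightmost effective operator on $f$ becomes $\partial_\ell$, annihilating $f$, so again option~(1) holds.

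It remains to show that if $\beta:=y^{-1}(\alpha_{i_k})$ is a non-simple positive root, then $w^\xi_{\mathrm{om}}$ is non-reduced. In the single-$\partial$ case (only one value $\xi(k)=1$), the omission word represents the element $w_0 r_\beta$, where $r_\beta$ is the reflection along $\beta$, and the standard Coxeter length formula $\ell(w_0 r_\beta)=\ell(w_0)-(2\,\text{ht}(\beta)-1)$ shows that the element's length drops below the omission word's length $r-1$ exactly when $\text{ht}(\beta)\geq 2$, i.e., precisely when $\beta$ is non-simple. For multiple $\partial$'s I would induct on the number of $\partial$-positions: each additional prefix removal introduces a further reflection factor into the element represented by the omission word, and (after conjugating these reflections through $y$) a careful length count shows the total defect between word length and element length is bounded below by $2\,\text{ht}(\beta)-2>0$, so the omission word remains non-reduced. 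The main obstacle is precisely this final combinatorial step, tracking how successive prefix-reflections interact within $S_a$ with the fixed non-simple root $\beta$ coming from the rightmost $\partial$; this bookkeeping is the reason the acknowledgements mention a substantial simplification of the argument.
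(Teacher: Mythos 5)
Your conjugation machinery is sound (it essentially reproduces part of Lemma \ref{lem-oddop-1}), and your analysis of the single-$\partial$ case is correct. But the case division you propose in general --- rightmost $\partial$ contributes a simple root $\Rightarrow$ case (1); non-simple root $\Rightarrow$ case (3) --- is false, so the deferred ``combinatorial bookkeeping'' is aimed at proving a statement that does not hold. Take $a=3$, $w=s_2s_1s_2$ (a reduced expression for $w_0$, indeed the one the paper's proof uses) and $\xi=(1,1,0)$, so the operator is $\partial_2\partial_1 s_2$. The rightmost $\partial$ sits at position $k=2$, $y=s_2$, and $\beta=y^{-1}(\alpha_1)=\alpha_1+\alpha_2$ is non-simple; yet the omission word is the single letter $s_2$, which is reduced. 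Your claimed lower bound on the defect, $2\,\mathrm{ht}(\beta)-2>0$, fails here (the defect is $0$), so case (3) does not occur and the lemma must be rescued by case (1).

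And indeed $\partial_2\partial_1 s_2(f)=0$ for all $f\in\osym_3$, but by a mechanism your argument cannot see: conjugating $\partial_1$ past $s_2$ produces the non-adjacent operator $\partial_{1,3}$, which does \emph{not} annihilate $\osym_3$ by itself; only the composite $\partial_2\partial_{1,3}$ does. This is exactly the paper's Lemma \ref{lem-oddop-2}, which asserts $\partial_{i+1}\cdots\partial_{j-1}\partial_{i,j}(f)=0$ for $f\in\osym_a$ and is the real engine of the proof: when $\beta$ is non-simple one must examine the letters to the \emph{left} of position $k$ and show that either they supply precisely the string $\partial_{i+1}\cdots\partial_{j-1}$ needed to kill $f$ (case (1)), or an intervening $\xi=0$ letter forces the omission word to be non-reduced (case (3), via Lemma \ref{lem-cox-first}). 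Your proposal has no analogue of Lemma \ref{lem-oddop-2} and no route to case (1) once $\beta$ is non-simple, so the gap is a missing idea, not bookkeeping.
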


In order to prove the OWL, we first introduce odd divided difference operators for non-adjacent transpositions.  Notation: let $s_{k,\ell}$ be the transposition of $k$ and $\ell$ in the symmetric group $S_a$, even if $|k-\ell|>1$.  For $1\leq i,j\leq a$ and $i\neq j$, define the corresponding odd divided difference operator $\partial_{i,j}$ by
\begin{equation}\begin{split}
&\partial_{i,j}(x_h)=\begin{cases}1&\text{if }h=i,j,\\0&\text{if }h\neq i,j,\end{cases}\\
&\partial_{i,j}(fg)=\partial_{i,j}(f)g+s_{i,j}(f)\partial_{i,j}(g).
\end{split}\end{equation}
Note that $\partial_{i,i+1}=\partial_i$ and that $\partial_{i,j}$ is homogeneous of degree $-2$.  It is not true in general that the kernel of $\partial_{i,j}$ contains $\osym_a$ (unless, of course, $j=i+1$).

\begin{lem}\label{lem-oddop-1}
\begin{enumerate}
\item For any $i\neq j$ and $k\neq\ell$, we have
\begin{equation}
\partial_{i,j}s_{k,\ell}+s_{k,\ell}\partial_{s_{k,\ell}(i,j)}=0
\end{equation}
as operators on $\opol_a$.  By $s_{k,\ell}(i,j)$, we mean the pair obtained by applying the transposition $s_{k,\ell}$ to $i$ and $j$.  In particular, $\partial_{i,j}$ and $s_{k,\ell}$ anticommute if $\lbrace i,j\rbrace\cap\lbrace k,\ell\rbrace=\varnothing$.
\item If $\lbrace i,j\rbrace\cap\lbrace k,\ell\rbrace=\varnothing$, then
\begin{equation}
\partial_{i,j}\partial_{k,\ell}+\partial_{k,\ell}\partial_{i,j}=0
\end{equation}
as operators on $\opol_a$.
\end{enumerate}\end{lem}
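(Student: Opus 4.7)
For part (1), the plan is to recognize the operator $T := \partial_{i,j}s_{k,\ell} + s_{k,\ell}\partial_{s_{k,\ell}(i,j)}$ as a twisted derivation on $\opol_a$ and then verify its vanishing on generators. First I would compute $T(fg)$: since $s_{k,\ell}$ is a ring endomorphism and both $\partial_{i,j}$ and $\partial_{s_{k,\ell}(i,j)}$ obey the odd Leibniz rule, the expansion produces two ``diagonal'' terms and two cross terms. The cross terms combine via the symmetric-group conjugation identity $s_{k,\ell}\, s_{s_{k,\ell}(i,j)} = s_{i,j}\, s_{k,\ell}$, and one obtains
\begin{equation*}
T(fg) = T(f)\cdot s_{k,\ell}(g) + s_{i,j}s_{k,\ell}(f)\cdot T(g).
\end{equation*}
Thus $T\equiv 0$ on $\opol_a$ as soon as $T(1)=0$ (obvious) and $T(x_h)=0$ for every generator $x_h$.

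The generator check is uniform rather than case-by-case: since $s_{k,\ell}(x_h) = -x_{s_{k,\ell}(h)}$ in every case,
\begin{equation*}
\partial_{i,j}s_{k,\ell}(x_h) = -\partial_{i,j}(x_{s_{k,\ell}(h)}) = -[s_{k,\ell}(h)\in\{i,j\}] = -[h\in s_{k,\ell}(\{i,j\})],
\end{equation*}
using $s_{k,\ell}^2=1$ and writing $[P]\in\{0,1\}$ for the truth value of $P$. Meanwhile $\partial_{s_{k,\ell}(i,j)}(x_h) = [h\in s_{k,\ell}(\{i,j\})]$, which is an integer scalar and therefore fixed by $s_{k,\ell}$. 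The two summands of $T(x_h)$ cancel. The ``in particular'' clause is just the specialization $\{i,j\}\cap\{k,\ell\}=\varnothing$, in which $s_{k,\ell}$ fixes $i,j$ and so $s_{k,\ell}(i,j)=(i,j)$.

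For part (2), assume $\{i,j\}\cap\{k,\ell\}=\varnothing$. Part (1) then yields the anticommutations $\partial_{i,j}s_{k,\ell} = -s_{k,\ell}\partial_{i,j}$ and $\partial_{k,\ell}s_{i,j} = -s_{i,j}\partial_{k,\ell}$. Setting $T' := \partial_{i,j}\partial_{k,\ell} + \partial_{k,\ell}\partial_{i,j}$ and expanding $T'(fg)$ by two applications of the odd Leibniz rule, the four cross terms cancel pairwise via these two anticommutations. Using that disjoint transpositions commute, $s_{i,j}s_{k,\ell} = s_{k,\ell}s_{i,j}$, the remaining terms assemble into
\begin{equation*}
T'(fg) = T'(f)\cdot g + s_{i,j}s_{k,\ell}(f)\cdot T'(g).
\end{equation*}
Once more this reduces the claim to the generator identity $T'(x_h)=0$, which is immediate: each of $\partial_{i,j}(x_h)$ and $\partial_{k,\ell}(x_h)$ lies in $\Z\cdot 1$, and both $\partial_{i,j}$ and $\partial_{k,\ell}$ annihilate integer scalars.

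The main thing to watch is the sign bookkeeping in the Leibniz expansions, together with confirming that the conjugation identity $s_{k,\ell}\, s_{s_{k,\ell}(i,j)} = s_{i,j}\, s_{k,\ell}$ transports faithfully through the odd action. This is essentially automatic because the odd $S_a$-action is a bona fide group homomorphism from $S_a$ into the ring endomorphisms of $\opol_a$ (the sign character tensored with the permutation representation), so standard symmetric-group identities lift to identities of operators. After the twisted-derivation reductions, both parts of the lemma collapse to the one-line generator computations above.
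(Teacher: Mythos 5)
Your proposal is correct and follows essentially the same route as the paper, whose proof of this lemma is exactly ``first check on $x_h$ and then induct using the Leibniz rule''; your twisted-derivation identities for $T$ and $T'$ simply make that inductive step explicit, and the sign bookkeeping (including the conjugation identity $s_{k,\ell}\,s_{s_{k,\ell}(i,j)}=s_{i,j}\,s_{k,\ell}$ and the fact that the $S_a$-action is a genuine action by ring endomorphisms) checks out.
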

\begin{proof} For both statements: first check on $x_h$ and then induct using the Leibniz rule.\end{proof}

\begin{lem}\label{lem-oddop-2} Suppose $1\leq i<j\leq a$ and $f\in\osym_a$.  Then
\begin{equation}\label{eqn-lem-oddop-2}
\partial_{i+1}\partial_{i+2}\cdots\partial_{j-1}\partial_{i,j}(f)=0.
\end{equation}
\end{lem}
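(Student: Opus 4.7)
The plan is to proceed by induction on $j - i$. The base case $j - i = 1$ is immediate, since $\partial_{i,i+1} = \partial_i$ and $\partial_i(f) = 0$ for $f \in \osym_a$ by the definition of odd symmetric polynomials.

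For the inductive step with $j - i \geq 2$, I would first derive the conjugation identity
\[
\partial_{i,j} = -s_{j-1}\, \partial_{i,j-1}\, s_{j-1}
\]
from Lemma \ref{lem-oddop-1}(1) (applied to the pairs $(i, j-1)$ and $(k,\ell) = (j-1, j)$, using that $s_{j-1}$ sends $(i, j-1)$ to $(i, j)$ and that $s_{j-1}^2 = 1$). Substituting yields
\[
\partial_{i+1} \cdots \partial_{j-1}\, \partial_{i,j}(f) = -\partial_{i+1} \cdots \partial_{j-1}\, s_{j-1}\, \partial_{i,j-1}\, s_{j-1}(f).
\]
Then I would carry the first $s_{j-1}$ leftward through the chain $\partial_{i+1}, \ldots, \partial_{j-1}$ via repeated use of Lemma \ref{lem-oddop-1}(1): each $\partial_k$ with $k \leq j-3$ anticommutes with $s_{j-1}$ (contributing a sign), while $\partial_{j-2}$ is converted into the longer-range operator $\partial_{j-2, j}$, and $\partial_{j-1}s_{j-1} = -s_{j-1}\partial_{j-1}$.

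The hard part will be bridging the gap that arises after this rearrangement: one is left with $\partial_{i,j-1}$ acting on $s_{j-1}(f)$, and $s_{j-1}(f)$ is not odd symmetric. However, using Lemma \ref{lem-oddop-1}(1) once more one can verify that $s_{j-1}(f) \in \bigcap_{k \neq j-2} \ker \partial_k$, and the appearance of $\partial_{j-2, j}$ in place of $\partial_{j-2}$ exactly compensates for the single defect at $k = j-2$ (since $\partial_{j-2}(s_{j-1}(f)) = -s_{j-1}\partial_{j-2, j}(f)$). Careful bookkeeping of signs and operator transformations should then allow the inductive hypothesis for $\partial_{i+1} \cdots \partial_{j-2}\, \partial_{i, j-1}$ to close the argument.

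A complementary simplification is available when $j \leq a - 1$: by Corollary \ref{cor_adjoin_xn} any $f \in \osym_a$ can be expanded as $f = \sum_m f_m x_a^m$ with $f_m \in \osym_{a-1}$, and since $\partial_{i,j}(x_a^m) = 0$ and $\partial_k(x_a^m) = 0$ for $k = i+1, \ldots, j-1$ (as $a \notin \{i, j\}$ and $k+1 \leq j \leq a - 1$), the operator factors through the $x_a$-expansion and the claim reduces to the lemma in $\osym_{a-1}$ via an outer induction on $a$. The genuinely new case is $j = a$, which must be handled by the inductive manipulation above; the main difficulty there is the careful tracking of signs and the interplay between simple and long-range divided differences.
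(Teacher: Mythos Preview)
Your outer induction on $a$ (reducing the case $j\leq a-1$ to fewer variables via Corollary~\ref{cor_adjoin_xn}) is fine, but the inner induction on $j-i$ via the conjugation identity does not close; in fact it is circular. Take the first nontrivial case $j-i=2$. Writing $X=\partial_{i+1}\partial_{i,i+2}(f)$ and using $\partial_{i,i+2}=-s_{i+1}\partial_i s_{i+1}$ together with $\partial_{i+1}s_{i+1}=-s_{i+1}\partial_{i+1}$ gives $X=s_{i+1}\partial_{i+1}\partial_i(g)$ with $g=s_{i+1}(f)$. Now $g\notin\osym_a$, so the base case $\partial_i(h)=0$ for $h\in\osym_a$ does not apply; and if you instead compute $\partial_i(g)=-s_{i+1}\partial_{i,i+2}(f)$ (again from Lemma~\ref{lem-oddop-1}(1)) and substitute, two more applications of $\partial_{i+1}s_{i+1}=-s_{i+1}\partial_{i+1}$ and $s_{i+1}^2=\mathrm{id}$ return exactly $X=X$. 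The same phenomenon persists for larger $j-i$: after pushing $s_{j-1}$ through the chain you obtain an expression of the form $\pm s_{j-1}(\ldots)\partial_{j-1}\partial_{i,j-1}(s_{j-1}f)$, and there is no available relation that turns $\partial_{j-1}\partial_{i,j-1}$ acting on a non-symmetric element into the inductive hypothesis $\partial_{i+1}\cdots\partial_{j-2}\partial_{i,j-1}$ acting on an odd symmetric one. The phrase ``careful bookkeeping should close the argument'' hides precisely this missing reduction.

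The paper takes a different route: it inducts on the \emph{degree} of $f$ rather than on $j-i$. Step~1 verifies \eqref{eqn-lem-oddop-2} directly for $f=\varepsilon_k$ by pairing the monomials of $\varepsilon_k$ on which $D_{i,j}'=\partial_{i+1}\cdots\partial_{j-1}\partial_{i,j}$ does not vanish and checking the two members of each pair cancel. For a product $f=gh$ with $g,h\in\osym_a$ of lower degree, the Leibniz rule produces terms of the form $s_{j-\ell+1}\cdots s_{j-1}s_{i,j}(g)\cdot\partial_{j-\ell+1}\cdots\partial_{j-1}\partial_{i,j}(h)$; an auxiliary decreasing induction on $\ell$ (equation~\eqref{eqn-oddop-subclaim}), together with Lemma~\ref{lem-oddop-1} and the inductive hypothesis in lower degree and for smaller $j$, shows all such terms vanish. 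The key idea you are missing is this explicit base-case computation on elementary polynomials and the simultaneous induction on degree and on $j$.
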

\begin{proof} The proof is by a slightly complicated induction.  For now, consider all $i,j$ simultaneously.

\textit{Step 1:} We first prove the lemma in the case $f=\varepsilon_k$, $k\geq0$.  Let $D_{i,j}'=\partial_{i+1}\partial_{i+2}\cdots\partial_{j-1}\partial_{i,j}$.  A divided difference operator $\partial_{p,q}$ applied to a monomial $x_J=x_{j_1}\cdots x_{j_\ell}$ vanishes if and only if $x_p$ and $x_q$ occur the same number of times in $J$.  It follows that if the tuple $J$ has no repeated entries, then $D_{i,j}'(x_J)\neq0$ if and only if $J$ contains either: all of $i+1,i+2,\ldots,j$ but not $i$, or all of $i,i+1,\ldots,j-1$ but not $j$.  The terms of $\varepsilon_k$ on which $D_{i,j}'$ does not vanish match into pairs
\begin{equation*}
\xt_I\xt_{i+1}\cdots\xt_j\xt_J,\quad\xt_I\xt_i\cdots\xt_{j-1}\xt_J,
\end{equation*}
where $I$ and $J$ are tuples whose degrees sum to $k-(j-i)$.  We compute
\begin{equation*}\begin{split}
&D_{i,j}'\left(\xt_I\xt_{i+1}\cdots\xt_j\xt_J+\xt_I\xt_i\cdots\xt_{j-1}\xt_J\right)\\
&\qquad=\partial_{i+1}\cdots\partial_{j-1}\partial_{i,j}\left(\xt_I\xt_{i+1}\cdots\xt_j\xt_J+\xt_I\xt_i\cdots\xt_{j-1}\xt_J\right)\\
&\qquad=\pm\partial_{i+1}\cdots\partial_{j-1}\partial_{i,j}\left(x_Ix_{i+1}\cdots x_jx_J+(-1)^\ell x_Ix_i\cdots x_{j-1}x_J\right)\\
&\qquad=\pm\partial_{i+1}\cdots\partial_{j-1}\left((-1)^{|I|+\ell-1}x_Ix_{i+1}\cdots x_{j-1}x_J+(-1)^\ell(-1)^{|I|}x_Ix_{i+1}\cdots  x_{j-1}x_J\right)\\
&\qquad=0.
\end{split}\end{equation*}
It follows that $D_{i,j}'(\varepsilon_k)=0$.

\textit{Step 2:} We now induct on the degree of $f$.  In each degree, we may assume $f$ is a product $\varepsilon_{k_1}\cdots\varepsilon_{k_r}$.  Step 1 covered the base cases $r=1$ and degree 1.  In suffices, then, to take $f=gh$, where both $g$ and $h$ are odd symmetric and have positive degree.  We will prove equation \eqref{eqn-lem-oddop-2} simultaneously with the following claim: for $1\leq\ell\leq j-i$,
\begin{equation}\label{eqn-oddop-subclaim}
\partial_{i+1}\cdots\partial_{j-\ell}\left(s_{j-\ell+1}\cdots s_{j-1}s_{i,j}(g)\cdot\partial_{j-\ell+1}\cdots\partial_{j-1}\partial_{i,j}(h)\right)=0.
\end{equation}
The $\ell=j-i$ case of \eqref{eqn-oddop-subclaim},
\begin{equation*}
s_{j-i+1}\cdots s_{j-1}s_{i,j}(g)\cdot\partial_{i+1}\cdots\partial_{j-1}\partial_{i,j}(h)=0,
\end{equation*}
follows from \eqref{eqn-lem-oddop-2} in lower degree.  Before proceeding, we fix $i$ and induct on $j$, the base case $j=i+1$ being obvious.

\textit{Step 3:} To prove \eqref{eqn-oddop-subclaim} by decreasing induction on $\ell$ (keeping $i,j$ both fixed), we compute
\begin{equation*}\begin{split}
&\partial_{i+1}\cdots\partial_{j-\ell}\left(s_{j-\ell+1}\cdots s_{j-1}s_{i,j}(g)\cdot\partial_{j-\ell+1}\cdots\partial_{j-1}\partial_{i,j}(h)\right)\\
&\qquad=\partial_{i+1}\cdots\partial_{j-\ell-1}(\pm s_{j-\ell+1}\cdots s_{j-1}s_{i,j}\partial_{i,j-\ell}(g)\cdot\partial_{j-\ell+1}\cdots\partial_{j-1}\partial_{i,j}(h)\\
&\qquad\qquad+s_{j-\ell}\cdots s_{j-1}s_{i,j}(g)\cdot\partial_{j-\ell}\cdots\partial_{j-1}\partial_{i,j}(h)),
\end{split}\end{equation*}
by the Leibniz rule and part 1 of Lemma \ref{lem-oddop-1}.  Any one of the operators $\partial_{i+1},\ldots\partial_{j-\ell-1}$ annihilates the second factor of the first term on the right-hand side by part 2 of Lemma \ref{lem-oddop-1}, so this equals
\begin{equation*}\begin{split}
&\ldots=\pm s_{j-\ell+1}\cdots s_{j-1}s_{i,j}\partial_{i+1}\cdots\partial_{j-\ell-1}\partial_{i,j-\ell}(g)\cdot\partial_{j-\ell+1}\cdots\partial_{j-1}\partial_{i,j}(h)\\
&\qquad\qquad+\partial_{i+1}\cdots\partial_{j-\ell-1}\left(s_{j-\ell}\cdots s_{j-1}s_{i,j}(g)\cdot\partial_{j-\ell}\cdots\partial_{j-1}\partial_{i,j}(h)\right).
\end{split}\end{equation*}
This vanishes by induction: the first factor of the first term by \eqref{eqn-lem-oddop-2} for lower $j$ and the second term by \eqref{eqn-oddop-subclaim} for higher $\ell$.  This proves \eqref{eqn-oddop-subclaim} in this degree and for this $\ell$, hence for all $j,\ell$.

\textit{Step 4:} It remains to prove \eqref{eqn-lem-oddop-2}.  We have
\begin{equation*}
\partial_{i+1}\cdots\partial_{j-1}\partial_{i,j}(gh)=\partial_{i+1}\cdots\partial_{j-1}\left(\partial_{i,j}(g)\cdot h\right)+\partial_{i+1}\cdots\partial_{j-1}\left(s_{i,j}(g)\cdot\partial_{i,j}(h)\right).
\end{equation*}
The second term on the right-hand side is zero by \eqref{eqn-oddop-subclaim} at $\ell=1$.  Applying each of $\partial_{i+1},\ldots\partial_{j-1}$ to the first term on the right-hand side and using the Leibniz rule, we always get zero for the term in which the divided difference operator hits $h$.  Therefore this term equals $\partial_{i+1}\cdots\partial_{j-1}\partial_{i,j}(g)\cdot h$, which is zero by \eqref{eqn-lem-oddop-2} in lower degree.  This completes the proof.
\end{proof}

\begin{lem}\label{lem-cox-first} For any $1\leq i\leq a-1$, there is a reduced expression for $w_0$ in $S_a$ with $s_i$ acting first.\end{lem}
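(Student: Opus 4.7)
The plan is to show that $\ell(s_i w_0) = \ell(w_0) - 1$; once this is known, any reduced expression for $s_i w_0$ becomes, after prepending $s_i$, a reduced expression for $w_0$ of the required form. This is a purely Coxeter-theoretic statement about $S_a$, so no odd phenomena enter.

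First I would recall the inversion description of Coxeter length: for $w \in S_a$, $\ell(w)$ equals the number of pairs $k < \ell$ with $w(k) > w(\ell)$. Since $w_0$ acts by $k \mapsto a+1-k$, every pair is an inversion, giving $\ell(w_0) = \binom{a}{2}$.

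Next, I would compute $\ell(s_i w_0)$ directly. Left multiplication by $s_i$ composes as $(s_i w_0)(k) = s_i(w_0(k))$, which swaps the two positions at which the values $i$ and $i+1$ appear in the one-line notation of $w_0$. These values sit at positions $a+1-i$ and $a-i$ respectively, with $i+1$ immediately preceding $i$, so this swap reverses exactly one inversion and creates no new ones. Hence $\ell(s_i w_0) = \binom{a}{2} - 1 = \ell(w_0) - 1$.

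Finally, choose any reduced expression $s_{j_1} s_{j_2} \cdots s_{j_{N-1}}$ for $s_i w_0$, where $N = \binom{a}{2}$. Then
\begin{equation*}
w_0 = s_i \, s_{j_1} s_{j_2} \cdots s_{j_{N-1}},
\end{equation*}
and since the right-hand side has $N$ factors while $\ell(w_0) = N$, this is a reduced expression beginning with $s_i$. The main obstacle is essentially nil; the only point requiring care is the length-decrease computation, for which the inversion count gives a one-line verification.
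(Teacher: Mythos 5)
Your proof is correct, but it takes a different route from the paper's. The paper argues by induction on $a$, using the two factorizations $w_0^{(a)}=s_1\cdots s_{a-1}w_0^{(a-1)}=s_{a-1}\cdots s_1 w_0^{(a-1)'}$, where $w_0^{(a-1)}$ and $w_0^{(a-1)'}$ are the longest elements of the parabolic subgroups on strands $1,\dots,a-1$ and $2,\dots,a$; since lengths add, the inductive hypothesis applied to whichever subgroup contains $s_i$ produces an explicit reduced word of the desired shape. You instead invoke the general Coxeter-theoretic mechanism: the inversion count shows $\ell(w_0)=\binom{a}{2}$ and $\ell(s_iw_0)=\ell(w_0)-1$ (every $s_i$ is a descent of $w_0$), so prepending $s_i$ to any reduced word for $s_iw_0$ gives a reduced word for $w_0$ beginning with $s_i$. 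Your argument is shorter, more standard, and more general (it works verbatim for any element having $s_i$ as a descent), while the paper's recursion has the mild advantage of producing concrete words compatible with the strand-diagram induction used in the proof of the OWL. One small point of care: in the paper "acting first" refers to the rightmost factor of $\partial_{w_0}$, which is the first operator applied to a polynomial, so strictly one wants a reduced expression \emph{ending} in $s_i$; since $w_0=w_0^{-1}$ and reversing a reduced word yields a reduced word for the inverse (or, equivalently, since $\ell(w_0s_i)=\ell(w_0)-1$ by the same inversion computation), your version is equivalent, but it is worth saying so explicitly.
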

\begin{proof} This is clear for $a=2,3$.  Let $w_0^{(a)}$ denote $w_0$ in $S_a$, $w_0^{(a-1)}$ denote the longest word among strands $1,\ldots,a-1$ in $S_a$, and $w_0^{(a-1)'}$ denote the longest word among strands $2,\ldots,a$ in $S_a$.  Since
\begin{equation*}
w_0^{(a)}=s_1\cdots s_{a-1}w_0^{(a-1)}=s_{a-1}\cdots s_1w_0^{(a-1)'},
\end{equation*}
the lemma follows by induction.\end{proof}

\begin{proof}[Proof of the OWL, Lemma \ref{lem-owl}] We are free to reorder $D_a$ up to sign; we choose the ordering $D_a'$ such that $D_2'=\partial_1$ and $D_a'=\partial_{a-1}\cdots\partial_1D_{a-1}'$.  (In fact, $D_a'=(-1)^{\binom{a}{3}}D_a$.)  Explicitly,
\begin{equation*}
D_a=(\partial_{a-1}\cdots\partial_1)\cdots(\partial_{a-1}\partial_{a-2})\partial_{a-1}.
\end{equation*}
Fix some $\xi$ and consider the generalized action $w^\xi\cdot f$.  Let $s_{w_0,b}$ denote the longest element of the symmetric group among strands $a-b+1,\ldots,a$.  If the rightmost $\partial_{a-1}$ in $D_a'$ acts with $\xi=1$, then any $f$ is annihilated and we are in Case 1.  By induction on $b\geq1$, then, suppose the bottom $\binom{b}{2}$ crossings all act with $\xi=0$.  The next $b$ crossings are between strand pairs $(a-b,a-b+1),(a-b+1,a-b+2),\ldots,(a-1,a)$.  Let $i$ be the number of the left strand of any of these crossings which acts with $\xi=1$.  If $i<a-1$ and the $(i+1,i+2)$ crossing acts with $\xi=0$, then the omission word $w^\xi_\text{om}$ is non-reduced by Lemma \ref{lem-cox-first} (pull a crossing $s_{i+1}$ to the top of $s_{w_0,b}$), so we are in Case 3 and are done.  We are therefore reduced to the case in which once one of these crossings acts with $\xi=1$, then so do all others to the left.  That is,
\begin{equation*}
w^\xi\cdot f=(\ldots)\partial_{a-1}\cdots\partial_{a-b+\ell}s_{a-b+\ell-1}\cdots s_{a-b}s_{w_0,b}(f).
\end{equation*}
By part 1 of Lemma \ref{lem-oddop-1} this equals
\begin{equation*}
\pm(\ldots)s_{a-b+\ell-1}\cdots s_{a-b}s_{w_0,b}\partial_{a-b+1}\cdots\partial_{a-b+\ell-1}\partial_{a-b,a-b+\ell}(f),
\end{equation*}
which vanishes by Lemma \ref{lem-oddop-2}.
\end{proof}

An immediate corollary of the OWL is that for any $f\in\osym_a$ and $g\in\opol_a$,
\begin{equation}\label{eqn-Da-left-linearity}
D_a(fg)=f^{w_0}D_a(g),
\end{equation}
since $\partial_v=0$ for any nonreduced word $v$.

\begin{cor} The odd symmetrization operator $\Ss$ is left $\osym_a$-linear and is a projection operator onto the subring $\osym_a\subset\opol_a$.\end{cor}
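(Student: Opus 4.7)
The plan is built around equation \eqref{eqn-Da-left-linearity}, which is the substantive content of the OWL; once that is available, both conclusions follow from essentially formal manipulations.

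First I would check that $\Ss$ actually lands in $\osym_a$. The operator $D_a$ sends $\opol_a$ into $\osym_a$: for any $i$, Lemma 2.2.5 (the composition rule \eqref{eqn_compose_oddops}) gives $\partial_i D_a = \partial_i \partial_{w_0} = 0$ because $\ell(s_i w_0) < \ell(w_0) = \ell(s_i) + \ell(w_0) - 2$, so $D_a(g\undx^{\delta_a}) \in \bigcap_i \ker(\partial_i) = \osym_a$. Then Lemma \ref{lemma-w0-e} says $(\osym_a)^{w_0}=\osym_a$, so $\Ss(g) \in \osym_a$ for every $g \in \opol_a$.

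Next I would establish left $\osym_a$-linearity directly from \eqref{eqn-Da-left-linearity}. For $f \in \osym_a$ and $g \in \opol_a$, apply that identity with the second argument $g\undx^{\delta_a}$:
\begin{equation*}
D_a\bigl(f \cdot g\undx^{\delta_a}\bigr) = f^{w_0}\, D_a\bigl(g\undx^{\delta_a}\bigr).
\end{equation*}
Since the $S_a$-action \eqref{eqn-S-action} is by ring endomorphisms and $w_0$ is an involution (so $(f^{w_0})^{w_0} = f$), applying the superscript $w_0$ yields $f \cdot (D_a(g\undx^{\delta_a}))^{w_0}$. Multiplying by $(-1)^{\binom{a}{3}}$ gives $\Ss(fg) = f \cdot \Ss(g)$, as desired.

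Finally, to see $\Ss$ is a projection onto $\osym_a$, I would combine left linearity with the computation of $\Ss(1)$. By left linearity with $g=1$, $\Ss(f) = f \cdot \Ss(1)$ for every $f \in \osym_a$, so it suffices to show $\Ss(1) = 1$. By the definition of the odd Schubert polynomials and our fixed reduced expression for $w_0$, $D_a(\undx^{\delta_a}) = \sch_e$, and the lemma preceding \eqref{eqn_onh_schubert} gives $\sch_e = (-1)^{\binom{a}{3}}$. Since $w_0$ fixes scalars,
\begin{equation*}
\Ss(1) = (-1)^{\binom{a}{3}} \bigl(D_a(\undx^{\delta_a})\bigr)^{w_0} = (-1)^{\binom{a}{3}} \cdot (-1)^{\binom{a}{3}} = 1.
\end{equation*}
Thus $\Ss|_{\osym_a} = \id$, and combined with the first paragraph this shows $\Ss$ is a projection $\opol_a \twoheadrightarrow \osym_a$. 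The only real difficulty was \eqref{eqn-Da-left-linearity}, and that has already been extracted from the OWL; everything else is bookkeeping with the $w_0$-action and the normalizing sign $(-1)^{\binom{a}{3}}$.
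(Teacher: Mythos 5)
Your argument is correct and follows essentially the same route as the paper: left linearity is read off from $D_a(fg)=f^{w_0}D_a(g)$, the image lies in $\osym_a$ because $\partial_iD_a=0$ together with $(\osym_a)^{w_0}=\osym_a$, and $\Ss|_{\osym_a}=\id$ reduces to $D_a(\undx^{\delta_a})=(-1)^{\binom{a}{3}}$ (the paper computes $\Ss(f)=f$ directly rather than via $\Ss(1)=1$, but it is the same computation). The only blemish is your garbled length identity; what you need is simply $\ell(s_iw_0)=\ell(w_0)-1\neq\ell(s_i)+\ell(w_0)$, so that \eqref{eqn_compose_oddops} forces $\partial_i\partial_{w_0}=0$, and this slip does not affect the argument.
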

\begin{proof} Left $\osym_a$-linearity of $\Ss$ follows immediately from equation \eqref{eqn-Da-left-linearity}.  The image of $\Ss$ is inside $\osym_a$ due to the presence of the $D_a$ in its definition and by Lemma \ref{lemma-w0-e}.  This image is all of $\osym_a$ and $\Ss^2=\Ss$ because, by the OWL and the calculation $D_a(\undx^{\delta_a})=(-1)^{\binom{a}{3}}$ (see \eqref{eq_Da-undxdelta}),
\begin{equation}
\Ss(f\undx^{\delta_a})=(-1)^{\binom{a}{3}}D_a(f\undx^{\delta_a})^{w_0}
=(-1)^{\binom{a}{3}}\left(f^{w_0}D_a(\undx^{\delta_a})\right)^{w_0}=f
\end{equation}
for any $f\in\osym_a$.
\end{proof}

We conclude this section with another useful corollary of the OWL.
\begin{cor} For any $f\in\opol_a$,
\begin{equation}
D_a(f)^{w_0}=(-1)^{\binom{a}{2}}D_a(f^{w_0}).
\end{equation}\end{cor}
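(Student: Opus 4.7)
The plan is to reformulate the identity in operator form and verify it on a free-module basis. Define $\Z$-linear maps $T, T' \colon \opol_a \to \opol_a$ by
\[
T(f) := D_a(f)^{w_0}, \qquad T'(f) := (-1)^{\binom{a}{2}} D_a(f^{w_0}).
\]
Using the twisted left linearity \eqref{eqn-Da-left-linearity} together with Lemma \ref{lemma-w0-e} and the fact that $(w_0)^2 = \id$ on $\opol_a$, I would check that both $T$ and $T'$ are left $\osym_a$-linear: for $g\in\osym_a$, $T(gf) = (g^{w_0}D_a(f))^{w_0} = g\,T(f)$, and similarly $T'(gf) = g\,T'(f)$. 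Since $\opol_a$ is a free left $\osym_a$-module on the odd Schubert polynomials $\{\sch_w\}_{w\in S_a}$ by Proposition \ref{prop_opol_free_mod}, the equality $T=T'$ reduces to verifying $T(\sch_w)=T'(\sch_w)$ for each $w\in S_a$.

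For $w \neq w_0$, $\deg\sch_w = 2\ell(w) < a(a-1)$; since $D_a$ has degree $-a(a-1)$ and $w_0$ preserves degree, both $D_a(\sch_w)$ and $D_a(\sch_w^{w_0})$ land in negative degree and hence vanish in $\opol_a$. For $w = w_0$ we have $\sch_{w_0} = \undx^{\delta_a}$, so $T(\undx^{\delta_a}) = ((-1)^{\binom{a}{3}})^{w_0} = (-1)^{\binom{a}{3}}$, and using the signed $S_a$-action \eqref{eqn-S-action} one finds $(\undx^{\delta_a})^{w_0} = (-1)^{\binom{a}{2}}\,\undx^{\delta_a^*}$ with $\undx^{\delta_a^*} := x_a^{a-1} x_{a-1}^{a-2} \cdots x_2$, giving $T'(\undx^{\delta_a}) = D_a(\undx^{\delta_a^*})$. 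The corollary thus reduces to the identity
\[
D_a(\undx^{\delta_a^*}) = (-1)^{\binom{a}{3}}.
\]

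To prove this, I introduce the ring automorphism $\tau$ of $\opol_a$ defined by $\tau(x_i) = x_{a+1-i}$. A direct check on generators combined with the Leibniz rule shows $\tau \partial_i \tau^{-1} = \partial_{a-i}$ as $\Z$-linear operators on $\opol_a$, and plainly $\tau(\undx^{\delta_a}) = \undx^{\delta_a^*}$. Consequently
\[
D_a(\undx^{\delta_a^*}) \;=\; D_a(\tau(\undx^{\delta_a})) \;=\; \tau\bigl(D_a^{\mathrm{rev}}(\undx^{\delta_a})\bigr),
\]
where $D_a^{\mathrm{rev}} := \tau^{-1} D_a \tau = \partial_{a-1}(\partial_{a-2} \partial_{a-1}) \cdots (\partial_1 \partial_2 \cdots \partial_{a-1})$ is another reduced expression for $\partial_{w_0}$. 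By \eqref{eqn_compose_oddops}, $D_a^{\mathrm{rev}} = \pm D_a$ in $\ONC_a$; the hard part of the proof is verifying that this sign is $+1$, which requires tracking the braid relations (sign-free) against the commutations $\partial_i\partial_j = -\partial_j\partial_i$ (sign $-1$, for $|i-j|>1$) needed to rewrite $D_a$ into $D_a^{\mathrm{rev}}$. This parity count I would handle inductively on $a$, exploiting the nested structure of the two expressions. Granted this, $D_a^{\mathrm{rev}}(\undx^{\delta_a}) = D_a(\undx^{\delta_a}) = (-1)^{\binom{a}{3}}$, and since $\tau$ fixes integer constants, $D_a(\undx^{\delta_a^*}) = (-1)^{\binom{a}{3}}$ as required.
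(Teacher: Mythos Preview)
Your proposal is correct and follows essentially the same route as the paper: both reduce via left $\osym_a$-linearity (equivalently, the Schubert expansion) and degree reasons to the single comparison at $\sch_{w_0}=\undx^{\delta_a}$, and both finish by computing $D_a$ on the $w_0$-twisted staircase. Your automorphism $\tau$ is exactly the paper's $\sigma$, and the ``hard part'' you isolate, $D_a^{\mathrm{rev}}=+D_a$, is precisely the identity $\sigma(D_a)=D_a$ stated in Section~\ref{sec_switch}; the paper proves it as Lemma~\ref{lem_alt_def} via the Crossing Slide Lemma~\ref{lem_crossing_slide}, which is the inductive argument you are gesturing at, so there is no substantive divergence.
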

\begin{proof} Expand $f$ in the odd Schubert polynomial basis,
\begin{equation*}
f=\sum_{w\in S_a}f_w\sch_w,
\end{equation*}
where each $f_w\in\osym_a$.  Since the action of $w_0$ preserves $\osym_a\subset\opol_a$, equation \eqref{eqn-Da-left-linearity} implies
\begin{equation*}\begin{split}
&D_a(f)^{w_0}=\sum_wf_wD_a(\sch_w)^{w_0}=f_{w_0}D_a(\undx^{\delta_a})^{w_0},\\
&D_a(f^{w_0})=\sum_wD_a((f_w)^{w_0}(\sch_w)^{w_0})=f_{w_0}D_a((\undx^{\delta_a})^{w_0}).
\end{split}\end{equation*}
The last equality on each line is by degree reasons: the only Schubert polynomial of high enough degree not to be annihilated by $D_a$ is $\sch_{w_0}$.  The corollary then follows from the relations
\begin{equation*}
D_a(\undx^{\delta_a})=(-1)^{\binom{a}{3}},\quad D_a((\undx^{\delta_a})^{w_0})=(-1)^{\binom{a+1}{3}},
\end{equation*}
proved in Section~\ref{sec_switch}.
\end{proof}

%
\subsection{Odd Schur polynomials} \label{sec_oschur}
%

%
\subsubsection{Partitions}
%

By a partition $\alpha=(\alpha_1,\alpha_2,\ldots)$ we mean
a nonincreasing sequence of nonnegative integers
$\alpha_1\ge\alpha_2\ge\ldots\ge0$ with finite sum. We define $|\alpha|=\sum_{i=1}^a
{\alpha_i}$ when $\alpha_i=0$ for $i>a$.  For short, we represent repeated entries with exponents: for example, $(\ldots,2^3,\ldots)$ means $(\ldots,2,2,2,\ldots)$.

By $P(a,b)$ we denote the set of all partitions $\alpha$ with at
most $a$ parts (that is, with $\alpha_{a+1}=0$) such that $\alpha_1\le
b$. That is, $P(a,b)$ consists of partitions whose corresponding Young diagram fits into a box of
size $a \times b$. Moreover, the set of all partitions with at most
$a$ parts (that is, the set $P(a,\infty)$) we denote simply by $P(a)$.  $P(0)=\{ \emptyset\}$ is the set of all partitions with at most $0$ parts, so that $P(0)$ contains only the empty partition.

The cardinality of $P(a,b)$ is ${a+b \choose a}$. We call
\begin{equation} \label{eq_Pab_card}
  |P(a,b)|_q:=\sum_{\alpha\in P(a,b)} {q^{2|\alpha|-ab}}=\left[{a+b
\atop a}\right]
\end{equation}
the $q$-cardinality of $P(a,b)$. The last term in
the above equations is the balanced $q$-binomial.

The \textit{dual} (or \textit{conjugate}) partition of  $\alpha$, denoted
$\overline{\alpha}=(\overline{\alpha}_1,\overline{\alpha}_2,\ldots)$ with
$\overline{\alpha}_j=\#\{i|\alpha_i\ge j\}$, has Young diagram given by reflecting
the Young diagram of $\alpha$ along the diagonal. For a partition
$\alpha\in P(a,b)$ we define the {\it complementary partition}
$\alpha^c=(b-\alpha_a,\ldots,b-\alpha_2,b-\alpha_1)$. To be more explicit we let $K=(b^a)\in P(a,b)$ and write $K-\alpha$ in place of $\alpha^c$ to emphasize that $\alpha \in P(a,b)$. Finally we
define $\hat{\alpha}:=\overline{\alpha^c}$. Note that $\overline{\alpha}$ and
$\hat{\alpha}$ belong to $P(b,a)$ and $\alpha^c $ to $P(a,b)$.

%
\subsubsection{Odd Schur polynomials}
%

In the even case, one definition of the Schur polynomial corresponding to a partition $\alpha$ of length at most $a$ is
\begin{equation}
s_\alpha(x_1,\ldots,x_a)=\partial_{w_0}(x^{\delta_a+\alpha})=\partial_{w_0}(x_1^{a-1+\alpha_1}
x_2^{a-2+\alpha_2} \dots x_a^{\alpha_a}).
\end{equation}
In the odd case, we must be careful about the ordering of the terms in the above expression.

\begin{defn} The \textit{odd Schur polynomial} corresponding to a partition $\alpha$ of length at most $a$ is the element of $\opol_a$ given by odd-symmetrizing the monomial $\undx^\alpha$,
\begin{equation}\label{eqn-defn-schur}
s_\alpha(x_1,\ldots,x_a)=\Ss(\undx^\alpha)=(-1)^{\binom{a}{3}}D_a(\undx^\alpha \undx^{\delta_a})^{w_0},
\end{equation}
see also equation \eqref{eqn-odd-symmetrization}.  As usual, $\undx^{\alpha}=x_1^{\alpha_1}\cdots x_a^{\alpha_a}$.\end{defn}

Since $\Ss$ is the projection onto the subring of odd symmetric polynomials, $s_\alpha$ is odd symmetric.

%
\subsubsection{The odd Pieri rule}
%

In what follows, we will refer to the \textit{normal ordering} of a monomial $x_{i_1}x_{i_2}\cdots x_{i_r}$; this is the monomial obtained by sorting the subscripts into non-decreasing order (and not introducing a sign).

\vspace{0.07in}

By analogy with the even case, we have the following.
\begin{lem}\label{lem-schur-elementary} For $1\leq k\leq a$, we have
\begin{equation} \label{eq_schur_elem}
s_{(1^k)}(x_1,\ldots,x_a)=(-1)^{\frac{1}{2}k(k-1)}\varepsilon_k(x_1,\ldots,x_a).
\end{equation}\end{lem}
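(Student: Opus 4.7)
The plan is to exploit that $\Ss$ is the identity on $\osym_a$, together with a vanishing lemma for $D_a$ on monomials with repeated adjacent exponents.

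Since $\varepsilon_k \in \osym_a$, we have $\Ss(\varepsilon_k) = \varepsilon_k$. Expanding $\varepsilon_k = \sum_{1 \leq i_1 < \cdots < i_k \leq a} \xt_{i_1} \cdots \xt_{i_k}$ and using $\Z$-linearity of $\Ss$ gives
\[
\varepsilon_k \;=\; \sum_I \Ss(\xt_{i_1} \cdots \xt_{i_k}),
\]
where $I = (i_1, \ldots, i_k)$ ranges over strictly-increasing $k$-tuples in $\{1, \ldots, a\}$. The main claim to establish is that $\Ss(\xt_{i_1} \cdots \xt_{i_k}) = 0$ unless $I = (1, 2, \ldots, k)$. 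Granting this,
\[
\varepsilon_k \;=\; \Ss(\xt_1 \xt_2 \cdots \xt_k) \;=\; (-1)^{\binom{k}{2}} \Ss(x_1 x_2 \cdots x_k) \;=\; (-1)^{\binom{k}{2}} s_{(1^k)},
\]
which rearranges to the desired identity.

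For the vanishing claim, skew-commutativity rewrites $\xt_{i_1} \cdots \xt_{i_k} \cdot \undx^{\delta_a}$ as $\pm \undx^\nu$, where the exponent sequence satisfies $\nu_h = a - h + 1$ if $h \in I$ and $\nu_h = a - h$ otherwise. A direct check gives $\nu_h = \nu_{h+1}$ iff $h \notin I$ and $h + 1 \in I$; such an $h$ exists precisely when $I \neq (1, 2, \ldots, k)$. Thus it suffices to prove the following key lemma: $D_a(\undx^\nu) = 0$ whenever $\nu_h = \nu_{h+1}$ for some $h$.

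To prove the key lemma, I would decompose $\undx^\nu = A \cdot x_h^n x_{h+1}^n \cdot B$ with $n = \nu_h = \nu_{h+1}$ and $A, B$ involving only variables outside $\{x_h, x_{h+1}\}$. Since $\partial_h$ annihilates each such variable, $\partial_h(A) = \partial_h(B) = 0$; combined with the identity $\partial_h(x_h^m x_{h+1}^m) = 0$ from the excerpt, the Leibniz rule yields $\partial_h(\undx^\nu) = 0$. Every simple reflection lies in the right descent set of $w_0$, so some reduced expression for $w_0$ has the form $w_0 = w' s_h$; the corresponding operator factors as $\partial_{w'} \partial_h$ on the nose, and this equals $\pm D_a$ (since, as noted in the excerpt, different reduced expressions for the same Weyl group element give the same odd divided difference up to an overall sign). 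Hence $D_a(\undx^\nu) = 0$.

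The main obstacle is the key lemma, but it reduces cleanly to the given identity $\partial_h(x_h^m x_{h+1}^m) = 0$ via the Leibniz rule; the rest is a combinatorial check on initial segments together with linearity of $\Ss$.
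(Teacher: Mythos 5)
Your proposal is correct and follows essentially the same route as the paper: both start from $\varepsilon_k=\Ss(\varepsilon_k)$, expand $\varepsilon_k$ into monomials, and kill every term except $\xt_1\cdots\xt_k$ by observing that normal-ordering $\xt_{i_1}\cdots\xt_{i_k}\undx^{\delta_a}$ produces adjacent equal exponents $x_h^n x_{h+1}^n$, which $\partial_h$ (and hence $D_a$, reordered to end in $\partial_h$) annihilates. You have merely spelled out the combinatorial check on initial segments and the Leibniz-rule reduction that the paper leaves implicit.
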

\begin{proof} Let $i_1<\cdots<i_k$ be a subset of $\lbrace1,\ldots,a\rbrace$.  Unless $i_1=1,\ldots,i_k=k$, the normal ordering of the monomial $\xt_{i_1}\cdots\xt_{i_k}\undx^{\delta_a}$ will have a factor of the form $x_j^\ell x_{j+1}^\ell$ somewhere, so that $D_a(\xt_{i_1}\cdots\xt_{i_k}\undx^{\delta_a})=\partial_j(\xt_{i_1}\cdots\xt_{i_k}\undx^{\delta_a})=0$.  Hence, using odd-symmetrization and the definitions of $\varepsilon_k$ and of $s_{(1^k)}$,
\begin{equation*}
\varepsilon_k=\Ss(\varepsilon_k)=\Ss(\xt_{i_1}\cdots\xt_{i_k})=(-1)^{\frac{1}{2}k(k-1)}s_{(1^k)}.
\end{equation*}
\end{proof}

For a partition $\alpha$, let $\frac{\alpha}{m}$ denote the partition whose Young diagram is obtained from that of $\alpha$ by removing the $m$-th and all lower rows; let $\frac{m}{\alpha}$ be likewise, removing the first through $m$-th rows.
\begin{prop}[Odd Pieri rule]\label{prop_odd_pieri} Let $\alpha$ be a partition and let $1\leq k\leq a$.  Then in $\osym_a$,
\begin{equation}\label{eqn-e-right-pieri}
s_\alpha s_{(1^k)}=\sum_\mu(-1)^{\left|\frac{i_1}{\alpha}\right|+\ldots+\left|\frac{i_k}{\alpha}\right|}s_\mu,
\end{equation}
where the sum is over all partitions $\mu$ with Young diagram obtained from that of $\alpha$ by adding one box each to the rows $i_1<\cdots<i_k$.
\end{prop}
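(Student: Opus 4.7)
By Lemma \ref{lem-schur-elementary}, $s_{(1^k)}=(-1)^{\binom{k}{2}}\varepsilon_k$, so the task reduces to computing $s_\alpha\varepsilon_k$. The plan is to push $\varepsilon_k$ inside the odd symmetrization used to define $s_\alpha$, expand it in squarefree monomials in the $\xt_i$, sort each term into a standard form via skew-commutativity of $\opol_a$, and read off the Schur summand.

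Starting from $s_\alpha = (-1)^{\binom{a}{3}}D_a(\undx^\alpha\undx^{\delta_a})^{w_0}$, I move $\varepsilon_k$ inside using that $(\cdot)^{w_0}$ is a ring automorphism, invoke Lemma \ref{lemma-w0-e} to rewrite $\varepsilon_k^{w_0}=(-1)^{\binom{k}{2}+k\binom{a-1}{2}}\varepsilon_k$, and absorb $\varepsilon_k$ into $D_a$ using the right $\osym_a$-linearity of $D_a$ (immediate from Leibniz and $\partial_i|_{\osym_a}=0$). This yields
\[
s_\alpha\varepsilon_k = (-1)^{\binom{a}{3}+\binom{k}{2}+k\binom{a-1}{2}}\,D_a\bigl(\undx^\alpha\undx^{\delta_a}\varepsilon_k\bigr)^{w_0}.
\]
Expanding $\varepsilon_k=\sum_{i_1<\cdots<i_k}\xt_{i_1}\cdots\xt_{i_k}$, for each $I=\{i_1<\cdots<i_k\}$ I use skew-commutativity to rewrite $\undx^\alpha\undx^{\delta_a}\xt_{i_1}\cdots\xt_{i_k}$ as $\pm\undx^{\mu+\delta_a}$, where $\mu=\alpha+e_{i_1}+\cdots+e_{i_k}$ (with $e_i$ the standard basis vector). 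The sign is the sum of three explicit contributions: the rearrangement $\undx^\alpha\undx^{\delta_a}=(-1)^{A(\alpha)}\undx^{\alpha+\delta_a}$ with $A(\beta):=\sum_{j<\ell}(a-j)\beta_\ell$; for each $j$, the leftward passage of $\xt_{i_j}$ past the factors $x_\ell^{\alpha_\ell+a-\ell}$ with $\ell>i_j$, contributing $|\frac{i_j}{\alpha}|+\binom{a-i_j}{2}$; and the constants $\xt_{i_j}=(-1)^{i_j-1}x_{i_j}$.

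If $\mu$ is not a partition, then for some $\ell$ one has $\alpha_\ell=\alpha_{\ell+1}$ with $\ell+1\in I$ and $\ell\notin I$, so $\mu+\delta_a$ has equal adjacent entries at positions $\ell,\ell+1$. Choosing a reduced expression for $w_0$ ending in $s_\ell$ (by reversing an expression furnished by Lemma \ref{lem-cox-first}, since $w_0=w_0^{-1}$) and invoking $\partial_\ell(x_\ell^m x_{\ell+1}^m)=0$ (a direct Leibniz computation) show $D_a$ annihilates the monomial, so only partition $\mu$'s contribute. For such $\mu$, the vector $\mu+\delta_a$ is strictly decreasing and $\undx^{\mu+\delta_a}=(-1)^{A(\mu)}\undx^\mu\undx^{\delta_a}$, whence the definition of $s_\mu$ gives $D_a(\undx^{\mu+\delta_a})^{w_0}=(-1)^{A(\mu)}s_\mu$.

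The main obstacle is the resulting sign check. The accumulated exponent in front of $s_\mu$ is
\[
\binom{k}{2}+k\binom{a-1}{2}+A(\alpha)+A(\mu)+\sum_{j=1}^k\Bigl(|\tfrac{i_j}{\alpha}|+\binom{a-i_j}{2}+i_j-1\Bigr).
\]
A short computation from the formula for $A$ gives $A(\mu)-A(\alpha)=k\binom{a}{2}-\sum_j\binom{a-i_j+1}{2}$, and then the identity $\binom{n+1}{2}+\binom{n}{2}=n^2\equiv n\pmod 2$ causes the terms $k[\binom{a-1}{2}+\binom{a}{2}]$, $\binom{a-i_j+1}{2}+\binom{a-i_j}{2}$, and $i_j-1$ to cancel pairwise modulo $2$, leaving $\binom{k}{2}+\sum_j|\frac{i_j}{\alpha}|$. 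Multiplying by $(-1)^{\binom{k}{2}}$ to pass from $s_\alpha\varepsilon_k$ back to $s_\alpha s_{(1^k)}$ yields the Pieri rule.
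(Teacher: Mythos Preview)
Your proof is correct and follows essentially the same route as the paper's: push $\varepsilon_k$ inside the odd symmetrization via Lemma~\ref{lemma-w0-e} and right $\osym_a$-linearity of $D_a$, expand in monomials, and kill non-partition terms via adjacent equal exponents. Your sign bookkeeping is more explicit than the paper's (passing through the normal-ordered monomial $\undx^{\mu+\delta_a}$ and the auxiliary function $A$), whereas the paper asserts the net sign of the rearrangement $\undx^\alpha\undx^{\delta_a}\xt_{i_1}\cdots\xt_{i_k}\to\undx^\mu\undx^{\delta_a}$ directly; note only a minor slip in your intermediate formula $D_a(\undx^{\mu+\delta_a})^{w_0}=(-1)^{A(\mu)}s_\mu$, which should carry an extra $(-1)^{\binom{a}{3}}$---this cancels against the $(-1)^{\binom{a}{3}}$ from the definition of $s_\alpha$, so your stated accumulated exponent and final conclusion are correct.
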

Note that mod 2, this is the usual Pieri rule.  The diagrams $\mu$ described in the statement of the Proposition are precisely the diagrams obtained from $\alpha$ by what is called ``adding a vertical strip of length $k$''; but we also need to know which rows we are adding to in order to get the correct sign.
\begin{proof} Identifying $s_{(1^k)}$ with $(-1)^{\binom{k}{2}}\varepsilon_k$ by Lemma \ref{lem-schur-elementary}, we compute:
\begin{equation*}\begin{split}
s_\alpha s_{(1^k)}&\refequal{\eqref{eqn-defn-schur}}(-1)^{\binom{a}{3}}D_a(\undx^\alpha\undx^{\delta_a})^{w_0}\cdot(-1)^{\binom{k}{2}}\varepsilon_k\\
&\refequal{\eqref{eqn-w0-e},\eqref{eqn-leibniz}}(-1)^{\binom{a}{3}+k\binom{a-1}{2}}D_a(\undx^\alpha\undx^{\delta_a} \varepsilon_k)^{w_0}\\
&\refequal{\eqref{eqn-defn-e}}(-1)^{\binom{a}{3}+k\binom{a-1}{2}}\sum_{i_1<\cdots<i_k}D_a(\undx^\alpha\undx^{\delta_a} \widetilde{x}_{i_1}\cdots\widetilde{x}_{i_k})^{w_0}.
\end{split}\end{equation*}
Commuting the factor $\widetilde{x}_{i_1}\cdots\widetilde{x}_{i_k}$ past $\undx^{\delta_a}$ and normal ordering it with $\undx^\alpha$ cancels the factor $(-1)^{k\binom{a-1}{2}}$ and introduces a factor of $(-1)^{\left|\frac{i_1}{\alpha}\right|+\ldots+\left|\frac{i_k}{\alpha}\right|}$ to each term.  So we appear to have the desired equation \eqref{eqn-e-right-pieri}, except that there are certain terms where $\mu$ is not non-decreasing, that is, does not correspond to a Young diagram (the term involving $x_{i_1}\cdots x_{i_k}$ corresponds to adding one box to $\alpha$ in each of rows $i_1,\ldots,i_k$).  This occurs when $\mu$ is a composition but not a partition.  A term will fail to correspond to a Young diagram if and only if the resulting monomial $\undx^\alpha\undx^{\delta_a} x_{i_1}\cdots x_{i_k}$, when normal ordered, has two adjacent exponents equal:
\begin{equation*}\begin{split}
\undx^{\alpha}\undx^{\delta_a} x_{i_1}\cdots x_{i_k}=\pm\ldots x_j^m x_{j+1}^m\ldots\text{ for some }j,m\text{ if and}\\
\text{only if }\mu\text{ is a composition but not a partition.}
\end{split}\end{equation*}
Such a monomial is sent to zero by $\partial_j$.  Re-ordering $D_a=\partial_{w_0}$ so as to have $\partial_j$ act first, we see the term does not contribute to equation \eqref{eqn-e-right-pieri}.
\end{proof}
The above proves the usual (even) Pieri rule as well; just drop all the signs.

\begin{rem} Assume for a moment that the Schur functions of this paper agree with the Schur functions of \cite{EK} (see Conjecture \ref{conj-schur} below).  Combining Proposition \ref{prop_odd_pieri} with the automorphism $\psi_1\psi_2$ and the anti-involution $\psi_3$ of $\osym$ which were introduced in Section 2.3 of \cite{EK}, we have three more forms of the Pieri rule:
\begin{eqnarray}
&(-1)^{\ell(w_\alpha)}s_\alpha s_{(k)}=\sum_\mu(-1)^{\ell(w_\mu)}(-1)^{\left|\frac{i_1}{\overline{\alpha}}\right|+\ldots+\left|\frac{i_k}{\overline{\alpha}}\right|}s_\mu,\\
&\eta_\alpha s_{(1^k)}s_\alpha=\sum_\mu(-1)^{\left|\frac{i_1}{\alpha}\right|+\ldots+\left|\frac{i_k}{\alpha}\right|}\eta_\mu s_\mu,\\
&\eta_\alpha(-1)^{\ell(w_\alpha)}s_{(k)}s_\alpha=\sum_\mu(-1)^{\ell(w_\mu)}(-1)^{\left|\frac{i_1}{\overline{\alpha}}\right|+\ldots+\left|\frac{i_k}{\overline{\alpha}}\right|}\eta_\mu s_\mu.
\end{eqnarray}
For definitions of the signs $(-1)^{\ell(w_\alpha)}$ and $\eta_\alpha$, see \cite{EK}.  The second sum (respectively first and third sums) is over all $\mu$ obtained from $\alpha$ by adding a vertical (respectively horizontal) strip; the integers $i_1,\ldots,i_k$ are the rows (respectively columns) to which a box was added.\end{rem}

\begin{rem} Again assuming Conjecture \ref{conj-schur}, no na\"{i}ve analogue of the Jacobi-Trudi relation can hold.  The usual (even) Jacobi-Trudi relation is
\begin{equation}
s_\alpha=\det(\varepsilon_{\overline{\alpha}_i+j-i})=\det(h_{\alpha_i+j-i}).
\end{equation}
A minimal example of the problem is $\alpha=(2,2)$ (we order the terms of our determinants top-to-bottom, though this isn't essential):
\begin{equation}\begin{split}
\det(\varepsilon_{\overline{\alpha}_i+j-i})&=\varepsilon_{22}-\varepsilon_{31}\\
\det(h_{\alpha_i+j-i})&=h_{22}-h_{31}=\left(\varepsilon_{22}-2\varepsilon_{211}+\varepsilon_{1111}\right)-\left(\varepsilon_{31}-\varepsilon_{111}\right)\\
s_{22}&=\varepsilon_{22}+\varepsilon_{31}-2\varepsilon_4.
\end{split}\end{equation}
Since $2\varepsilon_3=\varepsilon_{12}+\varepsilon_{21}$, it seems some clever rearrangement of terms and of signs could make the two determinants equal each other.  But neither determinant along with any possible re-arrangement of signs can lead to a term involving $\varepsilon_4$.  That is, $\varepsilon_4$ is not in the subring generated by $h_1,h_2,h_3,\varepsilon_1,\varepsilon_2,\varepsilon_3$.\end{rem}

%
\section{Graphical calculus}\label{sec-graphical-calculus}
%

%
\subsection{Graphical calculus for the odd nilHecke algebra}
%

We find it convenient to use a graphical calculus to represent
elements in $\ONH_a$. The diagrammatic representation of elements in $\ONH_a$ is given by braid-like dotted diagrams $D$ equipped with the height Morse function $h\maps D \to \R$, such that $h(g_1) \neq h(g_2)$ for any generators $g_1, g_2$ that appear in the diagram.

We write
\begin{eqnarray}
   \xy  (0,0)*{\includegraphics[scale=0.4]{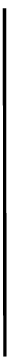}};  \endxy
   \;\;  \dots \;\;
   \xy  (0,0)*{\includegraphics[scale=0.4]{1thin.eps}};  \endxy
   \;\;  \dots \;\;
   \xy  (0,0)*{\includegraphics[scale=0.4]{1thin.eps}};  \endxy \quad := \quad
 1 \in \ONH_a
\end{eqnarray}
with a total of $a$ strands. The polynomial generators can be
written as
\begin{eqnarray}
  \xy  (0,0)*{\includegraphics[scale=0.4]{1thin.eps}};  \endxy
   \;\;  \dots \;\;
   \xy  (0,0)*{\includegraphics[scale=0.4]{1thin.eps}}; (0,0)*{\bullet}; \endxy
   \;\;  \dots \;\;
   \xy  (0,0)*{\includegraphics[scale=0.4]{1thin.eps}};  \endxy\quad := \quad
   x_r
\end{eqnarray}
with the dot positioned on the $r$-th strand counting from the left,
and
\begin{eqnarray}
  \xy  (0,0)*{\includegraphics[scale=0.4]{1thin.eps}};  \endxy
    \;\;  \dots \;\;
   \xy  (0,0)*{\includegraphics[scale=0.4]{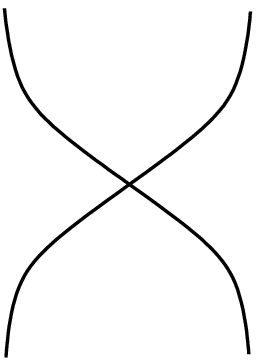}};  \endxy
   \;\;  \dots \;\;
   \xy  (0,0)*{\includegraphics[scale=0.4]{1thin.eps}};  \endxy\quad := \quad
  \partial_r,
\end{eqnarray}
with the crossing interchanging the $r$th and $(r+1)$-st strands.

In the diagrammatic notation multiplication is given by stacking
diagrams on top of each other, left-to-right becoming top-to-bottom.  Relations in the odd nilHecke ring acquire a graphical interpretation. For example, the equalities $\partial_rx_r+x_{r+1}\partial_r = 1 =
x_r \partial_r + \partial_r x_{r+1}$ become diagrammatic
identities:
\begin{eqnarray}        \label{new_eq_iislide}
  \xy  (0,0)*{\includegraphics[scale=0.4]{2thin-cross.eps}};
  (-4.3,-4)*{\bullet};
  \endxy
    \quad + \quad
   \xy  (0,0)*{\includegraphics[scale=0.4]{2thin-cross.eps}};
   (4.2,3)*{\bullet};
  \endxy
  & = &
   \xy  (-3,0)*{\includegraphics[scale=0.4]{1thin.eps}};
        (3,0)*{\includegraphics[scale=0.4]{1thin.eps}};
  \endxy
  \quad = \quad
  \xy  (0,0)*{\includegraphics[scale=0.4]{2thin-cross.eps}};
  (-4.2,3)*{\bullet};
  \endxy
    \quad + \quad
   \xy  (0,0)*{\includegraphics[scale=0.4]{2thin-cross.eps}};
   (4.2,-4)*{\bullet};
  \endxy
\end{eqnarray}
and the relation $\partial_r \partial_r= 0$ becomes
\begin{eqnarray}
\xy (0,0)*{\includegraphics[scale=0.4]{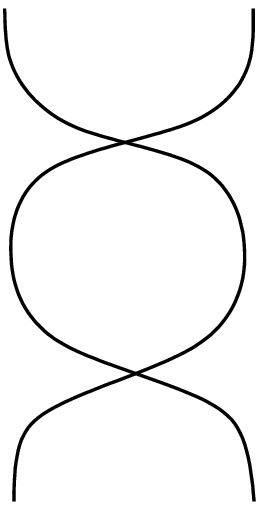}}; \endxy
  &=&
0.
\end{eqnarray}
The relation $\partial_r\partial_{r+1}\partial_r =
\partial_{r+1}\partial_r \partial_{r+1}$ is depicted as
\begin{eqnarray}      \label{new_eq_r3_easy}
\xy (0,0)*{\includegraphics[scale=0.4]{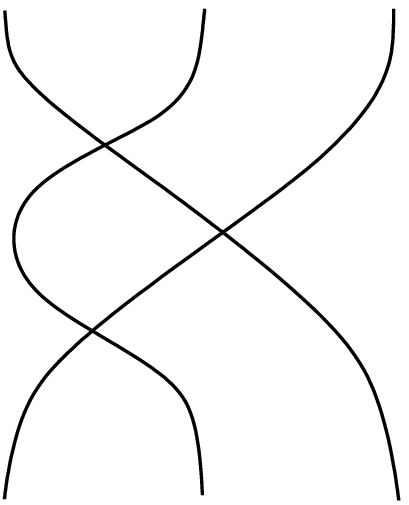}}; \endxy
  &=&
 \xy (0,0)*{\reflectbox{\includegraphics[scale=0.4]{3thin-R3.eps}}}; \endxy.
\end{eqnarray}
The remaining relations in the nilHecke ring can be encoded by
the requirement that the diagrams super commute under braid-like isotopies:

\begin{align}
\dots\;\;
   \xy  (0,0)*{\includegraphics[scale=0.4]{1thin.eps}}; (0,-3)*{\bullet}; \endxy
\;\; \dots \;\;
    \xy  (0,0)*{\includegraphics[scale=0.4]{1thin.eps}}; (0,3)*{\bullet}; \endxy \;\;\dots
&\quad + \quad
\dots\;\;
    \xy  (0,0)*{\includegraphics[scale=0.4]{1thin.eps}}; (0,3)*{\bullet}; \endxy
\;\; \dots \;\;
    \xy  (0,0)*{\includegraphics[scale=0.4]{1thin.eps}}; (0,-3)*{\bullet}; \endxy \;\;\dots \quad = 0,\\ \nn \\
\dots\;\;
    \xy  (0,0)*{\includegraphics[scale=0.4,angle=180]{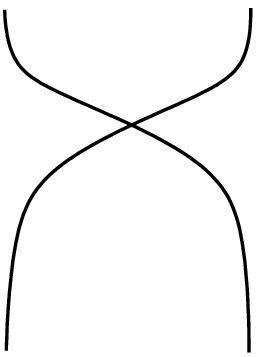}}; \endxy
\;\; \dots \;\;
    \xy  (0,0)*{\includegraphics[scale=0.4]{1thin.eps}}; (0,3)*{\bullet}; \endxy \;\;\dots
&\quad + \quad
\dots\;\;
    \xy  (0,0)*{\includegraphics[scale=0.4]{2thin-crossp.eps}}; \endxy
\;\; \dots \;\;
    \xy  (0,0)*{\includegraphics[scale=0.4]{1thin.eps}}; (0,-3)*{\bullet}; \endxy\;\;\dots  \quad = 0,\\ \nn \\
\dots\;\;
    \xy  (0,0)*{\includegraphics[scale=0.4]{1thin.eps}}; (0,-3)*{\bullet}; \endxy
\;\; \dots \;\;
    \xy  (0,0)*{\includegraphics[scale=0.4]{2thin-crossp.eps}}; \endxy \;\;\dots
&\quad + \quad
    \dots\;\;
     \xy  (0,0)*{\includegraphics[scale=0.4]{1thin.eps}}; (0,3)*{\bullet}; \endxy
\;\; \dots \;\;
    \xy  (0,0)*{\includegraphics[scale=0.4,angle=180]{2thin-crossp.eps}}; \endxy
    \;\;\dots  \quad = 0,\\ \nn \\
    \dots\;\;
    \xy  (0,0)*{\includegraphics[scale=0.4,angle=180]{2thin-crossp.eps}}; \endxy
\;\; \dots \;\;
     \xy  (0,0)*{\includegraphics[scale=0.4]{2thin-crossp.eps}}; \endxy  \;\;\dots
&\quad + \quad
\dots\;\;
    \xy  (0,0)*{\includegraphics[scale=0.4]{2thin-crossp.eps}}; \endxy
\;\; \dots \;\;
    \xy  (0,0)*{\includegraphics[scale=0.4,angle=180]{2thin-crossp.eps}}; \endxy \;\;\dots  \quad = 0.
\end{align}

%
\subsection{Box notation for odd thin calculus}\label{subsec-box-notation}
%
To simplify diagrams, write
\[ 
  \xy
 (0,0)*{\includegraphics[scale=0.5]{1thin.eps}};
 (-3,-4)*{a};
  \endxy
  \quad : = \quad
  \xy
 (0,0)*{\includegraphics[scale=0.5]{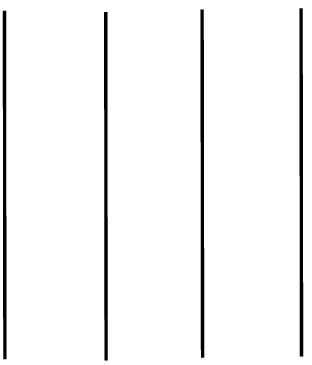}};
 (0,-11)*{\underbrace{\hspace{0.7in}}};  (0,-14)*{a};
  \endxy
 \quad = \quad 1 \in \ONH_a,
\]
where we will omit the label $a$ if it appears in a coupon as below. The operator $D_a$ is represented as
\begin{equation}\label{eq_def_Da}
    D_a \quad = \quad
    \xy
 (0,0)*{\includegraphics[scale=0.5]{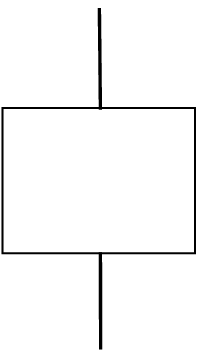}};
 (0,0)*{D_a};
  \endxy
  \quad = \quad
  \xy
 (0,0)*{\includegraphics[scale=0.5]{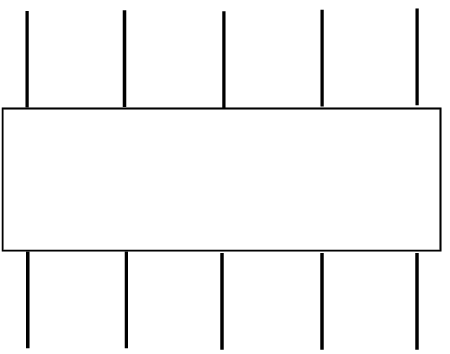}};
 (0,0)*{D_a};
  \endxy
 \quad := \quad
 \xy
 (0,0)*{\includegraphics[scale=0.5]{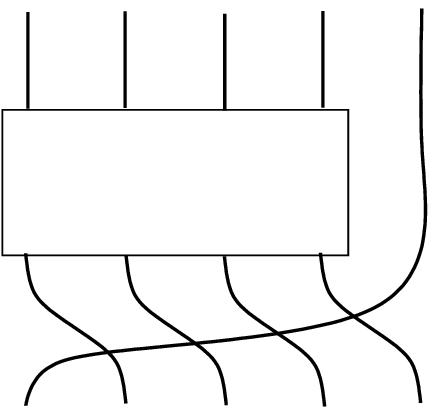}};
 (-2,1)*{D_{a-1}};
  \endxy \quad = \quad
 D_{a-1} (\partial_{a-1}\dots \partial_2\partial_1) .
\end{equation}
Next, let
\[ 
 \undx^{\delta_a}  \;\; = \;\;
     \xy
 (0,0)*{\includegraphics[scale=0.5]{1box.eps}};
 (0,0)*{\delta_a};
  \endxy
 \;\;:= \;\;
 \xy
 (-2.6,0)*{\includegraphics[scale=0.5]{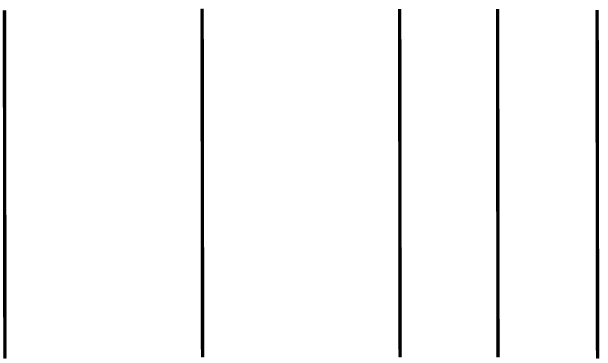}};
 (-17.7,4)*{\bullet}+(-4,1)*{\scs a-1};
 (-7.7,1)*{\bullet}+(-4,1)*{\scs a-2};
 (-3,-3)*{\cdots};
 (2.3,-2)*{\bullet}+(-2,1)*{\scs 2};
 (7.3,-5)*{\bullet};
  \endxy \quad = \quad x_1^{a-1}x_2^{a-2} \dots x_{a-2}^{2} x_{a-1}.
\]
Recall that the super degree of an element of $\ONH_a$ is the $\Z$-degree divided by 2.  In particular, we have
\begin{equation}
\deg_s(D_a) \equiv \deg_s(\undx^{\delta_a}) \equiv \binom{a}{2} \quad (\text{mod 2}).
\end{equation}

Throughout this section we make frequent use of the trivial binomial identities
\begin{align}
  \binom{a}{2}   &= \sum_{j =1}^{a-1} j = \frac{a(a-1)}{2} , \\
  \binom{a}{3}   &= \sum_{j=1}^{a-1} \binom{j}{2}=\sum_{j=1}^{a-2} \sum_{\ell=1}^{j}j = \frac{a(a-1)(a-2)}{6}, \\
  \binom{a+3}{4} &= \sum_{k=1}^a\sum_{j=1}^k \sum_{\ell=1}^j \ell = \frac{a (a+1) (a+2) (a+3)}{24}.
\end{align}

%
\subsubsection{0-Hecke crossings}
%

We will use a diagrammatic shorthand
\begin{equation}\label{eqn-0Hecke}
 \overline{\partial}_r \quad := \quad  \xy  (0,0)*{\includegraphics[scale=0.4]{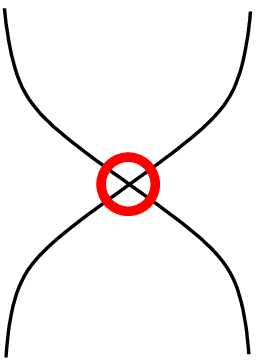}};
  \endxy
 \quad := \quad
    \xy  (0,0)*{\includegraphics[scale=0.4]{2thin-cross.eps}};
  (-4.2,3)*{\bullet};
  \endxy \quad = \quad  x_r\partial_r.
\end{equation}
It is easy to check that the elements $\overline{\partial}_r$ for $1 \leq r \leq a-1$ generate a copy of the 0-Hecke algebra:
\begin{equation}
  \overline{\partial}_r^2  \quad = \quad
  \xy (0,0)*{\includegraphics[scale=0.4]{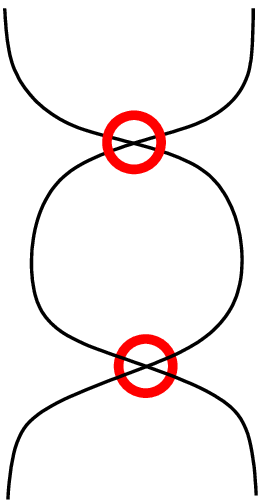}}; \endxy
  \quad = \quad
  \xy  (0,0)*{\includegraphics[scale=0.4]{2thin-Ocross.eps}};
  \endxy
 \quad  = \quad \overline{\partial}_r,
\end{equation}
\begin{equation}
  \overline{\partial}_{r}\overline{\partial}_{r+1}\overline{\partial}_{r}
  \quad = \quad
\xy (0,0)*{\includegraphics[scale=0.4]{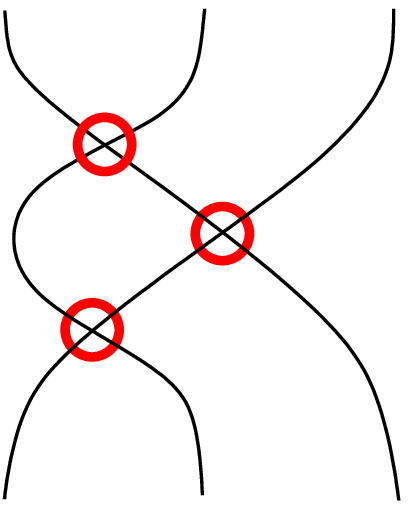}}; \endxy
  \quad = \quad
  \xy (0,0)*{\reflectbox{\includegraphics[scale=0.4]{3thin-OR3.eps}}}; \endxy
  \quad = \quad
  \overline{\partial}_{r+1}\overline{\partial}_{r}\overline{\partial}_{r+1},
\end{equation}
and, since the element $\overline{\partial}_r$ has degree zero, we have
\[
 \overline{\partial}_r \overline{\partial}_s = \overline{\partial}_s \overline{\partial}_r \qquad \text{if $|r-s|>1$.}
\]

Let $w_0$ denote a reduced word presentation of the longest element in $S_a$ and define
\[
 e_a \quad = \quad
    \xy
 (0,0)*{\includegraphics[scale=0.5]{1box.eps}};
 (0,0)*{e_a};
  \endxy
  \quad = \quad
  \xy
 (0,0)*{\includegraphics[scale=0.5]{5box.eps}};
 (0,0)*{e_a};
  \endxy
 \quad = \quad
 \overline{\partial}_{w_0}.
\]
By the 0-Hecke relations above it is clear that $\overline{\partial}_{w_0}$ does not depend on the choice of reduced word presentation $w_0$.

%
\subsubsection{Relations for $D_a$}
%

\begin{lem}[Crossing slide lemma] \label{lem_crossing_slide}
\begin{eqnarray} \label{eq_cross_exchange}
(\partial_{a-2} \dots \partial_{2}\partial_1)(\partial_{a-1} \dots\partial_2\partial_1)&\quad = \quad&(\partial_{a-1}\partial_{a-2} \dots \partial_{2}\partial_1)(\partial_{a-1} \dots\partial_2)
\\
    \xy (0,0)*{\includegraphics[scale=0.5,]{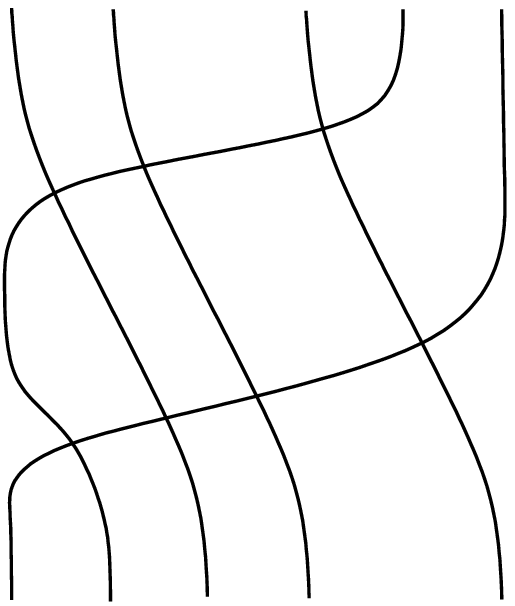}};
  (-2,12)*{\cdots};(7,-12)*{\cdots};\endxy
  &\quad = \quad&
    \xy (0,0)*{\includegraphics[scale=0.5,]{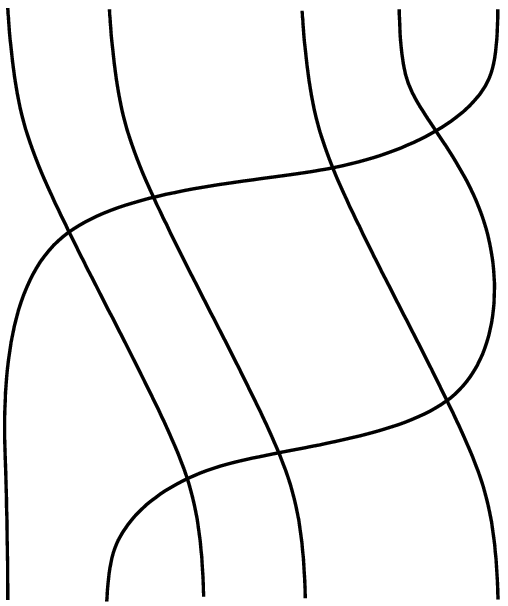}};
  (-2,12)*{\cdots};(7,-12)*{\cdots};\endxy \nn
\end{eqnarray}
\end{lem}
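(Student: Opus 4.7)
I plan to prove the identity by induction on $a$. The base case $a=2$ is trivial: both sides reduce to $\partial_1$ (with the empty product on the other factor). The case $a=3$, which serves as a sanity check, is exactly the braid relation $\partial_1\partial_2\partial_1=\partial_2\partial_1\partial_2$ from \eqref{eqn-onh-def1}.

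For the inductive step, I would rewrite
\begin{equation*}
\mathrm{LHS}=\partial_{a-2}(\partial_{a-3}\cdots\partial_1)\,\partial_{a-1}\,\partial_{a-2}(\partial_{a-3}\cdots\partial_1),
\end{equation*}
and then slide the central $\partial_{a-1}$ leftward across the block $(\partial_{a-3}\cdots\partial_1)$. Since all of $\partial_1,\ldots,\partial_{a-3}$ are at distance $\ge 2$ from $\partial_{a-1}$, the skew-commutation relation in \eqref{eqn-onh-def3} produces a total sign of $(-1)^{a-3}$. This yields
\begin{equation*}
\mathrm{LHS}=(-1)^{a-3}\,\partial_{a-2}\partial_{a-1}\Big[(\partial_{a-3}\cdots\partial_1)\,\partial_{a-2}\,(\partial_{a-3}\cdots\partial_1)\Big].
\end{equation*}
The bracketed expression is precisely the left side of the lemma with $a$ replaced by $a-1$, so by the inductive hypothesis it equals $(\partial_{a-2}\partial_{a-3}\cdots\partial_1)(\partial_{a-2}\partial_{a-3}\cdots\partial_2)$.

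Plugging this back in and applying the braid relation $\partial_{a-2}\partial_{a-1}\partial_{a-2}=\partial_{a-1}\partial_{a-2}\partial_{a-1}$ (which carries no sign, by \eqref{eqn-onh-def1}) gives
\begin{equation*}
\mathrm{LHS}=(-1)^{a-3}\,\partial_{a-1}\partial_{a-2}\partial_{a-1}(\partial_{a-3}\cdots\partial_1)(\partial_{a-2}\partial_{a-3}\cdots\partial_2).
\end{equation*}
Now I slide the middle $\partial_{a-1}$ rightward across the block $(\partial_{a-3}\cdots\partial_1)$; this picks up another $(-1)^{a-3}$, and the two signs cancel. The result is
\begin{equation*}
\mathrm{LHS}=(\partial_{a-1}\partial_{a-2}\cdots\partial_1)\,\partial_{a-1}(\partial_{a-2}\partial_{a-3}\cdots\partial_2)=\mathrm{RHS}.
\end{equation*}

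The main obstacle is bookkeeping: since the odd nilHecke relations differ from their even counterparts only by signs, the content is really just the combinatorial identity of permutations behind \eqref{eq_cross_exchange}, but one must check that the signs incurred by the various skew-commutations cancel against each other. The key point that makes this work is that the $\partial_{a-1}$ must be moved past the block $(\partial_{a-3}\cdots\partial_1)$ \emph{twice}, once in each direction, so the signs $(-1)^{a-3}$ cancel in pairs. A graphical sanity check: both sides represent the same underlying braid-like diagram, and the symmetric role of the braid and far-commutativity relations (the only sign-changing move being a far commutation, but every such move in the derivation is compensated by an opposite one) forces the signs to match.
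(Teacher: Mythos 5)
Your proof is correct and follows essentially the same route as the paper's: an induction on $a$ in which the central $\partial_{a-1}$ is anticommuted past the distant block $(\partial_{a-3}\cdots\partial_1)$, picking up a sign $(-1)^{a-3}$ that is cancelled by the reverse slide after the braid relation and the inductive hypothesis are applied. The only (immaterial) difference is that the paper performs the braid move before invoking the induction hypothesis, whereas you invoke the induction hypothesis on the inner bracket first.
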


\begin{proof}
The proof is by induction on the number of strands.  The base case of $a=3$ strands follows from the odd nilHecke relation~\eqref{new_eq_r3_easy}.  Assume the result holds for $a-1$ strands.  Then
\begin{eqnarray}
    \xy (0,0)*{\includegraphics[scale=0.5,]{5bl-cross.eps}};
  (-2,12)*{\cdots};(7,-12)*{\cdots};\endxy
  \;\; =\;\; (-1)^{a-3}\;
     \xy (0,0)*{\includegraphics[scale=0.5,]{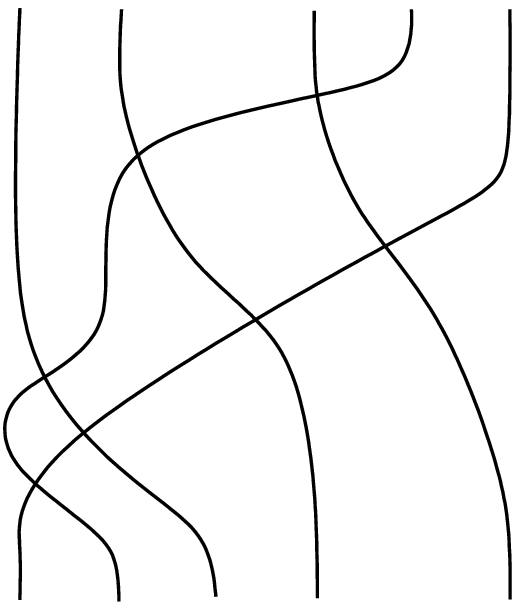}};
  (-2,12)*{\cdots};(7,-12)*{\cdots};\endxy
  \;\;\refequal{\eqref{new_eq_r3_easy}}\;\;
   (-1)^{a-3}\;
     \xy (0,0)*{\includegraphics[scale=0.5,]{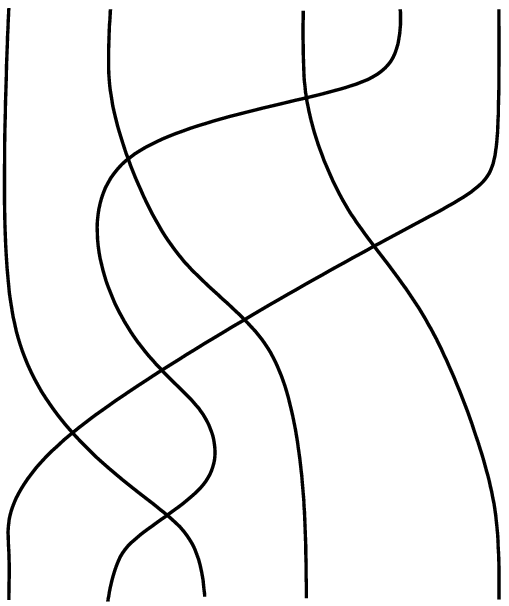}};
  (-2,12)*{\cdots};(7,-12)*{\cdots};\endxy
\end{eqnarray}
so that using the induction hypothesis the left side of \eqref{eq_cross_exchange} can be written as
\begin{equation}
  =\quad (-1)^{a-3}\;
     \xy (0,0)*{\includegraphics[scale=0.5,]{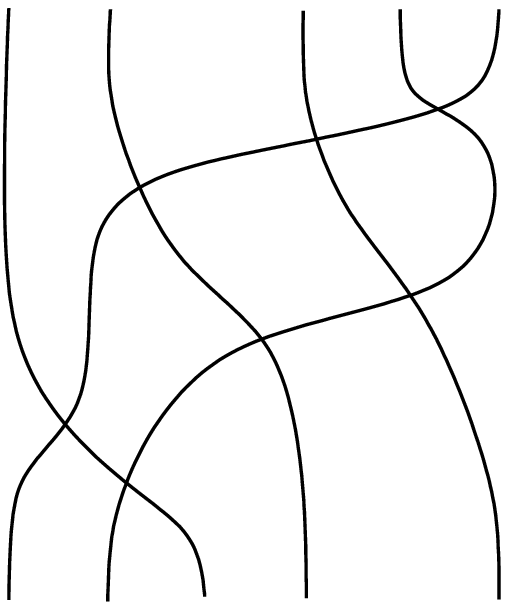}};
  (-2,12)*{\cdots};(7,-12)*{\cdots};\endxy
  \;\; =\;\;
 \xy (0,0)*{\includegraphics[scale=0.5,]{5ur-cross.eps}};
  (-2,12)*{\cdots};(7,-12)*{\cdots};\endxy.
\end{equation}
\end{proof}

\begin{lem}[Alternative definition of $D_a$] \label{lem_alt_def}
The element $D_a$ defined in \eqref{eq_def_Da} can also be inductively defined by the equation
\begin{equation} \label{eq_altdefDa}
  \xy
 (0,0)*{\includegraphics[scale=0.5]{5box.eps}};
 (0,0)*{D_a};
  \endxy
 \quad = \quad
 \xy
 (0,0)*{\reflectbox{\includegraphics[scale=0.5]{4box-1bl-r.eps}}};
 (2,1)*{D_{a-1}};
  \endxy
\end{equation}
\end{lem}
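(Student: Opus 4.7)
The plan is to prove this by induction on $a$. The base case $a = 2$ is immediate since both definitions reduce to $\partial_1$. For the inductive step, I will start from the original definition $D_a = D_{a-1}(\partial_{a-1}\cdots\partial_1)$ and apply the inductive hypothesis to rewrite $D_{a-1}$ in its alternative form, yielding
$$D_a = D_{a-2}^{(2,\ldots,a-1)} \cdot (\partial_1\partial_2\cdots\partial_{a-2}) \cdot (\partial_{a-1}\partial_{a-2}\cdots\partial_1),$$
where $D_{a-2}^{(2,\ldots,a-1)}$ denotes the version of $D_{a-2}$ acting on strands $2,\ldots,a-1$ (so involving only $\partial_2,\ldots,\partial_{a-3}$).

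The key step will be to establish a reflected version of the Crossing Slide Lemma,
$$(\partial_1\partial_2\cdots\partial_{a-2})(\partial_{a-1}\cdots\partial_1) \;=\; (\partial_{a-1}\cdots\partial_2)(\partial_1\partial_2\cdots\partial_{a-1}),$$
which I expect to prove by the same inductive pattern as Lemma \ref{lem_crossing_slide} itself, with base case $a = 3$ furnished by the odd braid relation $\partial_1\partial_2\partial_1 = \partial_2\partial_1\partial_2$. Substituting this into the expression for $D_a$ produces
$$D_a = \bigl[D_{a-2}^{(2,\ldots,a-1)}\cdot(\partial_{a-1}\cdots\partial_2)\bigr]\cdot(\partial_1\partial_2\cdots\partial_{a-1}) = D_{a-1}^{(2,\ldots,a)} \cdot (\partial_1\partial_2\cdots\partial_{a-1}),$$
where the last equality is \eqref{eq_def_Da} applied with strand labels shifted from $\{1,\ldots,a-1\}$ to $\{2,\ldots,a\}$. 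This is precisely the right-hand side of \eqref{eq_altdefDa}.

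The main obstacle will be sign bookkeeping for the reflected Crossing Slide. Every distant commutation $\partial_i\partial_j = -\partial_j\partial_i$ and every braid move can potentially introduce a sign, so a priori the identity holds only up to $\pm 1$; both sides are reduced expressions for the same element of $S_a$ and hence agree up to sign by \eqref{eqn_compose_oddops}, and the inductive argument must be arranged so that this sign is $+1$. Should the direct sign-tracking prove too intricate, a backup strategy is to note that both definitions of $D_a$ are reduced expressions for $w_0$ and therefore equal some common $\pm D_a$, then fix the sign by evaluating both sides on the monomial $\undx^{\delta_a}$: the original acts as $(-1)^{\binom{a}{3}}$ (cf.\ the proof of Lemma \ref{lem_sch_zbasis}), and checking that the alternative composition produces the same value on $\undx^{\delta_a}$ forces the sign to be $+1$.
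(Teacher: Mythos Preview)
Your approach is essentially the same as the paper's: both proceed by induction on $a$, both reduce the inductive step to the identity
\[
(\partial_1\partial_2\cdots\partial_{a-2})(\partial_{a-1}\cdots\partial_1)=(\partial_{a-1}\cdots\partial_2)(\partial_1\partial_2\cdots\partial_{a-1}),
\]
and both recognise the remaining factor as $D_{a-1}$ via the induction hypothesis. The only cosmetic difference is direction: the paper starts from the right-hand side of \eqref{eq_altdefDa}, expands $D_{a-1}^{(2,\ldots,a)}$ via \eqref{eq_def_Da}, applies the braid relation repeatedly to obtain the identity above, and then invokes the induction hypothesis to reach the left-hand side; you run the same chain in reverse. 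One small slip: your $D_{a-2}^{(2,\ldots,a-1)}$ involves $\partial_2,\ldots,\partial_{a-2}$, not $\partial_{a-3}$. Your caution about signs is well placed (the paper's phrase ``repeatedly applying \eqref{new_eq_r3_easy}'' hides the fact that distant anticommutations do occur but in cancelling pairs, as your $a=3$ base case plus induction confirms), and your backup strategy via evaluation on $\undx^{\delta_a}$ would work but is not needed.
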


\begin{proof}
The proof is by induction with the base case following from \eqref{new_eq_r3_easy}. The left hand side can be re-written using the definition of $D_a$
\begin{equation}
 \xy
 (0,0)*{\reflectbox{\includegraphics[scale=0.5]{4box-1bl-r.eps}}};
 (2,1)*{D_{a-1}};
  \endxy
\quad \refequal{\eqref{eq_def_Da}} \quad
 \xy
 (0,0)*{\includegraphics[scale=0.5]{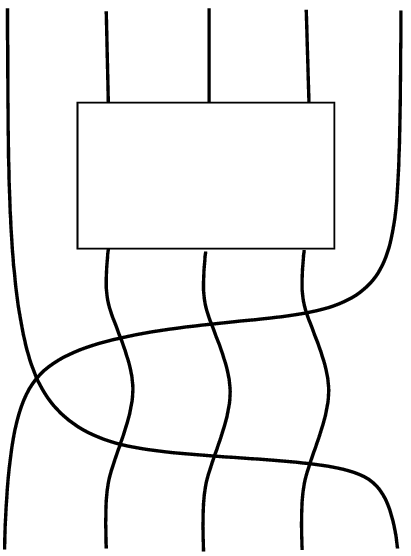}};
 (0,5)*{D_{a-2}};
  \endxy
\quad = \quad
 \xy
 (0,0)*{\includegraphics[scale=0.5]{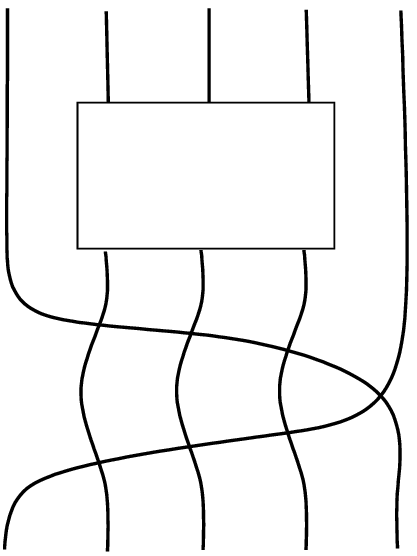}};
 (0,5)*{D_{a-2}};
  \endxy
   \quad = \quad
 \xy
 (0,0)*{\includegraphics[scale=0.5]{4box-1bl-r.eps}};
 (-2,1)*{D_{a-1}};
  \endxy,
\end{equation}
where the second to last equality follows by repeatedly applying \eqref{new_eq_r3_easy}, and the last equality follows from the induction hypothesis.
\end{proof}

\begin{lem}[$D_a$ slide] For $a\geq 1$ the equation
\begin{equation} \label{eq_Da-slide}
  \xy (0,0)*{\includegraphics[scale=0.5,]{4box-1bl-r.eps}};
  (-3,1)*{D_a};\endxy
  \quad = \quad (-1)^{\binom{a}{3}}\;\;
  \xy (0,0)*{\includegraphics[scale=0.5,angle=180]{4box-1bl-r.eps}};
  (3,-1)*{D_a}; \endxy
\end{equation}
holds in $\ONH_a$.
\end{lem}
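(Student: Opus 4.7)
The plan is to argue by induction on $a$. The base cases are immediate: when $a=1$ the equation reduces to the tautology $\partial_1=\partial_1$ (with the trivial sign $(-1)^{\binom{1}{3}}=1$), and when $a=2$ it specializes to the odd nilHecke braid relation $\partial_1\partial_2\partial_1=\partial_2\partial_1\partial_2$ (again with sign $+1$).

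For the inductive step, the strategy is to unfold the $D_a$ box on the left-hand side using the recursion $D_a = D_{a-1}(\partial_{a-1}\cdots\partial_1)$ from \eqref{eq_def_Da}. This exposes a $D_{a-1}$ subdiagram sitting on top of two consecutive crossing cascades, namely $(\partial_{a-1}\cdots\partial_1)(\partial_a\cdots\partial_1)$. Applying the Crossing Slide Lemma \ref{lem_crossing_slide} then rearranges these into $(\partial_a\cdots\partial_1)(\partial_a\cdots\partial_2)$. Next, anticommuting the $D_{a-1}$ box past the leading $\partial_a$ picks up an auxiliary sign $(-1)^{\binom{a-1}{2}}$, since $D_{a-1}$ can be written as a product of $\binom{a-1}{2}$ crossings $\partial_j$ with $j\leq a-2$, each of which anticommutes with $\partial_a$.

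At this point the diagram has the correct shape to apply the inductive hypothesis to the $D_{a-1}$ subconfiguration, flipping it into its $180^\circ$-rotated form and producing a further sign $(-1)^{\binom{a-1}{3}}$. Finally, the Alternative Definition Lemma \ref{lem_alt_def} is used to repackage the rotated $D_{a-1}$ together with the remaining crossing cascade into a fully rotated $D_a$ box, yielding exactly the right-hand side of the claim.

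The single delicate point is sign bookkeeping. Combining the anticommutation sign $(-1)^{\binom{a-1}{2}}$ with the inductive sign $(-1)^{\binom{a-1}{3}}$ gives $(-1)^{\binom{a-1}{3}+\binom{a-1}{2}}$, which equals $(-1)^{\binom{a}{3}}$ by Pascal's identity $\binom{a}{3}=\binom{a-1}{3}+\binom{a-1}{2}$, matching the sign in the statement. The main obstacle is therefore to verify carefully that no additional sign flips are introduced by the Crossing Slide Lemma or by either application of the two definitional expressions of $D_a$; this requires tracking each invocation of $\partial_i\partial_j=-\partial_j\partial_i$ (for $|i-j|>1$) throughout the manipulation, and choosing the order of the slides so that the auxiliary signs beyond the two identified above all cancel.
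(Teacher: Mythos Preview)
Your argument is correct and follows essentially the same route as the paper's proof: induct on $a$, unfold $D_a$ via \eqref{eq_def_Da}, apply the Crossing Slide Lemma~\ref{lem_crossing_slide}, pick up $(-1)^{\binom{a-1}{2}}$ by anticommuting $D_{a-1}$ past the distant crossing $\partial_a$, invoke the inductive hypothesis for $(-1)^{\binom{a-1}{3}}$, and finish with Pascal's identity. One small correction: the final repackaging step---combining $D_{a-1}^{(2,\ldots,a)}\cdot(\partial_a\cdots\partial_2)$ into $D_a^{(2,\ldots,a+1)}$---is just the shifted version of the \emph{original} recursive definition \eqref{eq_def_Da}, not the Alternative Definition Lemma~\ref{lem_alt_def}; the paper also cites \eqref{eq_def_Da} here.
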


\begin{proof}
The proof is by induction on the number of strands. The base case is trivial, so we assume the result holds for up to $a-1$ strands.
\begin{eqnarray}
  \xy (0,0)*{\includegraphics[scale=0.5,]{4box-1bl-r.eps}};
  (-3,1)*{D_a};\endxy
  \quad \refequal{\eqref{eq_def_Da}} \quad
    \xy (0,0)*{\includegraphics[scale=0.5,]{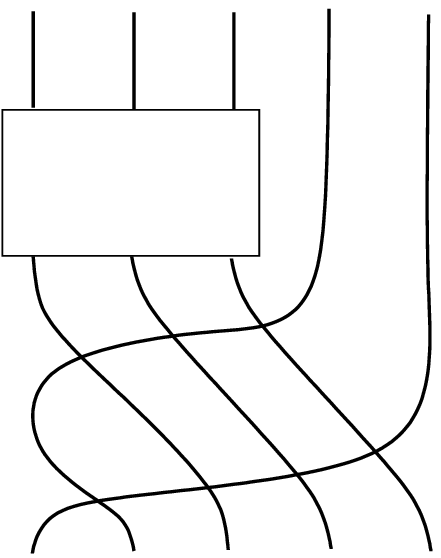}};
  (-4,5)*{D_{a-1}};\endxy
    \quad \refequal{\eqref{eq_cross_exchange}} \quad
    \xy (0,0)*{\includegraphics[scale=0.5,]{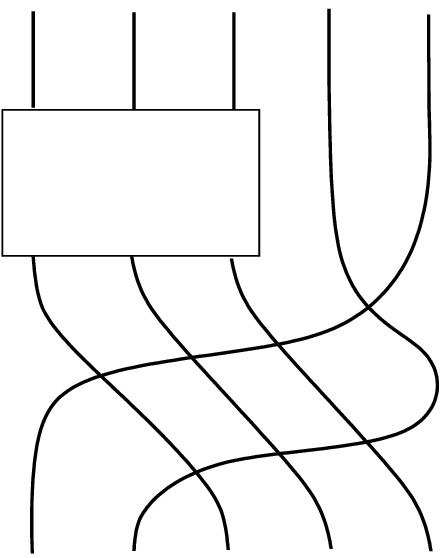}};
  (-4,5)*{D_{a-1}};\endxy \hspace{1.2in}
\\
 = \quad (-1)^{\binom{a-1}{2}}
 \xy (0,0)*{\includegraphics[scale=0.5,]{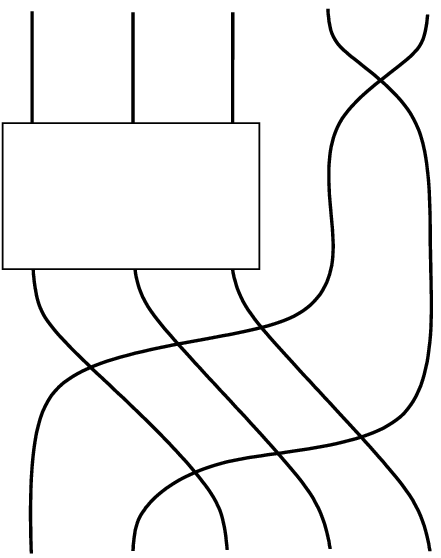}};
  (-4,4)*{D_{a-1}};\endxy
\quad = \quad
(-1)^{\binom{a-1}{2}+\binom{a-1}{3}}
 \xy (0,0)*{\includegraphics[scale=0.5,]{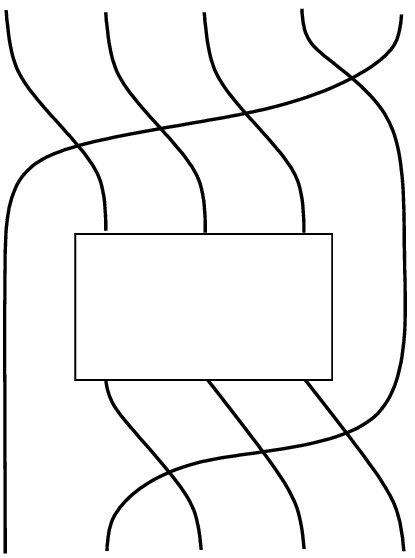}};
  (0,-2)*{D_{a-1}};\endxy,
\end{eqnarray}
where we used the induction hypothesis in the last step.  The result follows using the definition \eqref{eq_def_Da} of $D_a$ and the identity
\begin{equation}\label{eqn-binom}
\binom{a-1}{2}+\binom{a-1}{3} = \binom{a}{3}.
\end{equation}
\end{proof}

%
\subsubsection{Relations involving the 0-Hecke generators}
%

It is a simple calculation to see that
\begin{equation}\label{eqn-r3-one-dotted}
  \xy (0,0)*{\reflectbox{\includegraphics[scale=0.4]{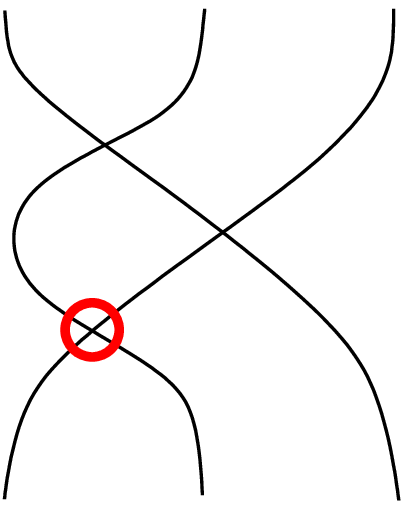}}}; \endxy
  \quad = \quad
  \xy (0,0)*{\includegraphics[scale=0.4,]{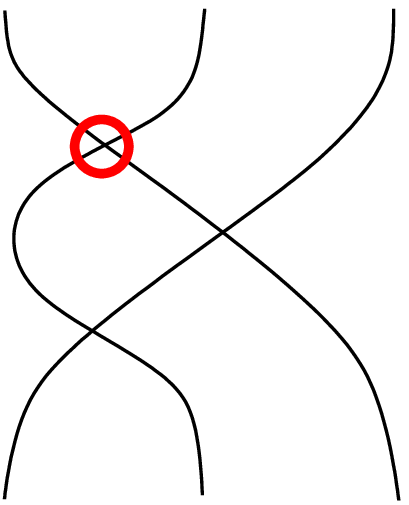}}; \endxy
\end{equation}
so that the relation
\begin{equation}\label{eq_ea_slide_left}
  \xy (0,0)*{\includegraphics[scale=0.5,angle=180]{4box-1bl-r.eps}};
  (3,-1)*{e_a}; \endxy
  \quad = \quad
  \xy (0,0)*{\includegraphics[scale=0.5,]{4box-1bl-r.eps}};
  (-3,1)*{e_a};\endxy
\end{equation}
holds.

\begin{rem} Note that an analogous relation
\begin{equation}
  \xy (0,0)*{\reflectbox{\includegraphics[scale=0.5,angle=180]{4box-1bl-r.eps}}};
  (-3,-1)*{e_a}; \endxy
  \quad \neq \quad
  \xy (0,0)*{\reflectbox{\includegraphics[scale=0.5,]{4box-1bl-r.eps}}};
  (3,1)*{e_a};\endxy
\end{equation}
does not hold for sliding a projector $e_a$ from the bottom left to the top right through a strand.  This is because the reflection of equation \eqref{eqn-r3-one-dotted} through a horizontal axis is false.\end{rem}

It follows from \eqref{new_eq_iislide} that
\begin{equation} \label{eq_Or2}
\xy (0,0)*{\includegraphics[scale=0.4]{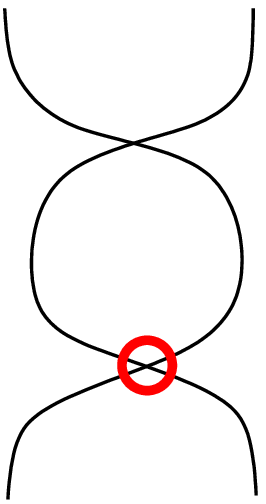}}; \endxy
 \quad = \quad
 \xy  (0,0)*{\includegraphics[scale=0.4]{2thin-cross.eps}};\endxy.
\end{equation}

It will be convenient to express elements $e_a$ in terms of one of the bases \eqref{eqn-onh-bases}.

\begin{prop}[Projector in standard basis] \label{prop_ea-standard}
\begin{equation}
    \xy
 (0,0)*{\includegraphics[scale=0.5]{1box.eps}};
 (0,0)*{e_a};
  \endxy
  \quad = \quad   (-1)^{\binom{a}{3}}\;\;\xy (0,0)*{\includegraphics[scale=0.5]{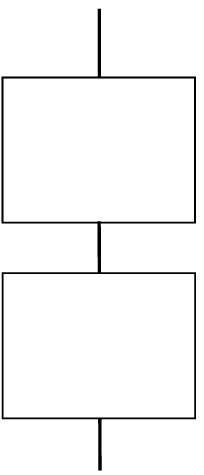}};
   (0,4)*{\delta_a };  (0,-6)*{D_a };    \endxy
\end{equation}
\end{prop}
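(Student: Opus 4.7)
The plan is to induct on $a$. The case $a=1$ is trivial: both sides equal $1$, with $D_1$ the empty product and $\undx^{\delta_1}=1$. For the inductive step, I use that $e_a$ depends only on the $0$-Hecke relations and is therefore independent of the choice of reduced word for $w_0$. Choosing the factorization $w_0^{(a)} = w_0^{(a-1)} \cdot (s_{a-1}s_{a-2}\cdots s_1)$, which is reduced since $\binom{a-1}{2}+(a-1)=\binom{a}{2}$, translates into the algebraic identity
\[
e_a \;=\; e_{a-1}\cdot \overline{\partial}_{a-1}\overline{\partial}_{a-2}\cdots\overline{\partial}_1,
\]
where $e_{a-1}$ sits in the subalgebra $\ONH_{a-1}\subset\ONH_a$.

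I first rewrite the trailing product $\overline{\partial}_{a-1}\cdots\overline{\partial}_1 = x_{a-1}\partial_{a-1}x_{a-2}\partial_{a-2}\cdots x_1\partial_1$ in standard form. Using only the skew-commutation $\partial_j x_i + x_i \partial_j=0$ for $i\neq j,j+1$, each $x_i$ is pushed leftward past the $a-1-i$ operators $\partial_j$ standing to its left, contributing sign $(-1)^{a-1-i}$; the total sign $(-1)^{\binom{a-1}{2}}$ is then cancelled against the $(-1)^{\binom{a-1}{2}}$ incurred in sorting the $x$'s into ascending order, producing
\[
\overline{\partial}_{a-1}\overline{\partial}_{a-2}\cdots\overline{\partial}_1 \;=\; x_1x_2\cdots x_{a-1}\cdot \partial_{a-1}\partial_{a-2}\cdots\partial_1.
\]
Inserting the inductive hypothesis $e_{a-1}=(-1)^{\binom{a-1}{3}}\undx^{\delta_{a-1}}D_{a-1}$ reduces the problem to simplifying
\[
\undx^{\delta_{a-1}}\,D_{a-1}\cdot x_1x_2\cdots x_{a-1}\cdot \partial_{a-1}\partial_{a-2}\cdots\partial_1.
\]

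I would then perform three manipulations. The first slides $D_{a-1}$ past $x_1\cdots x_{a-1}$: the crucial input is the identity $\partial_i(x_ix_{i+1})=0$ already recorded in Section~\ref{sec_osymm}, which ensures that the Leibniz corrections disappear when $\partial_i$ slides past the adjacent pair $x_ix_{i+1}$, leaving the clean rule $\partial_i\cdot x_1\cdots x_{a-1} = (-1)^{a-2}\, x_1\cdots x_{a-1}\cdot \partial_i$ for every $i\le a-2$; iterating over the $\binom{a-1}{2}$ factors of $D_{a-1}$ gives
\[
D_{a-1}\cdot x_1x_2\cdots x_{a-1}\;=\;(-1)^{(a-2)\binom{a-1}{2}}\, x_1x_2\cdots x_{a-1}\cdot D_{a-1}.
\]
The second is the monomial sign count $\undx^{\delta_{a-1}}\cdot x_1x_2\cdots x_{a-1} = (-1)^{\binom{a-1}{3}}\undx^{\delta_a}$ (each $x_k$ from the right factor must anticommute past $\binom{a-k-1}{2}$ variables, and summing via the hockey-stick identity produces $\binom{a-1}{3}$). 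The third is $D_{a-1}\cdot\partial_{a-1}\cdots\partial_1 = D_a$, directly from \eqref{eq_def_Da}.

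Assembling, $e_a = (-1)^{2\binom{a-1}{3}+(a-2)\binom{a-1}{2}}\undx^{\delta_a}D_a = (-1)^{(a-2)\binom{a-1}{2}}\undx^{\delta_a}D_a$. The induction closes via the numerical identity
\[
(a-2)\binom{a-1}{2} \;=\; \binom{a}{3}+2\binom{a-1}{3},
\]
verified from $\tfrac{(a-1)(a-2)^2}{2}-\tfrac{a(a-1)(a-2)}{6}=\tfrac{(a-1)(a-2)(a-3)}{3}$, so the exponent agrees with $\binom{a}{3}$ modulo $2$. The main obstacle is the sign bookkeeping in the first of the three manipulations above; what makes it succeed is precisely the odd-nilHecke relation $\partial_i(x_ix_{i+1})=0$, which forces the Leibniz corrections to cancel in pairs and lets each $\partial_i$ treat the block $x_1\cdots x_{a-1}$ as if it were a homogeneous anticommuting string.
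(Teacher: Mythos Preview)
Your proof is correct. Both you and the paper argue by induction on $a$ using the reduced factorization $w_0^{(a)}=w_0^{(a-1)}\cdot(s_{a-1}\cdots s_1)$, but the executions differ. The paper works diagrammatically: it first strips the dots off the $0$-Hecke crossings and pushes them to the top, then invokes the separately-proved slide relation \eqref{eq_ea_slide_left} to reposition $e_{a-1}$ before applying the inductive hypothesis, and finally re-inserts the dots into the $\delta_a$ pattern. You instead apply the inductive hypothesis immediately and then slide $D_{a-1}$ through the block $x_1\cdots x_{a-1}$ in one move, using the algebraic identity $\partial_i\,x_ix_{i+1}=-x_ix_{i+1}\,\partial_i$ in $\ONH_a$ (which you correctly derive from $\partial_i(x_ix_{i+1})=0$). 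Your route is slightly more self-contained, since it avoids the auxiliary relation \eqref{eq_ea_slide_left}, and it makes the three sign contributions $(-1)^{\binom{a-1}{3}}$, $(-1)^{(a-2)\binom{a-1}{2}}$, $(-1)^{\binom{a-1}{3}}$ completely transparent; the paper's diagrammatic bookkeeping trades that transparency for a more visual argument. Either way the same binomial identity $(a-2)\binom{a-1}{2}\equiv\binom{a}{3}\pmod 2$ closes the induction.
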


\begin{proof}
We prove the result by induction. The base case is trivial. Assume the result follows for $e_{a-1}$.  From the definition of $e_a$ we have
\begin{equation}
      \xy
 (0,0)*{\includegraphics[scale=0.5]{1box.eps}};
 (0,0)*{e_a};
  \endxy
  \quad = \quad
  \xy
 (0,0)*{\includegraphics[scale=0.5]{5box.eps}};
 (0,0)*{e_a};
  \endxy
  \quad = \quad
  \xy
 (0,0)*{\includegraphics[scale=0.5,angle=180]{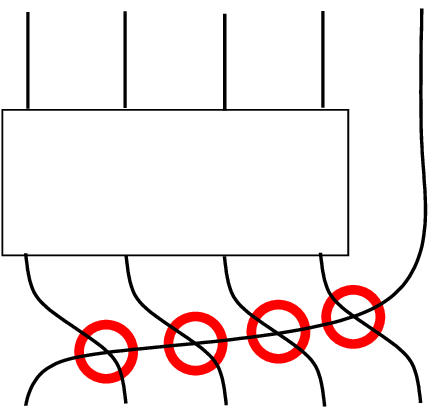}};
 (3,-2)*{e_{a-1}};
  \endxy
\end{equation}
Sliding the dots from the 0-Hecke generators up to the top of the diagram we can write
\begin{equation}
 \xy (0,0)*{\includegraphics[scale=0.5]{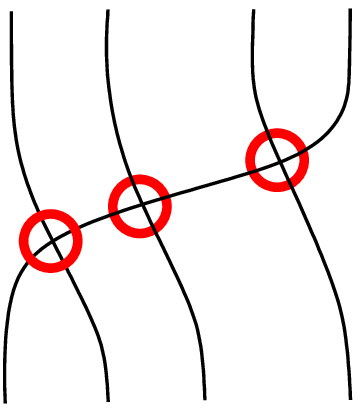}};
   (0,3)*{\cdots };   \endxy
  \quad = \quad (-1)^{\sum_{j=1}^{a-2}j}\;\;
   \xy (0,0)*{\includegraphics[scale=0.5]{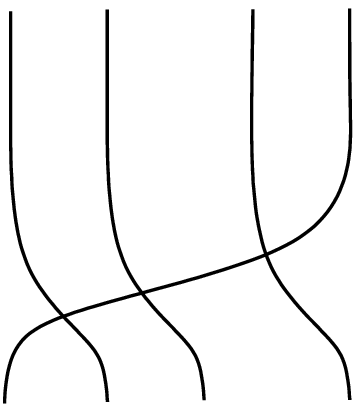}};
   (0,3)*{\cdots };
  (-8.5,3)*{\bullet};(-3.5,5.5)*{\bullet}; (3.7,7)*{\bullet};  \endxy
   \quad = \quad
   \xy (0,0)*{\includegraphics[scale=0.5]{4thin-leftright.eps}};
   (0,3)*{\cdots };
  (-8.5,7)*{\bullet};(-3.5,5.5)*{\bullet}; (3.7,3)*{\bullet};  \endxy
\end{equation}
so that
\begin{equation}
  \xy
 (0,0)*{\includegraphics[scale=0.5,angle=180]{4box-O1bl-r.eps}};
 (3,-2)*{e_{a-1}};
  \endxy \quad = \quad
 \xy (0,0)*{\includegraphics[scale=0.5,angle=180]{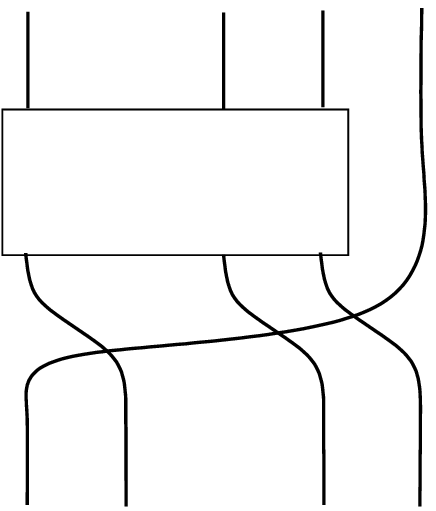}};
   (0,8)*{\cdots };  (5,-10)*{\cdots }; (3,-4)*{e_{a-1}};
   (-10.5,11)*{\bullet};(-5.5,9.5)*{\bullet}; (4.5,7)*{\bullet}; \endxy
  \quad \refequal{\eqref{eq_ea_slide_left}} \quad
   \xy (0,0)*{\includegraphics[scale=0.5]{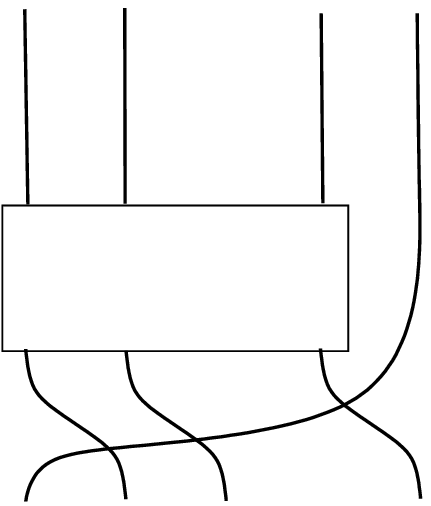}};
   (0,8)*{\cdots };  (1,-7)*{\cdots }; (-1,-1)*{e_{a-1}};
   (-9.5,9)*{\bullet};(-4.5,7.5)*{\bullet}; (5.5,5)*{\bullet}; \endxy
\end{equation}
Using the induction hypothesis we write $e_{a-1}$ in standard form
\begin{equation}
\ldots\quad = \quad
 (-1)^{\binom{a-1}{3}}\;\;\xy (0,-6)*{\includegraphics[scale=0.5]{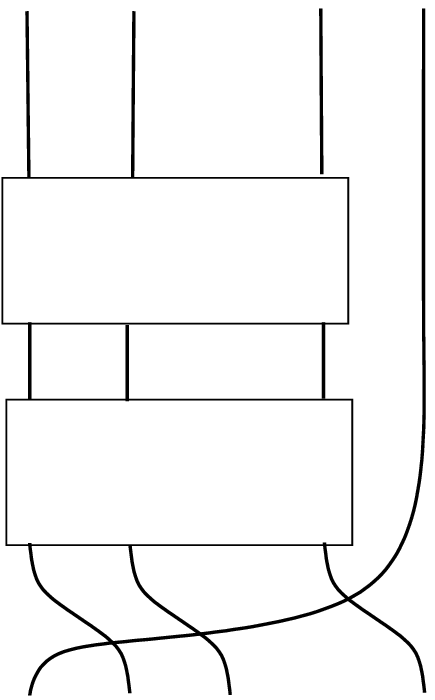}};
   (0,6)*{\cdots };  (1,-7)*{\cdots }; (-1,-1)*{\delta_{a-1}}; (-1,-12)*{D_{a-1}};
   (-9.5,9)*{\bullet};(-4.2,7.5)*{\bullet}; (5.5,5)*{\bullet}; \endxy
\end{equation}
then slide the dots into the correct position using the identity
\begin{equation}
 \xy (0,0)*{\includegraphics[scale=0.5]{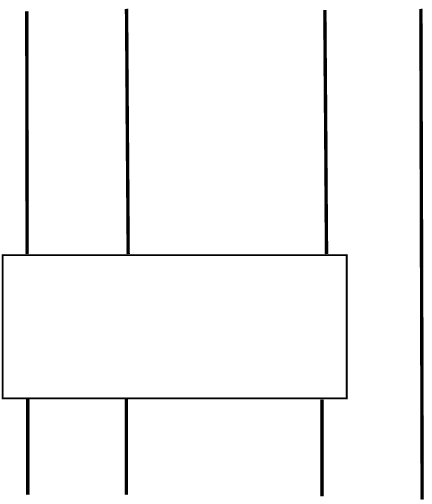}};
  (-2,-4)*{\delta_{a-1}}; (0,3)*{\cdots }; (0,-11)*{\cdots };
  (-9.5,7)*{\bullet};(-4.5,5.5)*{\bullet}; (5.5,3)*{\bullet};  \endxy
  \quad = \quad (-1)^{\sum_{j=1}^{a-2}\sum_{\ell=j}^{a-2}\ell} \;
 \xy (0,0)*{\includegraphics[scale=0.5]{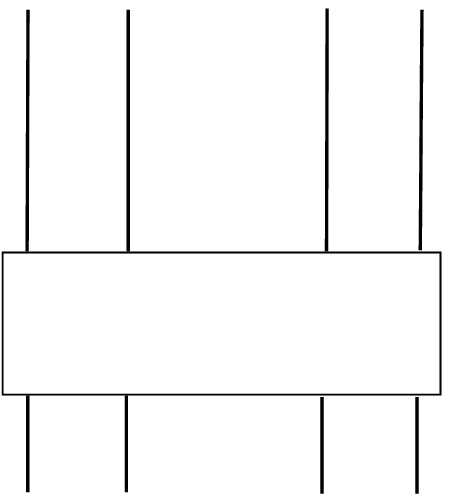}};
  (0,-4)*{\delta_a}; (0,3)*{\cdots }; (0,-11)*{\cdots };\endxy
   \quad = \quad (-1)^{\binom{a-1}{2}} \;
 \xy (0,0)*{\includegraphics[scale=0.5]{4box-middlegap.eps}};
  (0,-4)*{\delta_a}; (0,3)*{\cdots }; (0,-11)*{\cdots };\endxy
\end{equation}
which follows by sliding the lower right most dot down past each of the $\sum_{j=1}^{a-2}j$ dots, then the second dot from the right, and so on. The Proposition follows from the inductive definition \eqref{eq_def_Da} of $D_a$ and the binomial identity \eqref{eqn-binom}.
\end{proof}
The following proposition shows that the elements $e_a$ are idempotents in $\ONH_a$.
\begin{prop} \hfill \newline \label{prop_ea_idemp}
The diagrammatic identities
\begin{enumerate}[1)]
\item
$  \xy
 (0,0)*{\includegraphics[scale=0.5]{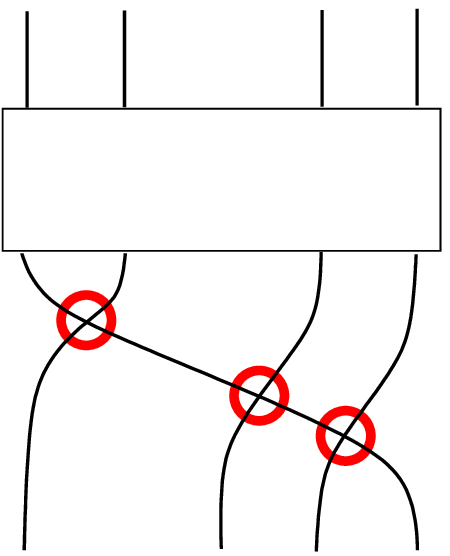}};
 (0,5)*{D_a}; (-5,-10)*{\cdots}; (0,12)*{\cdots};
  \endxy \quad = \quad
   \xy
 (0,0)*{\includegraphics[scale=0.5]{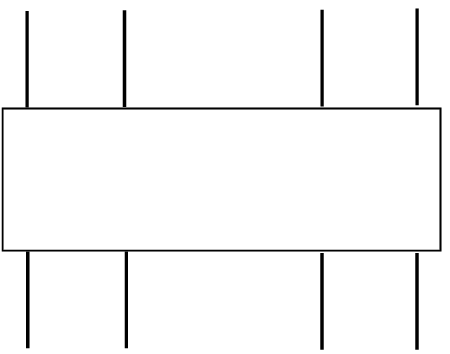}};
 (0,0)*{D_a}; (0,-6)*{\cdots}; (0,6)*{\cdots};
  \endxy,$

  \item $  \xy (0,0)*{\includegraphics[scale=0.5]{1two-box.eps}};
   (0,4)*{D_a };  (0,-6)*{e_a };    \endxy
  \quad = \quad  \xy
 (0,0)*{\includegraphics[scale=0.5]{1box.eps}};
 (0,0)*{D_a};
  \endxy, $

 \item $  \xy (0,0)*{\includegraphics[scale=0.5]{1two-box.eps}};
   (0,4)*{e_a };  (0,-6)*{e_a };    \endxy
  \quad = \quad  \xy
 (0,0)*{\includegraphics[scale=0.5]{1box.eps}};
 (0,0)*{e_a};
  \endxy $
\end{enumerate}
hold in $\ONH_a$.
\end{prop}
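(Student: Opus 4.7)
The plan is to prove the three identities in the order given, since (2) follows from (1) by iteration and (3) follows from (2) via the standard form of $e_a$.

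For (1), I would reduce the claim to showing $D_a\cdot \overline{\partial}_r = D_a$ in $\ONH_a$ for a single 0-Hecke generator $\overline{\partial}_r = x_r\partial_r$. Because $\ell(w_0 s_r) = \ell(w_0)-1$ for every simple reflection $s_r$, there exists a reduced expression for $w_0$ ending in $s_r$. By \eqref{eqn_compose_oddops}, this yields a factorization $D_a = \varepsilon\, C\partial_r$ for some $C\in\ONH_a$ and some sign $\varepsilon\in\{\pm1\}$. Applying \eqref{eqn-onh-def2} together with $\partial_r^2=0$ then gives
\[
D_a\cdot\overline{\partial}_r \;=\; \varepsilon\, C\partial_r x_r\partial_r \;=\; \varepsilon\, C(1 - x_{r+1}\partial_r)\partial_r \;=\; \varepsilon\, C\partial_r \;=\; D_a,
\]
where the sign $\varepsilon$ cancels because it appears symmetrically in the decomposition of $D_a$ and in its reconstitution.

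For (2), I will use that $e_a = \overline{\partial}_{w_0}$ is by construction a product $\overline{\partial}_{i_1}\overline{\partial}_{i_2}\cdots\overline{\partial}_{i_\ell}$ of 0-Hecke generators for some reduced word representing $w_0$. Iterating (1) from left to right absorbs each factor into $D_a$:
\[
D_a\cdot e_a \;=\; D_a\,\overline{\partial}_{i_1}\overline{\partial}_{i_2}\cdots\overline{\partial}_{i_\ell} \;=\; D_a\,\overline{\partial}_{i_2}\cdots\overline{\partial}_{i_\ell} \;=\; \cdots \;=\; D_a.
\]
For (3), Proposition \ref{prop_ea-standard} rewrites $e_a = (-1)^{\binom{a}{3}}\undx^{\delta_a}\, D_a$, so using (2) I obtain
\[
e_a\cdot e_a \;=\; (-1)^{\binom{a}{3}}\undx^{\delta_a}\,(D_a\cdot e_a) \;=\; (-1)^{\binom{a}{3}}\undx^{\delta_a}\, D_a \;=\; e_a.
\]

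The main obstacle is the sign tracking in (1), which is where the odd nilCoxeter case genuinely differs from the even case: different reduced expressions for $w_0$ yield $\partial_{w_0}$ only up to a sign, so the $\varepsilon$ in $D_a = \varepsilon\, C\partial_r$ is not a priori trivial. The key observation is that the manipulation in (1) amounts to inserting $x_r\partial_r$ at the tail of a reduced word for $w_0$ and then simplifying it back to the identity on that segment, so the same sign $\varepsilon$ appears on both sides and drops out. Once (1) is established, (2) and (3) are formal consequences — an iterated absorption and a single substitution, respectively.
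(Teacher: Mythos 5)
Your proof is correct, and for parts 1) and 2) it takes a genuinely different route from the paper. The paper proves 1) only for the specific staircase of 0-Hecke crossings appearing under $D_a$, by induction on $a$ using the inductive definition \eqref{eq_def_Da}, the relation \eqref{eq_Or2}, and the $D_a$-slide \eqref{eq_Da-slide}; it then proves 2) by a second induction on $a$, sliding $e_{a-1}$ through a strand via \eqref{eq_ea_slide_left} and invoking the alternative definition \eqref{eq_altdefDa} before appealing to 1). You instead prove the stronger local statement $D_a\overline{\partial}_r=D_a$ for \emph{every} simple reflection, from the existence of a reduced word for $w_0$ ending in $s_r$, the composition rule \eqref{eqn_compose_oddops}, and the one-line computation $\partial_r x_r\partial_r=(1-x_{r+1}\partial_r)\partial_r=\partial_r$; part 2) then follows by peeling off the factors of $e_a=\overline{\partial}_{i_1}\cdots\overline{\partial}_{i_\ell}$ one at a time. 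Your version is shorter and sidesteps the sign bookkeeping of the slide lemmas entirely --- indeed the sign $\varepsilon$ you worry about is a non-issue: it is a fixed scalar defined by $D_a=\varepsilon\,C\partial_r$, so $\varepsilon C\partial_r=D_a$ holds by definition and nothing needs to ``cancel.'' What the paper's route buys is that it stays inside the thick diagrammatic calculus and exercises the crossing-slide and $D_a$-slide lemmas that are needed later anyway. Part 3) is identical in both arguments.
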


\begin{proof}
Part 1) follows from the computation
\begin{align}
  \xy
 (0,0)*{\includegraphics[scale=0.5]{Da_underOdot.eps}};
 (0,5)*{D_a}; (-5,-10)*{\cdots}; (0,12)*{\cdots};
  \endxy
  &\quad \refequal{\eqref{eq_def_Da}} \quad
  \xy
 (0,0)*{\includegraphics[scale=0.5]{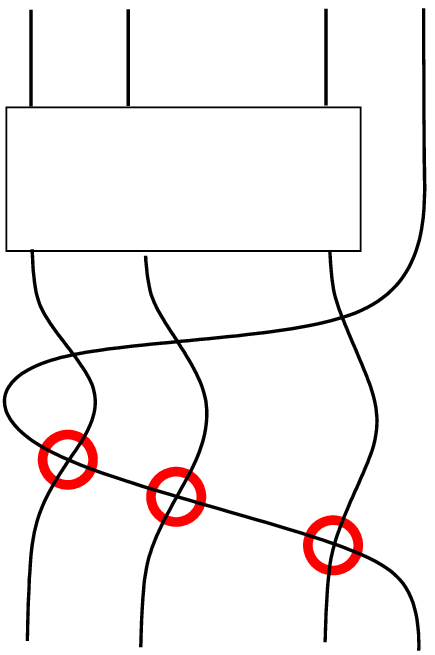}};
 (-2,7)*{D_{a-1}}; (1,-13)*{\cdots}; (0,13)*{\cdots};
  \endxy \quad \refequal{\eqref{eq_Or2}} \quad
    \xy
 (0,0)*{\includegraphics[scale=0.5]{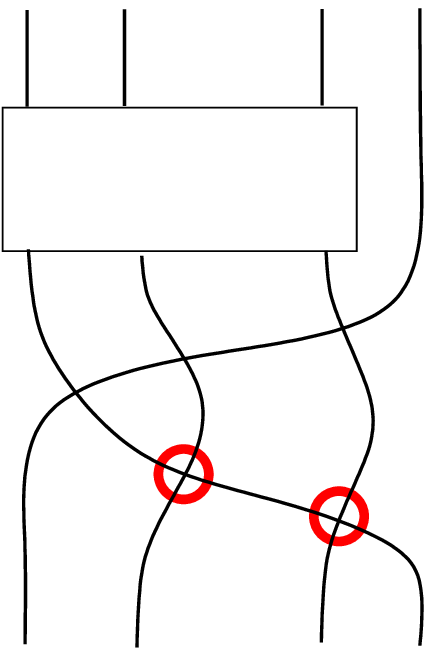}};
 (-2,7)*{D_{a-1}}; (1,-13)*{\cdots}; (0,13)*{\cdots};
  \endxy \nn\\
&\quad \refequal{\eqref{eq_Da-slide}} \quad (-1)^{\binom{a-1}{3}}
    \xy
 (0,0)*{\includegraphics[scale=0.5]{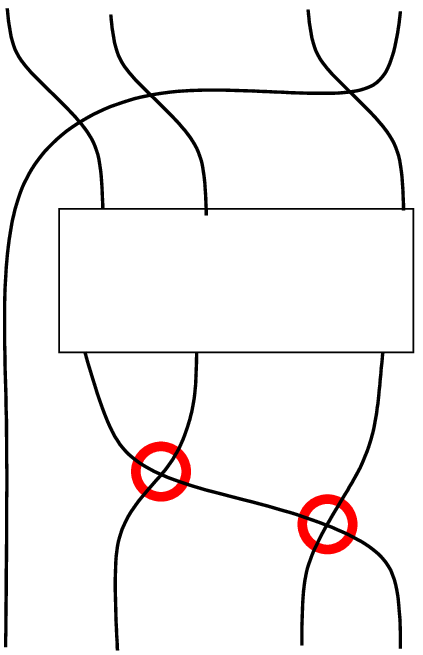}};
 (2,2)*{D_{a-1}}; (0,-14)*{\cdots}; (0,14)*{\cdots};
  \endxy
 \quad = \quad  (-1)^{\binom{a-1}{3}}
 \xy (0,0)*{\includegraphics[scale=0.5,angle=180]{4box-1bl-r.eps}};
  (3,-1)*{D_{a-1}}; \endxy,
  \nn
\end{align}
where the last equality follows by the induction hypothesis.  This proves part 1) using equation \eqref{eq_Da-slide} to slide $D_{a-1}$ back through the line.

For the second claim observe that
\begin{align}
 \xy (0,0)*{\includegraphics[scale=0.5]{1two-box.eps}};
   (0,4)*{D_a };  (0,-6)*{e_a };    \endxy
   \;\; = \;\;  \xy
 (0,0)*{\includegraphics[scale=0.5]{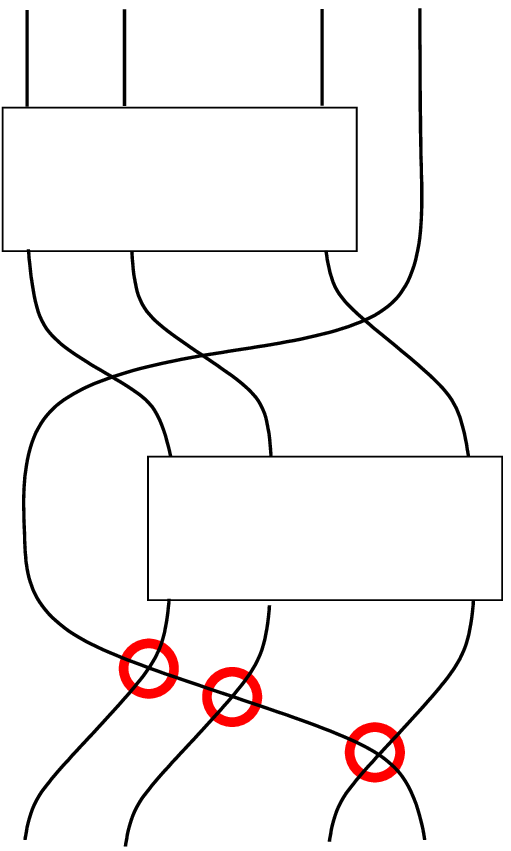}};
 (-4,13)*{D_{a-1}}; (4,-6)*{e_{a-1}}; (5,-13)*{\cdots}; (-2,19)*{\cdots};
  \endxy
     \;\; \refequal{\eqref{eq_ea_slide_left}} \;\;  \xy
 (0,0)*{\includegraphics[scale=0.5]{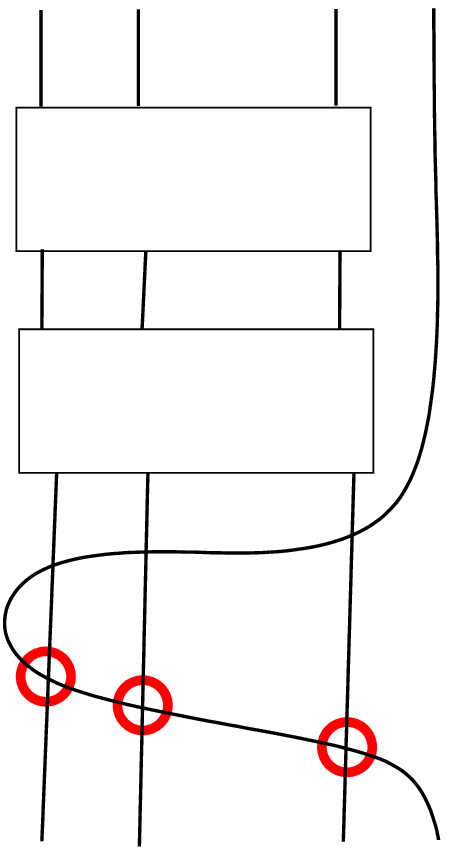}};
 (-2,13)*{D_{a-1}}; (-2,1)*{e_{a-1}}; (1,-13)*{\cdots}; (1,19)*{\cdots};
  \endxy
       \;\; = \;\;  \xy
 (0,0)*{\includegraphics[scale=0.5]{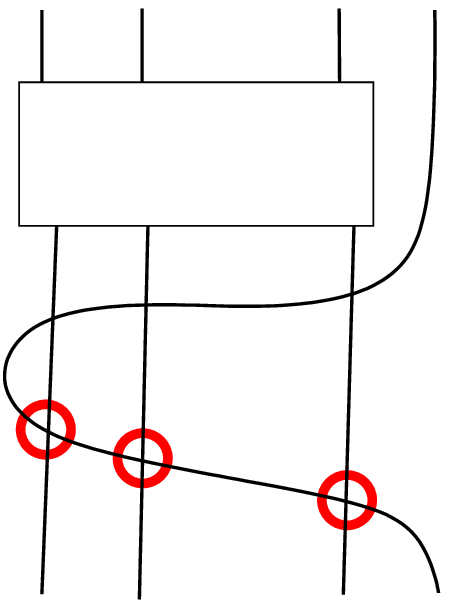}};
 (-2,8)*{D_{a-1}};  (1,-13)*{\cdots}; (1,13)*{\cdots};
  \endxy
 \;\; \refequal{\eqref{eq_altdefDa}} \;\;
   \xy
 (0,0)*{\includegraphics[scale=0.5]{Da_underOdot.eps}};
 (0,5)*{D_a}; (-5,-10)*{\cdots}; (0,12)*{\cdots};
  \endxy,
\end{align}
where the third equality follows by induction. The second claim follows from part 1). Part 3) follows from 2) using Proposition~\ref{prop_ea-standard} since
\begin{equation}
  e_a e_a = (-1)^{\binom{a}{3}} \undx^{\delta_a} D_a e_a = (-1)^{\binom{a}{3}} \undx^{\delta_a} D_a =e_a.
\end{equation}
\end{proof}

One can also prove the following equalities from the 0-Hecke relations.
\begin{equation} \label{eq_ea_absorb}
\xy
 (0,0)*{\includegraphics[scale=0.5]{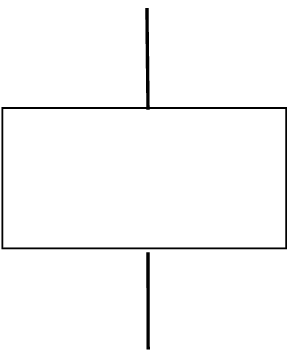}};
 (0,0)*{e_{a+b+c}};
  \endxy\;\; = \;\;
    \xy
 (0,0)*{\includegraphics[scale=0.5]{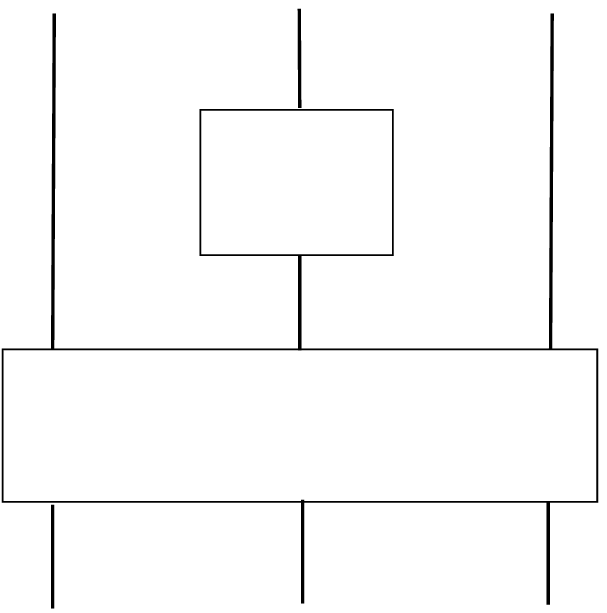}};
 (0,6)*{e_{b}}; (0,-6)*{e_{a+b+c}}; (-15,12)*{a}; (15,12)*{c};
  \endxy
 \;\; = \;\;
     \xy
 (0,0)*{\includegraphics[scale=0.5, angle=180]{pp3.eps}};
 (0,-6)*{e_{b}}; (0,6)*{e_{a+b+c}}; (-15,-12)*{a}; (15,-12)*{c};
  \endxy
\end{equation}

%
\subsubsection{Crossings and projectors}
%

Just as $a$ vertical lines can be represented by a single line labelled $a$, we denote a specific choice of crossings of $a$ strands with $b$ strands as follows:
\begin{equation}\label{eqn-defn-crossing}
 \xy  (0,0)*{\includegraphics[scale=0.4]{2thin-cross.eps}};
 (-8,6)*{a};
 (8,6)*{b};\endxy
 \;\; = \;\;
 \xy (0,0)*{\includegraphics[scale=0.5,]{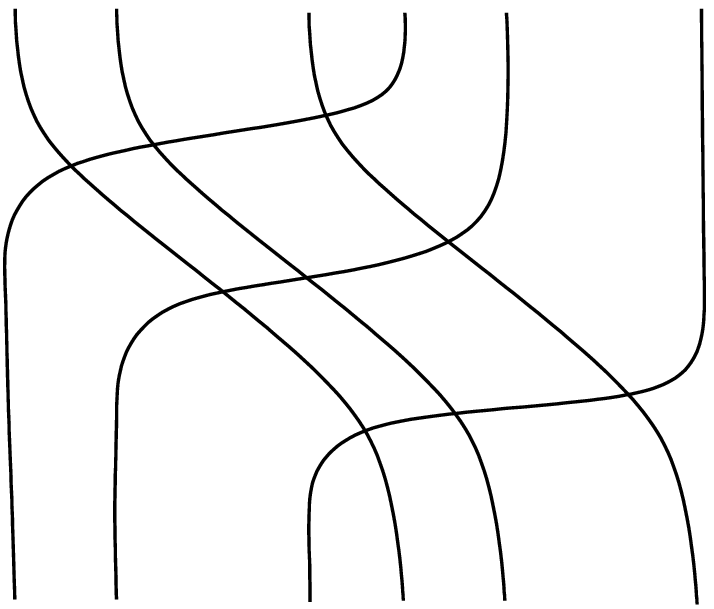}};
  (-7,12)*{\cdots}; (-7,-12)*{\cdots};(12,12)*{\cdots};(12,-12)*{\cdots};
  (-10,17)*{\overbrace{\hspace{0.7in}}};
  (10,17)*{\overbrace{\hspace{0.7in}}};
  (-10,20)*{a}; (10,20)*{b};\endxy
\end{equation}
The first of the $b$ strands crosses all of the $a$ strands, then the second of the $b$ strands crosses all of the $a$ strands, and so on.  One can easily verify that
\begin{equation} \label{eq_crossing-combine}
  \xy (0,0)*{\reflectbox{\includegraphics[scale=0.5,]{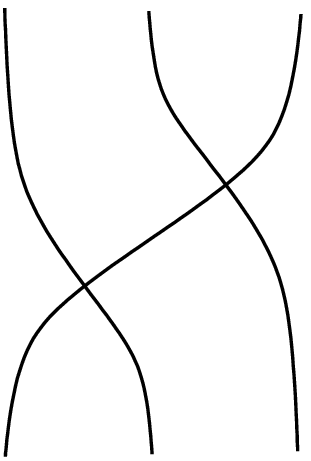}}};
   (-10,10)*{a}; (-2,10)*{b}; (10,10)*{c};
 \endxy
 \;\; = \;\;
 \xy  (0,0)*{\includegraphics[scale=0.4]{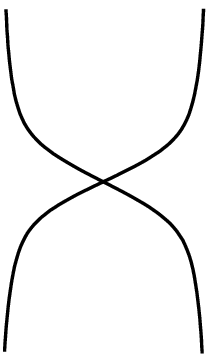}};
 (-7,6)*{a};
 (12,6)*{b+c};\endxy
 \qquad \qquad
  \xy (0,0)*{\includegraphics[scale=0.5,]{crossing-combine2.eps}};
   (-10,10)*{a}; (-2,10)*{b}; (10,10)*{c};
 \endxy
 \;\; = \;\; (-1)^{ab\binom{c}{2}}\;
 \xy  (0,0)*{\includegraphics[scale=0.4]{crossing-combine1.eps}};
 (-12,6)*{a+b};
 (7,6)*{c};\endxy
\end{equation}

The respective heights of $e_a$ and $e_b$ in the diagram
 \begin{equation}
      \xy
 (0,0)*{\includegraphics[scale=0.5]{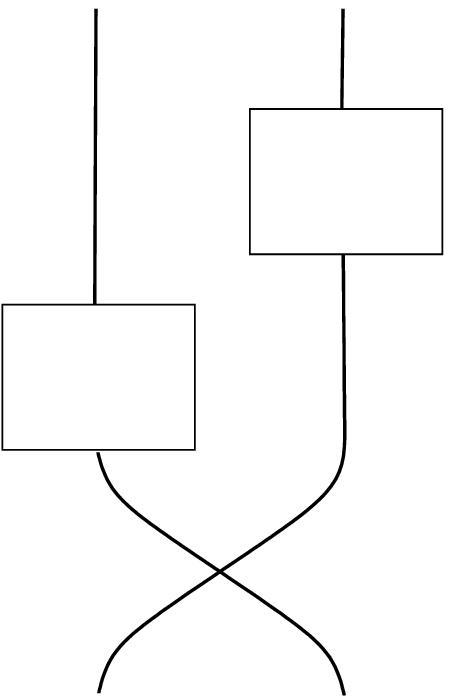}};
 (-7,-15)*{b};(7,-15)*{a};(-6,-2)*{e_a};(6,9)*{e_b}; 
  \endxy \;\; = \;\;
        \xy
 (0,0)*{\includegraphics[scale=0.5]{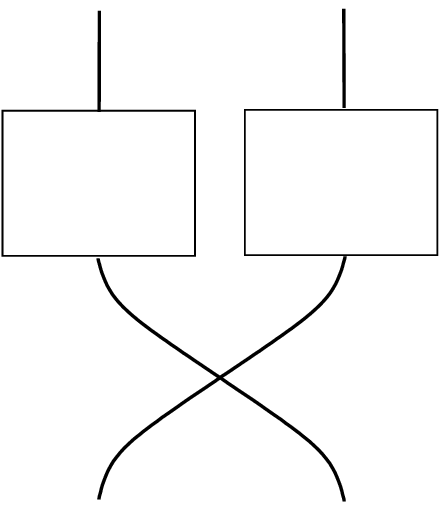}};
 (-7,-10)*{b};(7,-10)*{a};(-6,4)*{e_a};(6,4)*{e_b};
  \endxy
\end{equation}
are not relevant as these elements have even super-degrees and commute with each other.  The equation
\begin{equation} \label{eq_eaebcross}
  \xy
 (0,0)*{\includegraphics[scale=0.5]{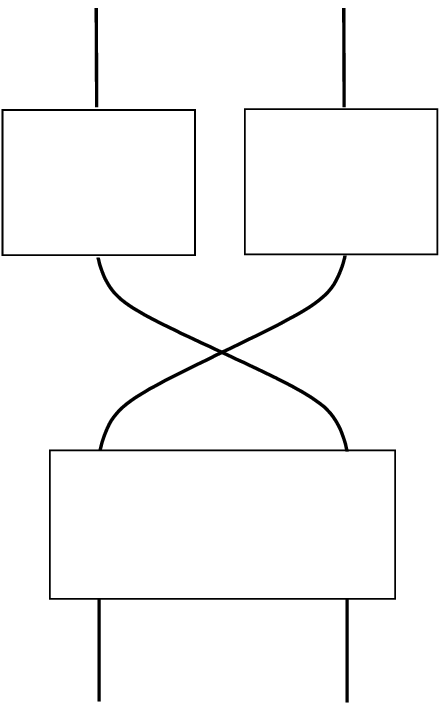}};
 (0,-8.5)*{e_{a+b}};(-6,8.5)*{e_{a}};(6,8.5)*{e_{b}};
  \endxy
 \;\; = \;\;
      \xy
 (0,0)*{\includegraphics[scale=0.5]{def-tsplitu2.eps}};
 (-7,-10)*{b};(7,-10)*{a};(-6,4)*{e_a};(6,4)*{e_b};
  \endxy
\end{equation}
holds.

\begin{prop}
\begin{align} \label{eq_DaDbcross1}
     \xy
 (0,0)*{\reflectbox{\includegraphics[scale=0.5]{def-tsplitu.eps}}};
 (-7,-15)*{b};(7,-15)*{a};(-6,9)*{D_a};(6,-2)*{D_b}; 
  \endxy      \;\; = \;\;
   \;
  \xy
 (0,0)*{\includegraphics[scale=0.5]{1box-wide.eps}};
 (0,0)*{D_{a+b}};
  \endxy  \;\; = \;\; (-1)^{\binom{a}{2}\binom{b}{2}}\;\;
 \xy
 (0,0)*{\includegraphics[scale=0.5]{def-tsplitu.eps}};
 (-7,-15)*{b};(7,-15)*{a};(-6,-2)*{D_a};(6,9)*{D_b}; 
  \endxy
\end{align}
\end{prop}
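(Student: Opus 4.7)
The plan is to prove the chain of equalities by induction, using the inductive characterizations of $D_{a+b}$ from \eqref{eq_def_Da} and Lemma \ref{lem_alt_def}, the crossing slide Lemma \ref{lem_crossing_slide}, and the $D_a$-slide identity \eqref{eq_Da-slide}. A preliminary observation is that at the level of the underlying permutation in $S_{a+b}$, both the left and right diagrams represent the same decomposition of $w_0^{(a+b)}$ as the longest element of $S_a \times S_b$ composed with the minimal-length coset representative that exchanges the two blocks of strands. In particular, both diagrams yield reduced expressions for $w_0^{(a+b)}$, so by \eqref{eqn_compose_oddops} each of them equals $\pm D_{a+b}$; the content of the proposition is the identification of the two signs.

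For the first (middle) equality, I would proceed by induction on $b$, the case $b = 0$ being tautological. The inductive step writes $D_{a+b} = D_{a+b-1} \cdot (\partial_{a+b-1}\cdots\partial_1)$ via \eqref{eq_def_Da}. Applying the induction hypothesis to $D_{a+b-1}$ decomposes it into $D_a$ on the left $a$-block, a block-crossing of $b-1$ past $a$, and $D_{b-1}$ on the right. The trailing sweep $\partial_{a+b-1}\cdots\partial_1$ is then split via repeated application of Lemma \ref{lem_crossing_slide} into a new column of the block crossing (extending $b-1$ to $b$) together with an extra $\partial_{b-1}\cdots\partial_1$ factor sitting on the top-right $b$ strands, which by the alternative form of $D_b$ from Lemma \ref{lem_alt_def} promotes $D_{b-1}$ to $D_b$.

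The right equality can then be derived from the middle one. The right diagram differs from the left essentially by swapping the vertical placements of $D_a$ and $D_b$ (and correspondingly changing the crossing from its reflected form to the unreflected form, which carries the ``opposite" block of crossings). Since $D_a$ and $D_b$ act on disjoint sets of strands and have super-degrees $\binom{a}{2}$ and $\binom{b}{2}$ respectively, the super-commutation between them contributes the sign $(-1)^{\binom{a}{2}\binom{b}{2}}$. More concretely, one can execute the passage by sliding $D_a$ past each of the $b$ strands entering the crossing block using \eqref{eq_Da-slide}, and tallying the resulting signs; the binomial identity $\binom{a-1}{2} + \binom{a-1}{3} = \binom{a}{3}$ and its analogues are the tool that repackages these strand-by-strand signs into the closed form $(-1)^{\binom{a}{2}\binom{b}{2}}$.

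The main obstacle is the careful sign bookkeeping in the inductive step for the middle equality: one must verify that the sign contributions from the crossing slides in Lemma \ref{lem_crossing_slide} and from reshuffling dots produced by promoting $D_{b-1}$ to $D_b$ cancel exactly, so that no extraneous sign appears. Organizing the computation by normalizing both sides against the standard basis $\{\undx^A \partial_w\}$ (so that ultimately one can test equality by acting on $\sch_e$ in the spirit of Proposition \ref{prop_onh_li}) gives a clean route around this bookkeeping, but the most efficient bookkeeping is to reduce everything to the $e_{a+b}$ analogue \eqref{eq_eaebcross}, where the $0$-Hecke relations make signs trivial, and then peel off the $\undx^{\delta_{a+b}}$ prefactor using Proposition \ref{prop_ea-standard}.
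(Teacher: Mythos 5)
Your core argument---induction using the Crossing Slide Lemma together with the two inductive presentations of $D_n$ to establish the first equality, and super-commutation of the disjointly supported $D_a$ and $D_b$ (of super-degrees $\binom{a}{2}$ and $\binom{b}{2}$) for the second---is correct and is essentially the paper's proof, which runs the same induction by trading $(a,b)$ for $(a+1,b-1)$. Two of your proposed shortcuts do not work as stated, however: both sides annihilate $\sch_e$ (every $\partial_w$ with $w\neq e$ kills constants), so to pin down the sign of $\pm D_{a+b}$ one should instead evaluate on $\undx^{\delta_{a+b}}$; and deriving the identity from the $0$-Hecke version \eqref{eq_eaebcross} by stripping off $\undx^{\delta}$ prefactors reintroduces exactly the sign bookkeeping you are trying to avoid (note also that no dots appear anywhere in this identity, so there is no ``reshuffling of dots'' to track).
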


\begin{proof}
The second equality follows immediately from the first. To prove the first, observe that
\[
     \xy
 (0,0)*{\reflectbox{\includegraphics[scale=0.5]{def-tsplitu.eps}}};
 (-7,-15)*{b};(7,-15)*{a};(-6,9)*{D_a};(6,-2)*{D_b}; 
  \endxy  \;\; = \;\;
  \xy
 (0,0)*{\includegraphics[scale=0.4]{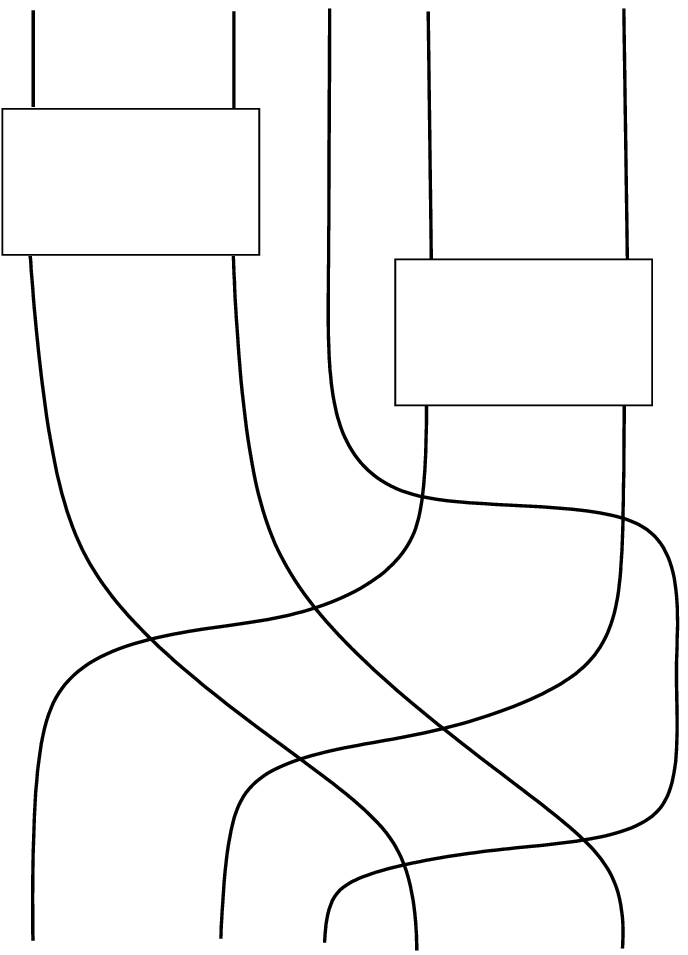}};
 (-9,12)*{D_{a}}; (7.5,6)*{D_{b-1}};
  \endxy
  \;\; = \;\;(-1)^{a\binom{b-1}{2}}
    \xy
 (0,0)*{\includegraphics[scale=0.4]{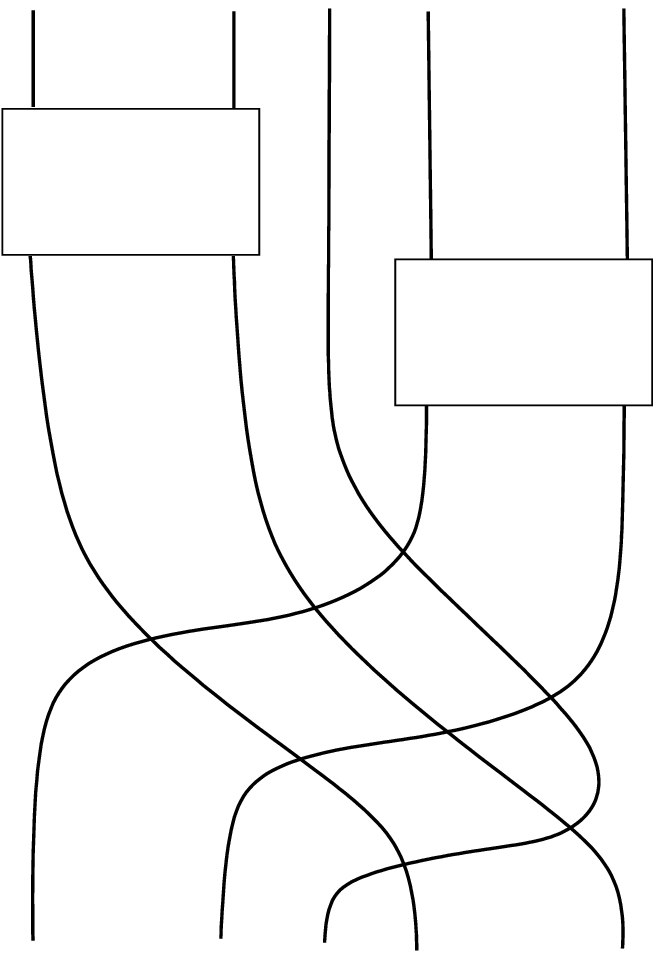}};
 (-9,12)*{D_{a}}; (7.5,6)*{D_{b-1}};
  \endxy
      \;\; = \;\;(-1)^{a\binom{b-1}{2}}
    \xy
 (0,0)*{\includegraphics[scale=0.4]{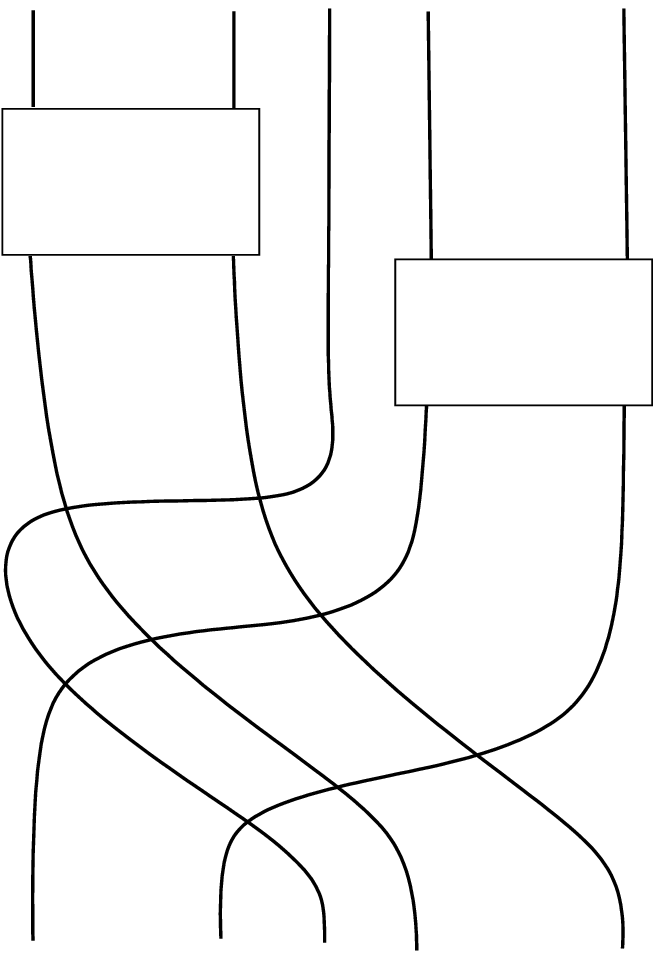}};
 (-9,12)*{D_{a}}; (7.5,6)*{D_{b-1}};
  \endxy
\]
where the sign in the second equality arises from sliding each of the $b-1$ crossings from $D_b$ down. The third equality follows by repeated application of the Crossing Slide Lemma~\ref{lem_crossing_slide}.  Sliding the $D_{b-1}$ down past the $a$ crossings and using the inductive definition of $D_{a+1}$ shows that
\[
     \xy
 (0,0)*{\reflectbox{\includegraphics[scale=0.5]{def-tsplitu.eps}}};
(-6,9)*{D_a};(6,-2)*{D_b}; 
  \endxy  \;\; = \;\;
       \xy
 (0,0)*{\reflectbox{\includegraphics[scale=0.5]{def-tsplitu.eps}}};
(-6,9)*{D_{a+1}};(6,-2)*{D_{b-1}}; 
  \endxy,
  \]
  so that the result holds by induction.
\end{proof}

%
\subsection{Automorphisms of the odd nilHecke algebra} \label{sec_switch}
%

Let $\sigma \maps \ONH_a \to \ONH_a$ denote the automorphism given by reflecting diagrams across the vertical axis.  Let $\psi \maps \ONH_a \to \ONH_a^{\op}$ denote the anti-automorphism given by reflecting diagrams across the horizontal axis. These automorphisms commute with each other.

Below we collect the effect of applying these automorphisms to the elements $\undx^{\delta_a}$, $D_a$, and $e_a$.
One can check that
\begin{alignat}{3}
  \undx^{\delta_a}
   \;\;&= \;\; x_1^{a-1}x_2^{a-2} \dots x_{a-1}^{1}x_a^{0} & \\
\sigma(\undx^{\delta_a})
  \;\;&= \;\;x_a^{a-1} x_{a-1}^{a-2} \dots x_2^1 x_1^0& \\
  \psi(\undx^{\delta_a})
   \;\;&= \;\;  x_a^{0} x_{a-1}^{1} \dots x_2^{a-2} x_1^{a-1}& \\
  \psi\sigma(\undx^{\delta_a})
   \;\;&= \;\; x_1^0 x_2^1 \dots x_{a-2}^{a-1}x_a^{a-1}. &
\end{alignat}

These monomials are related by the equations
\begin{equation}
  \undx^{\delta_a}=(-1)^{\binom{a}{4}} \psi(\undx^{\delta_a}), \qquad
   \sigma(\undx^{\delta_a}) =(-1)^{\binom{a}{4}} \sigma\psi(\undx^{\delta_a}).
\end{equation}
Similarly,
\begin{align}
  D_a
  \;\; &:= \;\;
   \partial_1(\partial_2\partial_1) \dots (\partial_{a-2} \dots \partial_1)(\partial_{a-1} \dots\partial_1) \nn \\
 \sigma(D_a)
  \;\; &= \;\;
  \partial_{a-1}(\partial_{a-2} \partial_{a-1})(\partial_2 \dots \partial_{a-1})(\partial_1 \dots \partial_{a-1}) \nn \\
 \psi(D_a)
 \;\; &= \;\;
  (\partial_{1} \dots \partial_{a-1})(\partial_1 \dots \partial_{a-2}) \dots (\partial_1\partial_2) \partial_1 \nn \\
 \psi\sigma(D_a)
 \;\; &= \;\;
  (\partial_{a-1} \dots \partial_1)(\partial_{a-1} \dots \partial_2) \dots(\partial_{a-1}\partial_{a-2}) \partial_{a-1}, \nn
\end{align}
and it follows from Lemma~\ref{lem_alt_def} that
\begin{align}
  D_a = \sigma(D_a) = (-1)^{\binom{a-1}{4}}\psi(D_a)  = (-1)^{\binom{a-1}{4}}\psi\sigma(D_a) .
\end{align}

We have shown in Proposition~\ref{prop_ea-standard} that
\begin{equation} \label{eq_ea_standard}
  e_a = (-1)^{\binom{a+1}{4}+\binom{a}{4}} \undx^{\delta_a}D_a = (-1)^{\binom{a}{3}} \undx^{\delta_a}D_a
\end{equation}
It is also worth while to write this equation in another form,
\begin{align}
  e_a &= (-1)^{\binom{a+1}{4}} \psi(\undx^{\delta_a}) D_a.
\end{align}
It follows from Proposition~\ref{prop_ea_idemp} that
\begin{equation} \label{eq_Da-undxdelta}
D_a(\undx^{\delta_a}) = (-1)^{\binom{a}{3}} \qquad  \text{and} \qquad D_a(\psi(\undx^{\delta_a})) = (-1)^{\binom{a+1}{4}}.
\end{equation}

%
\section{Extended graphical calculus for the odd nilHecke algebra}\label{sec-thick}
%

%
\subsection{Thick calculus}
%
In this section we introduce an extension of the graphical calculus from the previous section.

%
\subsubsection{Splitters}
%

Define
\[ 
  \xy
 (0,0)*{\includegraphics[scale=0.5]{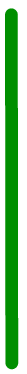}};
 (-2.5,-6)*{a}; 
  \endxy
  \quad : = \quad
\xy
 (0,0)*{\includegraphics[scale=0.5]{1box.eps}};
 (0,0)*{e_a};
  \endxy
\]
The notation is consistent, since $e_a$ is an idempotent, so that
cutting a thick line into two pieces and converting each of them
into an $e_a$ box results in the same element of the odd nilHecke ring.

The following diagrams will be referred to as {\em splitters} or
splitter diagrams.
\begin{equation} \label{eq_def-splitters}
    \xy
 (0,0)*{\includegraphics[scale=0.5]{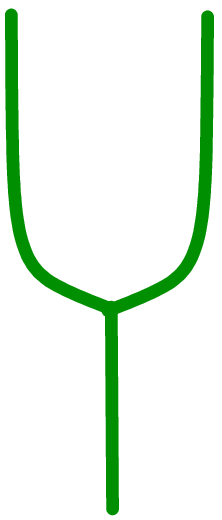}};
 (-5,-10)*{a+b};(-8,4)*{a};(8,4)*{b};
  \endxy
 \;\; :=\;\;
   \xy
 (0,0)*{\includegraphics[scale=0.5]{def-tsplitu2.eps}};
 (-7,-10)*{b};(7,-10)*{a};(-6,4)*{e_a};(6,4)*{e_b};
  \endxy
\qquad  \qquad 
    \xy
 (0,0)*{\includegraphics[scale=0.5,angle=180]{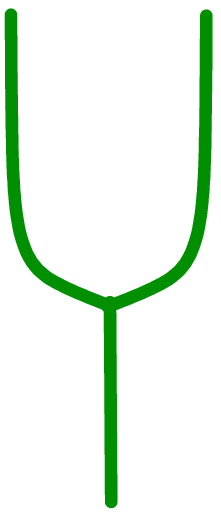}};
 (-5,10)*{a+b};(-8,-4)*{a};(8,-4)*{b}; 
  \endxy
  \;\; :=\;\;
     \xy
 (0,0)*{\includegraphics[scale=0.5,angle=180]{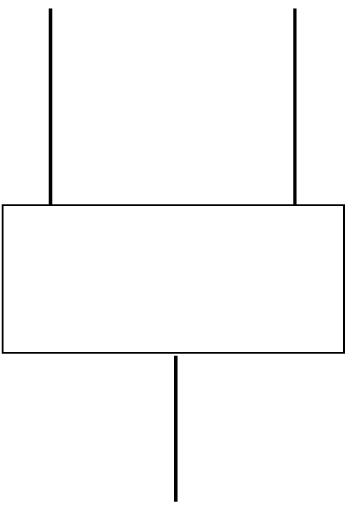}};
 (-8,-7)*{a};(8,-7)*{b};(0,1)*{e_{a+b}};(-5,10)*{a+b}; 
  \endxy
\end{equation}

Define a thick crossing by
\begin{align} \label{eq_defn_thick_cross}
\xy
 (0,0)*{\includegraphics[scale=0.5]{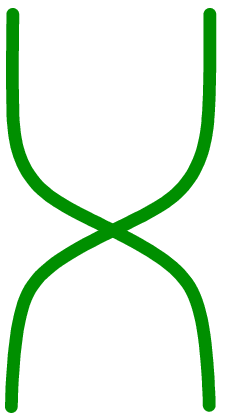}};
(-7,-7.5)*{a};(7,-7.5)*{b};
  \endxy \quad := \quad  \xy
 (0,0)*{\includegraphics[scale=0.45]{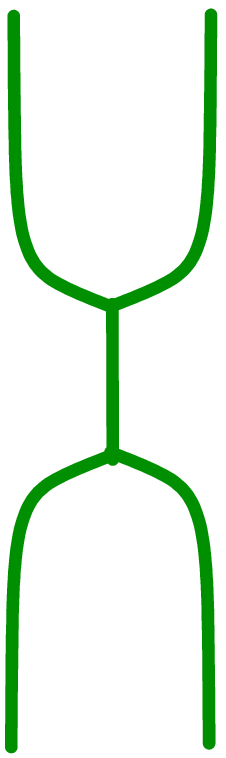}};
(-8,-11)*{a};(8,-11)*{b};(-8,11)*{b};(8,11)*{a};
  \endxy
 \quad = \quad
  \xy
 (0,0)*{\includegraphics[scale=0.5]{pp1.eps}};
 (0,-8.5)*{e_{a+b}};(-6,8.5)*{e_{b}};(6,8.5)*{e_{a}};
  \endxy
   \quad \refequal{\eqref{eq_eaebcross}} \quad
   \xy
 (0,0)*{\includegraphics[scale=0.5]{def-tsplitu2.eps}};
 (-7,-10)*{a};(7,-10)*{b};(-6,4)*{e_b};(6,4)*{e_a};
  \endxy
\end{align}

\begin{prop}[Associativity of splitters]

\begin{align} 
  \xy
 (0,0)*{\includegraphics[scale=0.5]{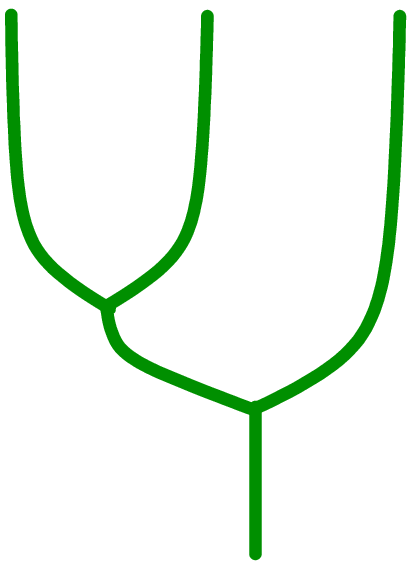}};
 (-5.5,-12)*{a+b+c};(-12,4)*{a};(-2.5,4)*{b};(12,4)*{c};
  \endxy
  \quad &= \quad (-1)^{ab\binom{c}{2}}
   \xy
 (0,0)*{\reflectbox{\includegraphics[scale=0.5]{uassoc.eps}}};
 (-10.5,-12)*{a+b+c};(-12,4)*{a};(-2.5,4)*{b};(12,4)*{c}; 
  \endxy \\
   \xy
 (0,0)*{\reflectbox{\includegraphics[scale=0.5, angle=180]{uassoc.eps}}};
 (-5.5,10)*{a+b+c};(-12,-4)*{a};(-2.5,-4)*{b};(12,-4)*{c};
  \endxy
  \quad &= \quad
 \xy
 (0,0)*{\includegraphics[scale=0.5,angle=180]{uassoc.eps}};
 (-10.5,10)*{a+b+c};(-12,-4)*{a};(-2.5,-4)*{b};(12,-4)*{c}; 
  \endxy \label{eq_split_assoc}
 \end{align}
\end{prop}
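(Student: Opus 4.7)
The plan is to expand both sides of each associativity equation using the definition \eqref{eq_def-splitters} of splitters in terms of the idempotents $e_\bullet$ and the thick $(a,b)$-crossings defined in \eqref{eqn-defn-crossing}, and then reduce the difference between the two sides to the crossing-combine relations \eqref{eq_crossing-combine}.

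For the first equation, I would unpack the left-hand side as a bottom-to-top stack: first an $(a{+}b,c)$ thick crossing with $e_{a+b}$ and $e_c$ capping the top, then splitting $e_{a+b}$ into $e_a\otimes e_b$ via an $(a,b)$ thick crossing capped by $e_a$ and $e_b$. Similarly, the right-hand side becomes an $(a,b{+}c)$ thick crossing with $e_a$ and $e_{b+c}$ on top, then a $(b,c)$ thick crossing splitting $e_{b+c}$ into $e_b\otimes e_c$. Using the absorption identity \eqref{eq_ea_absorb}, the inner idempotents ($e_{a+b}$ in one picture, $e_{b+c}$ in the other) can be absorbed into a single $e_{a+b+c}$ at the bottom, leaving pure crossing patterns between $e_{a+b+c}$ at the bottom and $e_a\otimes e_b\otimes e_c$ at the top. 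The two resulting crossing patterns are precisely the two sides of the second equality in \eqref{eq_crossing-combine}, which produces exactly the sign $(-1)^{ab\binom{c}{2}}$.

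For the second equation, the same unpacking is performed, but because the splitters are now downward, the idempotents $e_{a+b}$ and $e_{b+c}$ appear at the bottom of the respective splitters, with the crossings going into a single strand above them. After absorbing the redundant idempotents via \eqref{eq_ea_absorb} into a single $e_{a+b+c}$ at the top, the two crossing patterns differ by the \emph{first} equality in \eqref{eq_crossing-combine}, which is sign-free; this accounts for the absence of a sign on the right-hand side.

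The main obstacle will be bookkeeping: keeping track that no additional signs arise from sliding the (even super-degree) idempotents $e_a,e_b,e_c,e_{a+b+c}$ past one another, so that the only contribution is from \eqref{eq_crossing-combine}. The two equations then exhibit the structural asymmetry of the theory, namely that the thin ends of upward splitters carry idempotents above the crossings while those of downward splitters carry them below, which controls which of the two identities in \eqref{eq_crossing-combine} is triggered.
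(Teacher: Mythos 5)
Your overall strategy for the first identity is in the same spirit as the paper's, but there is a genuine gap at the step where you claim the two sides reduce to ``precisely the two sides of the second equality in \eqref{eq_crossing-combine}.'' After unpacking the definitions, the left-hand side is $(e_a\otimes e_b\otimes e_c)$ applied to the composite of the $(a,b)$-block crossing on the first $a+b$ strands stacked over the $(a+b,c)$-block crossing, while the right-hand side is $(e_a\otimes e_b\otimes e_c)$ applied to the $(b,c)$-block crossing on the last $b+c$ strands stacked over the $(a,b+c)$-block crossing. These are two different reduced expressions for the block-reversal permutation, and neither one is a side of \eqref{eq_crossing-combine}: each identity there only rewrites a \emph{single} big block crossing as a composite of two smaller ones. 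Substituting both identities of \eqref{eq_crossing-combine} into the two sides still leaves you needing a Yang--Baxter-type rearrangement of the three blocks to match the resulting words, together with its own sign analysis; this is exactly the unnamed identity involving $D_a$, $D_b$, $D_c$ that the paper's proof invokes alongside Proposition \ref{prop_ea-standard} and \eqref{eq_crossing-combine}, and it is the missing ingredient in your plan. A secondary issue: \eqref{eq_ea_absorb} cannot absorb the intermediate $e_{a+b}\otimes e_c$ into an $e_{a+b+c}$ ``at the bottom,'' since a block crossing separates them; the relevant tool is \eqref{eq_eaebcross} (the upward splitter absorbs the idempotent at its thick end), or the paper's route of rewriting every $e_n$ as $\pm\undx^{\delta_n}D_n$.

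For the second identity your conclusion is correct but your picture is not: the downward splitter contains no crossings. It has degree $0$ and is simply $e_{a+b}$ applied to the juxtaposed thick strands (compare $\deg(\lambda_\alpha)=2ab-2|\alpha|$ with $\deg(\hat{s}_{\hat{\alpha}})=2(ab-|\alpha|)$), whereas the upward splitter has degree $-2ab$ and genuinely contains the block crossing. Hence the second associativity relation follows from \eqref{eq_ea_absorb} alone, with no appeal to \eqref{eq_crossing-combine}, and the asymmetry between the two relations is explained by this up/down asymmetry of the splitters themselves rather than by which identity of \eqref{eq_crossing-combine} is ``triggered.''
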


\begin{proof}
The proof of the first claim is a direct calculation making use of Proposition~\ref{prop_ea-standard}, equation~\ref{eq_crossing-combine},  and the identity
\[
     \xy
 (0,0)*{\includegraphics[scale=0.4]{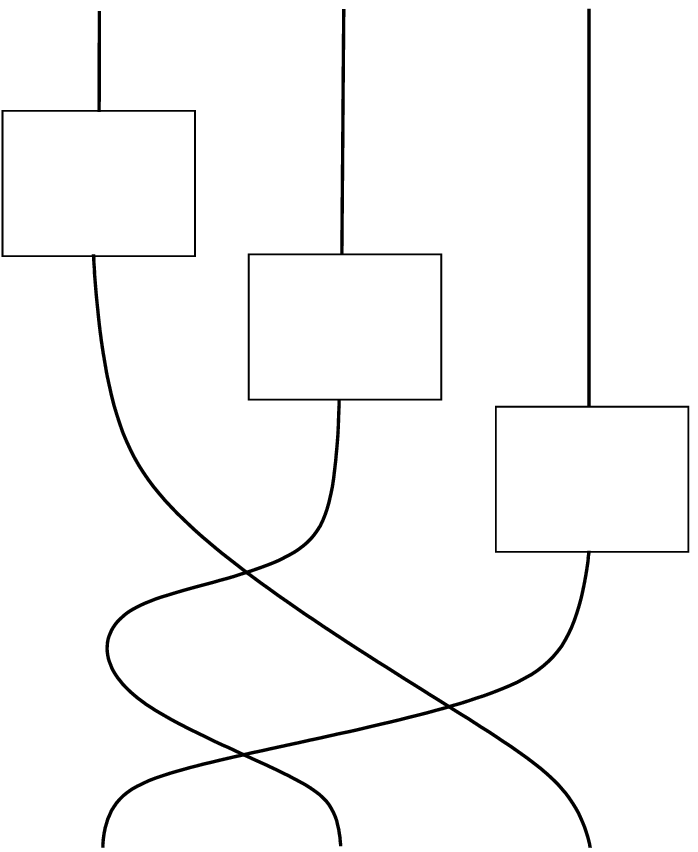}};
(-10,10)*{D_a};(0,4)*{D_b};(10,-2.5)*{D_c}; 
  \endxy  \;\; = \;\;
     \xy
 (0,0)*{\includegraphics[scale=0.4]{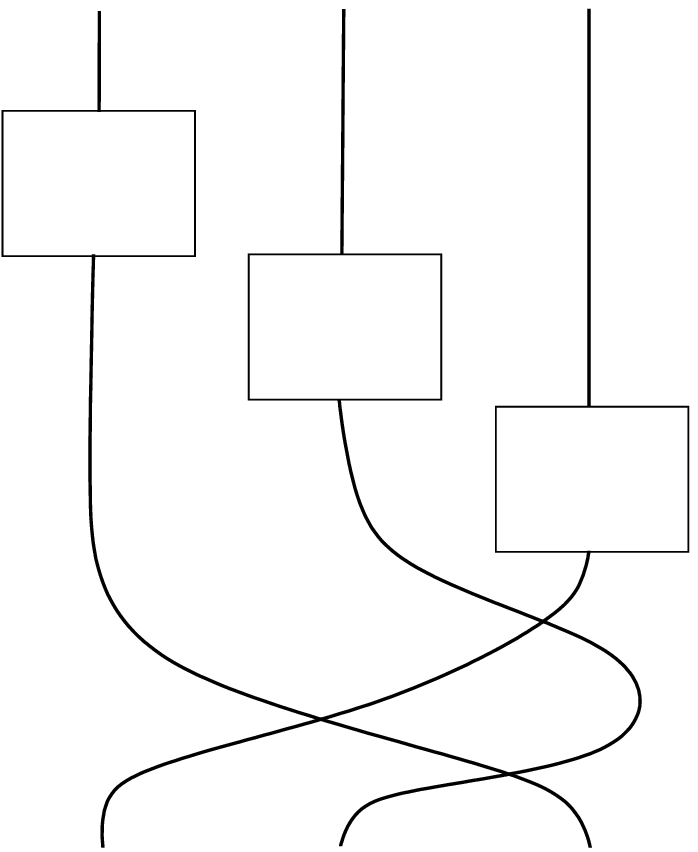}};
(-10,10)*{D_a};(0,4)*{D_b};(10,-2.5)*{D_c};
  \endxy
  \]
The second claim follows from equation~\eqref{eq_ea_absorb}.
\end{proof}

The apparent asymmetry between $a,c$ in the splitter associativity relation is a consequence of the asymmetry in our choice of crossing between multiple strands, equation \eqref{eqn-defn-crossing}.

\begin{prop}\label{prop_almostRthree}
For $a,b,c \geq 0$
\begin{align} \label{eq_triangle}
 \xy
 (0,0)*{\includegraphics[scale=0.5]{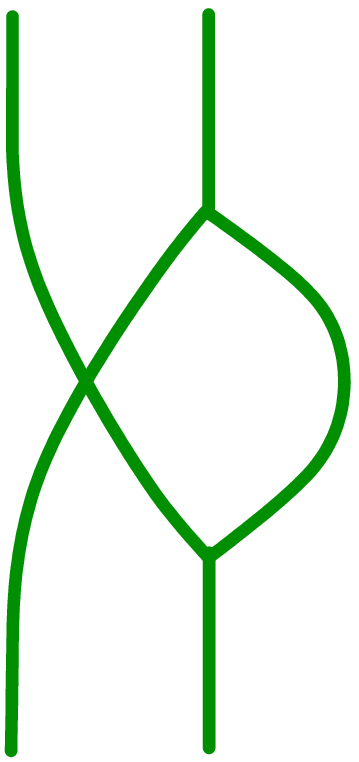}};
 (-10,-13)*{c};(8,-13)*{a+b};(-10,6)*{a};(11,4)*{b};
  \endxy
 \quad &= \quad
         \xy
 (0,0)*{\includegraphics[scale=0.5]{tH.eps}};
(-8,-11)*{c};(12,-11)*{a+b};(-8,11)*{a};(12,11)*{b+c};
  \endxy \\
 \xy
 (0,0)*{\reflectbox{\includegraphics[scale=0.5]{triangle.eps}}};
 (10,-13)*{c};(-8,-13)*{a+b};(10,6)*{a};(-11,4)*{b};
  \endxy
 \quad &= \quad (-1)^{\binom{a}{2}bc}
         \xy
 (0,0)*{\includegraphics[scale=0.5]{tH.eps}};
(8,-11)*{c};(-12,-11)*{a+b};(8,11)*{a};(-12,11)*{b+c};
  \endxy
\end{align}
\end{prop}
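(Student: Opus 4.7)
Both identities reduce to diagrammatic equalities in $\ONH_{a+b+c}$ by unfolding the definitions of splitters \eqref{eq_def-splitters} and thick crossings \eqref{eq_defn_thick_cross}. After these expansions, each side becomes a composition of idempotents $e_k$ interspersed with thin crossings, and the idempotent absorption rule \eqref{eq_ea_absorb} lets us consolidate the internal idempotents into a single $e_{a+b+c}$ sandwiched between $e_a \otimes e_{b+c}$ at the top and $e_c \otimes e_{a+b}$ at the bottom. Thus each side is determined, up to sign, by the pattern of thin crossings that implements the strand bundle permutation $(c, a+b) \mapsto (a, b+c)$.

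For the first identity, the triangle on the LHS is a splitter $a+b \to a,b$ composed with the thick crossing of the resulting $a$-group past $c$; after absorption this equals $e_{a+b+c}$ with the thin crossings that pull the $c$-block leftward through the first $a$-strands of $a+b$. The RHS tH, expanded via \eqref{eq_defn_thick_cross}, is a central $e_{a+b+c}$ with the identical thin crossing pattern (now interpreted as ``$c$-past-$a+b$'' followed by the merger $b,c \to b+c$). The two thin patterns coincide via the first equality of \eqref{eq_crossing-combine}, which says that such thin crossings glue associatively with no sign, so the two sides are equal.

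For the second identity, the reflected triangle on the LHS implements the same overall bundle permutation but with the $a$-block thin crossings now passing on the opposite side of the $b$-block. Regrouping these to match the RHS tH pattern requires invoking the second part of \eqref{eq_crossing-combine}; the total sign, after tracking the $\binom{a}{2}$ internal crossings of the $e_a$ projector (using the $\undx^{\delta_a} D_a$ form from Proposition~\ref{prop_ea-standard}) past the $bc$ thin crossings between the $b$- and $c$-blocks, works out to $(-1)^{\binom{a}{2}bc}$. Alternatively, one can derive this identity from the first by applying the reflection automorphism $\sigma$ of Section~\ref{sec_switch} and collecting signs from the behavior of $\undx^{\delta_a}$, $D_a$ and $e_a$ under $\sigma$ together with \eqref{eq_Da-slide}.

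The main obstacle is the precise sign accounting in the second identity. In the odd setting, every interchange of a thin crossing past another thin crossing or past a dot introduces a sign, so the bookkeeping must carefully balance the $\binom{a}{2}$ internal crossings of $e_a$ against the $bc$ crossings coming from the thick crossing of $b$ past $c$. The needed identities are the $D_a$-slide \eqref{eq_Da-slide}, the idempotency and absorption relations of Proposition~\ref{prop_ea_idemp}, and the crossing-combination \eqref{eq_crossing-combine}, all applied in concert.
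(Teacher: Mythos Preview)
Your approach is the same as the paper's: the paper's proof simply says ``The proof uses equation~\eqref{eq_crossing-combine} and equation~\eqref{eq_ea_absorb},'' which is exactly the pair of tools you invoke (expand splitters and thick crossings, absorb idempotents, then match thin-crossing patterns via \eqref{eq_crossing-combine}).

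One comment on your sign accounting for the second identity: your heuristic of ``moving the $\binom{a}{2}$ internal crossings of $e_a$ past the $bc$ thin crossings'' is not the right picture. The idempotent $e_a$ has super-degree zero, so sliding it past other generators introduces no sign whatsoever; invoking Proposition~\ref{prop_ea-standard} and the $D_a$-slide \eqref{eq_Da-slide} is unnecessary here. The sign $(-1)^{\binom{a}{2}bc}$ arises purely from the thin-crossing reorganization governed by \eqref{eq_crossing-combine}: when you compare the two ways of routing the $a$-bundle past the $b$- and $c$-bundles on the reflected side, repeated use of the second equality in \eqref{eq_crossing-combine} (with the roles of the three labels permuted appropriately) accumulates exactly that exponent. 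So the right ingredients are just \eqref{eq_crossing-combine} and \eqref{eq_ea_absorb}, as the paper indicates, without the additional machinery you list.
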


\begin{proof}
The proof uses equation~\ref{eq_crossing-combine} and equation~\eqref{eq_ea_absorb}.
\end{proof}

%
\subsubsection{Dotted thick strands}
%

\begin{defn}For any $f \in \osym_a$ write
 \begin{equation}
    \xy
 (0,0)*{\includegraphics[scale=0.5]{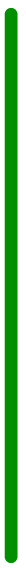}};
 (-2.5,-11)*{a};(0,-2)*{\bigb{f}};
  \endxy
  \quad := \quad e_a f e_a.
\end{equation}
\end{defn}

For any polynomial $f \in \opol_a$ observe that $D_a f D_a = D_a(f)D_a$ in $\ONH_a$. Now suppose $f$ is odd symmetric.  It follows that
\begin{equation} \label{eq_ea-to-delta}
e_a fe_a = \undx^{\delta_a} D_a f \undx^{\delta_a} D_a = \undx^{\delta_a} D_a(f \undx^{\delta_a}) D_a
= (-1)^{\binom{a}{3}} \undx^{\delta_a} f^{w_0} D_a.
\end{equation}
Then by the corollary to the OWL \eqref{eqn-Da-left-linearity},
\begin{equation} \label{eq_ea-to-unddelta}
 e_a fe_a = (-1)^{\binom{a}{3}} \undx^{\delta_a} f^{w_0} D_a=e_af.
\end{equation}
It follows that
\begin{equation}
e_agfe_a=(e_age_a)(e_afe_a),
\end{equation}
so we have the diagrammatic identity
\begin{equation} \label{eq_yz_greenbox}
  \xy
 (0,0)*{\includegraphics[scale=0.5]{t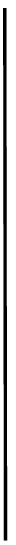}};
 (-2.5,-12)*{a};(0,2)*{\bigb{g}};(0,-5)*{\bigb{f}};
  \endxy
 \quad = \quad
   \xy
 (0,0)*{\includegraphics[scale=0.5]{tlong-up.eps}};
 (-2.5,-12)*{a};(0,-1)*{\bigb{gf}};
  \endxy
\end{equation}
whenever $f,g\in\osym_a$.

%
\subsection{Explosions}
%

Sometimes it is convenient to split thick edges into thin edges.
Define \textit{exploders}
\begin{equation} 
  \xy
 (0,0)*{\includegraphics[scale=0.5]{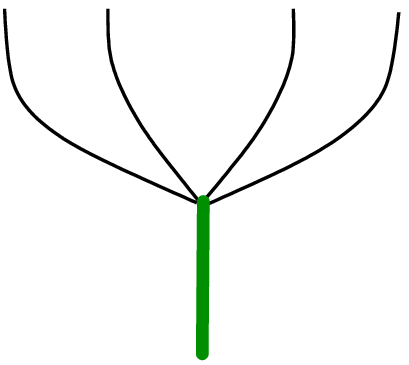}};
 (-3,-5)*{a}; (0,6)*{\cdots};
  \endxy
  \quad = \quad
  \xy
 (0,0)*{\includegraphics[scale=0.5]{1box-wide.eps}};
 (0,0)*{D_{a}};
  \endxy
  \qquad \qquad
   \xy
 (0,0)*{\includegraphics[scale=0.5,angle=180]{ufork-u.eps}};
 (-3,5)*{a}; (0,-6)*{\cdots};
  \endxy
  \quad = \quad
  \xy
 (0,0)*{\includegraphics[scale=0.5]{1box-wide.eps}};
 (0,0)*{e_{a}};
  \endxy
\end{equation}

The associativity rules for exploded splitters follow immediately from the associativity rules for general splitters derived above.  Equation~\eqref{eq_DaDbcross1} gives the following diagrammatic identities.
\begin{equation} \label{eq_Uasssplit1}
  \xy
 (0,0)*{\includegraphics[scale=0.45]{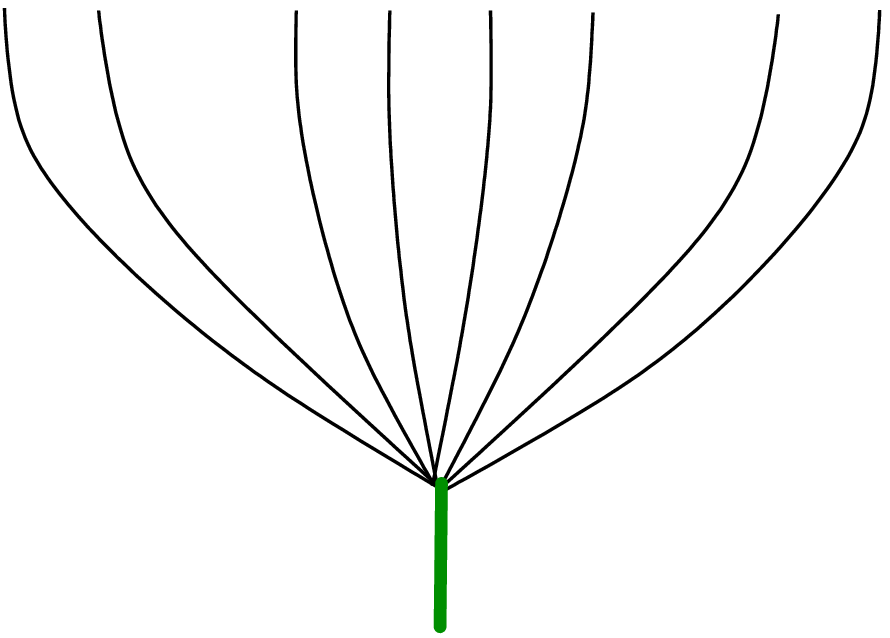}};
 (5,-13)*{a+b};(-10,11)*{\cdots}; (10,11)*{\cdots};
  \endxy
  \;\; = \;\;
  \xy
 (0,0)*{\includegraphics[scale=0.45]{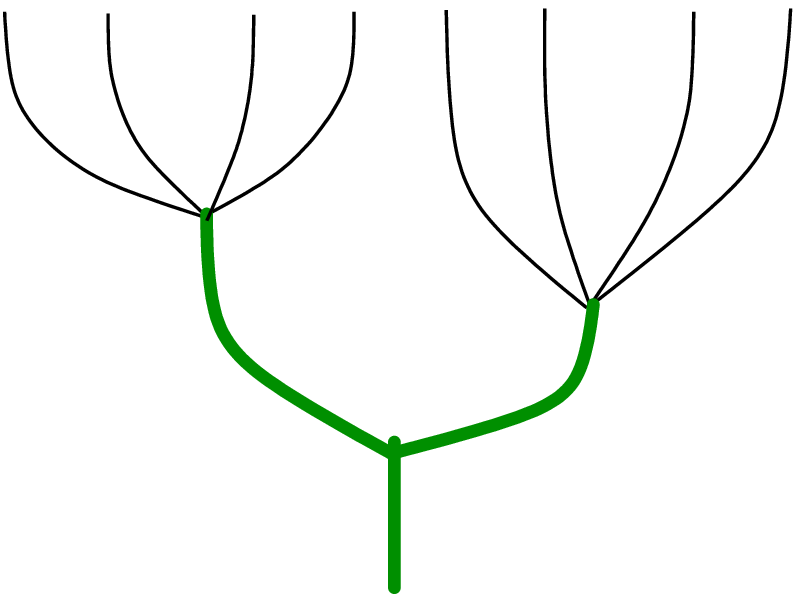}};
 (5,-13)*{a+b};(-8,-6)*{a};(10,-6)*{b}; (-10,11)*{\cdots}; (10,11)*{\cdots};
  \endxy
  \;\; = \;\; (-1)^{\binom{a}{2}\binom{b}{2}}
   \xy
 (0,0)*{\reflectbox{\includegraphics[scale=0.45]{ass-fullexplode1.eps}}};
 (5,-13)*{a+b};(-10,-6)*{a};(8,-6)*{b}; (-10,11)*{\cdots}; (10,11)*{\cdots};
  \endxy
\end{equation}

Using equation~\eqref{eq_eaebcross} the equation below follows.
\begin{equation} \label{eq_Uasssplit2}
  \xy
 (0,0)*{\includegraphics[scale=0.45]{ass-fullexplode2.eps}};
 (5,-13)*{a+b};(-10,11)*{\cdots}; (10,11)*{\cdots};
  \endxy
\;\; = \;\;
   \xy
 (0,0)*{\includegraphics[scale=0.45]{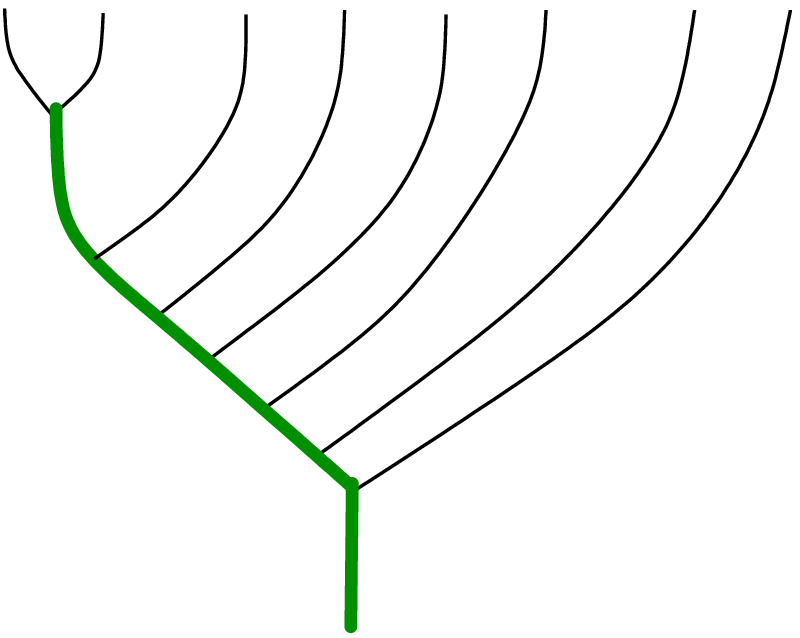}};
 (3,-13)*{a+b}; (-10,11)*{\cdots}; (10,11)*{\cdots};
  \endxy
\;\; =\;\;
   \xy
 (0,0)*{\reflectbox{\includegraphics[scale=0.45]{ass-fullexplode3.eps}}};
 (7,-13)*{a+b}; (-10,11)*{\cdots}; (10,11)*{\cdots};
  \endxy
\end{equation}

It is straightforward to show using the properties of the projectors $e_a$ that the following relations hold,
\begin{equation} \label{eq_Dasssplit1}
  \xy
 (0,0)*{\includegraphics[scale=0.45,angle=180]{ass-fullexplode2.eps}};
 (7,13)*{a+b};(-10,-11)*{\cdots}; (10,-11)*{\cdots};
  \endxy
  \;\; = \;\;
  \xy
 (0,0)*{\includegraphics[scale=0.45,angle=180]{ass-fullexplode1.eps}};
 (6,13)*{a+b};(-8,6)*{a};(10,6)*{b}; (-10,-11)*{\cdots}; (10,-11)*{\cdots};
  \endxy
  \;\; = \;\;
   \xy
 (0,0)*{\reflectbox{\includegraphics[scale=0.45,angle=180]{ass-fullexplode1.eps}}};
 (5,13)*{a+b};(-10,6)*{a};(8,6)*{b}; (-10,-11)*{\cdots}; (10,-11)*{\cdots};
  \endxy
\end{equation}
\begin{equation} \label{eq_Dasssplit2}
  \xy
 (0,0)*{\includegraphics[scale=0.45,angle=180]{ass-fullexplode2.eps}};
 (7,13)*{a+b};(-10,-11)*{\cdots}; (10,-11)*{\cdots};
  \endxy
\;\; = \;\;
   \xy
 (0,0)*{\includegraphics[scale=0.45,angle=180]{ass-fullexplode3.eps}};
 (7.5,13)*{a+b}; (-10,-11)*{\cdots}; (10,-11)*{\cdots};
  \endxy
\;\; =\;\;
   \xy
 (0,0)*{\reflectbox{\includegraphics[scale=0.45,angle=180]{ass-fullexplode3.eps}}};
 (4,13)*{a+b}; (-10,-11)*{\cdots}; (10,-11)*{\cdots};
  \endxy
\end{equation}

%
\subsubsection{Exploding Schur polynomials}
%

Recall that given a partition $\alpha= (\alpha_1, \dots, \alpha_a) \in P(a)$ we have defined the odd Schur polynomial $s_{\alpha}(x_1, \dots, x_a)$ as
\begin{equation}
 s_{\alpha}(x_1, \dots, x_a) :=  (-1)^{\binom{a}{3}} \left(D_a(\undx^{\alpha}\undx^{\delta_a}) \right)^{w_0}.
\end{equation}
It will be convenient to normal order the variables using the notation
\begin{equation}
  \undx^{\delta_a+\alpha} = x_1^{a-1+\alpha_1} x_2^{a-2+\alpha_2} \dots x_a^{\alpha_a}.
\end{equation}
One can show
\begin{equation}
 s_{\alpha}(x_1, \dots, x_a) :=  (-1)^{\chi_{\alpha}^a} \left(D_a(\undx^{\delta_a+\alpha}) \right)^{w_0},
\end{equation}
where for $\alpha \in P(a)$ we write
\begin{equation}
 \chi_{\alpha}^a := \binom{a}{3}+|\alpha|\binom{a}{2}+\sum_{j=1}^a \alpha_j\binom{a-j+1}{2}.
\end{equation}
That is, $(-1)^{\binom{a}{3}}\chi_{\alpha}^a$ is the sign needed to normal order the product $\undx^\alpha\undx^{\delta_a}$.  For example, if $\alpha = (1^r)$ then $\alpha_j=1$ for $1 \leq j \leq r$ and
\[
 \chi_{(1^r)}^a = \binom{a}{3}+r\binom{a}{2} + \frac{r(3a^2-3ar+r^2-1)}{6}.
\]

Observe that
 \begin{align}
 e_a s_{\alpha}e_a
  \;\; =\;\; (-1)^{\chi_{\alpha}^a}e_a D_a(\undx^{\alpha+\delta_a})^{w_0} e_a
 \;\; \refequal{\eqref{eq_ea-to-unddelta}} \;\; (-1)^{\binom{a}{3}+\chi_{\alpha}^a}\undx^{\delta_a} D_a(\undx^{\alpha+\delta_a}) D_a \\ \hspace{1in}
  \;\; =\;\; (-1)^{\binom{a}{3}+\chi_{\alpha}^a}
   \undx^{\delta_a} D_a \undx^{\alpha+\delta_a} D_a
   \; \;\refequal{\eqref{eq_ea_standard}} \;\;(-1)^{\chi_{\alpha}^a}
   e_a \undx^{\alpha + \delta_a} D_a.
\end{align}
This implies the diagrammatic identity
\begin{equation}
      \xy
 (0,0)*{\includegraphics[scale=0.5]{tlong-up.eps}};
 (-2.5,-11)*{a};(0,-2)*{\bigb{s_{\alpha}}};
  \endxy
  \quad = \quad (-1)^{\chi_{\alpha}^a} \;\;
\xy
 (0,0)*{\includegraphics[scale=0.5]{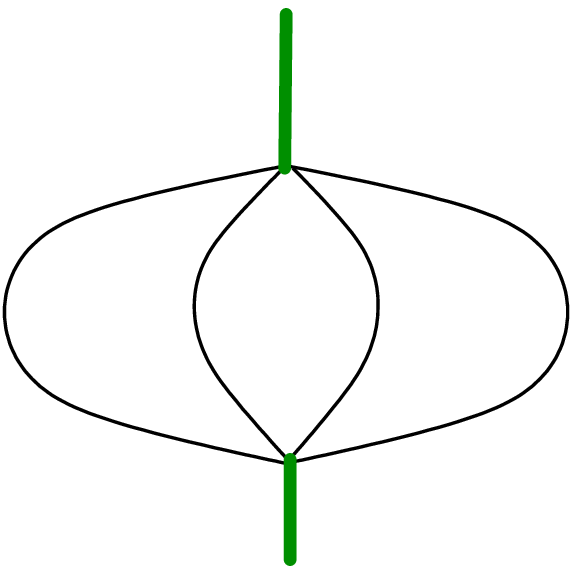}};
 (13.5,-4)*{\bullet}+(+2.5,1)*{\scs \alpha'_a};
 (4.5,-1)*{\bullet}+(+4.5,1)*{\scs \alpha'_{a-1}};
 (-4.5,1)*{\bullet}+(-2.5,1)*{\scs \alpha'_2};
 (-9.5,4)*{\bullet}+(-3,1)*{\scs\alpha'_{1}};
 (0,-2)*{\cdots}; 
  \endxy
\end{equation}
where $\alpha'_j = \alpha_j+a-j$.

Notice that writing such an equation would not be possible if we did not include the action of the longest symmetric group element $w_0$ in the definition of $s_{\alpha}$.

%
\subsection{Orthogonal idempotents}\label{subsec-orthogonal-idempotents}
%

%
\subsubsection{Some helpful lemmas}
%

\begin{lem}[Shuffle Lemma] \label{lem_right-slide}
\begin{equation}
  \partial_i(x_i^m x_{i+1}^{m+k}) =
  \left\{
   \begin{array}{cl}
     (-1)^m \partial_i(x_i^{m+k} x_{i+1}^m) & \text{if $k=1$,} \\
     -\partial_i(x_i^{m+k} x_{i+1}^m) & \text{if $k$ is even,} \\
     (-1)^{m} \partial_i(x_i^{m+k} x_{i+1}^m) - \partial_i(x_i^{m+k-1} x_{i+1}^{m+1}) +
     (-1)^{m} \partial_i(x_i^{m+1} x_{i+1}^{m+k-1} ) & \text{if $k$ is odd and $k \geq 3$.}
   \end{array}
  \right. \nn
\end{equation}
\end{lem}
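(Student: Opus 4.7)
The proof is essentially a direct bookkeeping argument from the explicit action of $\partial_i$ on monomials, so I would organize it to expose the structure cleanly rather than pushing symbols.

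First, I would reduce to $i=1$ and abbreviate $x = x_i$, $y = x_{i+1}$, $\partial = \partial_i$. The key initial observation is that $\partial(x^m y^m) = 0$ and $s_i(x^m y^m) = (-1)^m x^m y^m$ (both recorded earlier in the excerpt). Writing
\[
x^m y^{m+k} = (x^m y^m)\cdot y^k, \qquad x^{m+k} y^m = x^k \cdot (x^m y^m),
\]
the Leibniz rule \eqref{eqn-leibniz} collapses to
\[
\partial(x^m y^{m+k}) = (-1)^m x^m y^m\,\partial(y^k), \qquad \partial(x^{m+k} y^m) = \partial(x^k)\, x^m y^m.
\]
Using the explicit formulas for $\partial(x^k)$ and $\partial(y^k)$ from the paper's list just above the Proposition, and then commuting $y^m$ past powers of $x$ (each swap costing a sign), both quantities expand as $\sum_j \mathrm{sgn}_j\, x^{m+j} y^{m+k-1-j}$ in the same monomial basis. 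Explicitly I would record the signs
\[
[x^{m+j} y^{m+k-1-j}]\,\partial(x^m y^{m+k}) = (-1)^{m+j(m+1)}, \qquad [x^{m+j} y^{m+k-1-j}]\,\partial(x^{m+k} y^m) = (-1)^{(k-1-j)(j+m+1)}.
\]

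With these in hand, the three cases reduce to arithmetic on exponents of $-1$. For $k=1$ only $j=0$ survives and both coefficients are $\pm1$ with ratio $(-1)^m$. For $k$ even, $k-1$ is odd and the exponent $(k-1-j)(j+m+1)$ simplifies mod $2$ to $m+1+j(m+1)$, giving coefficient ratio exactly $-1$ independently of $j$; this yields the middle case. For $k$ odd with $k\geq 3$, comparing the two sign sequences shows that the coefficient of $x^{m+j} y^{m+k-1-j}$ in $\partial(x^m y^{m+k}) - (-1)^m \partial(x^{m+k} y^m)$ vanishes when $j$ is even and equals $-2$ when $j$ is odd (with $1\le j\le k-2$).

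The last step is to identify this residual sum with $-\partial(x^{m+k-1} y^{m+1}) + (-1)^m \partial(x^{m+1} y^{m+k-1})$. Applying the same expansion formulas to these two terms (with parameters $M=m+1$, $K=k-2$) and reindexing by $\ell = j+1$, each is a sum over $1\le \ell \le k-2$ of $x^{m+\ell} y^{m+k-1-\ell}$ with a computable sign; combining them one finds the coefficient of $x^{m+\ell} y^{m+k-1-\ell}$ equals $2$ for $\ell$ odd and $0$ for $\ell$ even, matching what is needed. Concretely, using that $k-2$ is odd, the parities work out as
\[
(k-2-\ell)(m+1+\ell) \equiv \begin{cases} 0 & \ell\text{ odd}\\ m+1 & \ell\text{ even}\end{cases}, \qquad (\ell-1)(m+2)\equiv \begin{cases} 0 & \ell\text{ odd}\\ m & \ell\text{ even}\end{cases} \pmod{2},
\]
and these combine to give $2$ in the odd case and $0$ in the even case.

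The only real obstacle is managing the sign conventions cleanly; to avoid errors I would organize everything around the single monomial basis $\{x^{m+j} y^{m+k-1-j}\}_{0\le j\le k-1}$ and express every term of interest as a signed coefficient in this basis before doing any case analysis. Once that is set up, the three cases of the lemma are simultaneous consequences of mod-$2$ reductions of the two exponents $j(m+1)$ and $(k-1-j)(j+m+1)$.
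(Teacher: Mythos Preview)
Your argument is correct and begins exactly as the paper does: factor $x_i^m x_{i+1}^m$ out via the Leibniz rule, using $\partial_i(x_i^m x_{i+1}^m)=0$ and $s_i(x_i^m x_{i+1}^m)=(-1)^m x_i^m x_{i+1}^m$, to reduce the problem to comparing $\partial_i(x_{i+1}^k)$ with $\partial_i(x_i^k)$ (and, for odd $k\ge 3$, the two mixed terms). From there the paper takes a shorter route than you do: rather than expanding in the monomial basis $\{x_i^{m+j}x_{i+1}^{m+k-1-j}\}$ and comparing signs term by term, it simply observes that $\widetilde{x}_i^k+\widetilde{x}_{i+1}^k\in\ker\partial_i$ for $k=1$ or $k$ even, and that $\widetilde{x}_i^k+\widetilde{x}_{i+1}^k+\widetilde{x}_i^{k-1}\widetilde{x}_{i+1}+\widetilde{x}_i\widetilde{x}_{i+1}^{k-1}\in\ker\partial_i$ for $k$ odd with $k\ge 3$; multiplying these kernel elements on the left by $x_{i+1}^m x_i^m$ and applying Leibniz again yields the three cases immediately. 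Your coefficient-matching verification is more explicit and arguably more robust against sign slips, while the paper's packaging into kernel identities is cleaner to state. One small correction: in your final sentence the combined coefficient for $\ell$ odd is $-2$, not $2$ (each of the two terms contributes $-1$), which is exactly what matches the residual you computed earlier.
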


\begin{proof}
Using the Leibniz rule for odd divided difference operators we have
\begin{equation}
  \partial_i(x_i^m x_{i+1}^{m+k}) = \partial_i(x_i^m x_{i+1}^{m})x_{i+1}^k
  + (x_i^m x_{i+1}^{m})^{s_i} \partial_i(x_{i+1}^k) = (x_{i+1}^m x_{i}^{m}) \partial_i(x_{i+1}^k).
\end{equation}
The sum $\xt_i^k+\xt_{i+1}^k$ is annihilated by the divided difference operator $\partial_i$
when $k=1$ and when $k$ is even.  When $k$ is odd and $k \geq 3$, the sum
\begin{equation}
  \xt_i^k+\xt_{i+1}^k + \xt_i^{k-1}\xt_{i+1} + \xt_i\xt_{i+1}^{k-1},
\end{equation}
is annihilated by $\partial_i$.  Using these facts the result follows.
\end{proof}

Note that for $k$ odd and $k\geq 3$ the relations above imply
\begin{align}
(-1)^{m} \partial_i(x_i^{m+k} x_{i+1}^m) - \partial_i(x_i^{m+k-1} x_{i+1}^{m+1})
+(-1)^{m} \partial_i(\xt_i^{m+1} x_{i+1}^{m+k-1} ) \nn \\
\qquad =
(-1)^{m} \partial_i(x_i^{m+k} x_{i+1}^m) - 2\sum_{j=1}^{\lfloor \frac{k}{2}\rfloor}
(-1)^{m(j+1)} \partial_i \left( x_i^{m+k-j}x_{i+1}^{m+j} \right)
\end{align}
so that
\begin{equation}
   \partial_i(x_i^m x_{i+1}^{m+k}) =
   (-1)^{m} \partial_i(x_i^{m+k} x_{i+1}^m) - 2\sum_{j=1}^{\lfloor \frac{k}{2}\rfloor}
(-1)^{m(j+1)} \partial_i \left( x_i^{m+k-j}x_{i+1}^{m+j} \right) .
\end{equation}
We call the relation above the  {\em big odd shuffle}.

\begin{prop}[Vanishing of matched exponent in a partial staircase] \label{prop_vanish}\hfill \newline
Let $a \geq m \geq 2$ and $a-(m-1) \leq p \leq a-1$.  Then
\begin{equation}
  D_m(x_1^{a-1}x_{2}^{a-2} \dots x_{m-2}^{a-(m-2)}x_{m-1}^{a-(m-1)} x_{m}^p ) = 0.
\end{equation}
\end{prop}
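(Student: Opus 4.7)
My plan is to prove the proposition by strong induction on $m$. The base case $m=2$ forces $p=a-1$, giving the monomial $x_1^{a-1}x_2^{a-1}$, which is annihilated by $D_2=\partial_1$ via the identity $\partial_i(x_i^k x_{i+1}^k)=0$ from the basic formulas for $\partial_i$. Observe that $a-(m-1) \le p \le a-1$ is equivalent to $p = a-j$ for some $j \in \{1,\ldots,m-1\}$, so that the exponent $p$ of $x_m$ always matches the exponent $a-j$ of $x_j$ in the staircase.

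For the inductive step I split on whether $p \le a-2$ or $p = a-1$. When $p \le a-2$ (i.e.\ $j \ge 2$) I use the alternative form $D_m = (\partial_1 \partial_2 \cdots \partial_{m-1})\, D'_{m-1}$ from Lemma~\ref{lem_alt_def}, where $D'_{m-1}$ denotes $D_{m-1}$ acting on $\{x_2,\ldots,x_m\}$. Using the anticommutativity $x_1 \partial_j = -\partial_j x_1$ for $j \ge 2$ from~\eqref{eqn-onh-def4}, I pull $x_1^{a-1}$ past $D'_{m-1}$ up to an overall sign; what remains inside $D'_{m-1}$ is $x_2^{a-2} x_3^{a-3}\cdots x_{m-1}^{a-m+1}x_m^p$, which under the relabeling $y_i := x_{i+1}$ is precisely the input of the Proposition in $m-1$ variables with $a$ replaced by $a-1$. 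The condition on $p$ then reads $(a-1)-((m-1)-1) \le p \le (a-1)-1$, so by the inductive hypothesis $D'_{m-1}$ kills this monomial and hence $D_m(x^\lambda)=0$.

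The remaining case $p=a-1$ (i.e.\ $j=1$) is more delicate, since the matched exponents now sit on the non-adjacent variables $x_1$ and $x_m$ and neither inductive decomposition directly reduces to a smaller instance of the Proposition. My plan here is to apply the ``Coxeter-sorting'' reduced expression $D_m = \pm(\partial_1\cdots\partial_{m-1})(\partial_1\cdots\partial_{m-2})\cdots(\partial_1\partial_2)\partial_1$, together with the auxiliary identity $\partial_i(x_i^{k+1} x_{i+1}^k) = x_i^k x_{i+1}^k$, which is the odd analogue of the standard even-case calculation and follows from the Leibniz rule~\eqref{eqn-leibniz} and the basic formulas for $\partial_i$ after the signed Leibniz corrections cancel. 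Applying the expression from right to left, each inner sweep $\partial_1 \partial_2 \cdots \partial_k$ uses this identity (combined with the Shuffle Lemma, where needed, to convert $\partial_i(x_i^p x_{i+1}^q)$ with $p<q$ into $(k{+}1,k)$-form) to produce an intermediate monomial that retains a pair of matched adjacent exponents at $(x_1, x_2)$, and the leftmost $\partial_1$ in the expression then annihilates the final monomial via $\partial_1(x_1^k x_2^k)=0$. The principal obstacle is precisely this case: beyond verifying $\partial_i(x_i^{k+1} x_{i+1}^k) = x_i^k x_{i+1}^k$ in the odd setting, one must track signs carefully through the iterated sweeps to confirm that the matched pair at $(x_1, x_2)$ survives until the final $\partial_1$.
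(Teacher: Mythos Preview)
Your treatment of $p\le a-2$ is a clean idea, genuinely different from the paper's route, but the decomposition you write down is incorrect: $(\partial_1\partial_2\cdots\partial_{m-1})D'_{m-1}$ is identically zero (already for $m=3$ it reads $\partial_1\partial_2\partial_2=0$). Lemma~\ref{lem_alt_def} actually gives $D_m=D'_{m-1}(\partial_1\cdots\partial_{m-1})$ with $D'_{m-1}$ on the \emph{left}; to have $D'_{m-1}$ act first you need instead $D_m=\pm(\partial_{m-1}\cdots\partial_1)D'_{m-1}$, which follows by applying $\psi$ (Section~\ref{sec_switch}). With that correction your argument works and is arguably simpler than the paper's for these $p$: the paper does not peel off $x_1$, but rather uses the Shuffle Lemma (Lemma~\ref{lem_right-slide}) at positions $(m-1,m)$ to swap exponents and then invokes the induction hypothesis via a decomposition $D_m=\pm(\partial_1\cdots\partial_{m-1})D_{m-1}$ with $D_{m-1}$ on $\{x_1,\ldots,x_{m-1}\}$, through which any power of $x_m$ can be pulled.

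The case $p=a-1$ is where there is a genuine gap. Running your sweeps, after the first $m-2$ blocks one does obtain (up to sign) $x_1^{c}\cdots x_{m-1}^{c}\,x_m^{a-1}$ with $c=a-m+1$, since each step is an instance of $\partial_i(x_i^{k+1}x_{i+1}^k)=x_i^kx_{i+1}^k$. But the final block begins with $\partial_{m-1}$ acting on $x_{m-1}^{c}x_m^{c+(m-2)}$, and for $m\ge4$ this is not a single monomial, nor does the Shuffle Lemma reduce it to one; so your ``matched pair at $(x_1,x_2)$'' mechanism does not apply as described. The repair is simple once spotted: write the monomial as $\pm\bigl(\prod_{i=1}^{m}x_i^{c}\bigr)x_m^{m-2}$; the product lies in $\bigcap_i\ker\partial_i$ and passes through the final block up to sign, and $(\partial_1\cdots\partial_{m-1})(x_m^{m-2})=0$ for degree reasons (the result would have degree $-2$). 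The paper avoids this case split entirely---its Shuffle-Lemma argument handles all $p$ in the stated range uniformly.
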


\begin{proof}
We prove the proposition by induction on $m$. If $m=2$ we are done since then $p=a-1$ and $\partial_1(x_1^{a-1}x_2^{a-1})=0$.   Assume by induction that
\[
D_{m-1}(x_1^{a-1}x_{2}^{a-2} \dots x_{m-2}^{a-(m-2)}x_{m-1}^{p'}) = 0
\]
for $a-(m-2) \leq p' \leq a-1$.

Suppose that $p=a-j$ for some $1 \leq j \leq m-1$ in the expression
\begin{equation}
  D_m(x_1^{a-1}x_{2}^{a-2} \dots x_{m-1}^{a-(m-1)} x_{m}^p ).
\end{equation}
If $p=a-(m-1)$ then we are done since $D_m= D_{m-1} (\partial_1 \dots \partial_{m-1})$ and $\partial_{m-1}(x_1^{a-1}\dots x_{m-1}^{a-(m-1)} x_{m}^{a-(m-1)})=0$.   Assume then that $a-(m-2) \leq p \leq a-1$  so that
\begin{align}
 D_m(x_1^{a-1}\dots x_{m-1}^{a-(m-1)} x_{m}^{p})
 &= D_{m-1} (\partial_1 \dots \partial_{m-1})(x_1^{a-1}\dots x_{m-1}^{a-(m-1)} x_{m}^{p})\nn\\ &=
 D_{m-1} (\partial_1 \dots \partial_{m-2})(x_1^{a-1}\dots x_{m-2}^{a-(m-2)} \partial_{m-1}(x_{m-1}^{a-(m-1)} x_{m}^{p}) ). \nn
\end{align}
Let $q=p-(a-(m-1))$.   Lemma~\ref{lem_right-slide} together with the induction hypothesis show that the expression vanishes whenever $q=1$ or $q$ is even.  When $q$ is odd and $q\geq 3$ we  write
\begin{align}
 D_m(x_1^{a-1}\dots x_{m-1}^{a-(m-1)} x_{m}^{p}) =
  (-1)^{a-(m-1)}D_m(x_1^{a-1}\dots x_{m-2}^{a-(m-2)} x_{m-1}^{p} x_{m}^{a-(m-1)})  \hspace{1in}\nn \\
 - D_m(x_1^{a-1}\dots x_{m-2}^{a-(m-2)} x_{m-1}^{p-1} x_{m}^{a-(m-2)})
 +
 (-1)^{a-(m-1)} D_m(x_1^{a-1}\dots x_{m-2}^{a-(m-2)} x_{m-1}^{a-(m-2)} x_{m}^{p-1}).  \nn
\end{align}
The induction hypothesis shows that each of these three terms is zero, unless $p=a-(m-2)$ in which case the induction hypothesis does not apply to the second term.  But in this case, the term still vanishes since the exponent of $x_m$ is also $a-(m-2)$.
\end{proof}

\begin{prop}[Adding a step to a full staircase] \label{prop_addstep}
The equation
\begin{equation}
  D_a(x_1^{a-2} x_2^{a-3} \dots x_{a-1}^0 x_a^{a-1}) =
 (-1)^{\binom{a-1}{3}} D_a(x_1^{a-1} x_2^{a-2} \dots x_{a-1}^1 x_a^{0})=(-1)^{\binom{a-1}{2}}
\end{equation}
holds in $\ONH_a$.
\end{prop}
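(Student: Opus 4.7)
The plan is to handle the two equalities separately. The second equality follows from the already-established identity $D_a(\undx^{\delta_a})=(-1)^{\binom{a}{3}}$ in \eqref{eq_Da-undxdelta} combined with Pascal's relation $\binom{a}{3}=\binom{a-1}{3}+\binom{a-1}{2}$, which immediately gives $(-1)^{\binom{a-1}{3}}(-1)^{\binom{a}{3}}=(-1)^{\binom{a-1}{2}}$.

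The substance is the first equality $D_a(\undx^{\gamma_a})=(-1)^{\binom{a-1}{2}}$, which I will prove by induction on $a$, with base case $a=2$ the direct calculation $\partial_1(x_2)=1$. For the inductive step, I factor $D_a = D_{a-1}\cdot(\partial_{a-1}\partial_{a-2}\cdots\partial_1)$ via \eqref{eq_def_Da} and push the rightmost block through $\undx^{\gamma_a}$ one $\partial_k$ at a time. The engine of the calculation is the auxiliary identity $\partial_i(x_i^m x_{i+1}^{m-1}) = x_i^{m-1} x_{i+1}^{m-1}$ for $m\geq 1$, which follows from a short application of Leibniz \eqref{eqn-leibniz} and the explicit formulas for $\partial_i(x_i^m)$ and $\partial_i(x_{i+1}^m)$, or equivalently from the $k=1$ case of Lemma~\ref{lem_right-slide} together with $\partial_i^2=0$.

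Applying $\partial_1,\partial_2,\ldots,\partial_{a-2}$ in this order turns each adjacent pair $(x_k^{a-k-1},x_{k+1}^{a-k-2})$ into $(x_k^{a-k-2},x_{k+1}^{a-k-2})$ via the auxiliary identity, while the factor $x_a^{a-1}$ remains untouched because $\partial_k(x_a)=0$ for $k\leq a-2$. Each $\partial_k$ also skew-commutes past the already-transformed prefix $x_1^{a-3}\cdots x_{k-1}^{a-k-1}$, contributing a sign $(-1)^{(k-1)(a-2)-\binom{k}{2}}$; the total accumulated sign after $a-2$ steps is $\sigma_{a-2} = (a-2)\binom{a-2}{2}-\binom{a-1}{3}$, leaving $(-1)^{\sigma_{a-2}}\,x_1^{a-3}x_2^{a-4}\cdots x_{a-3}^1\cdot x_a^{a-1}$. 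Next, applying $\partial_{a-1}$ uses $\partial_{a-1}(x_a^{a-1})=\sum_{j=0}^{a-2}(-1)^j x_{a-1}^j x_a^{a-2-j}$ and picks up an extra factor $(-1)^{\binom{a-2}{2}}$ from $s_{a-1}$ acting on the prefix.

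Finally, applying $D_{a-1}^{(1..a-1)}$, I use that each $\partial_i$ with $i\leq a-2$ treats $x_a$ inertly (Leibniz together with $\partial_i(x_a)=0$ and $s_i(x_a)=-x_a$ cancel to give $\partial_i(g x_a^k)=\partial_i(g) x_a^k$ with no sign), so $D_{a-1}^{(1..a-1)}$ commutes past $x_a^{a-2-j}$. A grading argument forces all but the $j=a-2$ summand to vanish, and the surviving polynomial on $x_1,\ldots,x_{a-1}$ is exactly $\undx^{\gamma_{a-1}}$, to which the induction hypothesis applies and yields $(-1)^{\binom{a-2}{2}}$. The main obstacle will be the sign reconciliation: collecting all contributions, the claim reduces to the parity congruence $\sigma_{a-2}+a \equiv \binom{a-1}{2}\pmod 2$, which follows from the arithmetic identity $3\binom{a-1}{3}=(a-1)\binom{a-2}{2}$ (giving $(a-2)\binom{a-2}{2}\equiv \binom{a-2}{3}\pmod 2$, since $3$ is odd) together with two applications of Pascal's recursion.
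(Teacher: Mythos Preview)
Your proof is correct, and it takes a genuinely different route from the paper's. The paper proves the first equality by repeatedly applying the Shuffle Lemma (Lemma~\ref{lem_right-slide}) and the big odd shuffle to move the exponent $a-1$ from $x_a$ leftward one position at a time inside $D_a(\cdots)$; the extra terms produced by each big odd shuffle are killed by Proposition~\ref{prop_vanish}. You instead induct on $a$ via the factorization $D_a=D_{a-1}(\partial_{a-1}\cdots\partial_1)$, pushing the inner block through $\undx^{\gamma_a}$ with the auxiliary identity $\partial_i(x_i^m x_{i+1}^{m-1})=x_i^{m-1}x_{i+1}^{m-1}$ (which follows directly from $\partial_i x_i + x_{i+1}\partial_i=1$ together with $\partial_i(x_i^{m-1}x_{i+1}^{m-1})=0$), and then reducing to $D_{a-1}(\undx^{\gamma_{a-1}})$ after a degree argument.

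What each approach buys: yours avoids Proposition~\ref{prop_vanish} entirely, so it is logically more self-contained; on the other hand, the sign bookkeeping is heavier, culminating in the parity identity $\sigma_{a-2}+a\equiv\binom{a-1}{2}\pmod 2$ that you correctly deduce from $3\binom{a-1}{3}=(a-1)\binom{a-2}{2}$ and Pascal's rule. The paper's shuffle method keeps the sign analysis shorter and is methodologically aligned with the proofs of Proposition~\ref{prop_reorder-revstair} and Lemma~\ref{lem_Dapb} that follow, where the same shuffle-and-vanish mechanism is reused. Both arguments are valid; yours could serve as an independent check on the sign in \eqref{eq_Da-undxdelta}, since it does not invoke that identity in establishing the first equality.
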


\begin{proof}
The proof follows by shuffling the exponent of $x_a$ left using Lemma~\ref{lem_right-slide} and the big odd shuffle.  Additional terms that arise from the big odd shuffle vanish by Proposition~\ref{prop_vanish}.  In this calculation, the sign is $(-1)^{\lfloor \frac{a-2}{2}\rfloor}$ when $a$ is even, and $(a-1)$ when $a$ is odd.
\end{proof}

\begin{prop}[Reordering a reverse staircase] \label{prop_reorder-revstair}
\begin{equation}
  D_a(x_1^0 x_2^1 \dots x_{a-1}^{a-2} x_a^{a-1}) = (-1)^{\binom{a}{4}}
  D_a(x_1^{a-1} x_2^{a-2} \dots x_{a-1}^1 x_a^{0}).
\end{equation}
\end{prop}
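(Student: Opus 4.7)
The plan is to reduce the claim to the involution $\sigma$ of Section~\ref{sec_switch}. First, observe that the reverse-staircase monomial on the left-hand side is nothing but $\psi\sigma(\undx^{\delta_a})$, which since $\psi$ and $\sigma$ commute equals $\sigma(\psi(\undx^{\delta_a}))$. Combining with the relation $\psi(\undx^{\delta_a}) = (-1)^{\binom{a}{4}}\undx^{\delta_a}$ (a rewriting of the first listed equation of Section~\ref{sec_switch}), we obtain
\[
x_1^0 x_2^1 \cdots x_{a-1}^{a-2} x_a^{a-1} \;=\; (-1)^{\binom{a}{4}}\,\sigma(\undx^{\delta_a}).
\]

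The core of the proof is then the intertwining identity
\[
D_a \circ \sigma \;=\; \sigma \circ D_a \qquad \text{as } \Z\text{-linear operators on } \opol_a.
\]
This translates the equation $\sigma(D_a) = D_a$ of Section~\ref{sec_switch} into a statement on the faithful representation (Proposition~\ref{prop_onh_li}): the algebra automorphism $\sigma$ of $\ONH_a$ is realized by conjugation by the polynomial-ring automorphism $\sigma|_{\opol_a}$ sending $x_i \mapsto x_{a+1-i}$. One checks this on generators, where $\sigma x_i \sigma^{-1} = x_{a+1-i}$ is automatic and $\sigma \partial_i \sigma^{-1} = \partial_{a-i}$ follows by a short induction with the Leibniz rule. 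Hence $\sigma D_a \sigma^{-1} = \sigma(D_a) = D_a$, which rearranges to the intertwining identity.

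Combining these two observations with $D_a(\undx^{\delta_a}) = (-1)^{\binom{a}{3}}$ from equation~\eqref{eq_Da-undxdelta},
\[
D_a\bigl(x_1^0 x_2^1 \cdots x_a^{a-1}\bigr) \;=\; (-1)^{\binom{a}{4}} D_a(\sigma(\undx^{\delta_a})) \;=\; (-1)^{\binom{a}{4}} \sigma(D_a(\undx^{\delta_a})) \;=\; (-1)^{\binom{a}{4}} D_a(\undx^{\delta_a}),
\]
the last equality because the integer $D_a(\undx^{\delta_a}) = (-1)^{\binom{a}{3}}$ is fixed by $\sigma$. This is the required identity.

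The only non-routine step is the intertwining $D_a \circ \sigma = \sigma \circ D_a$; once verified on the generators $x_i, \partial_i$ of $\ONH_a$, the rest is a short composition of known identities from Section~\ref{sec_switch} with the sign bookkeeping coming from $\psi$. (Alternatively, one could use the second half of~\eqref{eq_Da-undxdelta}, namely $D_a(\psi(\undx^{\delta_a})) = (-1)^{\binom{a+1}{4}}$, and then invoke Pascal's identity $\binom{a+1}{4} = \binom{a}{3} + \binom{a}{4}$; the two routes are essentially equivalent.)
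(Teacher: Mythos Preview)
Your proof is correct and takes a genuinely different route from the paper's. The paper argues by induction, repeatedly invoking Proposition~\ref{prop_addstep} (adding a step to a full staircase) to push the large exponent of $x_a$ leftward one slot at a time; that proposition in turn rests on the Shuffle Lemma~\ref{lem_right-slide} and Proposition~\ref{prop_vanish}. You instead exploit the horizontal-reflection automorphism $\sigma$ of Section~\ref{sec_switch}: after verifying (on generators, via the Leibniz rule and the compatibility $\sigma s_i=s_{a-i}\sigma$) that conjugation by the ring automorphism $x_i\mapsto x_{a+1-i}$ of $\opol_a$ realizes $\sigma$ on the faithful representation, the identity $\sigma(D_a)=D_a$ from Section~\ref{sec_switch} yields $D_a\circ\sigma=\sigma\circ D_a$ on $\opol_a$. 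Combined with $\psi(\undx^{\delta_a})=(-1)^{\binom{a}{4}}\undx^{\delta_a}$ and $D_a(\undx^{\delta_a})=(-1)^{\binom{a}{3}}$, the result drops out in one line.

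Your argument is cleaner for this particular statement and uses only material already established in Section~\ref{sec_switch}, so there is no circularity. What the paper's approach buys is that Propositions~\ref{prop_vanish} and~\ref{prop_addstep} are needed anyway for the proof of Lemma~\ref{lem_Dapb}, so Proposition~\ref{prop_reorder-revstair} comes essentially for free in that computational flow; your route, by contrast, makes the symmetry reason for the identity transparent. The one step you might spell out more fully is the check that $\sigma\partial_i\sigma^{-1}=\partial_{a-i}$ as operators on $\opol_a$: this requires $\sigma s_i\sigma^{-1}=s_{a-i}$ for the $S_a$-action of equation~\eqref{eqn-S-action}, which holds but is worth stating explicitly since that action carries signs.
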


\begin{proof}
The proposition follows by induction from Proposition~\ref{prop_addstep} together with some simplification of the exponent of $(-1)$.
\end{proof}

\begin{lem}
Let $\alpha\in P(a,b)$ and $\beta\in P(b,a)$ be two partitions. Then
\begin{equation}
  D_{a+b}\left(x_1^{a-1+\alpha_1} x_2^{a-2+\alpha_2} \dots x_{a-1}^{1+\alpha_{a-1}}x_a^{\alpha_a}
  x_{a+1}^{\beta_b} x_{a+2}^{1+\beta_{b-1}} \dots x_{a+b-1}^{b-2+\beta_2} x_{a+b}^{b-1+\beta_1}\right) = 0
\end{equation}
unless $|\alpha|+|\beta|=ab$.
\end{lem}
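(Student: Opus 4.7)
The plan splits into two cases according to the sign of $|\alpha|+|\beta|-ab$.

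When $|\alpha|+|\beta|<ab$, a degree count suffices. The sum of exponents in the monomial equals
\[
\binom{a}{2}+|\alpha|+\binom{b}{2}+|\beta|=\binom{a+b}{2}+(|\alpha|+|\beta|-ab),
\]
so the monomial has $q$-degree $2\binom{a+b}{2}+2(|\alpha|+|\beta|-ab)$. Since $D_{a+b}$ has $q$-degree $-2\binom{a+b}{2}$, its image has $q$-degree $2(|\alpha|+|\beta|-ab)<0$. As $\opol_{a+b}$ is concentrated in non-negative degrees, the image must vanish.

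For $|\alpha|+|\beta|>ab$, we proceed by a pigeonhole argument followed by a repeat-induced vanishing. Let $A$ denote the exponent tuple of the monomial. Because $\alpha_i\leq b$ and $\beta_j\leq a$, every entry of $A$ lies in $\{0,1,\ldots,a+b-1\}$. The sum of entries is $\binom{a+b}{2}+(|\alpha|+|\beta|-ab)$, which strictly exceeds $\binom{a+b}{2}=0+1+\cdots+(a+b-1)$, the maximum possible sum of $a+b$ distinct values drawn from this set. Hence some value is repeated. Moreover, the first $a$ entries of $A$ are strictly decreasing (since $\alpha$ is a partition, so $A_i-A_{i+1}=1+\alpha_i-\alpha_{i+1}\geq 1$) and the last $b$ entries are strictly increasing, so any repetition must pair some index $i\leq a$ with some index $j\geq a+1$.

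To finish, we show that $D_{a+b}$ annihilates $\undx^A$ whenever $A$ has such a cross-half repetition. The plan is to combine the Shuffle Lemma (Lemma~\ref{lem_right-slide}) and the Crossing Slide Lemma (Lemma~\ref{lem_crossing_slide}) to reorganize the action of $D_{a+b}$ so that a sub-product of consecutive $\partial$-operators acts on a sub-monomial matching the hypothesis of Proposition~\ref{prop_vanish}---a partial staircase terminated by a repeated exponent---and so vanishes; the remaining $\partial$-operators then inherit this zero. The main obstacle is that when $\alpha$ has non-constant parts the first $a$ exponents do not already form a consecutive decreasing sequence as required by Proposition~\ref{prop_vanish}, so rearranging them via the Shuffle Lemma produces correction terms whose vanishing must be established either by a further application of Proposition~\ref{prop_vanish} to a different sub-monomial or by recursion on $a+b$. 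Tracking the signs generated by the anticommutativity of the $x_k$ through these rearrangements is the principal technical difficulty.
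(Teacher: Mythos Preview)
Your degree argument for $|\alpha|+|\beta|<ab$ is exactly the paper's proof of that case.

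For $|\alpha|+|\beta|>ab$, your pigeonhole step is correct and is the same observation the paper makes: all exponents lie in $\{0,1,\ldots,a+b-1\}$ and their sum exceeds $\binom{a+b}{2}$, so some value must repeat. But your completion has a genuine gap. You try to force the monomial into the specific shape required by Proposition~\ref{prop_vanish} (a consecutive partial staircase capped by a repeat) and then admit that the correction terms produced along the way are not controlled. You never actually show vanishing; you only outline a plan with an unresolved obstacle. Invoking the Crossing Slide Lemma is also off target: that lemma rearranges a product of $\partial_i$'s inside $\ONH_a$ and does nothing to help reorganize the exponent pattern of $\undx^A$.

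The paper's argument avoids Proposition~\ref{prop_vanish} entirely and is much simpler. Use the Shuffle Lemma (and its iterated form, the big odd shuffle) to reorder the exponents under $D_{a+b}$ into non-increasing order. Every term that arises in this process has the same total degree and has each exponent still lying in $\{0,\ldots,a+b-1\}$, since the big odd shuffle replaces a pair $(m,m+k)$ only by pairs $(m+k-j,m+j)$ with $0\leq j\leq\lfloor k/2\rfloor$. So your pigeonhole argument applies to \emph{every} resulting term: each has a repeated exponent. But now the exponent sequence is non-increasing, so any repeat is between adjacent positions, say $i$ and $i+1$. Writing $D_{a+b}$ (up to sign) with $\partial_i$ acting first and using $\partial_i(x_i^m x_{i+1}^m)=0$ kills the term. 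That is the whole argument; no recursion on $a+b$ and no appeal to Proposition~\ref{prop_vanish} are needed. Your cross-half observation about where the repeat occurs is correct but irrelevant once you sort.
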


\begin{proof}
If  the total degree $\deg(\delta_a)+\deg(\delta_b)+|\alpha|+|\beta|$ of the dots  is less than the degree $\binom{a+b}{2}$ of $\delta_{a+b}$, then the total degree of the left-hand side of equation~\ref{eq_needtoprove} is negative, and the expression must therefore be zero.  If the degree is strictly greater than $\binom{a+b}{2}$, then using the Shuffle Lemma we can reorder the exponents so that they are decreasing, acquiring additional terms each time the big odd slide is performed.  The requirements on $\alpha$ and $\beta$ imply $b \geq \alpha_1 \geq \dots \geq \alpha_a$ and $a \geq \beta_1 \geq \dots \geq \beta_a \geq 1$.  Since the total degree of the dots is greater than $ \binom{a+b}{2}$ it follows that there must be a repeated exponent in each term that arises from the shuffling process.  Since the action of $D_{a+b}$ annihilates adjacent repeated exponents, all of these terms must vanish.
\end{proof}

\begin{lem} \label{lem_Dapb}
Let $\alpha\in P(a,b)$ and $\beta\in P(b,a)$. Then
\begin{equation} \label{eq_needtoprove}
  D_{a+b}\left(x_1^{a-1+\alpha_1} x_2^{a-2+\alpha_2} \dots x_{a-1}^{1+\alpha_{a-1}}x_a^{\alpha_a}
  x_{a+1}^{\beta_b} x_{a+2}^{1+\beta_{b-1}} \dots x_{a+b-1}^{b-2+\beta_2} x_{a+b}^{b-1+\beta_1}\right)
  = \delta_{\alpha,\hat{\beta}} (-1)^{\Omega(\beta)+\binom{a+b}{3}},
\end{equation}
where
\begin{equation}\label{eqn-Omega}
 \Omega(\beta) := \sum_{j=0}^{b-1} \binom{\beta_{b-j}+j}{3}.
\end{equation}
\end{lem}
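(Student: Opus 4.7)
By the preceding lemma, both sides vanish unless $|\alpha|+|\beta|=ab$ (on the right, $|\hat\beta|=ab-|\beta|$, so the Kronecker delta $\delta_{\alpha,\hat\beta}$ already forces this equality), so I will assume this degree constraint throughout. I will analyze the multiset of exponents
\[
E(\alpha,\beta) = \{\,a-i+\alpha_i : 1\le i\le a\,\}\cup\{\,j-1+\beta_{b-j+1} : 1\le j\le b\,\}
\]
appearing in the monomial inside $D_{a+b}$. A standard partition-theoretic computation shows that $E(\alpha,\beta)\subseteq\{0,1,\ldots,a+b-1\}$, with equality as multisets if and only if $\alpha=\hat\beta$; this reflects the classical interlacing between the columns of $\alpha\in P(a,b)$ and of $\beta^c\in P(b,a)$.

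If $\alpha\ne\hat\beta$, then $E(\alpha,\beta)$ contains a repeated entry. I then sort the monomial into decreasing exponent order by skew-commutativity of the $x_i$ (this merely introduces an overall sign), and apply the Shuffle Lemma (Lemma~\ref{lem_right-slide}) to handle residual reorderings. This produces a ``main'' sorted term plus big-odd-shuffle correction terms. The main term, being a monomial with two adjacent equal exponents sitting inside a partial staircase, is killed by Proposition~\ref{prop_vanish}. Each correction term also vanishes by Proposition~\ref{prop_vanish}, since the big-odd-shuffle still leaves a matched exponent within a smaller partial staircase. Hence the left-hand side is zero in this case.

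If $\alpha=\hat\beta$, then $E(\alpha,\beta)=\{0,1,\ldots,a+b-1\}$, and I reorganize the monomial by shuffling each of the $b$ variables of the second block into its decreasing-order position within the combined staircase. Iterating Proposition~\ref{prop_reorder-revstair} via intermediate applications of Proposition~\ref{prop_addstep} converts the monomial (up to a controllable sign) to $\undx^{\delta_{a+b}}$; equation \eqref{eq_Da-undxdelta} then gives $D_{a+b}(\undx^{\delta_{a+b}})=(-1)^{\binom{a+b}{3}}$. The remaining task is to verify that the accumulated sign from the shuffling equals $(-1)^{\Omega(\beta)}$.

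This final sign identification is the main obstacle. The compact form $\Omega(\beta)=\sum_{j=0}^{b-1}\binom{\beta_{b-j}+j}{3}$ strongly suggests induction on $b$: the variable $x_{a+b}^{b-1+\beta_1}$ should contribute exactly the $j=b-1$ summand $\binom{\beta_1+b-1}{3}$, with the remaining variables handled by the inductive hypothesis at level $b-1$. Executing this induction requires tracking how removing a ``staircase step'' from both $\alpha$ and $\beta^c$ preserves the condition $\alpha=\hat\beta$, together with careful bookkeeping of the anticommutation signs incurred by moving $x_{a+b}$ leftward past the first $a+b-1$ skew-commuting variables, and of the sign shifts produced by each invocation of Proposition~\ref{prop_reorder-revstair} on successive subsidiary staircases of length $\beta_{b-j}+j+1$.
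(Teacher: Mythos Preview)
Your outline matches the paper's strategy, but two of your three steps have genuine gaps.

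\textbf{The vanishing case.} Skew-commutativity $x_ix_j=-x_jx_i$ only reorders the \emph{factors} of a monomial, not the \emph{exponents}: it cannot turn $x_1^{e_1}x_2^{e_2}$ into $x_1^{e_2}x_2^{e_1}$. All the sorting must be done inside $D_{a+b}$ via the Shuffle Lemma and the big odd shuffle, and this produces correction terms whose exponent multisets genuinely change. Your assertion that each correction term ``still leaves a matched exponent within a smaller partial staircase'' and hence dies by Proposition~\ref{prop_vanish} is where the argument breaks: that proposition only applies to the very restrictive shape $x_1^{a-1}x_2^{a-2}\cdots x_{m-1}^{a-(m-1)}x_m^p$, which a generic big-odd-shuffle output does not have. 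The paper handles this differently: it does not sort everything at once, but rather builds the staircase from the right one step at a time. At each step it shows that if the next exponent fails to continue the staircase then some value in $\{0,1,\ldots,j-1\}$ is \emph{missing} from the exponent multiset, and then argues that every term produced by subsequent shuffles remains missing some small exponent (because a big odd shuffle $(t_1,t_1+k)\mapsto(t_1+k-\ell,t_1+\ell)$ can only create an exponent at the cost of deleting a smaller one). A missing exponent forces a repeat by degree count, and the repeat is adjacent after full sorting, so $D_{a+b}$ kills it. That ``missing exponent persists under shuffling'' observation is the actual content here, and your write-up skips it.

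\textbf{The sign.} You explicitly leave the sign identification $(-1)^{\Omega(\beta)}$ unproved, only sketching an induction on $b$. The paper's approach is the step-by-step staircase construction just described: once the staircase $x_{a-\beta_b+1}^{\beta_b-1}\cdots x_a^0$ is established, Proposition~\ref{prop_addstep} slides $x_{a+1}^{\beta_b}$ into position at the cost of exactly $(-1)^{\binom{\beta_b}{3}}$ (the $j=0$ term of $\Omega$), then $x_{a+2}^{\beta_{b-1}+1}$ at the cost of $(-1)^{\binom{\beta_{b-1}+1}{3}}$, and so on. The sign $\Omega(\beta)=\sum_{j=0}^{b-1}\binom{\beta_{b-j}+j}{3}$ is then immediate from iterating Proposition~\ref{prop_addstep}, not Proposition~\ref{prop_reorder-revstair}; the latter is a corollary of the former rather than the tool one iterates. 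Carrying this out is not difficult once the staircase structure has been forced, but it must actually be done for the proof to be complete.
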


\begin{proof}
By the previous lemma it suffices to consider the case when $|\alpha|+|\beta|=ab$.  The condition that $\alpha= \hat{\beta}$ implies that the only time \eqref{eq_needtoprove} is nonzero is when each of the exponents is unique, so that all elements of the set $\{0,1, \dots , a+b-1\}$ occur as exponents.  Note that the partition requirements for $\alpha$ and $\beta$ imply that there are chains of strict inequalities of exponents $a-1+\alpha_1 > a-2 + \alpha_2 > \dots > \alpha_a$ with the first $a$ variables and $\beta_b < \beta_{b-1}+1 < \dots < \beta_1 + b-1$ with the last $b$ variables.

We show that if an exponent is ever repeated in the left hand side of \eqref{eq_needtoprove} then the expression vanishes.  We will prove this in the case $\alpha_a <\beta_b$ (the case $\alpha_a>\beta_b$ is similar; and if $\alpha_a = \beta_b$, then the expression contains repeated adjacent exponents and is therefore zero). In this case, the exponents of $x_{a-{\beta_b}-1}$ through $x_{a}$ must form a staircase
\[
 x_{a-\beta_b-1}^{\beta_b-1} x_{a-\beta_b+1}^{\beta_b-2} \dots x_{a-1}^{1}x_{a}^{0},
\]
or else the expression vanishes. To see this suppose that $x_{a-j}$ for $0 \leq j \leq \beta_b-1$ is the first $j$ where the exponent $f$ of $x_{a-j}^{f}$ is not part of a reverse staircase, that is, $f>j$.

Since $\beta_b>f$ all of the exponents of the last $b$ strands must be larger than $f$, and all the variables before $x_{a-j}$ must also be larger than $f$.  Hence the exponent $j$ does not occur in the expression
\[
D_{a+b}\left(x_1^{a-1+\alpha_1}\dots
x_{a-j-1}^{j+1+\alpha_{a-(j+1)}} x_{a-j}^{f}
x_{a-(j-1)}^{(j-1)} \dots x_{a-1}^{1}x_{a}^{0}x_{a+1}^{\beta_b}
 \dots  x_{a+b}^{b-1+\beta_1}\right).
\]
Using the Shuffle Lemma and the big odd slide we reorder the exponents above so that they are decreasing.  Each of the resulting terms will have one element of the set $\{0, 1, \dots, j-1\}$ missing since the shuffling procedure takes pairs of exponents $(t_1,t_1+k)$ with $k >0$ and either shuffles them $(t_1+k,t_1)$ or else creates terms $(t_1+k-\ell,t_1+\ell)$ for $1 \leq \ell \leq \lfloor\frac{k}{2}\rfloor$.  Observe that if a missing exponent appears from a big odd slide $(t_1+k-\ell,t_1+\ell)$, then the lower exponent $t_1$ has been removed.  If $t_1$ then appears from a subsequent big odd slide, then again a lower exponent will have had to be removed.  Hence, at least one element in the set $\{0, 1, \dots, j-1\}$ is missing in each term arising in the shuffling procedure.  For degree reasons, if one exponent in the set $\{ 0,1, \dots, a+b-1\}$ is missing, then at least one exponent must be repeated, hence the expression contains adjacent repeated exponents and must therefore  vanish.

Thus, if our expression is nonvanishing, it must be of the form
\[
D_{a+b}\left(x_1^{a-1+\alpha_1}\dots
x_{a-\beta_b}^{\beta_b+\alpha_{(a-\beta_b)}} x_{a-(\beta_b-1)}^{\beta_b-1}
x_{a-(\beta_b-2)}^{\beta_b-2} \dots x_{a-1}^{1}x_{a}^{0}x_{a+1}^{\beta_b}
 \dots  x_{a+b}^{b-1+\beta_1}\right).
\]
Now use Proposition~\ref{prop_addstep} to slide the exponent $\beta_b$ of $x_{a+1}$ left to add a step to the staircase giving the expression
\[
(-1)^{\Omega(\beta_b)}D_{a+b}\left(x_1^{a-1+\alpha_1}\dots
x_{a-\beta_b}^{\beta_b+\alpha_{(a-\beta_b)}} x_{a-(\beta_b-1)}^{\beta_b}
x_{a-(\beta_b-2)}^{\beta_b-1} \dots x_{a-1}^{2}x_{a}^{1}x_{a+1}^{0} x_{a+2}^{1+\beta_{(b-1)}}
 \dots  x_{a+b}^{b-1+\beta_1}\right).
\]

If $\beta_{b-1}=\beta_b$, then slide the exponent $\beta_{b-1}+1 = \beta_b+1$ of $x_{a+2}$ left adding a step to the staircase with sign $(-1)^{\Omega(\beta_{b-1}+1)}$. If this exponent $\beta_b+1$ is repeated then it must be repeated as the exponent $\beta_b+\alpha_{(a-\beta_b)}$ of $x_{a-\beta_b}$, then the expression is zero.  Otherwise we can assume that $\beta_{b-1}+1=\beta_{b}+1$ is not a repeated exponent and that $\beta_b+\alpha_{(a-\beta_b)}> \beta_{b-1}+1$.

If $\beta_{b-1} > \beta_{b}$ so that $\beta_{b-1}=\beta_b+g+1$ for some $g\geq0$, then the staircase must continue
\[
x_{a-(\beta_b+g)}^{\beta_b+g}
\dots
x_{a-(\beta_b+1)}^{\beta_b+2}
x_{a-\beta_b}^{\beta_b+1}
x_{a-(\beta_b-1)}^{\beta_b}
x_{a-(\beta_b-2)}^{\beta_b-1} \dots x_{a}^{1}x_{a+1}^{0}
\]
or else the expression vanishes.  If the staircase did not continue, then one of the exponents in the set $\{\beta_b+1, \beta_b+2 \dots, \beta_b+g\}$ would not occur in the expression, and arguing as above one can show that all terms resulting from shuffling the expression to decreasing order must be missing at least one exponent in the set $\{0,1,\dots,\beta_b+g\}$ and must therefore vanish.

Continuing in this way, it follows that if any exponent in the expression is repeated then the expression is zero. Otherwise, all the exponents of the last $b$ variables can be slid through staircases so that the resulting expression has strictly decreasing exponents.  Since $D_{a+b}(\undx^{a+b})=(-1)^{\binom{a+b}{3}}$ the result follows.
\end{proof}

\begin{defn}
Define {\em dual Schur functions} as
$$\hat{s}_{\alpha}(x_1,\dots, x_a):=
(-1)^{\chi_{\alpha}^a} D_a( \sigma\psi( \undx^{\delta_a+\alpha}))^{w_0}
=
(-1)^{\chi_{\alpha}^a} D_a(x_1^{\alpha_a} x_2^{1+\alpha_{a-1}} \dots x_{a}^{a-1+\alpha_1})^{w_0}.$$
\end{defn}

It easy to see that
$e_a \hat{s}_{\alpha} e_a = (-1)^{\chi_{\alpha}^a}e_a\sigma\psi( \undx^{\delta_a+\alpha}) D_a = (-1)^{\chi_{\alpha}^a} e_a x_1^{\alpha_a} x_2^{1+\alpha_{a-1}} \dots x_{a}^{a-1+\alpha_1} D_a$.

\begin{prop} \label{prop_oval_small}
Let $\alpha\in P(a,b)$ and $\beta\in P(b,a)$ be two partitions. Then
\begin{equation} \label{eq_cor_oval_small}
  \xy
 (0,0)*{\includegraphics[scale=0.5]{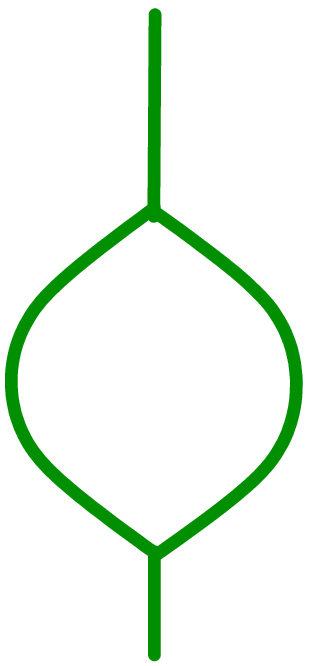}};
 (-8,-8)*{a};  (6,5)*{b};(-6,-14)*{a+b};
 (-6,2)*{\bigb{s_{\alpha}}};(6,-5)*{\bigb{\hat{s}_{\beta}}};
  \endxy
\;\; = \;\;
    \left\{
\begin{array}{ccc}
  (-1)^{\chi_{\alpha}^a+\chi_{\hat{\alpha}}^b+\binom{a}{2}\left(|\hat{\alpha}|
  +\binom{b}{2}\right)+\Omega(\hat{\alpha})+\binom{a+b}{3}} \left(\;\;\xy
 (0,0)*{\includegraphics[scale=0.5]{single-tup.eps}};
 (-6 ,-6)*{a+b};
  \endxy\;\; \right) & \quad & \text{if $\beta=\hat{\alpha}$,} \\ & & \\
  0 & \quad & \text{otherwise,}
\end{array}
  \right.
\end{equation}
 where $\Omega(\beta)$ is as in Lemma~\ref{lem_Dapb} and $\hat{\alpha}$ is as in Subsection \ref{sec_oschur}.
\end{prop}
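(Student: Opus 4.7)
The plan is to unpack the bubble diagram into its algebraic constituents, reduce to a scalar multiple of $e_{a+b}$, and apply Lemma~\ref{lem_Dapb} to identify the scalar. First, I would use the splitter definitions \eqref{eq_def-splitters} together with the absorption identity \eqref{eq_ea_absorb} to rewrite the bubble as $e_{a+b}\,(e_a s_\alpha e_a \otimes e_b \hat{s}_\beta e_b)\, e_{a+b}$. Applying the exploded-Schur identities $e_a s_\alpha e_a=(-1)^{\chi_\alpha^a}\, e_a\undx^{\delta_a+\alpha}\, D_a$ and $e_b\hat{s}_\beta e_b=(-1)^{\chi_\beta^b}\, e_b\,\sigma\psi(\undx^{\delta_b+\beta})\, D_b$ derived in the preceding subsection, the bubble becomes
$$(-1)^{\chi_\alpha^a+\chi_\beta^b+\binom{a}{2}(|\beta|+\binom{b}{2})}\; e_{a+b}\cdot M\cdot(D_a\otimes D_b)\cdot e_{a+b},$$
where $M$ is precisely the monomial $x_1^{a-1+\alpha_1}\cdots x_a^{\alpha_a}\,x_{a+1}^{\beta_b}\cdots x_{a+b}^{b-1+\beta_1}$ appearing in Lemma~\ref{lem_Dapb}; the extra sign accounts for moving $D_a$ past the monomial factor for $\hat{s}_\beta$ and past $e_b$ via graded-commutativity.

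The key reduction step uses an OWL-style length argument. Expanding both $e_{a+b}$'s as $(-1)^{\binom{a+b}{3}}\undx^{\delta_{a+b}}D_{a+b}$ and invoking the generalized Leibniz rule \eqref{eqn-generalized-leibniz} to move $\undx^{\delta_{a+b}}$ through $(D_a\otimes D_b)$, each resulting term has the form $(\text{polynomial})\cdot\partial_v$ for some $v\in S_{a+b}$; since multiplication by the rightmost $D_{a+b}=\partial_{w_0}$ annihilates any term with $v\neq e$ (by the length identity $\ell(v\cdot w_0)<\ell(v)+\ell(w_0)$ for $v\neq e$, valid because $w_0$ is the longest element), only the fully-$\partial$ term survives. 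A symmetric argument on the left reduces the expression to
$$\undx^{\delta_{a+b}}\, D_{a+b}\!\bigl(M\cdot(D_a\otimes D_b)(\undx^{\delta_{a+b}})\bigr)\, D_{a+b},$$
which, if the inner evaluation is a scalar $c$, equals $c\cdot(-1)^{\binom{a+b}{3}}e_{a+b}$.

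Finally, a direct Leibniz computation yields $(D_a\otimes D_b)(\undx^{\delta_{a+b}})=(-1)^{\binom{b}{2}(ab+\binom{a}{2})+\binom{b}{3}}\, D_a(\undx^{\delta_a+(b^a)})$; after multiplying by $M$ and normal-ordering the product, one obtains (up to an explicit sign) precisely the monomial appearing in Lemma~\ref{lem_Dapb}. The lemma then evaluates the outer $D_{a+b}$ to $\delta_{\alpha,\hat{\beta}}(-1)^{\Omega(\beta)+\binom{a+b}{3}}$, producing the Kronecker delta of the statement, and reassembling $\undx^{\delta_{a+b}}D_{a+b}$ yields the thick strand on the right-hand side. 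The main obstacle is the meticulous sign bookkeeping---combining $\chi_\alpha^a$, $\chi_{\hat\alpha}^b$, the graded-commutativity factor $\binom{a}{2}(|\hat\alpha|+\binom{b}{2})$, the signs from normal-ordering $M\cdot(D_a\otimes D_b)(\undx^{\delta_{a+b}})$, and $\Omega(\hat\alpha)$ from Lemma~\ref{lem_Dapb}---to match the stated exponent exactly. The OWL cancellation and Lemma~\ref{lem_Dapb} are designed precisely to make this bookkeeping tractable.
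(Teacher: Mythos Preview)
Your very first step is incorrect, and the error propagates in a way that makes your degree count inconsistent. The bubble is \emph{not} equal to $e_{a+b}\,(e_a s_\alpha e_a \otimes e_b \hat{s}_\beta e_b)\, e_{a+b}$. By the splitter definitions \eqref{eq_def-splitters}, the upward (split) trivalent vertex is $(e_a\otimes e_b)\cdot C$, where $C$ is the $ab$-fold crossing of degree $-2ab$; only the downward (merge) vertex is $e_{a+b}$. The absorption identity \eqref{eq_ea_absorb} does not remove this crossing. A quick degree check confirms the discrepancy: the bubble has $\Z$-degree $2(|\alpha|+|\beta|)-2ab$, which vanishes exactly when $\beta=\hat\alpha$, whereas your expression has degree $2(|\alpha|+|\beta|)$ and would force $\alpha=\beta=\emptyset$. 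This is also why your final step fails: the quantity $M\cdot(D_a\otimes D_b)(\undx^{\delta_{a+b}})$ has degree $\binom{a+b}{2}+ab$, so $D_{a+b}$ of it is a degree-$ab$ polynomial, not a scalar, and Lemma~\ref{lem_Dapb} does not apply.

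The paper's argument avoids this entirely. After exploding the two Schur boxes (your second step, which is correct and gives the sign $(-1)^{\chi_\alpha^a+\chi_\beta^b}$), it does \emph{not} attempt to manipulate $(D_a\otimes D_b)\,e_{a+b}$ algebraically. Instead it invokes the exploded-splitter associativity identities \eqref{eq_Uasssplit1} and \eqref{eq_Dasssplit1}: these say precisely that the crossing coming from the splitter combines with $D_a\otimes D_b$ to form $D_{a+b}$, and with $e_a\otimes e_b$ to form $e_{a+b}$. Sliding the dots past the splitter contributes the factor $(-1)^{\binom{a}{2}(|\beta|+\binom{b}{2})}$, and one lands directly on $e_{a+b}\,M\,D_{a+b}$ with $M$ exactly the monomial of Lemma~\ref{lem_Dapb}. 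Since $e_{a+b}\,M\,D_{a+b}=D_{a+b}(M)\cdot e_{a+b}$ (via \eqref{eq_ea-to-delta}), the lemma finishes the proof in one line. There is no need for any OWL-type argument or for evaluating $(D_a\otimes D_b)(\undx^{\delta_{a+b}})$.
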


\begin{proof}
From the definitions we have
\begin{equation} \label{eq_proof_explode}
  \xy
 (0,0)*{\includegraphics[scale=0.5]{tline-bubble.eps}};
 (-8,-8)*{a};  (6,5)*{b};(-6,-14)*{a+b};
 (-6,2)*{\bigb{s_{\alpha}}};(6,-5)*{\bigb{\hat{s}_{\beta}}};
  \endxy\;\;=\;\; (-1)^{\chi_{\alpha}^a +\chi_{\beta}^b}
     \xy
 (0,0)*{\includegraphics[scale=0.5]{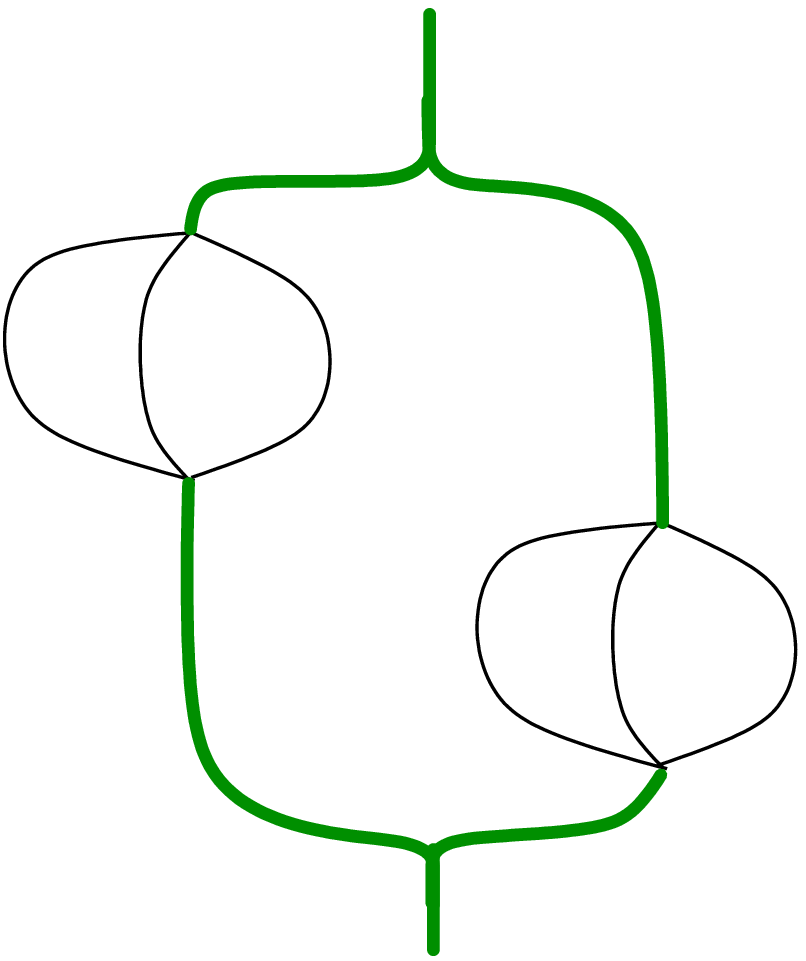}};
 (-6,-22)*{a+b};
 (-20.5,9)*{\bullet}+(-2.5,1)*{\scs s_1};
 (-13,7)*{\bullet}+(-2.5,1)*{\scs s_2};
 (-4,5)*{\bullet}+(-2.5,1)*{\scs s_a};
 (4,-6)*{\bullet}+(-2.5,1)*{\scs r_b};
  (11,-8)*{\bullet}+(-3,1)*{\scs r_{b-1}};
  (19.5,-10)*{\bullet}+(-2.5,1)*{\scs r_1};
 (15,-6)*{\cdots};(-9,9)*{\cdots};
  \endxy
 \end{equation}
where $s_j = a-j+\alpha_j$ and $r_j = b-j+\beta_j$.  After sliding the splitters past dot terms and using associativity for exploded splitters in equations~\eqref{eq_Uasssplit1} and  \eqref{eq_Dasssplit1}, the left-hand side of the above equation becomes
\begin{equation}
  \;\; = \;\; (-1)^{\chi_{\alpha}^a +\chi_{\beta}^b}(-1)^{\binom{a}{2}\left(|\beta|+\binom{b}{2}\right)}
       \xy
 (0,0)*{\includegraphics[scale=0.5]{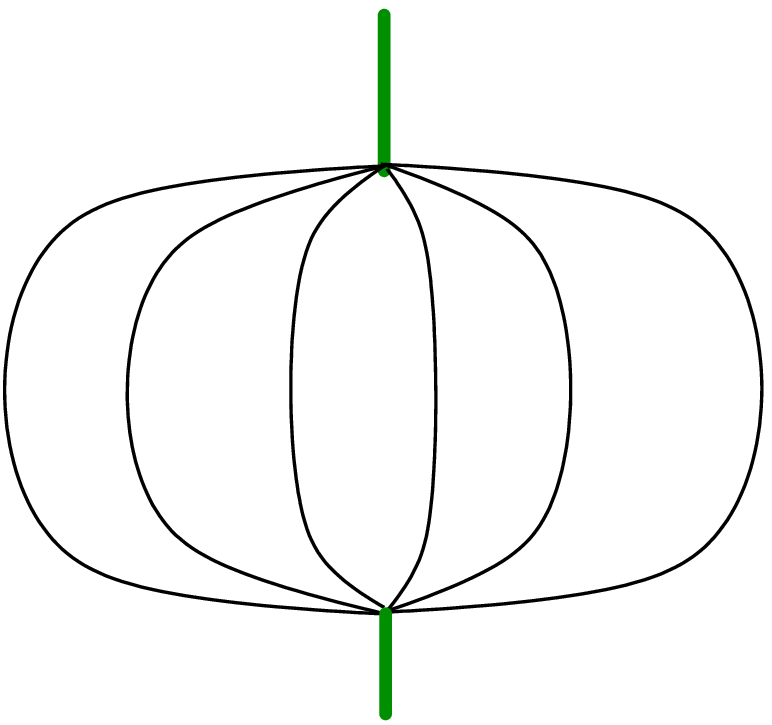}};
 (-7,-16)*{a+b};
 (-18,5)*{\bullet}+(-2.5,2)*{\scs s_1};
 (-12,3)*{\bullet}+(-2.,2)*{\scs s_2};
 (-4.5,1)*{\bullet}+(-2.5,2)*{\scs s_a};
 (2.5,-1)*{\bullet}+(-2.5,2)*{\scs r_b};
  (9.5,-3)*{\bullet}+(-2.5,2)*{\scs r_{b-1}};
  (18,-5)*{\bullet}+(-2.5,2)*{\scs r_1};
 (15,4)*{\cdots};(-9,-4)*{\cdots};
  \endxy.
\nn
\end{equation}
 The result follows by Lemma~\ref{lem_Dapb}.
\end{proof}

\begin{rem}
The ordering of the $x$'s in the above equation is critical for the above to work.  Notice that this is a different ordering from what appears in~\cite{KLMS}.  To see why it is necessary consider the example with $a=2$, $b=2$, $\alpha=(2,0)$, and $\beta=(2,0)$. In this case,  $D_4(x_1^3x_3^3) \neq 0$, while $D_4(x_1^3x_4^3)=0$.
\end{rem}

%
\subsection{The odd nilHecke algebra as a matrix algebra}
%

In this section we find it convenient to work with odd elementary symmetric functions $\varepsilon_k$ rather than their Schur analogues $s_{(1^k)}$.  Recall from \eqref{eq_schur_elem} that the two are related by $s_{(1^k)}=(-1)^{\binom{k}{2}}\varepsilon_k$.

Let $\Sq(a)$ denote the set of sequences
\begin{equation}
  \Sq(a) := \{
  \und{\ell} = \ell_1 \dots \ell_{a-1} \mid 0 \leq \ell_{\nu} \leq \nu, \nu =  1,2, \dots, a-1
  \}.
\end{equation}
This set has size $|\Sq(a)|=a!$.
For $\und{\ell} \in  \Sq(a)$ let
\begin{equation}
 \sigma_{\und{\ell}} \;\; = \;\;
   \vcenter{\xy
   (-2,-19)*{\includegraphics[scale=0.5]{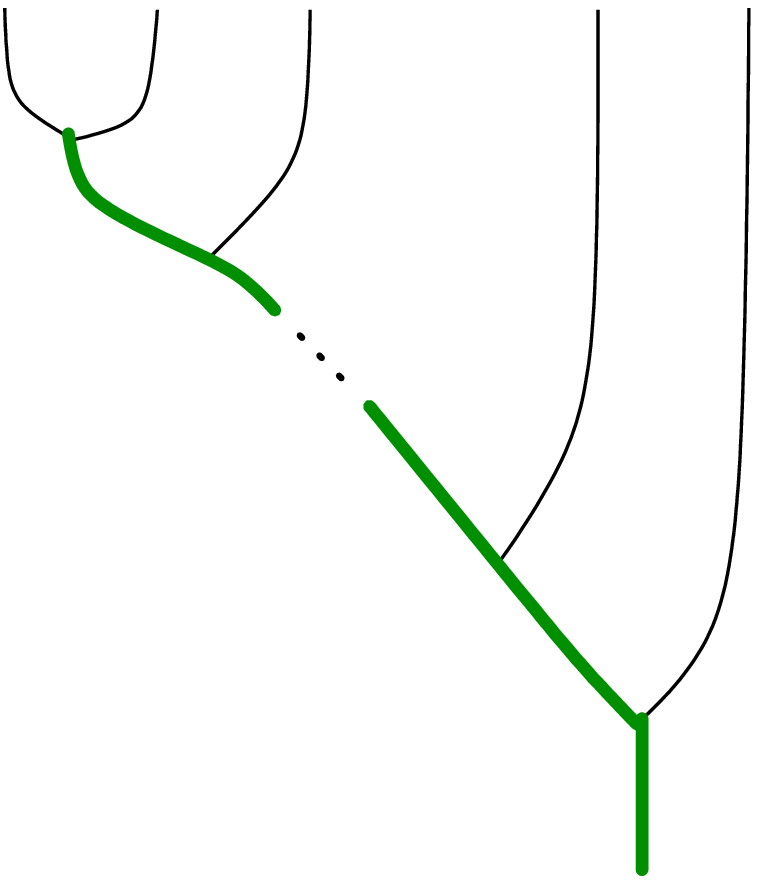}};
   (-16,-8)*{ \bigb{ \varepsilon_{\ell_{2}}}};
    (-1,-21)*{ \bigb{ \varepsilon_{\ell_{a-2}}}};
    (6,-29)*{ \bigb{ \varepsilon_{\ell_{a-1}}}};
    (-21,0)*{\bullet}+(-2.5,1.5)*{\scs \ell_{1}};
    (8,-39)*{a};
  \endxy}
  \qquad  \quad \lambda_{\und{\ell}} \;\; = \;\; (-1)^{\binom{a}{3}}
  \xy
 (0,0)*{\includegraphics[angle=180,scale=0.7]{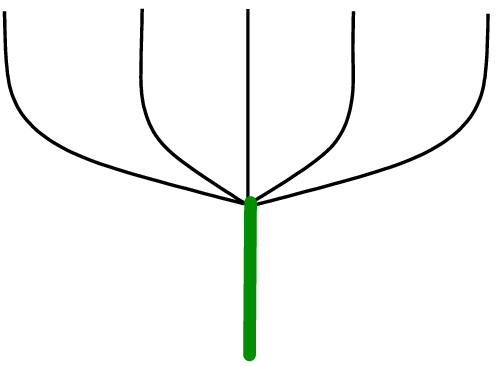}};
 (-7.5,-10)*{\bullet}+(-3,-1.5)*{\scs \hat{\ell}_{1}};;
 (0,-8)*{\bullet}+(2.5,-1.5)*{\scs \hat{\ell}_{r}};;
 (7.5,-6)*{\bullet}+(4,-1.5)*{\scs \hat{\ell}_{a-2}};;
 (15,-4)*{\bullet}+(4.5,-0)*{\scs \hat{\ell}_{a-1}};
 (3,-3)*{\cdots}; (-3.3,-3)*{\cdots}; (-3,8)*{a};
  \endxy
\end{equation}

\begin{lem} \label{lem_nil_orth}
\begin{equation}
  \lambda_{\und{\ell'}}\sigma_{\und{\ell}} \;\; = \;\;
  \delta_{\underline{\ell},\underline{\ell'}}
 \xy
  (0,0)*{\includegraphics[scale=0.5]{single-tup.eps}};
  (-2.5,-6)*{a};
 \endxy
 \;\; = \;\;
  \delta_{\und{\ell},\und{\ell'}} e_a
\end{equation}
for $\und{\ell}$, $\und{\ell'} \in \Sq(a)$.
\end{lem}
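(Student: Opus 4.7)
The plan is to induct on $a$, peeling off one outermost layer of splitters at each step and using Proposition~\ref{prop_oval_small} with $b=1$ to evaluate the resulting ``bubble.'' For the base case $a=1$, the set $\mathrm{Sq}(1)$ consists of the empty sequence, and the claim reduces to $e_1\cdot e_1 = e_1$, which holds since $e_1=1$.

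For the inductive step, I would first use splitter associativity \eqref{eq_split_assoc} to rewrite $\sigma_{\underline{\ell}}$ as an outermost splitter dividing the $a$-thick strand into an $(a-1)$-thick strand (carrying $\varepsilon_{\ell_{a-1}}$) and a single thin strand, composed with the inner splitter structure $\sigma_{\underline{\ell}_{<a-1}}$ on the $(a-1)$-thick strand, where $\underline{\ell}_{<a-1}=(\ell_1,\ldots,\ell_{a-2})$. Analogously, I would express $\lambda_{\underline{\ell'}}$ as an inner exploder $\lambda_{\underline{\ell'}_{<a-1}}$ on an $(a-1)$-thick strand together with a single thin strand carrying $\hat{\ell}'_{a-1}$ dots, followed by a single outermost merger back to an $a$-thick strand. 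Stacking, the composition $\lambda_{\underline{\ell'}}\sigma_{\underline{\ell}}$ then displays an outer bubble in which the $(a-1)$-thick strand carries $\varepsilon_{\ell_{a-1}} = (-1)^{\binom{\ell_{a-1}}{2}}\,s_{(1^{\ell_{a-1}})}$, and the $1$-thick strand carries $\hat{\ell}'_{a-1}$ dots, which on a single strand represents $\hat{s}_{(\hat{\ell}'_{a-1})}$ up to a sign.

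Next I would apply Proposition~\ref{prop_oval_small} with $a\to a-1$, $b=1$, $\alpha=(1^{\ell_{a-1}})\in P(a-1,1)$, and $\beta=(\hat{\ell}'_{a-1})\in P(1,a-1)$. The matching condition $\beta = \hat{\alpha}$ becomes $\hat{\ell}'_{a-1} = (a-1)-\ell_{a-1}$; since the hat notation is chosen so that $\hat{\ell}'_{a-1} = (a-1)-\ell'_{a-1}$, this forces $\ell_{a-1}=\ell'_{a-1}$ and otherwise produces zero. When they agree, the bubble collapses to a single $a$-thick strand with an explicit sign, and the residual inner diagram, via absorption \eqref{eq_ea_absorb}, is exactly $\lambda_{\underline{\ell'}_{<a-1}}\sigma_{\underline{\ell}_{<a-1}}$ living inside the $(a-1)$-thick piece of $e_a$. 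The induction hypothesis then yields $\delta_{\underline{\ell}_{<a-1},\underline{\ell'}_{<a-1}}\,e_{a-1}$, and combining with the $\delta_{\ell_{a-1},\ell'_{a-1}}$ factor and reabsorbing via \eqref{eq_ea_absorb} gives $\delta_{\underline{\ell},\underline{\ell'}}\,e_a$.

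The main obstacle will be sign bookkeeping: the intricate sign in Proposition~\ref{prop_oval_small}, the conversion $\varepsilon_k\leftrightarrow s_{(1^k)}$, the factor $(-1)^{\binom{a}{3}}$ built into the definition of $\lambda_{\underline{\ell}}$, and any signs from the splitter-associativity and exploder identities \eqref{eq_Uasssplit1}--\eqref{eq_Dasssplit2} together with \eqref{eq_crossing-combine} must all cancel. I expect this to reduce to a routine binomial identity compatible with the recursion $\binom{a-1}{2}+\binom{a-1}{3}=\binom{a}{3}$ already exploited in the paper, combined with the inductive behaviour of the sign exponents $\chi_{\alpha}^{a-1}$, $\chi_{\alpha}^{a}$ and $\Omega$ from Lemma~\ref{lem_Dapb}; while unenlightening, this sign cancellation is the only non-mechanical step in the argument.
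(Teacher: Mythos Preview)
Your approach is essentially the paper's: both evaluate the nested bubbles using Proposition~\ref{prop_oval_small} with $b=1$, producing a Kronecker delta and sign at each layer. The paper, however, peels from the \emph{inside out}: the innermost bubble (at thickness $\nu+1$, carrying $\varepsilon_{\ell_\nu}$ on the $\nu$-thick side and $\nu-\ell'_\nu$ dots on the thin side) is already pure, so it evaluates directly to $\delta_{\ell_\nu,\ell'_\nu}(-1)^{\binom{\nu}{2}}e_{\nu+1}$, and the resulting $e_{\nu+1}$ is absorbed, making the next bubble out pure in turn. Your outside-in induction has a small order-of-operations slip: the outermost bubble is not pure---its $(a-1)$-thick side still carries the entire inner composite $\lambda_{\underline{\ell'}_{<a-1}}\sigma_{\underline{\ell}_{<a-1}}$---so Proposition~\ref{prop_oval_small} cannot be applied until \emph{after} the induction hypothesis collapses that inner piece to a scalar multiple of $e_{a-1}$; once you reorder those two steps, your argument goes through. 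The paper's sign bookkeeping is then immediate: each level contributes $(-1)^{\binom{\nu}{2}}$, and $\sum_{\nu=1}^{a-1}\binom{\nu}{2}=\binom{a}{3}$ cancels the global sign in the definition of $\lambda_{\underline{\ell}}$, with no splitter-associativity or exploder signs entering at all.
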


\begin{proof}
Consider the composite
\begin{equation}
 \lambda_{\und{\ell'}}\sigma_{\und{\ell}}
  \;\; = \;\; (-1)^{\binom{a}{3}} \;
  \vcenter{\xy
   (2,3)*{\includegraphics[scale=0.5]{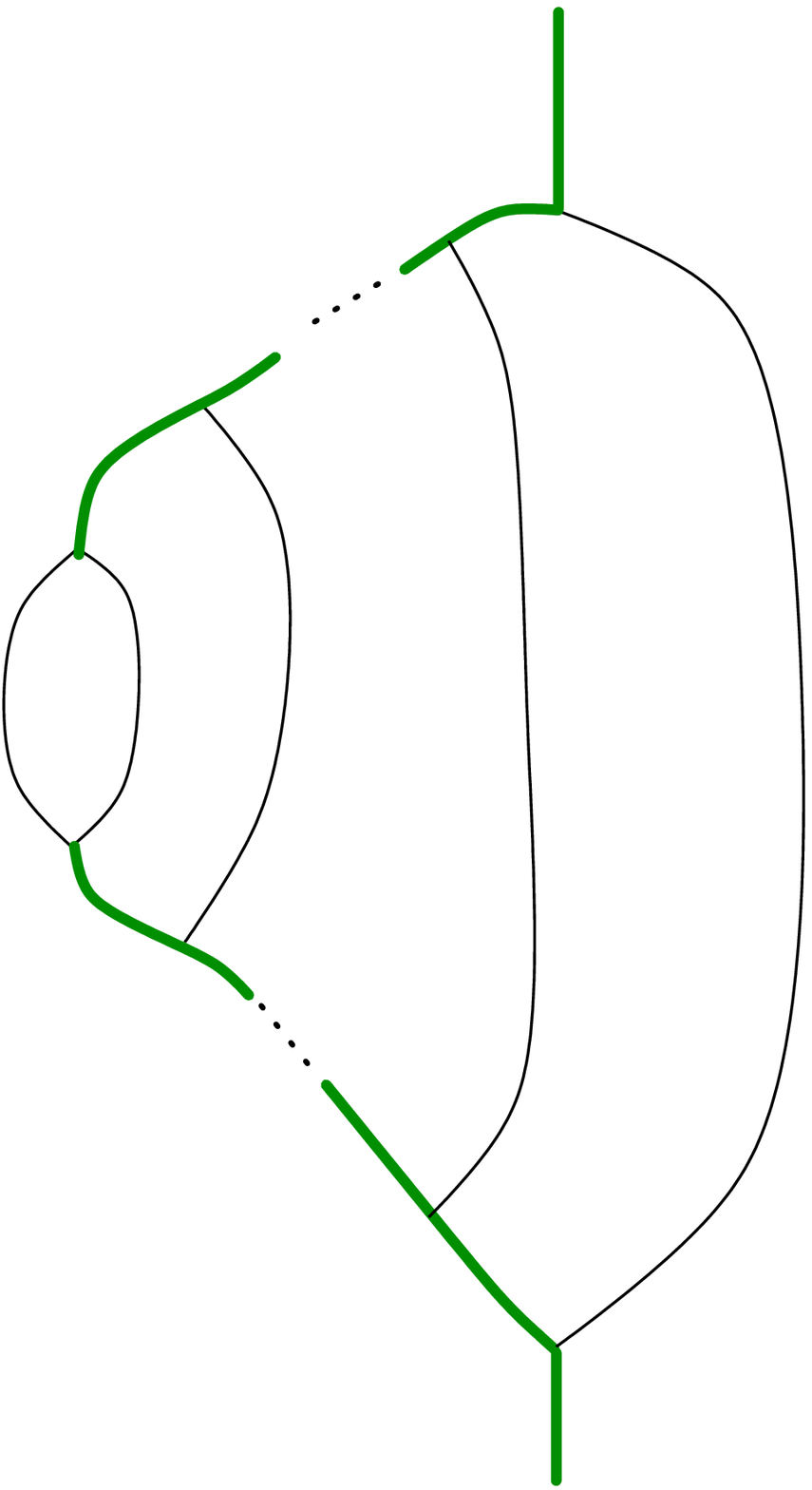}};
   (-16,-8)*{ \bigb{ \varepsilon_{\ell_{2}}}};
    (-1,-21)*{ \bigb{ \varepsilon_{\ell_{a-2}}}};
    (6,-29)*{ \bigb{ \varepsilon_{\ell_{a-1}}}};
    (-21,0)*{\bullet}+(-2.5,1.5)*{\scs \ell_{1}};
    (-14.5,3)*{\bullet}+(4,1.5)*{\scs 1-\ell_{1}'};
    (-5,6)*{\bullet}+(4,1.5)*{\scs 2-\ell_{2}'};
    (9.5,9)*{\bullet}+(7,1.5)*{\scs a-2-\ell_{a-2}'};
    (25.5,12)*{\bullet}+(7.5,1.5)*{\scs a-1-\ell_{a-1}'};
    (8,-39)*{a};
  \endxy}
\end{equation}
Repeatedly apply the equality
\begin{equation} \label{eq_oval_rthin}
  \xy
  (0,0)*{\reflectbox{\includegraphics[scale=0.5]{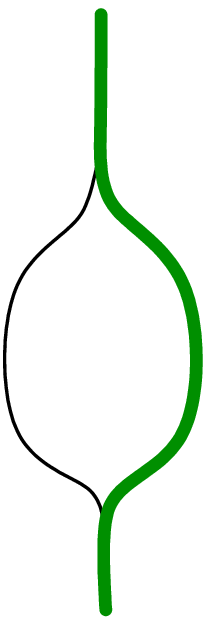}}};
 (-5,-13)*{\nu+1};
  (4,2)*{\bullet}+(6,1)*{\nu-\ell'_{\nu}};
  (-4,-5)*{ \bigb{ \varepsilon_{\ell_{\nu}}}};
  \endxy \quad = \quad \delta_{\ell_{\nu},\ell'_{\nu}}(-1)^{\binom{\ell_{\nu}}{2}+X_{(1^{\ell_{\nu}})}^{\nu,1}}
  \xy
   (0,0)*{\includegraphics[scale=0.5]{tlong-up.eps}};
    (-5,-12)*{\nu+1};
  \endxy
 \quad = \quad \delta_{\ell_{\nu},\ell'_{\nu}}(-1)^{\binom{\nu}{2}}
  \xy
   (0,0)*{\includegraphics[scale=0.5]{tlong-up.eps}};
    (-5,-12)*{\nu+1};
  \endxy
\end{equation}
for $1 \leq \nu \leq a-1$, where the first equality follows from equation \eqref{eq_schur_elem} and Proposition~\ref{prop_oval_small}.  The lemma follows since $\sum_{\nu=1}^{a-1} \binom{\nu}{2} = \binom{a}{3}$.
\end{proof}

Using the definitions and equation \eqref{eqn-Da-left-linearity} one can show that for $0 \leq k \leq a$ the diagrammatic identity
  \begin{equation} \label{eq_e_slide}
   \xy
 (0,0)*{\reflectbox{\includegraphics[scale=0.5,angle=180]{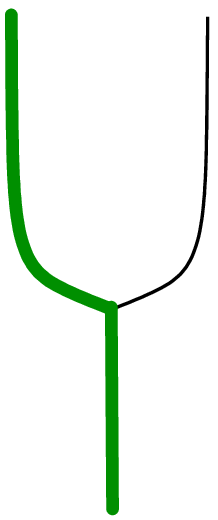}}};
 (-3,11)*{c};(-11,-11)*{c-1};(8,8)*{};
  (0,7)*{\bigb{\varepsilon_k}};
  \endxy
  \quad =
 \quad
    \xy
 (0,0)*{\reflectbox{\includegraphics[scale=0.5,angle=180]{tonesplit.eps}}};
 (-3,11)*{c};(-11,-11)*{c-1};(8,8)*{};
 (-5,-2)*{\bigb{\varepsilon_{k}}};
  \endxy
  \quad + \quad (-1)^{c-1}
      \xy
 (0,0)*{\reflectbox{\includegraphics[scale=0.5,angle=180]{tonesplit.eps}}};
 (-3,11)*{c};(-11,-11)*{c-1};(8,-8)*{};
 (-5,-2)*{\bigb{\varepsilon_{k-1}}};(5,-6)*{\bullet};
  \endxy
\end{equation}
holds, where the first diagram on the right-hand side is zero by convention when $k=c$.

\begin{lem} \label{lem_EaEone}
\begin{equation}
  \xy
 (4,0)*{\includegraphics[scale=0.5]{long-up.eps}};
 (-4,0)*{\includegraphics[scale=0.5]{tlong-up.eps}};
 (-7 ,-12)*{a};
 (7 ,-12)*{\scs 1};
  \endxy
 \quad = \quad (-1)^{\binom{a}{2}}\sum_{s=0}^a   \xy
 (0,0)*{\includegraphics[scale=0.4]{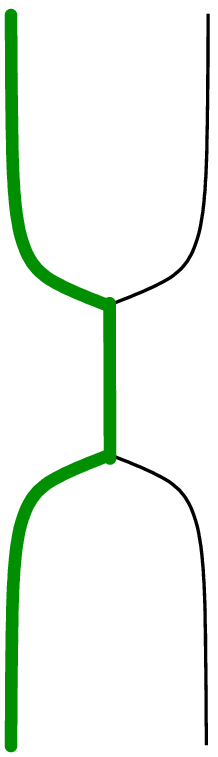}};
 (-7,-13)*{a};(8,-13)*{\scs 1};(-7,13)*{a};(7,13)*{\scs 1};
 (-5,7)*{\bigb{\varepsilon_{a-s} }};
 (4,-9)*{\bullet}+(3,1)*{s};
  \endxy
\end{equation}
\end{lem}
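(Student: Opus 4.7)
The plan is to verify this diagrammatic identity algebraically by interpreting both sides as elements of $\ONH_{a+1}$ and comparing their actions on the faithful polynomial representation $\opol_{a+1}$. First I would unpack the tH-one diagram on the right-hand side using the thick calculus: the $H$-shape represents the composition consisting of the explosion of the thick $a$-strand, the crossing of the thin $1$-strand with the $a$ resulting thin strands, and the recombination, with appropriate $e_a$ projectors absorbed on both sides. Using Proposition \ref{prop_ea-standard} to write $e_a = (-1)^{\binom{a}{3}}\undx^{\delta_a}D_a$, together with the inductive identity $D_{a+1}=D_a(\partial_a\partial_{a-1}\cdots\partial_1)$ from \eqref{eq_def_Da}, reduces the equation to an identity of the form
\begin{equation*}
\undx^{\delta_a} D_a \;=\; (-1)^{\binom{a}{2}} \sum_{s=0}^a \varepsilon_{a-s}^{(a)}\,\undx^{\delta_a}\, D_{a+1}\, x_{a+1}^s
\end{equation*}
inside $\ONH_{a+1}$.

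The heart of the argument is then to expand $\varepsilon_{a-s}^{(a)}$ in terms of $\varepsilon_{k}^{(a+1)}$ via Lemma \ref{lem_elem_vars}, namely $\varepsilon_{a-s}^{(a)}=\sum_{j\ge 0}(-1)^j\varepsilon_{a-s-j}^{(a+1)}\xt_{a+1}^j$, and substitute this into the summand. Each factor $\varepsilon_{a-s-j}^{(a+1)}\in\osym_{a+1}$ can then be commuted through $D_{a+1}$ using the OWL corollary \eqref{eqn-Da-left-linearity} (in the form $fD_{a+1}=D_{a+1}f^{w_0}$) together with Lemma \ref{lemma-w0-e}, yielding a double sum in which the only variables moving past $D_{a+1}$ are powers of $x_{a+1}$. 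After re-indexing this double sum in terms of the total power of $x_{a+1}$ on the right of $D_{a+1}$, the terms involving $\varepsilon_k^{(a+1)}$ with $k>0$ cancel in pairs via the $\varepsilon$-$h$ relation of Lemma \ref{lemma-e-h-relation}, and only the $k=0$ contribution survives. This surviving term reduces to $\undx^{\delta_a}D_a$, matching the left-hand side.

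The principal obstacle is the sign bookkeeping. The overall factor $(-1)^{\binom{a}{2}}$ must emerge precisely from the interplay of (i) the signs $(-1)^j$ in the expansion of $\varepsilon^{(a)}$ into $\varepsilon^{(a+1)}$ combined with $\xt_{a+1}^j=(-1)^{aj}x_{a+1}^j$; (ii) the sign $(-1)^{\binom{k}{2}+k\binom{a}{2}}$ from the $w_0$-action on $\varepsilon_k^{(a+1)}$ in Lemma \ref{lemma-w0-e}; (iii) the anticommutation of $x_{a+1}^s$ past odd-parity elements of $\opol_a$; and (iv) the sign $(-1)^{k(k+1)/2}$ appearing in the $\varepsilon$-$h$ identity. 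Verifying that these signs combine correctly — so that all unwanted terms cancel and the surviving term is $(-1)^{\binom{a}{3}}\undx^{\delta_a}D_a = e_a\otimes 1$ with precisely the factor $(-1)^{\binom{a}{2}}$ stripped — is the delicate combinatorial heart of the proof, and is where care is essential given the noncommutativity of $\opol_a$.
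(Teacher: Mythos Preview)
Your approach is genuinely different from the paper's, which proceeds entirely within the diagrammatic thick calculus: it starts by inserting a bubble on the thick $a$-line via \eqref{eq_oval_rthin}, resolves the resulting crossing with the extra thin strand using the nilHecke relation \eqref{new_eq_iislide}, then repeatedly applies the $\varepsilon$-slide identity \eqref{eq_e_slide} together with the dot-sliding/vanishing relations \eqref{eq_split_right_thin} and Proposition~\ref{prop_almostRthree} to rewrite everything as the desired sum of tH-diagrams. No expansion in terms of $\varepsilon^{(a+1)}$, no OWL, and no $\varepsilon$--$h$ relation are used; the key input is really \eqref{eq_e_slide}, which is itself the diagrammatic form of $\varepsilon_k^{(c)}=\varepsilon_k^{(c-1)}+\varepsilon_{k-1}^{(c-1)}\xt_c$.

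Your outline has two genuine gaps. First, after expanding $\varepsilon_{a-s}^{(a)}$ via Lemma~\ref{lem_elem_vars} you obtain terms $\varepsilon_k^{(a+1)}\,\xt_{a+1}^{\,j}\,\undx^{\delta_a}\,D_{a+1}\,x_{a+1}^s$, and you want to push $\varepsilon_k^{(a+1)}$ rightward against $D_{a+1}$ so that the OWL corollary applies. But odd symmetric polynomials do \emph{not} super-commute with arbitrary monomials: for instance $\varepsilon_1^{(2)}x_1=x_1^2+x_1x_2$ while $x_1\varepsilon_1^{(2)}=x_1^2-x_1x_2$. Since $\undx^{\delta_a}$ involves $x_1,\dots,x_{a-1}$ with varying exponents, you cannot move $\varepsilon_k^{(a+1)}$ past it by a global sign, and \eqref{eqn-Da-left-linearity} only applies when the symmetric factor is immediately adjacent to $D_{a+1}$. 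Second, even granting some reorganization, your invocation of Lemma~\ref{lemma-e-h-relation} is unmotivated: after commuting, the coefficient of each $\varepsilon_k^{(a+1)}$ would be a sum of expressions of the form $(\pm)\,x_{a+1}^{?}\,D_{a+1}\,x_{a+1}^{?}$, not odd complete polynomials $h_{m-k}$, so the $\varepsilon$--$h$ identity has nothing to act on. A correct algebraic proof along your lines would need a different cancellation mechanism---essentially a telescoping in $(s,j)$ that mimics \eqref{eq_e_slide} one step at a time---rather than Lemma~\ref{lemma-e-h-relation}.
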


\begin{proof} A direct computation gives
\begin{equation}\label{eq_lemEaEb}
    \xy
 (4,0)*{\includegraphics[scale=0.5]{long-up.eps}};
 (-4,0)*{\includegraphics[scale=0.5]{tlong-up.eps}};
 (-7 ,-12)*{a};
 (7 ,-12)*{\scs 1};
  \endxy
\; \refequal{\eqref{eq_oval_rthin}} \;
(-1)^{\binom{a-1}{2}}\;\;
  \xy
  (-14,0)*{\reflectbox{\includegraphics[scale=0.45]{split-thinthick2.eps}}};
 (-16.5,-12)*{a};(-19,-2)*{\bigb{\varepsilon_{a-1}}}; (-22,0)*{};
 (-4,0)*{\includegraphics[scale=0.5]{long-up.eps}}; (0 ,-12)*{\scs 1};
  \endxy
 \refequal{\eqref{new_eq_iislide}} (-1)^{\binom{a-1}{2}} \left(
     \xy
 (-5,0)*{\reflectbox{\includegraphics[scale=0.4]{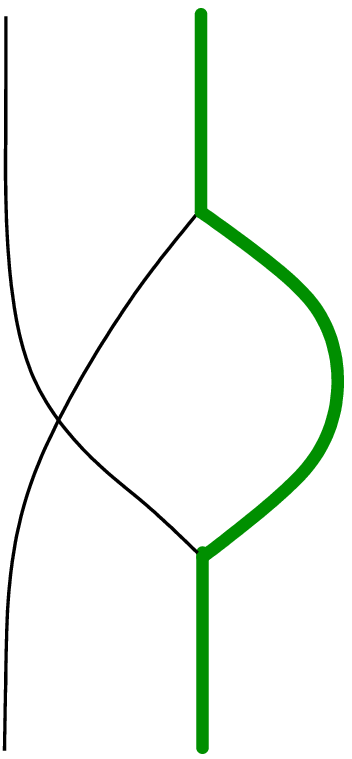}}};
 (-9,-13)*{a};(4,-13)*{\scs 1};
 (-11,4)*{\bigb{\varepsilon_{a-1}}}; (-22,0)*{};
 (-2,1)*{\bullet};
  \endxy
\;\; +
      \xy
 (-5,0)*{\reflectbox{\includegraphics[scale=0.4]{triangle-4.eps}}};
 (-9,-13)*{a};(4,-13)*{\scs 1};
 (-11,3)*{\bigb{\varepsilon_{a-1}}}; (-22,0)*{};
 (1,-5)*{\bullet};
  \endxy \;\; \right)
\end{equation}
By \eqref{eq_e_slide} the odd elementary symmetric function can be slid to the top of the diagram.
\begin{equation}= \quad
(-1)^{\binom{a-1}{2}}  \left((-1)^{(a-1)}
\xy
 (-5,0)*{\reflectbox{\includegraphics[scale=0.4]{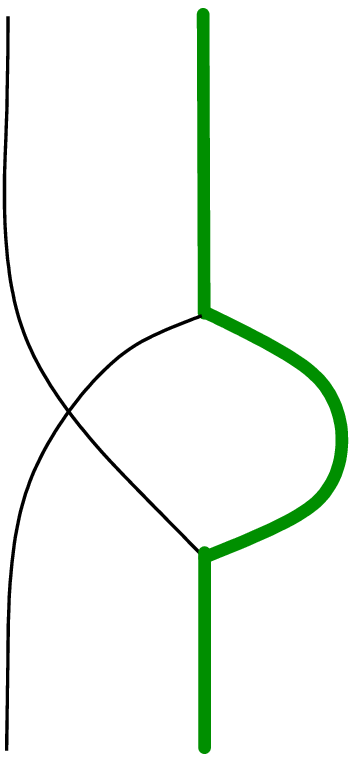}}};
 (-10,-13)*{a};(4,-13)*{\scs 1};
 (-6,6)*{\bigb{\varepsilon_{a}}};
  \endxy
   \;\;  + \quad
 \sum_{s=0}^{a-1}(-1)^{as}    \xy
 (-5,0)*{\reflectbox{\includegraphics[scale=0.4]{triangle-3.eps}}};
 (-10,-13)*{a};(4,-13)*{\scs 1};
 (-6,6)*{\bigb{\varepsilon_{a-1-s}}};
 (1,-5)*{\bullet};  (-3,1)*{\bullet}+(2,1)*{s};
  \endxy \;\;\right)
\end{equation}
Sliding the $s$ dots down on the second term using
the odd nilHecke relation and the fact that
\begin{equation} \label{eq_split_right_thin}
  \xy
  (-14,0)*{\reflectbox{\includegraphics[scale=0.45]{split-thinthick2.eps}}};
 (-16.5,-11)*{a};(-9.5,-2)*{\bullet}+(2,1)*{x};
  \endxy \quad = \quad 0, \qquad \quad \text{if $x< a-1$}
\end{equation}
 gives
\begin{equation}
\eqref{eq_lemEaEb} \quad = \quad
(-1)^{\binom{a-1}{2}}  \left((-1)^{(a-1)}\xy
 (-5,0)*{\reflectbox{\includegraphics[scale=0.4]{triangle-3.eps}}};
 (-8,-13)*{a};(4,-13)*{\scs 1};
 (-6,6)*{\bigb{\varepsilon_{a}}};
  \endxy
   \;\;  + \quad   \sum_{s=0}^{a-1}(-1)^{as+s}   \xy
 (-5,0)*{\reflectbox{\includegraphics[scale=0.4]{triangle-3.eps}}};
 (-8,-13)*{a};(4,-13)*{\scs 1};
 (-6,6)*{\bigb{\varepsilon_{a-1-s}}};
 (1,-5)*{\bullet}+(5,1)*{s+1};
  \endxy\right).
\end{equation}
In the last diagram on the right we can slide the $s+1$ dots down past the degree $a-1$ splitter giving
\[
\xy
 (-5,0)*{\reflectbox{\includegraphics[scale=0.4]{triangle-3.eps}}};
 (-8,-13)*{a};(4,-13)*{\scs 1};
 (-6,6)*{\bigb{\varepsilon_{a-1-s}}};
 (1,-5)*{\bullet}+(5,1)*{s+1};
  \endxy \;\; =\;\; (-1)^{(a-1)(s+1)}
  \xy
 (-5,0)*{\reflectbox{\includegraphics[scale=0.4]{triangle-3.eps}}};
 (-8,-13)*{a};(4,-13)*{\scs 1};
 (-6,6)*{\bigb{\varepsilon_{a-1-s}}};
 (2,-10)*{\bullet}+(5,1)*{s+1};
  \endxy
  \;\; =\;\; (-1)^{(a-1)(s+1)}
  \xy
 (0,0)*{\includegraphics[scale=0.4]{tH-one.eps}};
 (-7,-13)*{a};(8,-13)*{\scs 1};(-7,13)*{a};(7,13)*{\scs 1};
 (-5,7)*{\bigb{\varepsilon_{a-1-s} }};
 (4,-9)*{\bullet}+(5,1)*{s+1};
  \endxy
\]
where we used Proposition~\ref{prop_almostRthree} in the last equality. Shifting the summation shows that both terms in the parenthesis come with sign $(a-1)$.  Since $(a-1)+\binom{a-1}{2} = \binom{a}{2}$ the result follows.
\end{proof}

\begin{thm} \label{thm_nil_matrix}
Let $e_{\und{\ell}} = \sigma_{\und{\ell}}\lambda_{\und{\ell}}$. The set $\{e_{\und{\ell}}\}_{\und{\ell} \in \Sq(a)}$ consists of
mutually orthogonal idempotents that add up to $1\in \ONH_a$:
\begin{equation}
  e_{\und{\ell}}e_{\und{\ell'}} = \delta_{\und{\ell},\und{\ell'}} e_{\und{\ell}},
  \qquad \sum_{\und{\ell} \in \Sq(a)} e_{\und{\ell}} =1.
\end{equation}
\end{thm}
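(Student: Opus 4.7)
Both parts of the claim can be established quickly.  For the \emph{orthogonality relation}, one computes directly from Lemma~\ref{lem_nil_orth}:
\begin{equation*}
e_{\und{\ell}}e_{\und{\ell'}} \;=\; \sigma_{\und{\ell}}\bigl(\lambda_{\und{\ell}}\sigma_{\und{\ell'}}\bigr)\lambda_{\und{\ell'}} \;=\; \delta_{\und{\ell},\und{\ell'}}\,\sigma_{\und{\ell}}\,e_a\,\lambda_{\und{\ell'}}.
\end{equation*}
When $\und{\ell}=\und{\ell'}$ the middle $e_a$ is absorbed, since both $\sigma_{\und{\ell}}$ (at its thick bottom) and $\lambda_{\und{\ell}}$ (at its thick top) factor through $e_a$; hence $e_{\und{\ell}}^2 = \sigma_{\und{\ell}}\lambda_{\und{\ell}} = e_{\und{\ell}}$.

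For the \emph{completeness relation} $\sum_{\und{\ell}}e_{\und{\ell}} = 1$, I would proceed by induction on $a$, the case $a=1$ being immediate since $\Sq(1) = \{\emptyset\}$ and $e_{\emptyset} = 1 \in \ONH_1$.  For the inductive step, decompose
\begin{equation*}
\Sq(a) \;=\; \bigsqcup_{k=0}^{a-1}\bigl\{(\und{\ell'},k) : \und{\ell'}\in\Sq(a-1)\bigr\},
\end{equation*}
indexed by the last entry $\ell_{a-1}=k$, so that $\hat{\ell}_{a-1} = (a-1)-k$.  Inspecting the definitions of $\sigma_{\und{\ell}}$ and $\lambda_{\und{\ell}}$, each $e_{(\und{\ell'},k)}$ factors diagrammatically as $e_{\und{\ell'}}$ applied to the first $a-1$ strands, sandwiched between a top splitter decorated by $\varepsilon_k$ on the thick $(a-1)$-edge and a bottom merger decorated by $(a-1)-k$ dots on the rightmost thin strand.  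Applying the inductive hypothesis $\sum_{\und{\ell'}\in\Sq(a-1)}e_{\und{\ell'}} = 1 \in \ONH_{a-1}$ collapses the inner sum, reducing the entire claim to an identity in $\ONH_a$ of the form
\begin{equation*}
(-1)^{\binom{a}{3}}\sum_{k=0}^{a-1}\bigl(\text{splitter/merger through thick $(a{-}1)$ and thin $1$, carrying $\varepsilon_k$ and $(a{-}1{-}k)$ dots}\bigr) \;=\; 1,
\end{equation*}
which is precisely the content of Lemma~\ref{lem_EaEone} after the re-indexing $s = a-1-k$.

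The main obstacle is the diagrammatic bookkeeping in the inductive step: verifying that $e_{(\und{\ell'},k)}$ really factors as described requires commuting the added $\varepsilon_k$-decorated splitter and the $(a-1-k)$-dotted merger past the decorations of $\sigma_{\und{\ell'}}$ and $\lambda_{\und{\ell'}}$, using \eqref{eq_yz_greenbox}, \eqref{eq_e_slide}, and the associativity rules for splitters from Subsection~\ref{subsec-box-notation}.  One must also reconcile the sign $(-1)^{\binom{a}{3}}$ in the definition of $\lambda_{\und{\ell}}$ with the $(-1)^{\binom{a-1}{3}}$ inherited from the induction and the $(-1)^{\binom{a}{2}}$ appearing on the right of Lemma~\ref{lem_EaEone}, via the identity $\binom{a}{3} = \binom{a-1}{3}+\binom{a-1}{2}$.
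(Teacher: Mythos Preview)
Your proposal is correct and follows essentially the same inductive strategy as the paper, using Lemma~\ref{lem_nil_orth} for orthogonality and Lemma~\ref{lem_EaEone} plus induction for completeness.  The only difference is direction: the paper argues \emph{synthetically}, starting from $1\in\ONH_{a+1}$ (viewed as $a+1$ vertical strands), applying the inductive hypothesis to the leftmost $a$ strands to insert $\sum_{\und{\ell}\in\Sq(a)}e_{\und{\ell}}$, and then invoking Lemma~\ref{lem_EaEone} on the resulting thick-$a$/thin-$1$ pair in the middle; this sidesteps the factoring and sign-commutation bookkeeping you flag as the ``main obstacle,'' since the strands being replaced are undecorated.
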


\begin{proof}
Lemma~\ref{lem_nil_orth} shows that $e_{\und{\ell}}$ are orthogonal idempotents.  To see that they decompose the identity we proceed by induction, the base case being trivial.
\begin{equation} \label{eq_thm_nil}
    \ONH_{a+1} \ni 1 \;\; = \;\; \vcenter{\xy
 (-6,-5)*{\includegraphics[scale=0.5]{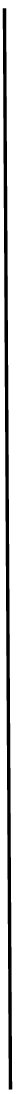}};
 (0,-5)*{\includegraphics[scale=0.5]{single-up-long2.eps}};
 (12,-5)*{\includegraphics[scale=0.5]{single-up-long2.eps}};
 (20,-5)*{\includegraphics[scale=0.5]{single-up-long2.eps}};
 (6,0)*{\cdots};
  \endxy}
  \;\; = \;\; (-1)^{\binom{a}{3}}\sum_{\und{\ell} \in \Sq(a)}
  \vcenter{\xy
  (-15,6)*{\xy
   (-2,-19)*{\includegraphics[scale=0.5]{AA2.eps}};
   (-16,-8)*{ \bigb{ \varepsilon_{\ell_{2}}}};
    (-1,-21)*{ \bigb{ \varepsilon_{\ell_{a-2}}}};
    (6,-29)*{ \bigb{ \varepsilon_{\ell_{a-1}}}};
    (-21,0)*{\bullet}+(-2.5,1.5)*{\scs \ell_{1}};
    (8,-39)*{a};
  \endxy};
 (3.5,-18)*{  \xy
 (0,0)*{\includegraphics[angle=180,scale=0.5]{dexplode.eps}};
 (-5.5,-8)*{\bullet}+(-3,0)*{\scs \hat{\ell}_{1}};;
 (0,-6)*{\bullet}+(2.5,-1.5)*{\scs \hat{\ell}_{r}};;
 (5.3,-4)*{\bullet}+(4,-1.5)*{\scs \hat{\ell}_{a-2}};;
 (11,-3)*{\bullet}+(4.5,1)*{\scs \hat{\ell}_{a-1}};
 (3,-3)*{\cdots}; (-3.3,-3)*{\cdots};
  \endxy};
 (24,0)*{\includegraphics[scale=0.5]{single-up-long2.eps}};
  \endxy}
\end{equation}
then apply Lemma~\ref{lem_EaEone} in the form
\begin{equation}
  \xy
 (4,0)*{\includegraphics[scale=0.5]{long-up.eps}};
 (-4,0)*{\includegraphics[scale=0.5]{tlong-up.eps}};
 (-7 ,-12)*{a};
 (7 ,-12)*{\scs 1};
  \endxy
 \quad = \quad (-1)^{\binom{a}{2}}\sum_{\ell_a=0}^a \xy
 (0,0)*{\includegraphics[scale=0.4]{tH-one.eps}};
 (-7,-13)*{a};(8,-13)*{\scs 1};(-7,13)*{a};(7,13)*{\scs 1};
 (-5,7)*{\bigb{\varepsilon_{\ell_a} }};
 (4,-9)*{\bullet}+(5,1)*{\scs a-\ell_a};
  \endxy
\end{equation}
and use that $\binom{a}{3}+\binom{a}{2}\equiv \binom{a+1}{3} \mod 2$, proving the inductive step.
\end{proof}

Elements $\sigma_{\und{\ell}}$, $\lambda_{\und{\ell}}$ give an explicit
realization of $\ONH_a$ as the algebra of $a! \times a!$ matrices
over the ring of odd symmetric functions.  Suppose that
rows and columns of $a!\times a!$ matrices are enumerated by
elements of $\Sq(a)$.  The isomorphism takes the matrix with $x \in
\osym_a$ in the $(\und{\ell},\und{\ell'})$ entry and zeros elsewhere to
$\sigma_{\und{\ell}}x\lambda_{\und{\ell'}}$.

%
\subsubsection{Decomposition of $\mathcal{E}^{(a)}\mathcal{E}^{(b)}$ }
%

Given $\alpha \in P(a,b)$ let
\begin{equation} \label{eq_Xbeta}
 X_{\alpha}^{a,b} :=|\alpha|\cdot |\hat{\alpha}|+\chi_{\alpha}^a+\chi_{\hat{\alpha}}^b+\binom{a}{2}\left(|\hat{\alpha}|
  +\binom{b}{2}\right)+\Omega(\hat{\alpha})+\binom{a+b}{3},
\end{equation}
 where $\Omega$ is defined by equation \eqref{eqn-Omega}.

When $\alpha\in P(a,1)$, so that $\alpha=(1^{a-s})$ for some $0 \leq s \leq a$, then $|\alpha| =a-s$, $|\hat{\alpha}|=s$, and $X_{\alpha}^{a,1}$ from \eqref{eq_Xbeta} simplifies to
\begin{equation}
 X_{(1^{r})}^{a,1} = a(a-r)+\binom{a-r+1}{2}.
\end{equation}
In particular, $X_{(1^{a})}^{a,1}=1$.

For every partition $\alpha\in P(a,b)$ define
\begin{align} \label{eq_def_sigmalambda_alpha}
  \sigma_{\alpha}:=
  \xy
 (0,0)*{\includegraphics[scale=0.5]{tsplit.eps}};
 (-5,-11)*{a+b};(-8,8)*{a};(8,8)*{b};
 (-5,2)*{\bigb{s_{\alpha}}};
  \endxy, \qquad \quad
 \lambda_{\alpha}:= (-1)^{X_{\alpha}^{a,b}}
     \xy
 (0,0)*{\includegraphics[scale=0.5,angle=180]{tsplitd.eps}};
 (-5,11)*{a+b};(-8,-8)*{a};(8,-8)*{b};
 (5,-2)*{\bigb{\hat{s}_{\hat{\alpha}}}};
  \endxy, \qquad \quad e_{\alpha}=\sigma_{\alpha}\lambda_{\alpha}.
\end{align}
We view $\sigma_{\alpha}$, $\lambda_{\alpha}$, and $e_{\alpha}$ as elements of $\ONH_{a+b}$ with $\deg(\sigma_{\alpha}) = -\deg(\lambda_{\alpha})=2|\alpha|-2ab$, and $\deg(e_{\alpha})=0$.
Proposition~\ref{prop_oval_small} says that
\begin{equation}
  \lambda_{\beta} \sigma_{\alpha} = \delta_{\alpha,\beta} e_{a+b}, \qquad \text{$\alpha,\beta \in P(a,b)$.}
\end{equation}
This implies
\begin{equation}
e_{\beta}e_{\alpha}=\delta_{\alpha,\beta}e_{\alpha}.
\end{equation}

\begin{thm} \label{thm_nil-eaeb}
\begin{equation}
  e_{a,b} = \sum_{\alpha \in P(a,b)} e_{\alpha}
\end{equation}
or diagrammatically
\begin{equation}
  \xy
 (4,0)*{\includegraphics[scale=0.5]{tlong-up.eps}};
 (-4,0)*{\includegraphics[scale=0.5]{tlong-up.eps}};
 (-7 ,-12)*{a};
 (7 ,-12)*{b};
  \endxy
 \quad = \quad \sum_{\scriptscriptstyle
  \alpha\in P(a,b)
} (-1)^{X_{\alpha}^{a,b}}  \xy
 (0,0)*{\includegraphics[scale=0.5]{tH.eps}};
 (-8,-13)*{a};(8,-13)*{b};(-8,13)*{a};(8,13)*{b};
 (-5,7)*{\bigb{s_{\alpha} }};
 (4,-9)*{\bigb{\hat{s}_{\hat{\alpha}} }};
  \endxy
\end{equation}
\end{thm}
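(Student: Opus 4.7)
Orthogonality of the $e_\alpha$ and their subordination to $e_{a,b}$ are essentially already recorded in the paragraph preceding the theorem. The first summand $|\alpha|\cdot|\hat\alpha|$ of the exponent $X_\alpha^{a,b}$ in \eqref{eq_Xbeta} supplies the supercommutation sign incurred when the labels $s_\beta$ and $\hat s_{\hat\alpha}$ in the middle composition $\lambda_\alpha \sigma_\beta$ of $e_\alpha e_\beta$ are switched to the vertical order of Proposition~\ref{prop_oval_small}, and the remaining summands of $X_\alpha^{a,b}$ cancel the sign appearing in that proposition. Together with $\sigma_\alpha e_{a+b} = \sigma_\alpha$ and equation~\eqref{eq_eaebcross}, this yields $e_\alpha e_\beta = \delta_{\alpha,\beta} e_\alpha$ and $e_{a,b}\, e_\alpha = e_\alpha = e_\alpha\, e_{a,b}$.

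The substantive claim is $\sum_{\alpha\in P(a,b)} e_\alpha = e_{a,b}$, which I would establish by induction on $b$. The case $b=0$ is trivial, and the case $b=1$ is essentially the content of Lemma~\ref{lem_EaEone}: the bijection $P(a,1) \leftrightarrow \{0,1,\ldots,a\}$, $\alpha = (1^{a-s}) \leftrightarrow s$, gives $\hat\alpha = (s)$, and under the identification $s_{(1^{a-s})} = (-1)^{\binom{a-s}{2}} \varepsilon_{a-s}$ of Lemma~\ref{lem-schur-elementary} together with the explicit one-variable formula for $\hat s_{(s)}$ as a signed power of $x_1$, the right-hand side of the theorem matches the sum in Lemma~\ref{lem_EaEone} term by term; a routine sign check confirms that $(-1)^{\binom{a}{2}}$ combined with the Schur identifications produces exactly $(-1)^{X_{(1^{a-s})}^{a,1}}$.

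For the inductive step ($b\ge 2$), assume the identity for $(a, b-1)$. Using $e_b = e_{b-1,1}\, e_b$ from~\eqref{eq_ea_absorb}, one has $e_{a,b} = e_{a,b-1,1} \cdot e_{a,b}$ in $\ONH_{a+b}$, and applying the inductive hypothesis inside $e_{a,b-1,1} = e_{a,b-1} \otimes e_1$ gives
\[
e_{a,b} \;=\; \sum_{\beta\in P(a, b-1)} \bigl(\sigma_\beta\lambda_\beta \otimes e_1\bigr)\cdot e_{a,b}.
\]
Within each summand, one uses splitter associativity~\eqref{eq_split_assoc} to factor the $(a+b)\to(a,b)$ merger/splitter of $e_\beta \otimes e_1$ (after absorbing the trailing $e_1$ into $e_b$) through $(a, b-1, 1)$, and then applies the base case (Lemma~\ref{lem_EaEone}) to the resulting $(b-1,1)\to b$ transition on the right. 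This produces an additional summation index $r$, to be interpreted as the height of a new column appended to $\beta$ to form a partition $\alpha\in P(a,b)$; after reindexing $(\beta, r) \leftrightarrow \alpha$, the double sum collapses to a single sum over $\alpha\in P(a,b)$ with each partition appearing exactly once.

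The main obstacle will be sign bookkeeping. The accumulated sign combines $(-1)^{X_\beta^{a,b-1}}$ from the inductive hypothesis, $(-1)^{\binom{b-1}{2}}$ from Lemma~\ref{lem_EaEone}, and various associativity and reindexing signs; it must telescope to $(-1)^{X_\alpha^{a,b}}$. The verification requires decomposing each ingredient of $X_\alpha^{a,b}$---namely $\chi_\alpha^a$, $\chi_{\hat\alpha}^b$, $\Omega(\hat\alpha)$, and $|\alpha|\cdot|\hat\alpha|$---in terms of the analogous data for $\beta$ and $\hat\beta$ and the column data $r$, using the binomial identities established in Subsection~\ref{subsec-box-notation}.
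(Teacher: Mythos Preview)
Your treatment of orthogonality and subordination is fine and matches the paper. The divergence is in how you establish the summation identity $\sum_\alpha e_\alpha = e_{a,b}$.

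The paper does \emph{not} proceed by induction. Instead, once the $e_\alpha$ are known to be $\binom{a+b}{a}$ mutually orthogonal idempotents, each equivalent (via $\sigma_\alpha,\lambda_\alpha$) to the primitive idempotent $e_{a+b}$, the matrix-algebra isomorphism $\ONH_{a+b}\cong\Mat_{[a+b]!}(\osym_{a+b})$ of Corollary~\ref{cor_nilmatrix} finishes the argument in one stroke: the graded rank of the projective $\ONH_{a+b}\,e_{a,b}$ forces $e_{a,b}-\sum_\alpha e_\alpha=0$. No further diagrammatic computation or sign bookkeeping is needed.

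Your inductive approach is modeled on the proof of Theorem~\ref{thm_nil_matrix}, but there the induction is clean because $\Sq(a+1)=\Sq(a)\times\{0,\ldots,a\}$ is a product set. For Theorem~\ref{thm_nil-eaeb} this breaks down: $P(a,b)$ is \emph{not} $P(a,b-1)\times\{0,\ldots,b-1\}$. Applying the inductive hypothesis and then Lemma~\ref{lem_EaEone} to the $(b-1,1)$ side yields a double sum with $\binom{a+b-1}{a}\cdot b$ terms, which in general strictly exceeds $|P(a,b)|=\binom{a+b}{a}$ (already for $a=1,b=2$ one gets $4$ versus $3$). Your claimed bijection $(\beta,r)\leftrightarrow\alpha$ therefore cannot be a bijection of index sets; for it to work you must show that the ``illegal'' pairs produce vanishing terms, and you do not address this. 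There is a second, related gap: the Schur label on the $a$-strand in the target is $s_\alpha$, but after your inductive step it is still $s_\beta$, and Lemma~\ref{lem_EaEone} applied on the $(b-1,1)$ side does not touch the $a$-strand. You give no mechanism by which $s_\beta$ becomes $s_\alpha$. So the obstacle is not merely sign bookkeeping; the combinatorics and the Schur-label transformation are genuinely unaccounted for. The matrix-algebra rank argument sidesteps all of this.
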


\begin{proof}
Proposition~\ref{prop_oval_small} implies that the $e_{\alpha}$ form a collection of $\binom{a+b}{a}$ mutually orthogonal idempotents in $\ONH_a$.   The result follows from the isomorphism $\ONH_a\cong \Mat_{(a)^!_{q^2}}(\osym_a)$ in Corollary~\ref{cor_nilmatrix}.
\end{proof}

%
\section{Cyclotomic quotients of the odd nilHecke algebra}\label{sec-cyclotomic}
%

Let $a_1,a_2,\ldots$ be a sequence of an algebra.  We say this family satisfies the \textit{odd symmetric defining relations} if
\begin{equation}\label{eqn-odr}\begin{split}
a_ia_j=a_ja_j	\qquad	&\text{if }i+j\text{ is even,}\\
a_ia_j+(-1)^ia_ja_i=(-1)^ia_{i+1}a_{j-1}+a_{j-1}a_{i+1}		\qquad	&\text{if }i+j\text{ is odd.}
\end{split}\end{equation}
Usually, $a_i=0$ for large enough $i$.

In \cite{EK}, the algebra $\osym$ of \textit{odd symmetric functions} (there denoted $\sym$ or $\sym^{-1}$) is constructed as the quotient of the graded free algebra $\Bbbk\langle h_1,h_2,\ldots\rangle$, $\deg(h_i)=i$, by the odd symmetric defining relations among the $h_i$'s.  The degree $n$ part of $\osym$ has a basis indexed by partitions of $n$,
\begin{equation*}
\osym_n=\Bbbk\lbrace h_\lambda=h_{\lambda_1}h_{\lambda_2}\cdots h_{\lambda_r}:\lambda=(\lambda_1,\ldots,\lambda_r)\text{ is a partition of }n\rbrace.
\end{equation*}
The elements $h_\lambda$ are called \textit{odd complete symmetric functions}.

However, we need to modify these gradings in order for the definitions of \cite{EK} to be compatible with those of this paper.  In \cite{EK}, the super-degree of a homogeneous element is the mod 2 residue of its $\Z$-degree.  Instead, for our present purposes, double the $\Z$-gradings but leave unchanged the super-gradings of \cite{EK}.  So $h_i$ has $\Z$-degree $2i$ and super-degree $i$.  The odd defining relations are unchanged.

Inductively define elements $\varepsilon_n\in\osym$ by the relation
\begin{equation}
\sum_{k=0}^n(-1)^{\binom{k+1}{2}}\varepsilon_kh_{n-k}=0.
\end{equation}
By convention, $h_0=\varepsilon_0=1$ and $h_i=\varepsilon_i=0$ for $i<0$.  The products $\varepsilon_\lambda=\varepsilon_{\lambda_1}\cdots\varepsilon_{\lambda_r}$ form a basis of $\osym_a$ just like the $h_\lambda$'s; we call such products \textit{odd elementary symmetric functions}.  It follows from Lemmas \ref{lemma-osymm-relations}, \ref{lemma-e-h-relation}, and \ref{lem-h-odrs} that $\osym$ is isomorphic to an inverse limit (in the category of graded rings) of the rings $\osym_a$,
\begin{equation*}
\osym\cong\underleftarrow{\lim}\osym_a
\end{equation*}
with respect to the maps
\begin{equation*}\begin{split}
&\osym_{a+1}\to\osym_a,\\
&\varepsilon_j\mapsto\begin{cases}
\varepsilon_j&j\neq a+1,\\
0&j=a+1,\end{cases}
\end{split}\end{equation*}
in such a way that the odd complete and odd elementary symmetric polynomials in $\osym_a$ pull back to the odd complete and odd elementary functions of \cite{EK}.

In terms of generators and relations, then,
\begin{equation}
\osym_a\cong\osym/\langle \varepsilon_m:m>a\rangle.
\end{equation}
For each $N\geq0$ we define the \textit{odd Grassmannian ring} to be the quotient of $\osym_a$ by the ideal generated by all $h_m$ with $m>N-a$,
\begin{equation*}
OH_{a,N}=\osym_a/\langle h_m:m>N-a\rangle.
\end{equation*}
For $a=0,N$ this ring is just $\Z$, and the it vanishes unless $0\leq a\leq N$.  The even analogue of $OH_{a,N}$ is the cohomology ring $H^*(\Gr(a,N))$ of the Grassmannian of complex $a$-planes in $\C^N$.  The quotient $\ONH_a^N=\ONH_a/\langle x_1^N\rangle$ is called the $N$-th \textit{cyclotomic quotient} of $\ONH_a$.

Let $\mathcal{H}_a=\lbrace \widetilde{x}^\alpha:\alpha\leq\delta\rbrace$ be as in Lemma \ref{lem_sch_zbasis} and let $\varphi:\ONH_a\to\End_{\osym_a}(\opol_a)$ be the isomorphism of Corollary \ref{cor_nilmatrix}; under $\varphi$, a polynomial $f$ acts as multiplication by $f$.    A set of defining relations for the cyclotomic quotient $\ONH_a^N$ is given by the matrix entries of $\varphi(\widetilde{x}_1)^N$ with respect to the basis $\mathcal{H}_a$.  We should therefore consider the operator $\varphi(\widetilde{x}_1)$ in some detail. Our analysis closely follows~\cite[Section 5]{Lau4}.

For each multi-index $\beta$ obtained by replacing $\alpha_1$ by zero in some $\alpha$ appearing in $\mathcal{H}_a$, consider the $\osym_a$-submodule of $\opol_a$ with basis
\begin{equation*}
B_\beta=\lbrace\widetilde{x}_1^{a-1}\widetilde{x}^\beta,\widetilde{x}_1^{a-2}\widetilde{x}^\beta,\ldots,\widetilde{x}^\beta\rbrace.
\end{equation*}
The operator $\varphi(\widetilde{x}_1)$ sends the span of each $B_\beta$ to itself; let $\varphi(\widetilde{x}_1)_\beta$ be the resulting restricted map (or the corresponding matrix with respect to the basis $B_\beta$).  So the defining relations given by the entries of $\varphi(\widetilde{x}_1)^N$ are all realized in the $a\times a$ matrices $\varphi(\widetilde{x}_1)_\beta^N$.

\begin{lem}\label{lemma-x1} The restriction of $\varphi(\widetilde{x}_1)$ to the span of $B_\beta$ has the matrix expression
\begin{equation}\label{eqn-claim}
\varphi(\widetilde{x}_1)_\beta=\begin{pmatrix}
\varepsilon_1 & 1 & 0 & 0 & \cdots & 0\\
\varepsilon_2 & 0 & 1 & 0 & \cdots & 0\\
-\varepsilon_3 & 0 & 0 & 1 & \cdots & 0\\
\vdots & \vdots & \vdots & \vdots & & \vdots\\
(-1)^{\binom{a-2}{2}}\varepsilon_{a-1} & 0 & 0 & 0 & \cdots & 1\\
(-1)^{\binom{a-1}{2}}\varepsilon_a & 0 & 0 & 0 & \cdots & 0\end{pmatrix}
\end{equation}
with respect to the basis $B_\beta$.\end{lem}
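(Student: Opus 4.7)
The plan is to reduce the lemma to a single polynomial identity in $\opol_a$ and then prove that identity by induction on $a$. First observe that columns $2$ through $a$ of the claimed matrix are tautological: for $j \ge 2$, multiplication by $\widetilde{x}_1$ sends the $j$-th basis element $\widetilde{x}_1^{a-j}\widetilde{x}^\beta$ of $B_\beta$ to $\widetilde{x}_1^{a-j+1}\widetilde{x}^\beta$, which is the $(j{-}1)$-st basis element. All the content of the lemma therefore sits in the first column. Since $\widetilde{x}^\beta$ sits on the right of every basis element and multiplication by $\widetilde{x}_1$ is on the left, it suffices to establish the identity
\begin{equation*}
\widetilde{x}_1^a \;=\; \sum_{k=1}^a (-1)^{\binom{k-1}{2}}\,\varepsilon_k^{(a)}\,\widetilde{x}_1^{a-k}
\end{equation*}
in $\opol_a$; multiplying on the right by $\widetilde{x}^\beta$ then reproduces the first column.

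I would prove this identity by induction on $a$. The base case $a = 1$ is just $\widetilde{x}_1 = \varepsilon_1^{(1)}$. For the step, substitute the one-variable recursion $\varepsilon_k^{(a)} = \varepsilon_k^{(a-1)} + \varepsilon_{k-1}^{(a-1)}\widetilde{x}_a$ from \eqref{eqn-e-vars} into the right-hand side, splitting it as
\begin{equation*}
\sum_{k=1}^a (-1)^{\binom{k-1}{2}} \varepsilon_k^{(a-1)}\widetilde{x}_1^{a-k}
\;+\; \sum_{k=1}^a (-1)^{\binom{k-1}{2}} \varepsilon_{k-1}^{(a-1)}\widetilde{x}_a\widetilde{x}_1^{a-k}.
\end{equation*}
The first sum collapses to $\widetilde{x}_1^a$: its $k = a$ term vanishes because $\varepsilon_a^{(a-1)} = 0$, and the surviving terms are precisely what one gets by multiplying the $(a-1)$-variable inductive hypothesis on the right by $\widetilde{x}_1$.

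The second sum must therefore vanish. Reindexing $j = k-1$ and using the skew-commutativity $\widetilde{x}_a\widetilde{x}_1^{a-1-j} = (-1)^{a-1-j}\widetilde{x}_1^{a-1-j}\widetilde{x}_a$ to pull $\widetilde{x}_a$ out on the right, and using that $\opol_a$ has no zero divisors, this reduces to showing
\begin{equation*}
\sum_{j=0}^{a-1} (-1)^{\binom{j+1}{2}}\,\varepsilon_j^{(a-1)}\,\widetilde{x}_1^{a-1-j} \;=\; 0
\end{equation*}
in $\opol_{a-1}$. The $j = 0$ term is $\widetilde{x}_1^{a-1}$, and for $j \geq 1$ the computation $\binom{j+1}{2} - \binom{j-1}{2} = 2j - 1$ yields $(-1)^{\binom{j+1}{2}+1} = (-1)^{\binom{j-1}{2}}$, so by the induction hypothesis for $a-1$ the $j \ge 1$ terms sum to $-\widetilde{x}_1^{a-1}$ and cancel the $j = 0$ term.

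The only real obstacle is sign bookkeeping, where one must coordinate the $(-1)^{a-1-j}$ from skew-commutativity with the $(-1)^{\binom{k-1}{2}}$ from the statement and the recursion for $\varepsilon_k^{(a)}$. I would spot-check at $a = 2, 3$ to calibrate conventions and isolate the binomial congruence $\binom{j+1}{2} \equiv \binom{j-1}{2} + 1 \pmod{2}$ before writing out the general induction.
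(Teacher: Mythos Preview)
Your proof is correct, but it takes a different route from the paper's. Both arguments reduce immediately to the first column and hence to the identity
\[
\widetilde{x}_1^{\,a}=\sum_{k=1}^a(-1)^{\binom{k-1}{2}}\varepsilon_k^{(a)}\,\widetilde{x}_1^{\,a-k}.
\]
You prove this by induction on $a$, splitting off the \emph{last} variable via the recursion $\varepsilon_k^{(a)}=\varepsilon_k^{(a-1)}+\varepsilon_{k-1}^{(a-1)}\widetilde{x}_a$; the induction hypothesis dispatches the first block, and you reduce the second block to a vanishing alternating sum, which the same inductive identity then kills. The paper instead splits off the \emph{first} variable, writing $\varepsilon_k(x_1,\ldots,x_a)=\widetilde{x}_1\varepsilon_{k-1}(x_2,\ldots,x_a)+\varepsilon_k(x_2,\ldots,x_a)$, and observes that the cross terms in $\sum_k(-1)^{\binom{k-1}{2}}\varepsilon_k\widetilde{x}_1^{\,a-k}$ telescope directly, leaving only the surviving term $\widetilde{x}_1^{\,a}$. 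The paper's telescoping is a one-shot argument with no induction, while yours reuses the recursion \eqref{eqn-e-vars} that was already set up; in exchange you have to track an extra layer of sign bookkeeping (the $(-1)^{a-1-j}$ from skew-commuting past $\widetilde{x}_a$ and the congruence $\binom{j+1}{2}\equiv\binom{j-1}{2}+1$). Both are clean and roughly the same length.
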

\begin{proof}
Since $\widetilde{x}_1\cdot\widetilde{x}_1^j\widetilde{x}^\beta=\widetilde{x}_1^{j+1}\widetilde{x}^\beta$, all entries other than the first column are clear.  To get the first column, we need to express $\widetilde{x}_1^a\widetilde{x}^\beta$ in the basis $B_\beta$.  That is, we want to find a linear relation
\begin{equation*}
\widetilde{x}_1^a\widetilde{x}^\beta=f_1\widetilde{x}_1^{a-1}\widetilde{x}^\beta+\ldots+f_{a-1}\widetilde{x}_1\widetilde{x}^\beta,
\end{equation*}
with each $f_j\in\osym_a$.  Recall that $\varepsilon_0=1$.  Now
\begin{equation*}\begin{split}
\varepsilon_k(x_1,\ldots,x_a)\widetilde{x}_1^{a-k}\widetilde{x}^\beta&=\widetilde{x}_1\varepsilon_{k-1}(x_2,\ldots,x_a)\widetilde{x}_1^{a-k}\widetilde{x}^\beta+\varepsilon_k(x_2,\ldots,x_a)\widetilde{x}_1^{a-k}\widetilde{x}^\beta\\
&=\sum_{2\leq i_1<\cdots<i_{k-1}\leq a}\widetilde{x}_1\widetilde{x}_{i_1}\cdots\widetilde{x}_{i_{k-1}}\widetilde{x}_1^{a-k}\widetilde{x}^\beta+\sum_{2\leq i_1<\cdots<i_k\leq a}\widetilde{x}_{i_1}\cdots\widetilde{x}_{i_k}\widetilde{x}_1^{a-k}\widetilde{x}^\beta.
\end{split}\end{equation*}
Comparing this with the analogous expansion of $\varepsilon_{k+1}\widetilde{x}_1^{a-(k+1)}\widetilde{x}^\beta$, the first sum of the latter and the second sum of the former differ by the sign $(-1)^k$.  So telescoping cancellations occur if each $f_j=\pm \varepsilon_j$ and $f_{j+1}=\pm(-1)^j\varepsilon_{j+1}$.  The claim follows; that is, we have found
\begin{equation*}
\widetilde{x}_1^a\widetilde{x}^\beta=\sum_{j=1}^n(-1)^{\binom{j-1}{2}}\varepsilon_j\widetilde{x}_1^{a-j}\widetilde{x}^\beta,
\end{equation*}
and that equation \eqref{eqn-claim} is true.\end{proof}

\begin{prop}\label{prop-odd-grassmannian} The cyclotomic quotient $\ONH_a^N$ is isomorphic to a matrix algebra of size $q^{\frac{1}{2}a(a-1)}[a]!$ over the odd Grassmannian ring $OH_{a,N}$.
\end{prop}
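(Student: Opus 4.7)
The plan is to push the cyclotomic ideal through the Morita isomorphism $\ONH_a \cong \Mat_{q^{a(a-1)/2}[a]!}(\osym_a)$ of Corollary~\ref{cor_nilmatrix}.  Since the two-sided ideals of $\Mat_n(R)$ are exactly $\Mat_n(J)$ for two-sided $J \subseteq R$, and the two-sided ideal generated by a single matrix is $\Mat_n$ applied to the two-sided ideal generated by its entries, the cyclotomic quotient will automatically be a matrix algebra of the desired size over $\osym_a/J_N$, where $J_N \subseteq \osym_a$ is the two-sided ideal generated by the entries of the matrix representing $\varphi(x_1)^N$.  The whole problem therefore reduces to showing $J_N = \langle h_m : m > N-a\rangle$.

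First I would exploit the block decomposition $\opol_a = \bigoplus_\beta V_\beta$ into right $\osym_a$-submodules spanned by the subsets $B_\beta$ appearing in the preamble to Lemma~\ref{lemma-x1}.  Left multiplication by $\widetilde{x}_1 = x_1$ preserves each $V_\beta$, so $\varphi(x_1)^N$ is block-diagonal and every block is the same $a\times a$ matrix $M^N$, where $M$ is the companion-type matrix of Lemma~\ref{lemma-x1}.  Consequently $J_N$ is the two-sided ideal of $\osym_a$ generated by the $a^2$ entries of the single matrix $M^N$.

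Next I would compute the first row of $M^N$.  The shape of $M$ forces $Me_j = e_{j-1}$ for $j \geq 2$ and $Me_1 = \sum_j (-1)^{\binom{j-1}{2}} \varepsilon_j e_j$, and an induction on $N$ yields the recursion $(M^N)_{1,1} = \sum_{\ell=1}^a (-1)^{\binom{\ell-1}{2}} \varepsilon_\ell (M^{N-\ell})_{1,1}$ for $N \geq a$.  Using the sign identity $(-1)^{\binom{\ell+1}{2}+1} = (-1)^{\binom{\ell-1}{2}}$ one recognizes this as exactly the recursion for $h_N$ derived from Lemma~\ref{lemma-e-h-relation}; matching initial conditions then give $(M^N)_{1,1} = h_N$, and repeatedly applying $Me_i = e_{i-1}$ propagates this to $(M^N)_{1,i} = h_{N-i+1}$ for $1 \leq i \leq a$ (taking $N \geq a-1$).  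Thus $h_{N-a+1}, \dots, h_N \in J_N$, and a second application of Lemma~\ref{lemma-e-h-relation}, now inductively expressing each $h_{N+1}, h_{N+2}, \dots$ in terms of its predecessors, yields the containment $\langle h_m : m > N-a\rangle \subseteq J_N$.

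For the reverse inclusion I would iterate the matrix recursion to write each entry in closed form as $(M^N)_{j,i} = \sum_{\ell=0}^{a-j} (-1)^{\binom{j+\ell-1}{2}} \varepsilon_{j+\ell}\, h_{N-i-\ell}$, extended by $h_m = 0$ for $m<0$.  Entries with $j > i$ already have every $h$-index strictly larger than $N-a$ and so lie in the target ideal.  For $j \leq i$ the tail of the sum involves $h_{N-a}, h_{N-a-1}, \dots$ and pure-$\varepsilon$ contributions coming from $h_0 = 1$; one re-expresses these using Lemma~\ref{lemma-e-h-relation} to trade low-index $h$'s for sums of products involving higher-index $h$'s and $\varepsilon$'s, and verifies that the rewrite lands inside $\langle h_m : m > N-a\rangle$.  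Combined with the standard identification $\Mat_n(R)/\Mat_n(J) \cong \Mat_n(R/J)$, this produces the stated isomorphism.  The main obstacle is this final rewrite: carefully massaging the low-index tails of the lower-left entries via the Newton-type identity while controlling the signs $(-1)^{\binom{k+1}{2}}$ and the noncommutativity of $\osym_a$.  The even-case argument of \cite{Lau4} provides a clean template, but in the odd case the order of factors matters at every step, and that bookkeeping is the substantive content of the proof.
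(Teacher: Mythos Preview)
Your approach is correct and runs parallel to the paper's, but the organization differs in a way worth noting.  Both arguments push the cyclotomic ideal through Corollary~\ref{cor_nilmatrix} and use Lemma~\ref{lemma-x1} to replace $\varphi(x_1)^N$ by the block $M^N$.  From there the paper reduces to the \emph{first column} of $M^{N-a+1}$ (deferring that reduction to the even case) and then, via the generating-function presentation of $OH_{a,N}$, identifies those entries up to sign with the relations $f_{j,N-a}$ by an induction on $N$.  You instead compute the \emph{first row} of $M^N$ directly and then attack all entries at once with a closed formula.  Since the $i$-th column of $M^N$ equals the first column of $M^{N-i+1}$, your last column is literally the paper's object of study, so the two routes are closer than they first appear; your version avoids the generating-function bookkeeping at the cost of having to treat every entry.

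One comment on what you call the ``main obstacle.''  Your phrasing (``trade low-index $h$'s for sums of products involving higher-index $h$'s'') suggests rewriting individual $h_m$ with $m\leq N-a$, which is not what Lemma~\ref{lemma-e-h-relation} lets you do.  The actual mechanism is cleaner: your closed formula is the tail of a single Newton identity.  At degree $N-i+j$ one has
\[
\sum_{k=0}^{j-1}(-1)^{\binom{k+1}{2}}\varepsilon_k\,h_{N-i+j-k}
\;+\;
\sum_{k=j}^{a}(-1)^{\binom{k+1}{2}}\varepsilon_k\,h_{N-i+j-k}
\;=\;0,
\]
and reindexing the second sum by $\ell=k-j$ (using $(-1)^{\binom{k+1}{2}}=-(-1)^{\binom{k-1}{2}}$) shows it equals $-(M^N)_{j,i}$.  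Hence
\[
(M^N)_{j,i}\;=\;\sum_{k=0}^{j-1}(-1)^{\binom{k+1}{2}}\varepsilon_k\,h_{N-i+j-k},
\]
and every $h$-index here satisfies $N-i+j-k\geq N-i+1\geq N-a+1$.  So the reverse inclusion is a one-line consequence of the very identity you already invoked for the first row, and no delicate term-by-term rewriting is needed.  With this observation in hand your proof is complete and the noncommutativity bookkeeping is benign: the $\varepsilon$'s sit on the left in both your formula and in Lemma~\ref{lemma-e-h-relation}, so the orders match throughout.
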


The proof will use an alternate presentation of $OH_{a,N}$.  Let the algebra $\osym_a[t]$ be obtained from $\osym_a$ by adjoining an element $t$ of super-degree 1 which is super-central; that is, $th_k=(-1)^kh_kt$ for all $k$ (and likewise with $h_k$ replaced by $\varepsilon_k$).  The $\Z$-degree of $t$ is immaterial, so set it to 2 for consistency with the $x_i$'s.  Then the relation
\begin{equation}
(1+\varepsilon_1t+\varepsilon_2t^2+\ldots)(1+z_1t+z_2t^2+\ldots)=1
\end{equation}
holds if and only if $z_k=(-1)^{\binom{k+1}{2}}h_k$ for each $k$.  So we can define $OH_{a,N}$ by taking the quotient of $\osym_a$ by the ideal generated by the coefficients of powers $t^k$ in
\begin{equation}\label{eqn-grassmann-relations}
(1+\varepsilon_1t+\varepsilon_2t^2+\ldots+\varepsilon_at^a)(1+z_1t+z_2t^2+\ldots+z_{N-a}t^{N-a})=1,
\end{equation}
and furthermore we know that
\begin{equation}
z_k=(-1)^{\binom{k+1}{2}}h_k
\end{equation}
for all $k$.

\begin{proof} Let $M$ be the matrix $\varphi(\widetilde{x}_1)_\beta$ of equation \eqref{eqn-claim}.  By Lemma \ref{lemma-x1}, the entries of $M^N$ generate the ideal defining the cyclotomic quotient $\ONH_a^N$.  It is not hard to show that these relations are already generated by the entries in the first column of $M^{N-a+1}$ (the proof is exactly as in the even case).  These relations are homogeneous of degrees $2(N-a+1),2(N-a+2),\ldots,2N$.  Let $v=(1,0,\ldots,0)^T$ be the column vector with first entry 1 and all other entries 0, so that we are seeking to compute the entries of $M^{N-a+1}v$.  We proceed by induction on $N$.  We may as well assume $N-a\geq a$; the case $N-a<a$ is similar.  Let $f_{j,N-a}$ be the relation of degree $2(N-a+j)$ in equation \eqref{eqn-grassmann-relations}, for each $j=1,\ldots,a$.  We claim that
\begin{equation*}
(M^{N-a+1}v)_j=(-1)^{\binom{N-a+j+1}{2}}f_{j,N-a}.
\end{equation*}
For $N=a,a+1$, this is clear.  Now replacing $N$ by $N+1$, matrix multiplication shows
\begin{equation}\label{eqn-cases1}
(M^{N-a+2}v)_j=\begin{cases}
(-1)^{\binom{j-1}{2}+\binom{N-a}{2}}\varepsilon_jf_{1,N-a}+(-1)^{\binom{N-a+j+2}{2}}f_{j+1,N-a} & \text{for }1\leq j\leq a-1,\\
(-1)^{\binom{a-1}{2}+\binom{N-a+2}{2}}\varepsilon_af_{1,N-a}	 \qquad\qquad	 \text{for }j=a.	 &
\end{cases}\end{equation}
And by equation \eqref{eqn-grassmann-relations} with $N+1$ in place of $N$,
\begin{equation}\label{eqn-cases2}
f_{j,N-a+1}=\begin{cases}
f_{j+1,N-a}-(-1)^{j(N-a+1)}\varepsilon_jf_{1,N-a}	&	\text{for }1\leq j\leq a-1,\\
(-1)^{\binom{a-1}{2}+\binom{N-a+2}{2}}\varepsilon_af_{1,N-a}	&	 \text{for }j=a.
\end{cases}\end{equation}
The expressions \eqref{eqn-cases1},\eqref{eqn-cases2} differ only by a sign, proving the proposition.\end{proof}

The algebra $\osym$ has an important basis, the \textit{odd Schur functions}.  They are indexed by partitions as the complete and elementary functions are, and they are defined by the change of basis relation
\begin{equation}
h_\mu=\sum_{\lambda\vdash n}K_{\lambda\mu}s_\lambda^H,
\end{equation}
where $|\mu|=n$ and $K_{\lambda\mu}$ is a signed count of semistandard Young tableaux of shape $\lambda$ and content $\mu$; for the signs, see \cite{EK}.  The superscript ``$H$'' is to distinguish these from the Schur functions of Subsection \ref{sec_oschur} The Schur functions form an integral basis over $\Z$, and they are signed-orthonormal:
\begin{equation}
(s_\lambda^H,s_\mu^H)=(-1)^{\ang{\overline{\lambda}}+|\lambda|}\delta_{\lambda\mu}.
\end{equation}
Here we use the notation $\ang{\mu}=\sum_j\binom{\mu_j+1}{2}$.  In the even setting, the images of those Schur functions $s_\lambda$ with $\lambda_1\leq N-a$ and $\ell(\lambda)\leq a$ form an integral basis of $H^*(\Gr(a,N);\Z)$; the class $s_\lambda$ is Poincar\'{e} dual to the corresponding Schubert cycle.  The odd Schur functions corresponding to the same partitions give a basis in the odd case as well, although no geometric description is currently known.

\begin{conj}\label{conj-schur} The image of $s_\lambda^H$ under the map $\osym\to\osym_a$ equals the odd Schur polynomial $s_\lambda$.\end{conj}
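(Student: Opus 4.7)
The plan is to characterize both $s_\lambda^H \in \osym$ and the $s_\lambda$ of this paper by their change-of-basis against the odd elementary basis $\{\varepsilon_\mu\}$, and then show the two transition matrices coincide. Since $\varepsilon_\mu \in \osym$ restricts to $\varepsilon_\mu \in \osym_a$ when $\mu_1 \leq a$ and to $0$ otherwise (by Lemma \ref{lem_elem_vars} and the presentation $\osym_a \cong \osym/\langle \varepsilon_m : m > a\rangle$), once the change-of-basis coefficients are shown to agree, the conjecture follows immediately.

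First, I would iterate the odd Pieri rule (Proposition \ref{prop_odd_pieri}) starting from $s_\emptyset = 1$ to expand
\[
 \varepsilon_{\mu_1}\varepsilon_{\mu_2}\cdots\varepsilon_{\mu_r}
 \;=\; (-1)^{\sum_j \binom{\mu_j}{2}}\,
 s_{(1^{\mu_1})} s_{(1^{\mu_2})} \cdots s_{(1^{\mu_r})}
\]
(via Lemma \ref{lem-schur-elementary}) as a $\Z$-linear combination $\sum_\lambda c_{\lambda\mu}\, s_\lambda$ in $\osym_a$. At each Pieri step a vertical strip of length $\mu_j$ is added to the current shape, accompanied by a sign $(-1)^{|i_1/\alpha|+\cdots+|i_{\mu_j}/\alpha|}$ recording the rows $i_1<\cdots<i_{\mu_j}$ that receive new boxes. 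Under the standard bijection between sequences of vertical strip additions building $\lambda$ out of $r$ strips of sizes $\mu_1,\ldots,\mu_r$ and column-strict tableaux of shape $\lambda$ and content $\mu$, the coefficient $c_{\lambda\mu}$ becomes a signed count of such tableaux, i.e.\ a signed Kostka-type number associated to $(\overline{\lambda},\mu)$.

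Second, I would match these signed counts to the Kostka coefficients of \cite{EK}. In \cite{EK}, the Schur functions $s_\lambda^H$ are characterized by $h_\mu = \sum_\lambda K_{\lambda\mu}\, s_\lambda^H$; applying the anti-involution $\psi_3$ of \cite{EK} (which swaps $h$'s and $\varepsilon$'s up to a sign and induces partition conjugation) converts this into an expansion $\varepsilon_\mu = \sum_\lambda c'_{\lambda\mu}\, s_\lambda^H$. I would show that both $c_{\lambda\mu}$ and $c'_{\lambda\mu}$ obey a common local recursion at the level of adding a single column to a tableau, from which equality $c_{\lambda\mu} = c'_{\lambda\mu}$ would follow. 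Combined with the first step, this identifies the image of $s_\lambda^H$ in $\osym_a$ with $s_\lambda$.

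The main obstacle is this sign reconciliation. The Pieri-rule sign $(-1)^{|i_1/\alpha|+\cdots+|i_k/\alpha|}$ records the parity of the total number of boxes lying strictly below the rows being augmented in the current partial shape, while the sign attached by \cite{EK} to a tableau arises from a bilinear-form argument and is expressed in terms of content statistics of the entries. A clean matching — ideally a direct verification that both conventions satisfy the same recursion when a box is added in position $(i,j)$ — requires careful parity bookkeeping but is purely combinatorial and should be tractable once set up properly. Small checks (e.g.\ $\lambda = (2,1)$ and $\lambda = (2,2)$, in light of the final remark of Section \ref{sec_oschur}) serve as useful sanity tests along the way.
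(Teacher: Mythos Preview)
The paper does not prove this statement: it is explicitly labeled a \emph{Conjecture} and left open. The remarks following Proposition~\ref{prop_odd_pieri} are phrased conditionally (``Assume for a moment that the Schur functions of this paper agree with the Schur functions of \cite{EK}''), and the failure of any na\"{i}ve Jacobi--Trudi formula noted there signals that the authors were aware of, but could not complete, precisely the kind of sign-matching you outline. So there is no paper proof to compare against; you are attempting something the authors did not claim to have.

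Your strategy is the natural one and is essentially what the authors' remarks gesture toward: iterate the odd Pieri rule to get the transition matrix from $\{\varepsilon_\mu\}$ to $\{s_\lambda\}$, and compare with the analogous matrix for $\{s_\lambda^H\}$ in \cite{EK}. Two cautions. First, $\psi_3$ is an \emph{anti}-involution, so applying it to $h_\mu = h_{\mu_1}\cdots h_{\mu_r}$ reverses the order of the factors; you do not directly obtain an expansion of $\varepsilon_\mu$ but of $\varepsilon_{\mu_r}\cdots\varepsilon_{\mu_1}$ (up to sign). You would need either to use the automorphism $\psi_1\psi_2$ instead, or to track how reversing the product interacts with the odd defining relations \eqref{eqn-e-relations}, which introduces further $2$-torsion correction terms via \eqref{eqn-e-odd-sort}. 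Second, the ``purely combinatorial'' sign reconciliation you defer is the entire content of the conjecture: the Pieri sign counts boxes strictly below augmented rows in a \emph{growing} shape, while the \cite{EK} signs are global statistics on the completed tableau, and showing these agree entrywise (not just modulo~$2$) is exactly what neither paper establishes. Your proposal is a reasonable plan of attack, but as written it is a sketch that stops at the hard step rather than a proof.
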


\begin{prop} Over $\Bbbk=\Z$, the images of the Schur functions $s_\lambda^H$ for $\lambda$ having at most $a$ rows and at most $N-a$ columns form a homogeneous basis for $OH_{a,N}$.  All other Schur functions are 0 in $OH_{a,N}$.\end{prop}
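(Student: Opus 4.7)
The plan is to prove the proposition in three steps: compute the graded rank of $OH_{a,N}$, show that Schur functions indexed by partitions outside the $a\times(N-a)$ box vanish in $OH_{a,N}$, and deduce that the remaining images form a basis by a rank count combined with the fact that $\{s_\lambda^H\}$ is an integral basis of $\osym$.

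For the rank computation, I would first observe that $OH_{a,N} \otimes_\Z \Z/2$ is isomorphic to $\sym_a/\langle h_m : m>N-a\rangle \otimes \Z/2$, which is the cohomology ring of the complex Grassmannian mod $2$ and has graded dimension equal to the balanced $q$-binomial $\qbin{N}{a}$. A free-complement argument along the lines of Proposition \ref{prop_size_osymm}, or equivalently direct extraction from Proposition \ref{prop-odd-grassmannian} using the known rank of $\ONH_a^N$, shows that $OH_{a,N}$ is free as a graded abelian group with $\qrk(OH_{a,N}) = \qbin{N}{a}$. In particular, the total $q$-count matches $\sum_{\lambda \subseteq (N-a)^a} q^{2|\lambda|}$, the $q$-count of partitions in the box.

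For the vanishing step, I would handle the two excluded cases separately. When $\ell(\lambda) > a$, equivalently $\overline{\lambda}_1 > a$, the change-of-basis matrix from $\{s_\lambda^H\}$ to $\{\varepsilon_\mu\}$ on $\osym$ is unimodular and triangular with respect to the dominance order (this is the content of the signed Kostka matrix in \cite{EK}), so $s_\lambda^H$ lies in the ideal generated by $\{\varepsilon_m : m > a\}$ and therefore maps to zero in $\osym_a$, hence in $OH_{a,N}$. When $\lambda_1 > N-a$, the naive Jacobi–Trudi determinant is unavailable by the remark after Proposition \ref{prop_odd_pieri}, so I would proceed by induction on $|\lambda|$ using the odd Pieri rule (Proposition \ref{prop_odd_pieri}) together with the automorphism/anti-involution of $\osym$ from \cite{EK} that interchanges the roles of the $h_k$'s and $\varepsilon_k$'s. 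The plan is to express $s_\lambda^H$ as a $\Z$-linear combination of products involving $h_m$ with $m \geq \lambda_1 > N-a$, each of which is zero in $OH_{a,N}$, modulo terms to which the inductive hypothesis applies.

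Finally, the images of $s_\lambda^H$ for $\lambda$ in the $a\times(N-a)$ box form a spanning set of $OH_{a,N}$ because the full collection $\{s_\lambda^H\}_\lambda$ is a $\Z$-basis of $\osym$ and those outside the box vanish by the previous step. Since the number of such partitions, counted with $q$-weight, equals $\qbin{N}{a} = \qrk(OH_{a,N})$, no nontrivial relations among the images can exist: any such relation would reduce modulo $2$ to a nontrivial relation among the classical Schubert classes in $H^*(\Gr(a,N);\Z/2)$, contradicting the classical result. The main obstacle I expect is the vanishing of $s_\lambda^H$ when $\lambda_1 > N-a$, since the failure of the naive Jacobi–Trudi identity (as discussed in the remark) forces us to use the Pieri rule inductively and to track signs carefully through the odd $h/\varepsilon$ duality.
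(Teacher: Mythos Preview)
Your overall strategy matches the paper's: show vanishing outside the box, then use mod $2$ reduction to the classical Grassmannian to conclude that the remaining Schur functions are a $\Z$-basis. The row-vanishing argument via unitriangularity of the $s^H$-to-$\varepsilon$ change of basis is exactly what the paper does.

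For the column case, however, you are making things harder than necessary. The paper does not use the odd Pieri rule or an inductive argument here at all. It simply invokes the other unitriangularity relation from \cite{EK},
\[
s_\lambda^H \;=\; h_\lambda \;+\; \sum_{\mu>\lambda} a_\mu\, s_\mu^H,
\]
with respect to lexicographic order. Unwinding this recursively expresses $s_\lambda^H$ as a $\Z$-linear combination of products $h_\mu$ with $\mu\geq\lambda$ in lex order, so in particular $\mu_1\geq\lambda_1>N-a$, and every such $h_\mu$ has a leftmost factor $h_{\mu_1}$ that is already zero in $OH_{a,N}$. No Jacobi--Trudi identity is needed, only the (inverse) signed Kostka triangularity; the failure of the naive odd Jacobi--Trudi determinant is irrelevant. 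The involution $\psi_1\psi_2$ is mentioned in the paper only as the mechanism that makes the $h$-triangularity and the $\varepsilon$-triangularity formally dual, not as part of a separate Pieri argument. So your ``main obstacle'' dissolves once you use the $h$-expansion from \cite{EK} directly.
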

\begin{proof} It is shown in \cite{EK} that
\begin{equation*}\begin{split}
&s_\lambda^H=h_\lambda+\sum_{\mu>\lambda}a_\mu s_\mu^H,\\
&s_\lambda^H=\varepsilon_{\overline{\lambda}}+\sum_{\mu>\overline{\lambda}}b_\mu s_\mu^H,
\end{split}\end{equation*}
for certain integers $a_\mu,b_\mu$; the ordering on partitions here is the lexicographic one.  As a result, if $\lambda$ has more than $a$ rows, $s_\lambda^H$ is a linear combination of $\varepsilon_\mu$'s with $\mu_1>a$ for all $\mu$; hence $s_\lambda^H=0$.  Applying the involution $\psi_1\psi_2$ of \cite{EK}, analogous reasoning with the odd complete polynomials implies that $s_\lambda^H=0$ if $\lambda$ has more than $N-a$ columns.  So $OH_{a,N}$ is spanned over $\Z$ by the Schur functions $s_\lambda^H$ for $\lambda$ having at most $a$ rows and at most $N-a$ columns.  Reducing modulo 2, odd symmetric polynomials coincide with the usual (even) symmetric polynomials over $\Z/2$.  In particular, they have the same graded rank over $\Z/2$.  Since $OH_{a,N}$ is a free $\Z$-module, this implies that these Schur functions are in fact a basis over $\Z$.\end{proof}

%
\section{Categorification}\label{sec-categorification}
%

Consider the category $\ONH_a\pmod$ of finitely generated graded left projective $\ONH_a$-modules and its Grothendieck group $K_0(\ONH_a)$.  Let $\ONH := \bigoplus_{a\geq 0} \ONH_a$ and define
\[
 K_0(\ONH) := \bigoplus_{a\geq 0} K_0(\ONH_a).
\]
There is a natural inclusion of algebras $\ONH_a \otimes \ONH_b \subset \ONH_{a+b}$ given on diagrams by placing a diagram in $\ONH_a$ next to a diagram in $\ONH_b$ with the diagram from $\ONH_a$ appearing to the left of the one from $\ONH_b$.  These inclusions give rise to induction and restriction functors that equip $K_0(\ONH)$ with the structure of a $q$-bialgebra (for the notion of $q$-bialgebra, see \cite{EK}).

Denote the regular representation of $\ONH_a$ by $\mathcal{E}^a$.  By Corollary~\ref{cor_nilmatrix} this module decomposes into the direct sum of $a!$ copies of the unique indecomposable projective module of $\ONH_a$.  If we denote by $\mathcal{E}^{(a)}$ the projective module corresponding to the minimal idempotent $e_a$ with the grading shifted down by $\frac{a(a-1)}{2}$, then we get a direct sum decomposition of graded modules
\[
 \mathcal{E}^a = \ONH_a \cong \bigoplus_{[a]!} \left(\ONH_a e_a\right) \lbrace\frac{-a(a-1)}{2} \rbrace
  =  \bigoplus_{[a]!} \mathcal{E}^{(a)}.
\]
Here $[a]! = [a][a-1]\dots [1]$ is the quantum factorial, $[a]= \frac{q^a-q^{-a}}{q-q^{-1}}$, and $M^{\oplus f}$ or $\oplus_f M$, for a graded module $M$ and a Laurent polynomial $f=\sum f_j q^j \in \Z[q,q^{-1}]$, denotes the direct sum over $j \in \Z$, of $f_j$ copies of $M\{j\}$.

Diagrammatically, the projective module $\mathcal{E}^a$ corresponds to the idempotent $1 \in \ONH_a$ given by $a$ vertical lines.  The idempotent $e_a$ corresponding to the indecomposable projective module $\mathcal{E}^{(a)}$ is represented in the graphical calculus by a thick edge of thickness $a$.   Theorem~\ref{thm_nil_matrix} can be interpreted as giving an explicit isomorphism
\begin{equation}
  \sum_{\und{\ell} \in \Sq(a)} \lambda_{\und{\ell}} \maps
  \mathcal{E}^a \longrightarrow
  \oplus_{[a]!}\mathcal{E}^{(a)} = \bigoplus_{\und{\ell} \in \Sq(a)}\mathcal{E}^{(a)} \{a -1 -2|\und{\ell}|\},
\end{equation}
while Theorem~\ref{thm_nil-eaeb} gives a canonical isomorphism
\begin{eqnarray}
 \sum_{\alpha \in P(a,b)} \lambda_{\alpha} & \maps &  \mathcal{E}^{(a)}\mathcal{E}^{(b)}\onen \longrightarrow
\bigoplus_{\qbin{a+b}{a}}\mathcal{E}^{(a+b)} \;\; =\;\;\bigoplus_{\alpha \in P(a,b)}  \mathcal{E}^{(a+b)}\onen \{2|\alpha|-ab\}.
\end{eqnarray}
In particular, we have an isomorphism
\begin{align}
  \mathbf{U}_q^+(\mathbf{sl}_2)_{\mathcal{A}} &\to K_0(\ONH) \nn \\
    \theta^{(a)} & \mapsto  \mathcal{E}^{(a)}
\end{align}
where $\mathbf{U}_q^+(\mathbf{sl}_2)_{\mathcal{A}}$ is the integral version of the algebra $\mathbf{U}_q^+(\mathbf{sl}_2)$.  The same result was announced by Kang, Kashiwara, and Tsuchioka \cite{KKT}.

Denote by $\ONH^N := \bigoplus_{a=0}^N\ONH_a^N$ the direct sum of cyclotomic quotients of $\ONH_a$. The odd Grassmannian ring $OH_{a,N}$ is graded local, so Proposition~\ref{prop-odd-grassmannian} implies that $K_0(\ONH^N)$ has the same size as the integral form of the irreducible representation of $\mathbf{U}_q(\mf{sl}_2)$ of highest weight $N$ (the cyclotomic quotient $\ONH_a^N$ is zero unless $0\leq a\leq N$).  The tensor products by bimodules which are odd analogues of the cohomology of two-step flag varieties descend to the action of $E$ and $F$ on $K_0(\ONH^N)$, as in the even case.


%

\begin{thebibliography}{10}

\bibitem{EK}
A.~P. Ellis and M.~Khovanov.
\newblock The {H}opf algebra of odd symmetric functions.
\newblock 2011.
\newblock \href{http://arxiv.org/abs/1107.5610}{arXiv:math.QA/1107.5610}.

\bibitem{KKT}
S.-J. Kang, M.~Kashiwara, and S.~Tsuchioka.
\newblock Quiver {H}ecke superalgebras.
\newblock 2011.
\newblock \href{http://arxiv.org/abs/1107.1039}{arXiv:math.QA/1107.1039v1}.

\bibitem{KW1}
T.~Khongsap and W.~Wang.
\newblock {H}ecke-{C}lifford algebras and spin {H}ecke algebras {I}: {T}he
  classical affine type.
\newblock {\em Transf. Groups}, 13:389--412, 2008.
\newblock \href{http://arxiv.org/abs/0704.0201}{arXiv:math.RT/0704.0201}.

\bibitem{KW2}
T.~Khongsap and W.~Wang.
\newblock {H}ecke-{C}lifford algebras and spin {H}ecke algebras {II}: {T}he
  rational double affine type.
\newblock {\em Pacific J. Math.}, 238:73--103, 2008.
\newblock \href{http://arxiv.org/abs/0710.5877}{arXiv:math.RT/0710.5877}.

\bibitem{KW4}
T.~Khongsap and W.~Wang.
\newblock {H}ecke-{C}lifford algebras and spin {H}ecke algebras {IV}: {O}dd
  double affine type.
\newblock {\em SIGMA}, 5, 2009.
\newblock \href{http://arxiv.org/abs/0810.2068}{arXiv:math.RT/0810.2068}.

\bibitem{KhoGL12}
M.~Khovanov.
\newblock How to category one-half of quantum {$gl(1|2)$}.
\newblock 2010.
\newblock \href{http://arxiv.org/abs/1007.3517}{arXiv:math.QA/1007.3517}.

\bibitem{KL}
M.~Khovanov and A.~Lauda.
\newblock A diagrammatic approach to categorification of quantum groups {I}.
\newblock {\em Represent. Theory}, 13:309--347, 2009.
\newblock \href{http://arxiv.org/abs/0803.4121}{arXiv:math.QA/0803.4121}.

\bibitem{KLMS}
M.~Khovanov, A.~Lauda, M.~Mackaay, and M.~Sto\v{s}i\'{c}.
\newblock Extended graphical calculus for categorified quantum sl(2).
\newblock 2010.
\newblock \href{http://arxiv.org/abs/1006.2866}{arXiv:math.QA/1006.2866}.

\bibitem{Lau2}
A.~Lauda.
\newblock Categorified quantum sl(2) and equivariant cohomology of iterated
  flag varieties.
\newblock arXiv:math.QA/0803.3848, 2008.
\newblock To appear in {\em Algebras and Representation Theory}.

\bibitem{Lau1}
A.~Lauda.
\newblock A categorification of quantum sl(2).
\newblock {\em Adv. Math.}, 225(6):3327--3424, 2010.
\newblock \href{http://arxiv.org/abs/0803.3652}{arXiv:math.QA/0803.3652}.

\bibitem{Lau4}
A.~D. Lauda.
\newblock An introduction to diagrammatic algebra and categorified quantum
  sl(2).
\newblock 2011.
\newblock \href{http://arxiv.org/abs/1106.2128}{arXiv:math.QA/1106.2128}.

\bibitem{LOT}
R.~Lipshitz, P.~Ozsv\'{a}th, and D.~Thurston.
\newblock Bordered {H}eegaard {F}loer homology: Invariance and pairing.
\newblock 2008.
\newblock \href{http://arxiv.org/abs/0810.0687}{arXiv:math.GT/0810.0687}.

\bibitem{Man}
L.~Manivel.
\newblock {\em Symmetric functions, {S}chubert polynomials and degeneracy
  loci}, volume~6 of {\em SMF/AMS Texts and Monographs}.
\newblock AMS, Providence, RI, 2001.

\bibitem{ORS}
P.~Ozsv\'{a}th, J.~Rasmussen, and Z.~Szab\'{o}.
\newblock Odd {K}hovanov homology.
\newblock 2007.
\newblock \href{http://arxiv.org/abs/0710.4300}{arXiv:math.QA/0710.4300}.

\bibitem{Rou2}
R.~Rouquier.
\newblock 2-{K}ac-{M}oody algebras, 2008.
\newblock \href{http://arxiv.org/abs/0812.5023}{arXiv:math.RT/0812.5023}.

\bibitem{Shum}
A.~Shumakovitch.
\newblock Patterns in odd {K}hovanov homology.
\newblock {\em J. Knot Theory Ramifications}, 20(1):203--222, 2011.
\newblock \href{http://arxiv.org/abs/1101.5607}{arXiv:math.GT/1101.5607}.

\bibitem{Wang}
W.~Wang.
\newblock Double affine {H}eke algebras for the spin symmetric group.
\newblock {\em Math. Res. Lett.}, 16:1071--1085, 2009.
\newblock \href{http://arxiv.org/abs/math/0608074}{arXiv:math.RT/0608074}.

\bibitem{Web}
B.~Webster.
\newblock Knot invariants and higher representation theory {I}: {D}iagrammatic
  and geometric categorification of tensor products, 2010.
\newblock \href{http://arxiv.org/abs/1001.2020}{arXiv:math.QA/1001.2020}.

\bibitem{Web2}
B.~Webster.
\newblock Knot invariants and higher representation theory {II}: {T}he
  categorification of quantum knot invariants, 2010.
\newblock \href{http://arxiv.org/abs/1005.4559}{arXiv:math.QA/1005.4559}.

\end{thebibliography}

%

%
\end{document}